\documentclass[twoside,english,12pt]{amsart}
\usepackage{amsfonts,amsmath,amssymb,amsthm}
\usepackage{subfigure,graphicx,color}
\usepackage{float}
\usepackage{xspace,accents}
\usepackage{enumitem}
\usepackage{dutchcal}
\usepackage{mathrsfs}

\usepackage{tikz}

\usepackage{hyperref}
\hypersetup{
    colorlinks=true,       
    linkcolor=blue,          
     citecolor=blue,        
    filecolor=magenta,      
    urlcolor=cyan           
  }
\newcommand{\BB}{\mathbb{B}}

\newcommand{\DD}{\mathbb{D}}

\newcommand{\HH}{\mathbb{H}}

\newcommand{\RR}{\mathbb{R}}

\newcommand{\VV}{\mathbb{V}}
\newcommand{\WW}{\mathbb{W}}
\newcommand{\XX}{\mathbb{X}}

\def\caH{{ \mathcal H }}

\def\caN{{\mathcal N}}

\def\cad{\mathcal d}
\def\cah{\mathcal h}

\def\ux{{\underline x}}

\def\us{{\underline s}}
\def\uy{{\underline y}}

\def\ua{{\underline a}}
\def\oa{{\overline a}}
 \def\ob{{\overline b}}

 \def\uz{{\underline z}}
 \def\oz{{\overline z}}

 \def\ut{{\underline t}}
 \def\ot{{\overline t}}

 \def\osigma{{\overline \sigma}}

\def\uT{{\underline T}}

\def\sft{\mathsf t}

\def\eps{\varepsilon}

\def\bfh{{\bf h}}

\def\bfn{{\bf n}}
\def\bft{{\bf t}}
\def\bfu{{\bf u}}
\def\bfv{{\bf v}}
\def\bfw{{\bf w}}

 \newcommand\caO{{\mathcal O}}

\renewcommand{\d}{\partial}
\newcommand{\ovl}[1]{\overline{#1}}
\newcommand{\udl}[1]{\underline{#1}}

\DeclareMathOperator{\dist}{dist}
\DeclareMathOperator{\Span}{Span}
\DeclareMathOperator{\supp}{supp}
\DeclareMathOperator{\singsupp}{sing~supp}

\DeclareMathOperator{\sign}{sign}
\DeclareMathOperator{\WF}{WF}

\DeclareMathOperator{\Ends}{\d}
\DeclareMathOperator{\End}{\d}

\newcommand{\eg}{e.g.\@\xspace}
\newcommand{\resp}{resp.\@\xspace}

\newcommand{\pp}{a.e.\@\xspace}

\newcommand{\nhd}{neighborhood\xspace}
\newcommand{\nhds}{neighborhoods\xspace}

\newcommand{\bichar}{bicharacteristic\xspace}

\newcommand{\dcone}{double-cone\xspace}

\newcommand{\dcones}{double-cones\xspace}
\newcommand{\Dcones}{Double-cones\xspace}
\newcommand{\DC}{\mathbb{D}}

\newcommand{\dd}{domain of determinacy\xspace}

\newcommand{\dds}{domains of determinacy\xspace}
\newcommand{\Dds}{Domains of determinacy\xspace}

\newcommand{\suff}{sufficiently\xspace}

\newcommand{\Equiv}{\Leftrightarrow}
\newcommand{\Imply}{\Rightarrow}

\newcommand{\N}{\mathbb{N}}
\newcommand{\Z}{\mathbb{Z}}
\newcommand{\R}{\mathbb{R}}

\renewcommand{\O}{\mathcal O}

\newcommand{\caL}{\mathcal L}
\newcommand{\M}{\mathcal M}
\newcommand{\caM}{\mathcal M}

\newcommand{\cushion}{\DC_c}

\newcommand{\tgamma}{\tilde{\gamma}}

\newcommand{\tcaN}{\tilde{\caN}}

\newcommand{\Con}{\ensuremath{\mathscr C}}

\newcommand{\Cinf}{\ensuremath{\mathscr C^\infty}}
\newcommand{\Cinfc}{\ensuremath{\mathscr C^\infty_{c}}}

\newcommand{\dup}[2]{\langle #1, #2 \rangle}

\newcommand{\gd}{g^\delta}

\newcommand{\gR}{\mathsf{g}}

\newcommand{\nablaL}{\nabla_{\!\!\caL}}

\DeclareMathOperator{\sgn}{sgn}

\newcommand{\taum}{\tau_{\max}}

\newcommand{\ba}{\bar{a}}
\newcommand{\bb}{\bar{b}}

\newcommand{\bm}{\bar{m}}

\newcommand{\bs}{\bar{s}}

\newcommand{\bsigma}{\bar{\sigma}}

\newcommand{\by}{\bar{y}}

\newcommand{\bt}{\bar{t}}
\newcommand{\bx}{\bar{x}}

\newcommand{\Norm}[2]{{\| #1 \|}_{#2}}

\newcommand{\norm}[2]{{|#1|}_{#2}}

\renewcommand{\qedsymbol}{$\blacksquare$}

\newcommand{\Minkowski}{\mathbb M}

\newtheorem{theorem}{Theorem}[section]
\newtheorem{lemma}[theorem]{Lemma}
\newtheorem{proposition}[theorem]{Proposition}
\newtheorem{corollary}[theorem]{Corollary}

\newtheorem{remark}[theorem]{Remark}
\newtheorem{example}[theorem]{Example}
\newtheorem{hypothesis}[theorem]{Hypothesis}
\newtheorem{definition}[theorem]{Definition}

\newtheorem*{open*}{Open question}

\newcounter{theorembiss}

\newcounter{lemmabiss}

\newcounter{propositionbiss}

\newcounter{corollarybiss}

\newcounter{theoremters}

\newcounter{lemmaters}

\newcounter{propositionters}

\newcounter{corollaryters}

\newcommand{\et}{\ensuremath{\text{and}}}

\newcommand{\avec}{\ensuremath{\text{with}}}
\newcommand{\si}{\ensuremath{\text{if}}}
\newcommand{\pour}{\ensuremath{\text{for}}}

\newcommand{\where}{\ensuremath{\text{where}}}

\newcommand{\atime}{\mathsf t}
\newcommand{\atimeF}{\atime^+}
\newcommand{\atimeP}{\atime^-}

\newcommand{\atimeFd}[1]{\atime^{+,#1}}

\newcommand{\Future}{\mathsf F}
\newcommand{\Past}{\mathsf P}

\newcommand{\Cone}{\Gamma}

\newcommand{\y}[1]{y^{(#1)}}
\newcommand{\x}[1]{x^{(#1)}}
\newcommand{\yt}[1]{t^{(#1)}}

\newcommand{\bld}[1]{\mbox{\boldmath $#1$}}

\newcommand*\circled[1]{\tikz[baseline=(char.base)]{
    \node[shape=circle,draw,inner sep=1pt] (char) {#1};}}

\newcommand{\point}{\alpha}

\newcommand{\usq}{\uy}
\newcommand{\bsq}{\by}

\newcommand{\Null}{Z}
\newcommand{\Timelike}{\Gamma}

\numberwithin{equation}{section}
\begin{document}

\title[Homotopies and domain of determinacy]{Timelike curves: homotopies and domain of determinacy}

\author{J\'er\^ome Le Rousseau and Jeffrey B. Rauch}
\address{J. Le Rousseau. Laboratoire analyse, g\'eom\'etrie et applications,
 Universit\'e Sorbonne Paris-Nord, CNRS, 
 Villetaneuse, France.}
\email{jlr@math.univ-paris13.fr}
\address{J. Rauch. Department of Mathematics, University of Michigan, Ann Arbor, Michigan, USA.}
\email{rauch@umich.edu}

\date{\today}

\begin{abstract}
  This paper studies domains of determination of 
linear strictly hyperbolic second order operators $P$. For an 
open set $\caO$, a set $Z$ is a domain
of determination when the values of solutions of the differential equation
$Pu=0$ are determined on $Z$  by their values in $\caO$.  
Fritz John's global H\"olmgren theorem implies that points that can
be reached by deformations of noncharacteristic hypersufaces 
with initial surface and boundaries in $\caO$ 
belong to  a domain of determination  provided that
local uniqueness  holds
at  noncharacteristic surfaces.   Using spacelike hypersurfaces
yields sharp finite speed results whose domains of determination
are described in terms of influence curves that never exceed
the local speed of propagation.
This paper studies deformations of  noncharacteristic
nonspacelike hypersurfaces.  We prove that points reachable by 
(repeated) deformations by noncharacteristic nonspacelike
hypersurfaces coincide exactly with the set of points
reachable by  (repeated) homotopies of timelike arcs whose
initial curves and endpoints belong to $\caO$.   
When the set $\caO$ is a small neighborhood of a forward  timelike
arc connecting $a$ to $b$, a natural candidate for $Z$ is the intesection of the future of 
$a$ with the past of $b$.  This candidate is exact for D'Alembert's equation.
We prove that it is also exact when $a,b$ are points close together on a  fixed timelike
arc.  The timelike homotopy 
 criterion  fuels the construction of surprising examples for which the domain of determination
 is strictly larger (resp. strictly smaller) than the future-intersect-past 
 candidate.
\end{abstract}
\keywords{Wave equation, Lorentz geometry, unique continuation, \dd,
  hypersurface deformation, homotopy, counterexamples}
\subjclass[2020]{35L05, 35A02, 53C50}

\maketitle

\tableofcontents
\section{Introduction}
This paper studies \dds for solutions of linear
partial differential equations $Pu=0$.
Suppose that $\caL$ is a connected smooth manifold without boundary,
and, $P$ is a second-order linear partial differential operator with
coefficients infinitely differentiable.
\begin{definition}
  Suppose that $\caO\subset\caL$ is an open subset.  An open set
  $Z\subset\caL$ is {\bf a \dd of $\caO$} when $u\in
  H^2_{loc}(\caL)$, $Pu=0$ on $\caL$, and $u=0$ on $\caO$ imply that
  $u=0$ on $Z$.
\end{definition}
If $\{Z(g): g \in G\}$ if a family of \dds of $\caO$
then $\cup_{g\in G}Z(g)$ is a \dd of $\caO$.
\begin{definition}
  \label{def:domainofdetermination}
  For open $\caO\subset\caL$, the union of all \dds
  is the largest \dd of $\caO$.  It is denoted
  $Z_\caO$, and, is called {\bf the \dd} of $\caO$.
\end{definition}
In particular $Z_\caO$ is open.  If $\caO_1\subset\caO_2$, then
$Z_{\caO_1}\subset Z_{\caO_2}$. Then, $Z_\caO$ is the intersection of
all $Z_{\caO'}$ with $\caO\subset \caO'$. Therefore, the definition of
\dd can be extended to arbitrary sets.
\begin{definition}
 \label{def:domain_of_determination2}
 If $A\subset \caL$ is any set, {\bf the \dd of
   $A$}, denoted $Z_A$, is the intersection of \dds
 of open sets $\caO\supset A$,
 \begin{align*}
   Z_A := \mathop{\cap}_{\caO \in \mathscr V (A)} Z_\caO, \quad \where \ \ 
   \mathscr V (A) =  \big\{ \caO\,:\, \caO\ \text{is open and}\ A \subset \caO \big\}.
 \end{align*}
\end{definition}
Note that $Z_A$ may not be open if $A$ is not open.

The problem is to determine large \dds.  Their
structure depends in subtle ways on the global behavior of the
coefficients.  For the hyperbolic operators that  are  our main interest,
Lorentzian geometry plays a central role.  Before jumping into that,
recall some history and introduce key ideas.

\medskip
{\bf  Unique continuation for analytic elliptic operators.}   The oldest results concern
holomorphic functions and harmonic functions.  The 
underlying operators are $\d/\d \ovl{z}$
and $\Delta$ respectively. Solutions are given by  convergent
Taylor series.
Therefore,  if $u$ is a solution  on $\caL$
and $u$ vanishes on a nonempty open  $\caO$, 
then $u=0$ on the connected set $\caL$.
The proof applies to elliptic partial differential
operator of arbitrary order whose coefficients are given by convergent Taylor series.

\medskip
{\bf Deformations of hypersurfaces and John's theorem.}     
The preceding result has an independent  proof that  does not
depend on the analyticity of the solutions.  The
zeros are propagated from $\caO$ to larger sets using
Fritz John's Global H\"olmgren Theorem. 
\begin{definition}
\label{def: reachable point} 
  Suppose that 
  $\caL$ has dimension $1+d$, and $\O\subset \caL$ is  open.
  A {\bf continuous deformation of  $\bld{d}$-dimensional 
  embedded hypersurfaces  with initial suface and boundaries in 
  $\bld{\caO}$} is 
  a pair $(\M, F)$ where $\M$ is a 
  $d$-dimensional compact  $\Con^1$  manifold 
  with boundary and   
   $F:[0,1] \times \M \to \caL$ is a a $\Con^0$-map satisfying   
  \begin{enumerate}[align=left,labelwidth=0.8cm,labelsep=0cm,itemindent=0.8cm,
  leftmargin=0cm,label=\bf{(\roman*)}]
  \item For each $a \in [0,1]$, $F_a: \M \to \caL$, with
    $F_a(.) = F(a,.)$, is a $\Con^1$-embedding.
  \item $F(\{0\} \times \M) = F_0(\M) \subset \O$.
  \item  $F \big([0,1]\times \d \M\big) = \cup_{a \in [0,1]}  F_a(\d\M)
    \subset \O$.
  \end{enumerate}
   $F_0(\M)$ is called the {\bf initial manifold}, and, $F
  \big([0,1]\times \d \M\big)$ is called the {\bf boundary}.  $F
  \big([0,1]\times \M\big)$ is called the set of {\bf points reached
    by the deformation}. 
\end{definition}
Figure~\ref{fig: h1 O} sketches two common situations.
\begin{figure}
  \begin{center}
    \subfigure[\label{fig: h1 O a}]
              {\resizebox{2.4cm}{!}{
                  \begin{picture}(0,0)%
\includegraphics{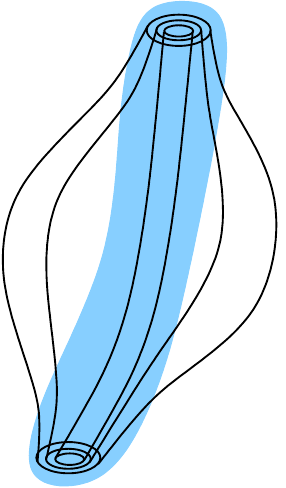}%
\end{picture}%
\setlength{\unitlength}{3947sp}%
\begin{picture}(2276,3900)(8760,-4172)
\put(9390,-3050){\makebox(0,0)[lb]{\smash{\fontsize{12}{14.4}\usefont{T1}{ptm}{m}{n}{\color[rgb]{0,0,0}$\O$}%
}}}
\put(11019,-2076){\rotatebox{359.0}{\makebox(0,0)[lb]{\smash{\fontsize{12}{14.4}\usefont{T1}{ptm}{m}{n}{\color[rgb]{0,0,0}$F_a(\M)$}%
}}}}
\put(10654,-461){\rotatebox{359.0}{\makebox(0,0)[lb]{\smash{\fontsize{12}{14.4}\usefont{T1}{ptm}{m}{n}{\color[rgb]{0,0,0}$\d F_a(\M)$}%
}}}}
\put(9905,-1840){\makebox(0,0)[rb]{\smash{\fontsize{12}{14.4}\usefont{T1}{ptm}{m}{n}{\color[rgb]{0,0,0}$F_0(\M)$}%
}}}
\end{picture}%
}}
    \qquad    \qquad 
     \subfigure[\label{fig: h1 O b}]
               {\resizebox{2.8cm}{!}{
\begin{picture}(0,0)%
\includegraphics{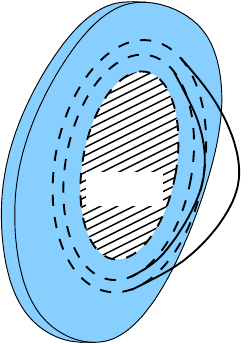}%
\end{picture}%
\setlength{\unitlength}{3947sp}%
\begin{picture}(1977,2749)(8917,-3545)
\put(9304,-3316){\makebox(0,0)[lb]{\smash{\fontsize{12}{14.4}\usefont{T1}{ptm}{m}{n}{\color[rgb]{0,0,0}$\O$}%
}}}
\put(9995,-1080){\rotatebox{359.0}{\makebox(0,0)[lb]{\smash{\fontsize{12}{14.4}\usefont{T1}{ptm}{m}{n}{\color[rgb]{0,0,0}$\d F_a(\M)$}%
}}}}
\put(10877,-2158){\rotatebox{359.0}{\makebox(0,0)[lb]{\smash{\fontsize{12}{14.4}\usefont{T1}{ptm}{m}{n}{\color[rgb]{0,0,0}$F_a(\M)$}%
}}}}
\put(9632,-2358){\makebox(0,0)[lb]{\smash{\fontsize{12}{14.4}\usefont{T1}{ptm}{m}{n}{\color[rgb]{0,0,0}$F_0(\M)$}%
}}}
\end{picture}%
               }}
    \caption{Deformation of a tube on the left and disk on the right.  $\caO$ is blue.}
  \label{fig: h1 O}
  \end{center}
\end{figure}
\begin{definition}
  \label{def:unique_continuation}
An embedded hypersurface $\Sigma \subset \caL$ has {\bf local
  uniqueness in the Cauchy problem} when for each $y\in \Sigma $ there
is a \nhd $\Omega\subset \caL$ with $y\in \Omega$ so that if $u\in
H^2(\Omega)$, $Pu=0$ on $\Omega$, and, $u=\nabla u=0$ on $\Sigma
\cap\Omega$, then $u$ vanishes on a \nhd of $y$.
\end{definition}
\begin{theorem} [\bf John's Global H\"olmgren]
  \label{theorem:John global}
  Suppose that $\O\subset\caL$ is  open  and $F$ is a continuous
  deformation of $d$-dimensional hypersurfaces with initial suface and
  boundary in $\caO$.  If for all $0\le a\le 1$ there is local
  uniqueness in the Cauchy problem at $F_a(\M)$, then $F([0,1]\times
  \M) \subset Z_\O$.
\end{theorem}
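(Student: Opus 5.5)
The plan is a continuity (connectedness) argument in the deformation parameter $a$. Fix $u\in H^2_{loc}(\caL)$ with $Pu=0$ on $\caL$ and $u=0$ on $\caO$. Set
\[
A := \{ a\in[0,1] : u = 0 \text{ on a \nhd of } F([0,a]\times\M)\}.
\]
Once $1\in A$ is known, the open set on which $u$ vanishes near $F([0,1]\times\M)$ is a \dd of $\caO$. Since it contains $F([0,1]\times\M)$, that image lies in $Z_\caO$. To show $1\in A$, I will prove that $A$ is nonempty, open, and closed in $[0,1]$. Nonemptiness is immediate: $F_0(\M)\subset\caO$, so $0\in A$ with $\caO$ itself as the neighborhood.

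Openness of $A$ is easy. If $u$ vanishes on an open $W\supset F([0,a]\times\M)$, then by compactness of $\M$ and continuity of $F$, $F([0,a+\eta]\times\M)\subset W$ for small $\eta>0$.

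Closedness is the heart of the proof. Let $a_*=\sup A$; we must show $a_*\in A$. We know that $u$ vanishes on a \nhd of $F([0,a])$ for every $a<a_*$, hence on the open set $U:=\bigcup_{a<a_*}W_a$. Take $y=F_{a_*}(m)$. If $m\in\d\M$, then $y\in\caO$ by (iii), so $u$ vanishes near $y$. For $m$ interior, I will use local uniqueness at $\Sigma=F_{a_*}(\M)$, which is an embedded $\Con^1$ hypersurface.

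The obstacle is that $u$ is only known to vanish on $U$, which approaches $\Sigma$ from the side swept by the earlier surfaces $F_a(\M)$, $a<a_*$. Local uniqueness in Definition~\ref{def:unique_continuation} requires Cauchy data $u=\nabla u=0$ on $\Sigma\cap\Omega$. One must show that every point of $\Sigma$ (away from $\d\M$) lies in $\overline U$, so that by continuity of $u,\nabla u$ (traces, $H^2$) the Cauchy data vanish on $\Sigma$. This is delicate because a merely continuous deformation need not sweep a neighborhood monotonically.

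I would handle it as follows. Points $F_a(m)\to F_{a_*}(m)$ as $a\uparrow a_*$, and $u=\nabla u=0$ on each $F_a(\M)$ because these lie in the open set $U$. Hence $u$ and $\nabla u$ vanish at $F_{a_*}(m)$ by continuity. Here $H^2_{loc}$ in dimension $1+d$ need not give continuous $\nabla u$, so I will instead argue in the trace sense. Equivalently, I would apply local uniqueness to the function $\tilde u = u\,\mathbf 1_{\text{one side}}$, which is the standard Hörmander-type formulation. Then $u=0$ near each point of $F_{a_*}(\M)$, and compactness gives a \nhd of $F([0,a_*]\times\M)$ on which $u=0$, so $a_*\in A$.

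If that step is too fragile, I would strengthen the setup by smoothing. The approach is to approximate $F$ by a deformation that is locally a graph over $\Sigma$ near each point, using the tubular neighbourhood of the $\Con^1$ embedding $F_{a_*}$. A point of $\Sigma$ is then approached from one side by vanishing regions, and the usual unique continuation across a noncharacteristic surface from one side applies. Open plus closed plus nonempty gives $A=[0,1]$, completing the proof.
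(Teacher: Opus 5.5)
Your argument follows the paper's outline of John's theorem. It uses the same set $J=\{a: u=0 \text{ near } F([0,a]\times\M)\}$, gets $0\in J$ from $F_0(\M)\subset\caO$, proves openness by compactness, and at $\ba=\sup J$ shows that $u,\nabla u$ vanish at interior points of $F_{\ba}(\M)$ (boundary points lie in $\caO$) before applying local uniqueness. The paper settles the step you worry about simply by invoking continuity of traces at noncharacteristic surfaces, which is your ``trace sense'' route, so keep that one. Drop the two alternatives: the one-sided $u\,\mathbf 1_{\text{one side}}$ reformulation assumes $u$ already vanishes on a full one-sided collar of $F_{\ba}(\M)$, and the smoothing fallback fails because $u$ is only known to vanish near the original surfaces $F_a(\M)$, not near smoothed ones.
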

The proof is sketched in Section \ref{sec:JT}.  In John's paper
\cite{John:49} the operators are real analytic and the hypersurfaces
$F(\M)$ are noncharacteristic.  For them, uniqueness in the Cauchy
problem is assured by H\"olmgren's theorem.

For elliptic operators any hypersurface is noncharacteristic.  Since
$\caL$ is connected, one easily constructs noncharacteristic
deformations reaching any $y\in \caL$ with initial surface and
boundary in $\caO$.  This yields an alternate proof of unique
continuation for elliptic operators with analytic coefficients.  For
this and other examples, see Section 1.8 of \cite{JBR:PDE} where a
slightly less general notion of deformation is used.

\medskip
{\bf Unique continuation for nonanalytic second-order elliptic
  operators.}  Carleman estimates suffice to prove local uniqueness
for the Cauchy problem for second-order  scalar elliptic
operators and arbitrary $\Con^1$-hypersurfaces; see \eg \cite[Chapter
  8]{Hoermander:63} or \cite[Chapter 5]{LRLR:V1}.  Therefore the
preceding argument proves that {\it solutions of scalar second-order
  elliptic equations, $Z_\caO=\caL$ for any open $\caO$.}  Therefore
for any $y\in \caL$, $Z_{\{y\}}=\caL$.

\medskip
{\bf Finite speed for hyperbolic operators.}  Suppose that
$P(t,x,\d_{t},\d_x)$ is a second-order scalar real
operator on $\caL = \R^{1+d}_{t,x}$ whose smooth coefficients are constant
outside a compact subset of $\R^{1+d}_{t,x}$.  Suppose that $P$ is
strictly hyperbolic with $dt$ timelike.  This means that the principal
symbol $P_2$ satisfies for all $t,x$ and all nonzero covectors $\xi\in
T^*_{t,x}(\R^{1+d})$, the equation $P_2(t,x,\tau dt+\xi)=0$ has two
distinct real roots $\tau$.  Proposition \ref{prop:strict hyperbolic}
shows that multiplying $P$ by $-1$ if need be, the principal symbol is
a quadratic form of signature $-1,1\dots,1$ so defines a Lorentzian
structure on $\R_{t,x}^{1+d}$.  Elements of Lorentzian geometry are recalled
in Appendix~\ref{app:Lorentzian_geometry}.

For $P$, a  hypersurface $\Sigma$ is noncharacteristic 
if  $P_2(y,\eta)\neq 0$ for all $y \in \Sigma$ and $\eta\in
T^*_{y}(\R^{1+d})\setminus 0$ conormal to $\Sigma$.  Equivalently, its
Lorentzian normals have nonzero lengths. 

 A  hypersurface in a Lorentzian manifold is {\bf spacelike},
 {\bf characteristic}, or, {\bf noncharacteristic nonspacelike} when
 its Lorentzian normals have negative, zero, or, positive lengths
 respectively.  A spacelike or noncharacteristic nonspacelike
 hypersurface remains the same under small perturbations on compact
 subsets.  Therefore, a continuous deformation of noncharacteristic
 hypersurfaces is either a continuous deformation of spacelike
 hypersurfaces or a continuous deformation of noncharateristic
 nonspacelike hypersurfaces.

Timelike vectors are tangent vectors with strictly negative length.
For $y = (t,x)\in \R^{1+d}$, the {\bf forward timelike cone}
$\Cone^+_y\subset T_{y}(\R^{1+d})$ is the component of
$\d/\d t$ in the set of timelike tangent vectors at $y$.
A {\bf forward timelike  curve} is Lipschitz and its
tangent at all points $y$ where it is differentiable  belongs to
$\Cone^+_y$. A {\bf Forward causal curves} is Lipschitz and its
tangent at all points $y$ where it is differentiable   belongs to
$\ovl{\Cone^+_y}\setminus 0$.


Suppose that $\omega$ is an open subset of $\RR^d$
and consider solutions $u$ whose Cauchy data at $t=0$ vanish 
on $\omega$.
A classical energy argument proves
local uniqueness in the Cauchy problem
at all spacelike surfaces.  
Therefore,  for each $x\in \omega$, there is an 
open subset $\Omega_x\subset\RR^{1+d}$ with $(0,x)\in \Omega_x$ and $u|_{\Omega_x}=0$.  
Then $u=0$ on the open subset   $\Omega:=\cup_{x\in \omega} \Omega_x$
of  $\RR^{1+d}$ that contains 
$\{0\}\times\omega$.   
The problem of describing accurately the finite speed of propagtion is the same
as finding large sets of determination of $\Omega$.  Equivalently finding large
subsets of $Z_\Omega=Z_\omega$.

For $y=(t,x)$, {\bf the future} $\Future (y)$ is the union of
forward timelike curves starting at $y$ and {\bf the closed future}
$\ovl{\Future (y)} $ is the union of forward causal curves starting
at $y$.  {\bf The past $\Past(y)$} is the union of
backward timelike curves from $y$ and {\bf the closed past
  $\ovl{\Past(y)}$} is the union of backward causal curves from $y$.
A sharp finite speed result for the equation $Pu=0$ that
takes account of both the dependence of speeds on location and
direction asserts that solutions of $Pu=0$ on $\R^{1+d}$ satisfy,
\begin{equation}
  \label{eq:sharp}
  \supp u  \subset  \mathop{\cup}_{x \notin \omega}
 \ovl{\Future(0,x)}\, \cup\, \ovl{\Past(0,x)}.
\end{equation}
\cite{JMR:05} showed that
{\it John's Theorem applied to deformations of spacelike surfaces yields \eqref{eq:sharp}}.  This sharp finite speed
is equivalent to the lower bound
\begin{align*}
  Z_\omega = Z_\Omega \supset \R^{1+d}  \setminus
  \mathop{\cup}_{x \notin \omega}
 \ovl{\Future(0,x)}\, \cup\, \ovl{\Past(0,x)}.
\end{align*}

{\bf Noncharacteristic nonspacelike deformations and double cones.}  
   This  paper investigates propagation
of zeros for second-order scalar strictly hyperbolic operators 
given by John's theorem with  continuous deformations of 
noncharacteristic nonspacelike hypersurfaces.  
The Timelike Homotopy Theorem stated below gives a convenient  description
of the points  that are reachable by such
deformations.

The study is motivated
by control theory.  A classical duality argument shows that if  observations on an open set $\caO$
suffice to uniquely determine solutions, there is approximate controlability
with controls supported in $\caO$.
The Timelike Homotopy Theorem
is a tool for proving  observability, hence approximate controlability.
\begin{definition}
  \label{def: reachable point2}
  Define $\cad^1 \O$ to be the set of points reachable from $\caO$ by
  noncharacteristic nonspacelike deformations with initial surfaces and
  boundaries contained in $\caO$.  Proposition \ref{prop: d1O open
    set} proves that $\cad^1(\caO)$ is open.  Define inductively for
  $n\ge 2$, $\cad^{n} \O = \cad^1 (\cad^{n-1} \O)$, and,
  $\cad^{\infty}\O= \cup_{n \ge 1} \cad^{n}\O$.  $\cad^{\infty}\O$ is
  called {\bf the set of points reachable from $\bld{\O}$ by iterated
    noncharacteristic nonspacelike deformations}.
\end{definition}
\begin{remark}
  \label{remark: d 1 dinfty O}
  Then $\cad^1(\cad^{\infty}\O)= \cad^{\infty}\O$ so 
 $ \cad^{\infty}(\cad^{\infty}\O)=\cad^{\infty}\O$.
  Indeed, if $y \in \cad^1(\cad^{\infty}\O)$ then there exists a function
  $F$ and $\M$ as in Definition~\ref{def: reachable point} such that 
  $y \in F([0,1]\times \M)$ and $K = \M_0\cup F
  \big([0,1]\times \d \M\big) \subset \cad^{\infty}\O$. Since $K$ is
  compact then $K \subset \cad^{n}\O$ for some $n\in \N$.
  Thus, $y \in  \cad^1(\cad^{n}\O) = \cad^{n+1}\O$. 
\end{remark}
\begin{definition} 
\label{def: reachable point - homotopy}
A {\bf homotopy of timelike curves} is with $a<b$, a map
\begin{align*}
  \XX \in \Con^0\big([0,1];
\Con^1([a,b]; \caL) \big),
\end{align*}
 with $[a,b]\ni s \mapsto \XX(\sigma, s)$ timelike for all
 $\sigma\in[0,1]$.

For $\sigma \in [0,1]$ define $\XX_\sigma := \XX(\{ \sigma\} \times
[a,b])$.  The set $\XX_0$ is called the {\bf initial curve}.  The set
$\Ends{\XX} := \XX \big([0,1]\times \{a,b\}\big)$ is called the {\bf
  boundary}.
  
  For $y\in \XX([a,b]\times[0,1] )$ the homotopy {\bf reaches} $y$.
  Define $\cah^1\caO$ to the be set of {\bf points reachable from
    $\bld{\caO}$ by a timelike homotopy} with initial curve and boundary in $\caO$.
\end{definition}
\begin{figure}
  \begin{center}
    \resizebox{3cm}{!}{
\begin{picture}(0,0)%
\includegraphics{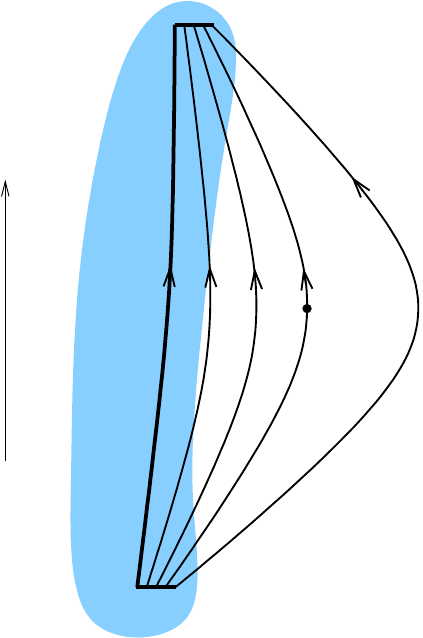}%
\end{picture}%
\setlength{\unitlength}{3947sp}%
\begin{picture}(3368,5093)(928,-4365)
\put(2596,-4003){\makebox(0,0)[lb]{\smash{\fontsize{15}{18}\usefont{T1}{ptm}{m}{n}{\color[rgb]{0,0,0}$s=0$}%
}}}
\put(3425,-1796){\makebox(0,0)[lb]{\smash{\fontsize{15}{18}\usefont{T1}{ptm}{m}{n}{\color[rgb]{0,0,0}$\XX(\sigma, s)$}%
}}}
\put(2873,512){\makebox(0,0)[lb]{\smash{\fontsize{15}{18}\usefont{T1}{ptm}{m}{n}{\color[rgb]{0,0,0}$s=1$}%
}}}
\put(1082,-779){\makebox(0,0)[lb]{\smash{\fontsize{12}{14.4}\usefont{T1}{ptm}{m}{n}{\color[rgb]{0,0,0}$t$}%
}}}
\put(2203,-1882){\makebox(0,0)[rb]{\smash{\fontsize{15}{18}\usefont{T1}{ptm}{m}{n}{\color[rgb]{0,0,0}$\sigma=0$}%
}}}
\put(4123,-2547){\makebox(0,0)[lb]{\smash{\fontsize{15}{18}\usefont{T1}{ptm}{m}{n}{\color[rgb]{0,0,0}$\sigma=1$}%
}}}
\put(2151,-4178){\makebox(0,0)[b]{\smash{\fontsize{15}{18}\usefont{T1}{ptm}{m}{n}{\color[rgb]{0,0,0}$\End \XX$}%
}}}
\put(1998,-694){\makebox(0,0)[b]{\smash{\fontsize{15}{18}\usefont{T1}{ptm}{m}{n}{\color[rgb]{0,0,0}$\O$}%
}}}
\put(2147,-2628){\makebox(0,0)[rb]{\smash{\fontsize{15}{18}\usefont{T1}{ptm}{m}{n}{\color[rgb]{0,0,0}$\XX_0$}%
}}}
\end{picture}%
    }
    \caption{A homotopy of
timelike curves as in Definition~\ref{def: reachable point -
  homotopy}.  $\caO$ is blue.}
  \label{fig: homotopy} 
  \end{center}
\end{figure}
\begin{definition}
  \label{def: reachable point - homotopy2}
  Proposition \ref{prop: h1O open set} proves that $\cah^1\caO$ is
  open.  Define inductively for $n\ge 2$, $\cah^{n} \O = \cah^1
  (\cah^{n-1} \O)$, and, $\cah^{\infty}\O= \cup_{n \ge 1} \cah^{n}\O$.
  $\cah^{\infty}\O$ is called the {\bf set of points reachable from
    $\bld{\caO}$ by iterated timelike homotopies}.
  \end{definition}
\begin{remark}
  \label{remark: h 1 hinfty O}
  As in Remark
  \ref{remark: d 1 dinfty O},
  $\cah^1(\cah^{\infty}\O)= \cah^{\infty}(\cah^{\infty}\O)=\cah^{\infty}\O$. 
\end{remark}
The next theorem, from Section~\ref{sec:Timelike homotopies}, is a
result in Lorentzian geometry.
\begin{theorem}[\bf Timelike Homotopy Theorem]
  \label{theorem:homotopy theorem}
  If  $\O$ is an open subset of a Lorentzian manifold $\caL$, then $\cah^\infty \O = \cad^\infty \O$.
\end{theorem}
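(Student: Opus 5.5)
The plan is to prove the two inclusions $\cah^1\caO\subset\cad^\infty\caO$ and $\cad^1\caO\subset\cah^\infty\caO$ for every open $\caO$, and then iterate. Indeed, applying the first inclusion to $\cad^{n-1}\caO$ and using Remark~\ref{remark: d 1 dinfty O} gives $\cah^n\caO\subset\cad^\infty\caO$ by induction, since $\cah^1(\cad^\infty\caO)\subset\cad^\infty(\cad^\infty\caO)=\cad^\infty\caO$. Symmetrically, the second inclusion together with Remark~\ref{remark: h 1 hinfty O} gives $\cad^n\caO\subset\cah^\infty\caO$. Both inclusions rest on the elementary pointwise fact that \emph{a hypersurface is noncharacteristic nonspacelike at $y$ if and only if its tangent hyperplane at $y$ contains a timelike vector}. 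This holds because the Lorentzian normal is spacelike exactly when its orthogonal hyperplane meets the open timelike cone.

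\emph{From homotopies to deformations.} Let $\XX$ be a timelike homotopy with $\XX_0\cup\Ends\XX\subset\caO$ and let $y=\XX(\sigma_0,s_0)$. If $d=1$, take $\M=[a,b]$ and $F_\sigma=\XX(\sigma,\cdot)$ directly. If $d\ge 2$, thicken each curve into a thin tube $\M=[a,b]\times S^{d-1}$, setting $F_\sigma(s,\theta)=\exp_{\XX(\sigma,s)}(\eps\, N_\sigma(s)\theta)$, where $N_\sigma(s)$ is a continuous frame of the Lorentz-orthogonal complement of $\d_s\XX$. The boundary $\{a,b\}\times S^{d-1}$ consists of small spheres around the endpoints, so it lies in $\caO$ for $\eps$ small by compactness of $\Ends\XX$; the initial tube lies in $\caO$ by compactness of $\XX_0$. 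For $\eps$ small the tangent space of the tube contains a vector close to $\d_s\XX$, hence a timelike one, so the deformation is noncharacteristic nonspacelike. Continuity of $F$ follows from $\XX\in\Con^0([0,1];\Con^1)$. To reach $y$ itself rather than only points at distance $\eps$, apply the construction to the translated homotopy $\XX-\eps N\theta_0$. Its initial curve and boundary still lie in $\caO$ for $\eps$ small, and its tube passes through $y$. The embedding requirement in (i) of Definition~\ref{def: reachable point} needs the curves $\XX_\sigma$ to be injective. When $1+d\ge3$ I will first replace $\XX$ by a generic small $\Con^0([0,1];\Con^1)$-perturbation; one must check that the curves can be chosen embedded uniformly in $\sigma$, or else subdivide $[0,1]$ and use the iteration. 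For $d=1$ I will subdivide $s$-intervals locally.

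\emph{From deformations to homotopies.} Let $(\M,F)$ be a noncharacteristic nonspacelike deformation with data in $\caO$. On each $F_a(\M)$ the timelike vectors tangent to the surface form a nonempty open convex cone field, depending continuously on $a$. A partition of unity on $[0,1]\times\M$ therefore produces a continuous, and after smoothing in the $\M$ variables $\Con^1$, vector field $V_a$ tangent to $F_a(\M)$ and timelike. Let $J=\{a\in[0,1]: F_a(\M)\subset\cah^\infty\caO\}$. I will show $J$ is open and closed in $[0,1]$ and contains $0$, which gives $J=[0,1]$. For a point $z=F_{a_0}(\M)$, follow the maximal integral curve of $V_a$ through the corresponding parameter point in $\M$, forward and backward, until it hits $\d\M$. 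Those endpoints lie in $F([0,1]\times\d\M)\subset\caO$, which is where the boundary hypothesis enters. For $a$ near $a_1\in J$ (or, for closedness, $a_1<a_0$ near $a_0$), the family $\sigma\mapsto$ (integral curve in $F_\sigma(\M)$) is then a timelike homotopy. Its boundary lies in $\caO$, and its initial curve lies in $F_{a_1}(\M)\subset\cah^\infty\caO$. Hence $z\in\cah^1(\cah^\infty\caO)=\cah^\infty\caO$. Continuity of the homotopy in $\Con^1$ follows from continuous dependence of flows on the vector field, provided the exit times are transversal, or can be made so by perturbing $V$ near $\d\M$ to be transverse to it.

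\emph{Main obstacle.} The hard step is guaranteeing that integral curves of $V_a$ actually exit $\M$ in both directions. In a general Lorentzian manifold a timelike curve could be trapped in the compact surface $F_a(\M)$, for example by accumulating on a closed timelike curve. My first attempt is to exploit the freedom in choosing $V_a$ inside the cone field: modify $V_a$ so that some function on $\M$ increases strictly along it (locally a time function restricted to the surface), which excludes trapping. Where no global such function exists, I will cut $\M$ into finitely many small pieces on which a local time function exists. On each piece trapping is impossible, and the curves exit through the piece boundary; then I handle the pieces successively via the iteration $\cah^1(\cah^\infty\caO)=\cah^\infty\caO$, ordering them so that each piece's new boundary is already known to lie in the reached set.
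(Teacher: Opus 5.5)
The reduction to the two inclusions $\cah^1\caO\subset\cad^\infty\caO$ and $\cad^1\caO\subset\cah^\infty\caO$, followed by the induction, is the paper's Step~3. The pointwise criterion (the tangent hyperplane contains a timelike vector) is also the right tool. The gap is that both of your constructions are global: tubes around entire curves, and integral curves of $V_a$ running from $\d\M$ to $\d\M$. In an arbitrary Lorentzian manifold these objects need not exist.

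\textbf{The inclusion $\cad^1\caO\subset\cah^\infty\caO$.} Here this is fatal, exactly as you feared, and your repairs do not close it. Take the following example.
\begin{itemize}
\item $\M=S^1_t\times[-1,1]_x$ with a Lorentz metric whose null directions are $\d_t+(f(x)\pm\eta)\d_x$, where $f(x)=x(x^2-\tfrac14)$ and $0<\eta<\tfrac{3}{64}$.
\item $\caL=\M\times\R_y$ with metric $g_\M+dy^2$, and $F_a(m)=(m,a)$.
\item $\caO$ a small neighborhood of $(\M\times\{0\})\cup(\d\M\times[0,1])$.
\end{itemize}
Every future timelike curve in $F_a(\M)$ starting at a point with $x=\tfrac14$ stays in $|x|\le\tfrac14$. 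Every past one stays in $\tfrac14\le x\le\tfrac34$. So no choice of $V_a$ gives a curve through that point with both ends on $\d\M$.

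Your first repair fails here. No function on $\M$ can have derivative bounded below by a positive constant along these curves, since it is bounded on the compact $\M$. Mere strict monotonicity does not prevent trapping.

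Your second repair, cutting $\M$ into pieces, fails too. Your homotopy moves with the surfaces $F_\sigma(\M)$. So a curve ending on an interior piece boundary has its endpoints on $F_\sigma(\M)$ at exactly the levels $\sigma$ that are not yet known to be reached.

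\textbf{The missing idea.} This is what the paper does: decouple the homotopy parameter from the deformation parameter, and place both the initial curve \emph{and} the boundary of the new homotopy in the previously reached surface rather than in $\caO$.
\begin{itemize}
\item In your closedness step, take $a_n\nearrow\ba$ and $\by\in F_{\ba}(\M)$ interior. Compactness gives $F_{a_n}(\M)\subset\cah^\ell\caO$ for some $\ell$.
\item In local coordinates, fix a transversal through $\by$ and let $\by+\bfu$ be its intersection with $F_{a_n}(\M)$.
\item Take a timelike arc $Z$ in $F_{a_n}(\M)$ through $\by+\bfu$, of parameter length of order $\eps^{1/2}$. Push it by $-\eps\sigma\chi(s)\bfu/|\bfu|$, with $\chi\in\Cinfc(]-1,1[)$ and $\chi(0)=1$.
\item For $\eps$ small, and then $n$ large so that $|\bfu|\le\eps$, the curves stay timelike. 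Their endpoints never leave $F_{a_n}(\M)$, and the push passes through $\by$. Hence $\by\in\cah^{\ell+1}\caO$.
\end{itemize}
Nothing has to exit through $\d\M$.

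\textbf{The inclusion $\cah^1\caO\subset\cad^\infty\caO$.} The same localization fixes this one, whose embeddedness problem is also real.
\begin{itemize}
\item In $1+d=3$, self-crossings in a one-parameter family of curves are stable under small perturbations, so a generic perturbation does not remove them.
\item Subdividing $[0,1]$ does not remove them either.
\item Subdividing in $s$ loses the condition that the ends lie in $\caO$.
\end{itemize}
Instead, run the continuity argument in $\sigma$. Thicken only a short, hence embedded, arc of $\XX_{\sigma_n}$ into a thin cylinder. Its initial surface and end-caps then lie in the open set $\cad^\ell\caO\supset\XX_{\sigma_n}$, and a radial bulge of the cylinder reaches $\by$.

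Separately, a frame Lorentz-orthogonal to $\d_s\XX$ is only continuous in $s$, so your $F_\sigma$ would not be $\Con^1$. Use a smooth transverse frame instead.

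When $\caL=\R^{1+d}$ with $dt$ timelike, your global constructions would essentially go through. Timelike curves are then injective, and $t$ grows at a rate bounded below along $V_a$ on compact sets. The theorem, however, is stated for an arbitrary Lorentzian $\caL$.
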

John's theorem then yields the following.
\begin{theorem}
  \label{theorem:ZO homotopy}
  Suppose that $\caL$ is manifold with Lorentzian structure defined by
  a second-order real scalar strictly hyperbolic operator $P$.  In
  addition suppose that there is uniqueness in the Cauchy problem for
  $P$ at every noncharacteristic nonspacelike hypersurface.  Then, for
  any open $\caO\subset \caL$, $\cah^{(\infty)}\O \subset Z_\O$.
\end{theorem}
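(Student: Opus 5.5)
By the Timelike Homotopy Theorem (Theorem~\ref{theorem:homotopy theorem}), $\cah^{\infty}\O=\cad^{\infty}\O$. It therefore suffices to prove $\cad^\infty\O\subset Z_\O$, and I will do this by induction on $n$, showing $\cad^n\O\subset Z_\O$ for every $n\ge1$.

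\emph{Base case.} Take $y\in\cad^1\O$. By definition there is a continuous deformation $(\M,F)$ of noncharacteristic nonspacelike hypersurfaces with $F_0(\M)\subset\O$ and $F([0,1]\times\d\M)\subset\O$, and $y\in F([0,1]\times\M)$. By hypothesis each $F_a(\M)$ has local uniqueness in the Cauchy problem, in the sense of Definition~\ref{def:unique_continuation}. John's Global H\"olmgren Theorem~\ref{theorem:John global} then gives $F([0,1]\times\M)\subset Z_\O$, so $y\in Z_\O$. Hence $\cad^1\O\subset Z_\O$.

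One detail needs care here. The hypersurfaces $F_a(\M)$ are compact manifolds with boundary, while Definition~\ref{def:unique_continuation} is stated for embedded hypersurfaces. I apply it to the interior of $F_a(\M)$, or to a slightly extended open hypersurface. At boundary points nothing is required: they lie in the open set $\O$, where $u$ already vanishes.

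\emph{Inductive step.} Assume $\cad^{n-1}\O\subset Z_\O$. The key observation is that $Z_{Z_\O}=Z_\O$. One inclusion is immediate, since $\O\subset Z_\O$ gives $Z_\O\subset Z_{Z_\O}$. For the other, let $u\in H^2_{loc}$ satisfy $Pu=0$ with $u=0$ on $\O$. Then $u=0$ on the open set $Z_\O$, so $u=0$ on $Z_{Z_\O}$. Thus $Z_{Z_\O}$ is a domain of determinacy of $\O$, and therefore $Z_{Z_\O}\subset Z_\O$.

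Monotonicity of $Z$ and the base case, applied to the open set $\cad^{n-1}\O$ (open by Proposition~\ref{prop: d1O open set}), give
\begin{align*}
\cad^n\O=\cad^1(\cad^{n-1}\O)\subset Z_{\cad^{n-1}\O}\subset Z_{Z_\O}=Z_\O .
\end{align*}
Taking the union over $n$ yields $\cad^\infty\O\subset Z_\O$, and hence $\cah^{\infty}\O\subset Z_\O$.

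The substantive content is entirely in the Timelike Homotopy Theorem and in John's theorem. The only step requiring attention is checking that the local uniqueness hypothesis is applicable along every $F_a(\M)$, including near $\d\M$, which the argument above handles by using that $\O$ is open.
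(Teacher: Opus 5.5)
Your proposal is correct and follows the paper's route. The Timelike Homotopy Theorem reduces the claim to $\cad^\infty\O\subset Z_\O$, which John's theorem supplies. Your induction via $Z_{Z_\O}=Z_\O$ (using that each $\cad^{n}\O$ is open) spells out the iteration the paper leaves implicit when it says ``John's theorem then yields'' the result.
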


The next  theorem, from the  seminal papers of
   \cite{Robbiano:91,Tataru:95},  refined in
   \cite{Hoermander:97,RZ:98,Tataru:99, LL:19},
gives hypotheses 
 guaranteeing   unique continuation at noncharacteristic nonspacelike hypersurfaces.
\begin{theorem}[Robbiano-Tataru]
   \label{thm:Robbiano-tataru}
   If 
   $P$ is a
   scalar real second-order strictly hyperbolic operator and
   $\Omega$ is an open set  on which there are coordinates $(t,x)$ so
   that $dt$ is timelike and the coefficients of  $P$ are  real analytic
   in $t$, then on $\Omega$ there is 
    uniqueness in the Cauchy problem  at all noncharacteristic nonspacelike
    $\Con^1$ hypersurfaces. 
 \end{theorem}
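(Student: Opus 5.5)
The proof is local, so I would fix a point $y_0$ of a noncharacteristic nonspacelike $\Con^1$ hypersurface $\Sigma$ and work in the coordinates $(t,x)$ of the statement, near $y_0$. The approach is the Tataru strategy: a Carleman estimate in which the large parameter acts in the $x$ variables as usual, but in the analytic variable $t$ only through a Gaussian smoothing. First I would write $\Sigma$ locally as $\{\psi=0\}$ with $u$ vanishing on $\{\psi<0\}$. Then I would replace $\psi$ by a weight $\phi$ that is slightly convexified (for instance $\phi=\psi-\epsilon|y-y_0|^2$) and whose level sets through $y_0$ stay noncharacteristic. I would then aim for an estimate of the form
\begin{align*}
  \Norm{e^{-\frac{\epsilon}{2\tau}|D_t|^2} e^{\tau\phi}w}{L^2}
  \le C\tau^{-1/2}\Norm{e^{-\frac{\epsilon}{2\tau}|D_t|^2} e^{\tau\phi}Pw}{L^2}
  + C e^{-\delta\tau}\Norm{e^{\tau\phi}w}{L^2},
\end{align*}
for $w$ compactly supported near $y_0$.

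The key structural point is that, with this partial Gaussian weight, Tataru's sub-ellipticity (pseudoconvexity) condition only has to be checked on the characteristic set of the conjugated symbol with the dual variable $\tau_t$ of $t$ set to zero. Strict hyperbolicity with $dt$ timelike is exactly what makes this work. At $\tau_t=0$ the principal symbol $P_2(t,x,\xi)$ is a definite quadratic form in $\xi$, i.e.\ elliptic in the spatial directions. Hence the conjugated operator $P_2(y,\xi+i\tau d_x\phi)$ has no real characteristic points, except at places where the conormal of $\Sigma$ would be characteristic, and that is excluded by assumption. So I expect partial pseudoconvexity to hold for every noncharacteristic $\Sigma$, whether spacelike or not.

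To prove the estimate, I would conjugate $P$ by the heat-type operator $e^{-\frac{\epsilon}{2\tau}|D_t|^2}$. Analyticity of the coefficients in $t$ is used exactly here. Each coefficient extends holomorphically in a complex strip, so commuting it past the Gaussian (equivalently, an FBI transform in $t$) produces only errors of size $e^{-\delta\tau}$ after cutoffs. After that, a standard positive-commutator (G\aa rding) argument gives the estimate for the smoothed, weighted function. I expect this step to be the main obstacle. The weight and the Gaussian parameters have to be balanced so that the cutoff and commutator errors are exponentially small while the gain $\tau^{-1/2}$ survives. I would treat it by splitting into the regime $|\tau_t|\lesssim\tau$, handled by the pseudoconvexity argument above, and the regime $|\tau_t|\gg\tau$, where the Gaussian factor already makes everything exponentially small.

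Finally I would deduce uniqueness. Apply the estimate to $\chi u$ with $\chi$ a cutoff. The term $P(\chi u)$ is supported where $\phi<-c$, because $u$ vanishes for $\psi<0$ and $\phi$ is strictly negative on the transition region of $\chi$. Letting $\tau\to\infty$ then shows that the Gaussian-smoothed function $e^{-\frac{\epsilon}{2\tau}|D_t|^2}(\chi u)$ vanishes in a small region $\{\phi>-c'\}$. The remaining step is to convert this into $u=0$ near $y_0$, and I would do it with Tataru's ``$t$-analytic Holmgren'' argument: the smoothed functions are entire in $t$, so they vanish on a full $t$-interval, and letting $\epsilon\to0$ gives the result. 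This yields local uniqueness at each point, which is the form of hypothesis needed by Theorem~\ref{theorem:John global}.
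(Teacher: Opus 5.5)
The statement is about $\Con^1$ hypersurfaces, and that is exactly the only part the paper proves. The smooth case is quoted from Robbiano, Tataru, H\"ormander, and Robbiano--Zuily. Proposition~\ref{prop: UCP from Cinf to C1} in Appendix~\ref{sec:hypersurface Robbiano-Tataru} then upgrades it to $\Con^1$. Your sketch never deals with this, and it breaks at that point.

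\textbf{The $\Con^1$ gap.}
\begin{itemize}
\item Your weight $\phi=\psi-\epsilon|y-y_0|^2$ is only $\Con^1$, because $\psi$ defines a $\Con^1$ surface. The conjugated operator and the subellipticity bracket $\{\Re p_\phi,\Im p_\phi\}$, with $p_\phi(y,\xi,\tau)=p(y,\xi+i\tau d\phi)$, both involve $\nabla^2\phi$. So the positive-commutator/G\aa rding step has no meaning for your weight.
\item You cannot simply swap in a smooth weight tangent to $\Sigma$ at $y_0$ whose superlevel set meets $\supp u$ only at $y_0$. For $\Sigma=\{s=|z|^{3/2}\}$, no $\Con^2$ function does that.
\item Your sign is also reversed. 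With $u=0$ on $\{\psi<0\}$, the weight must decrease into $\{\psi>0\}$; otherwise it is not negative on $\supp u\cap\supp\nabla\chi$.
\end{itemize}

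\textbf{The paper's reduction.} Write $\Sigma=\{s=f(z)\}$ with $u=0$ on $\{s>f\}$. Take the smooth surfaces $\{s=\chi_\delta*f(z)+a+b|z|^2\}$ for $|a|\le a_1$.
\begin{itemize}
\item Being noncharacteristic nonspacelike is an open condition in the $\Con^1$ topology (it involves only first derivatives), so these surfaces inherit it.
\item For $a=a_1$, and on $|z|=R_1$ for all $|a|\le a_1$, the surfaces lie in $\{s>f\}$.
\item At $a=-a_1$ the surface passes below $y_0$.
\end{itemize}
John's continuation argument combined with uniqueness at smooth surfaces then gives $u=0$ near $y_0$. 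Your proposal needs this step, or an equivalent one.

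\textbf{Errors in the Carleman sketch itself, even for smooth $\Sigma$.}
\begin{itemize}
\item At $\xi_t=0$ the conjugated symbol is $p(y,\xi)-\tau^2p(y,d\phi)+2i\tau g^*_y(\xi,d\phi)$. For nonspacelike $\Sigma$ one has $p(y,d\phi)>0$, so this symbol does vanish, on the set $p(y,\xi)=\tau^2p(y,d\phi)$, $g^*_y(\xi,d\phi)=0$, which is nonempty for $d\ge2$. Your claim of no characteristic points is true only in the spacelike case.
\item What does hold is that $p$ itself has no real zeros on $\{\xi_t=0\}\setminus0$. This makes the partial condition behave as in the elliptic case, so partial pseudoconvexity has to be produced by a genuine convexification using $p(y,d\phi)\neq0$; it does not come for free.
\item That spatial ellipticity needs $\partial_t$ timelike. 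This holds under Hypothesis~\ref{hyp: metric}, but it does not follow from $dt$ timelike alone: in $\Minkowski^{1+2}$ with $x_1'=x_1-vt$ and $|v|>1$, $dt$ is timelike but $p$ is indefinite at $\tau'=0$.
\item Your endgame does not follow. The estimate only gives exponential smallness of the $\tau$-dependent quantity $e^{-\frac{\epsilon}{2\tau}|D_t|^2}e^{\tau\phi}\chi u$, not vanishing of any fixed function, and the smoothing tends to the identity as $\tau\to\infty$. Tataru instead obtains exponential decay of this FBI-type transform for real $t$, combines it with growth bounds for complex $t$, and concludes with a Phragm\'en--Lindel\"of-type lemma; there is no limit $\epsilon\to0$.
\end{itemize}
These pieces are supplied by the cited literature, which is why the paper simply quotes the smooth case and proves only the $\Con^1$ extension.
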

\begin{open*}
  Examples in \cite{Alinhac:83} and \cite[Theorem 1.4 and Remark
    1.5]{zuilly:83} show that uniqueness in the Cauchy problem at
  noncharacteristic nonspacelike hypersurfaces can fail for smooth
  complex lower-order terms. The case of smooth {\it real} coefficients is a
  challenging open problem.
\end{open*}
\begin{remark}
  \label{remark:C1vsCinf}
  Theorem~\ref{thm:Robbiano-tataru} is stated in the literature for more regular hypersurfaces. Appendix~\ref{sec:hypersurface Robbiano-Tataru} shows that $\Con^1$ regularity suffices. 
\end{remark}

\bigskip
\begin{definition}
  \label{def: dcone intro}
  If $\gamma$ is a forward  timelike open arc joining $y_1$ to
  $y_2$, {\bf the double cone} $\DD(\gamma):= \Future(y_1)\cap \Past(y_2)$.
\end{definition}
Figure~\ref{fig:dcones-Minkowski} provides examples of  \dcones in the Minkowski space. 

\begin{example}
  \label{ex:Minkowski-DD}
  Consider  d'Alembert's wave equation
$\Box u=0$.  The corresponding Lorentzian manifold is the Minkowski
  space $\Minkowski^{1+3}$.  Consider different types of open
  timelike arcs $\gamma$ and open sets $\caO$ such that $\gamma \subset \caO \subset \DD(\gamma)$.
 
 {\bf 1.}   For the timelike arc $\gamma=\{(t,x): x=0, \ -T< t<T \}$,
it is easy to see that 
$\cad^1\caO=\DD(\gamma)$. John's theorem implies that
$Z_\caO\supset \DD(\gamma)$.

For any point $\uy=
(\ut,\ux)\notin \ovl{\DD(\gamma)}$ with $\ut\ge 0$, the line through
$\uy$ with speed equal to $-\ux/\|\ux\|$ is a null geodesic.
Parameterize it as $t\mapsto (t,x(t)):=\zeta(t)$.  It lies outside $\ovl{\DD(\gamma)}$. The curve $\lambda(t):= (\zeta(t),
|\ux |,\ux)$ is a bicharacteristic for $P=\Box$ lying above
$\zeta$.  Choose Cauchy data localized near $\zeta(0)$ and away from
$\ovl{\DD(\gamma)} \cap \{t=0\}$ and such that $\lambda(0) \in \WF(u)$ for
$u$ the associated solution of $P u=0$. By sharp finite speed, $\supp
u \cap \DD(\gamma)=\emptyset$.  Since $\uy \in \singsupp u$
one has $\uy \in \supp u$, proving that $\uy \notin Z_{\caO}$.

A symmetrical argument treats $\ut\le 0$
yielding $\DD(\gamma)=\cad^1\caO=Z_\caO$ using that $Z_{\caO}$ is
open. Then, $Z_\gamma = \DD(\gamma)$.


 {\bf 2.} Suppose that $y_2\in \Future(y_1)$ and consider the open timelike
segment  $\gamma(s)=(1-s)y_1+sy_2$ with $0<s<1$. 
The Lorentz invariance of $\Box$ allows one to conjugate this problem
to the preceding one so 
$\DD(\gamma)=\cad^1(\caO)= Z_\caO = Z_\gamma$.  
 
{\bf 3.}
Suppose that $\gamma$ is a  tightly
wound helical forward timelike path with constant speed less than one
connecting $y_1$ to  $y_2$. 
In this case too
  $Z_\gamma =Z_\caO= \DD(\gamma)$.   In this case noncharacteristic nonspacelike deformations
  are not obvious.  In contrast, timelike homotopies 
reaching arbitrary points in $\DD(\gamma)$
are easy to construct.
A conjugation reduces to a helix about the stationary timelike arc.
A first homotopy decreasing the radius of the helix
leads to  a stationary straight timelike arc.  Then one has the geometry
of the first example and timelike homotopies reaching arbitrary points of $\DD(\gamma)$
are easy.  
\end{example}
\begin{figure}
  \begin{center}
    \subfigure[\label{fig:dcone-1}]
              {\resizebox{3.5cm}{!}{
\begin{picture}(0,0)%
\includegraphics{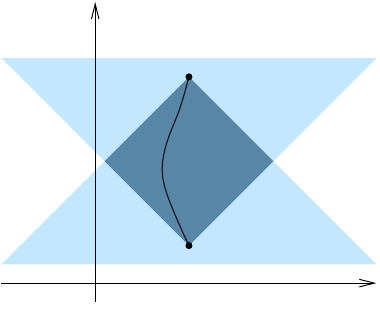}%
\end{picture}%
\setlength{\unitlength}{3947sp}%
\begin{picture}(3024,2481)(1489,-1780)
\put(2326,464){\makebox(0,0)[lb]{\smash{\fontsize{12}{14.4}\usefont{T1}{ptm}{m}{n}{\color[rgb]{0,0,0}$t$}%
}}}
\put(4276,-1711){\makebox(0,0)[lb]{\smash{\fontsize{12}{14.4}\usefont{T1}{ptm}{m}{n}{\color[rgb]{0,0,0}$x$}%
}}}
\put(3601,-1186){\makebox(0,0)[lb]{\smash{\fontsize{12}{14.4}\usefont{T1}{ptm}{m}{n}{\color[rgb]{0,0,0}$\Past(q)$}%
}}}
\put(2926, 89){\makebox(0,0)[rb]{\smash{\fontsize{12}{14.4}\usefont{T1}{ptm}{m}{n}{\color[rgb]{0,0,0}$q$}%
}}}
\put(2701,-811){\makebox(0,0)[rb]{\smash{\fontsize{12}{14.4}\usefont{T1}{ptm}{m}{n}{\color[rgb]{0,0,0}$\gamma$}%
}}}
\put(3001,-661){\makebox(0,0)[lb]{\smash{\fontsize{12}{14.4}\usefont{T1}{ptm}{m}{n}{\color[rgb]{0,0,0}$\DD(\gamma)$}%
}}}
\put(3526, 14){\makebox(0,0)[lb]{\smash{\fontsize{12}{14.4}\usefont{T1}{ptm}{m}{n}{\color[rgb]{0,0,0}$\Future(y)$}%
}}}
\put(3076,-1336){\makebox(0,0)[lb]{\smash{\fontsize{12}{14.4}\usefont{T1}{ptm}{m}{n}{\color[rgb]{0,0,0}$y$}%
}}}
\end{picture}%
              }}
    \qquad    \qquad 
    \subfigure[\label{fig:dcone-2}]
              {\resizebox{3.5cm}{!}{
\begin{picture}(0,0)%
\includegraphics{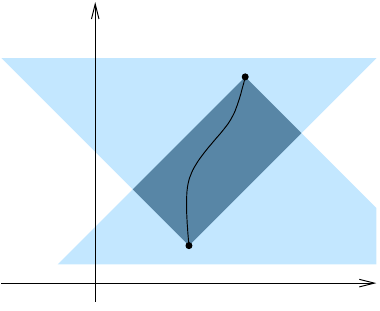}%
\end{picture}%
\setlength{\unitlength}{3947sp}%
\begin{picture}(3024,2481)(1489,-1780)
\put(2326,464){\makebox(0,0)[lb]{\smash{\fontsize{12}{14.4}\usefont{T1}{ptm}{m}{n}{\color[rgb]{0,0,0}$t$}%
}}}
\put(4276,-1711){\makebox(0,0)[lb]{\smash{\fontsize{12}{14.4}\usefont{T1}{ptm}{m}{n}{\color[rgb]{0,0,0}$x$}%
}}}
\put(3601,-1186){\makebox(0,0)[lb]{\smash{\fontsize{12}{14.4}\usefont{T1}{ptm}{m}{n}{\color[rgb]{0,0,0}$\Past(q)$}%
}}}
\put(3076,-1336){\makebox(0,0)[lb]{\smash{\fontsize{12}{14.4}\usefont{T1}{ptm}{m}{n}{\color[rgb]{0,0,0}$y$}%
}}}
\put(2926,-886){\makebox(0,0)[rb]{\smash{\fontsize{12}{14.4}\usefont{T1}{ptm}{m}{n}{\color[rgb]{0,0,0}$\gamma$}%
}}}
\put(2401,-136){\makebox(0,0)[lb]{\smash{\fontsize{12}{14.4}\usefont{T1}{ptm}{m}{n}{\color[rgb]{0,0,0}$\Future(y)$}%
}}}
\put(3346, 54){\makebox(0,0)[rb]{\smash{\fontsize{12}{14.4}\usefont{T1}{ptm}{m}{n}{\color[rgb]{0,0,0}$q$}%
}}}
\put(3348,-436){\makebox(0,0)[lb]{\smash{\fontsize{12}{14.4}\usefont{T1}{ptm}{m}{n}{\color[rgb]{0,0,0}$\DD(\gamma)$}%
}}}
\end{picture}%
}}
    \qquad    \qquad
              {\resizebox{3.5cm}{!}{
\begin{picture}(0,0)%
\includegraphics{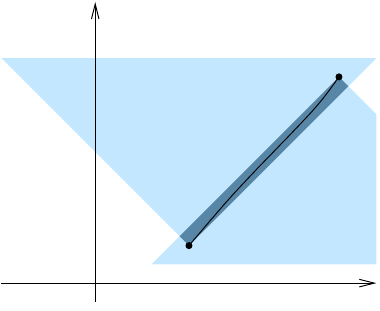}%
\end{picture}%
\setlength{\unitlength}{3947sp}%
\begin{picture}(3024,2481)(1489,-1780)
\put(2326,464){\makebox(0,0)[lb]{\smash{\fontsize{12}{14.4}\usefont{T1}{ptm}{m}{n}{\color[rgb]{0,0,0}$t$}%
}}}
\put(4276,-1711){\makebox(0,0)[lb]{\smash{\fontsize{12}{14.4}\usefont{T1}{ptm}{m}{n}{\color[rgb]{0,0,0}$x$}%
}}}
\put(3076,-1336){\makebox(0,0)[lb]{\smash{\fontsize{12}{14.4}\usefont{T1}{ptm}{m}{n}{\color[rgb]{0,0,0}$y$}%
}}}
\put(2401,-136){\makebox(0,0)[lb]{\smash{\fontsize{12}{14.4}\usefont{T1}{ptm}{m}{n}{\color[rgb]{0,0,0}$\Future(y)$}%
}}}
\put(4123, 61){\makebox(0,0)[rb]{\smash{\fontsize{12}{14.4}\usefont{T1}{ptm}{m}{n}{\color[rgb]{0,0,0}$q$}%
}}}
\put(3901,-526){\makebox(0,0)[lb]{\smash{\fontsize{12}{14.4}\usefont{T1}{ptm}{m}{n}{\color[rgb]{0,0,0}$\DD(\gamma)$}%
}}}
\put(4426,-1186){\makebox(0,0)[rb]{\smash{\fontsize{12}{14.4}\usefont{T1}{ptm}{m}{n}{\color[rgb]{0,0,0}$\Past(q)$}%
}}}
\put(3457,-611){\makebox(0,0)[rb]{\smash{\fontsize{12}{14.4}\usefont{T1}{ptm}{m}{n}{\color[rgb]{0,0,0}$\gamma$}%
}}}
\end{picture}%
              }}
    \caption{Three \dcones in the Minkowski space.}
  \label{fig:dcones-Minkowski}
  \end{center}
\end{figure}

\medskip
{\bf Double cones beyond  Minkowski: Doldrums.}  
Suppose that  $P$ is a 
second-order real strictly hyperbolic  operator and 
$\gamma$
is a smooth  open timelike arc.
Denote
\begin{align*}
  \cad^n \gamma = \mathop{\cap}_{\caO} \cad^n\caO,\quad n \in \N^* \cup \{+\infty\},
\end{align*}
with the intersection  over all open sets containing $\gamma$.

 Here are  ten reasonable sounding assertions involving the sets
$Z_\gamma$,  $\cad^n(\gamma)$, and, $\DD(\gamma)$.    
They are all true when $P$ is D'Alembert's operator.  They are all 
true for small double cones.

\smallskip
\begin{tabular}{lll}
{\bf 1.}  $Z_\gamma \,\subset\, \ovl{\DD(\gamma)}$.\\
{\bf 2.}    $\DD(\gamma)   \,\subset\, Z_\gamma$.\\
{\bf 3.}    $Z_\gamma\subset \cad^1(\gamma)$.  
& {$\bf 3'$.} $Z_\gamma \subset \cad^n(\gamma)$ for $n$ large.
& {$\bf 3^{\prime\prime}$.}   $Z_\gamma \subset  \cad^\infty(\gamma)$.\\
{\bf 4.}    $\DD(\gamma) =  \cad^1(\gamma)$.
& {$\bf 4'$.}     $\DD(\gamma) =  \cad^n(\gamma)$   for $n$ large.
&{$\bf 4^{\prime\prime}$.}    $\DD(\gamma) = \cad^\infty(\gamma)$.\\
{\bf 5.}    $\DD(\gamma)$ is a topological ball.
& {$\bf 5'$.}   There is a finite upper bound & \!\! \!\!\!on the genus of $\DD(\gamma)$.
\end{tabular}

\smallskip
{\it None of these assertions is true in general.} Even for operators
  $P$ with time independent coefficients.  The Timelike Homotopy
Theorem is used to  construct counterexamples as
well as in proving correctness in the small.  The two most striking 
facts are that $Z_\gamma$ can be striclty larger than $\DD$, and,
the difficulty of proving that small $\DD$ are exactly equal to $Z_\gamma$.

To show that the ten assertions are false we construct two examples.
 The examples involve regions where the speed of propagation is very
 small.  We call them {\bf doldrums} after the regions of calm in the
 intertropical Atlantic zone.  Sailors avoid
 doldrums to reach their destinations rapidly.   It is the same for waves.
   Figure~\ref{fig: doldrum-intro} sketches 
  wavefronts of a plane wave  in the presence of a
 doldrum.  The wavefront is sketched at successive times showing that
 they wrap around the doldrum in a sort of embrace that creates a void
 in the future sketched in Figure~\ref{fig: doldrum-intro}.
\begin{figure}
  \begin{center}
    \resizebox{4.5cm}{!}{
\begin{picture}(0,0)%
\includegraphics{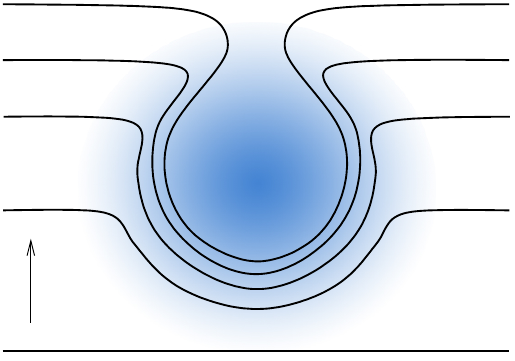}%
\end{picture}%
\setlength{\unitlength}{3947sp}%
\begin{picture}(4102,2871)(2229,-3136)
\end{picture}%
    }
    \caption{Sketch of a plane wave moving upward and interacting with a low speed region. The bluer the region, the smaller the wave speed.}
  \label{fig: doldrum-intro} 
  \end{center}
\end{figure}
 Paired with a similar behavior for the pasts, this yields a double
 cone as in Figure~\ref{fig: doldrums3b} explaining the failure of {\bf
   5}.  The second example, in
 Section~\ref{sec:Second_doldums_example}, is the one with the embrace
 by wavefronts.  It has points outside $\ovl{\DD(\gamma)}$ that lie in
 the \dd of $\gamma$ showing that {\bf 1} is false.  This astonished
 us.

 We know three places where timelike homotopies play a role resembling
what they do here.  In \cite{Rauch77,RS:81},
timelike homotopies for $\Box$ serve as sketetons for the bare
hands construction of noncharacteristic nonspacelike deformations.
Alvarez and Sanchez \cite{Alvarez-Sanchez:15} construct timelike
curves connecting a pair of points that are not homotopic by timelike
connections.  The obstruction is doldrums.

{\bf Outline.}
Section~\ref{sec:Hyperbolic_operators_Lorentz_unique_continuation}
proves results announced in the Introduction including the Timelike
Homotoy Theorem.  Section~\ref{sec:First-arrival_future_past} studies first arrival times
and the structure of the future and pasts.  It uses the elements of
Pseudo-Riemannian Geometry in the Lorentzian case.  Section~\ref{sec:dcones}
 studies relations of futures and \dcones in the
large.  The $\ovl{\Future(z)}$ is said to plunge into
$\Future(y)$ if any future causal curve starting at $z$ enters
$\Future(y)$.  In the Minkowski setting this can only happen if $z\in
\Future(y)$.  The final Section~\ref{sec:Double-cones_dds} gives the surprisingly
subtle proof that small double cones have all the ten reasonable
properties.  
 A sufficient condition ruling
out plunging is  crucial in studying small double
cones. 
That section has two doldrum examples.  The first gives a
double cone $\DD(q_1,q_2)$ and two timellike arcs $\gamma_1$ and
$\gamma_2$ connecting $q_1$ to $q_2$ so that there are points of
$\gamma_2$ that are not in $Z_{\gamma_1}$.  The second doldrum example
has the futures hugging the doldrum set.  There are three brief
appendices.  Appendix~\ref{app:Lorentzian_geometry} recalls basic
notions of Lorentzian geometry.  Appendix~\ref{sec:hypersurface
  Robbiano-Tataru} is devoted to showing that $\Con^1$ regularity of
hypersurfaces is sufficient for uniqueness in the Cauchy problem.  At
least $\Con^2$ is standard practice.  Appendix~\ref{app:speeding up
  and slowing down metrics} describes metrics parameterized by
$\delta>0$ so that for $\delta>1$ (resp. $\delta<1$) the speeds are
faster (resp. slower) than those of the original problem.

\section{John's theorem, Lorentzian structure, timelike homotopies}
\label{sec:Hyperbolic_operators_Lorentz_unique_continuation}

This section  contains  proofs of four results from  the Introduction.

\subsection{Hyperbolicity and Lorentz structure}
\label{sec:Hyperbolicity and Lorentz structure}
Suppose that $\caL$ has dimension $1+d$.  Denote by $y$ points in
$\caL$, by $(y,\eta)$ points in $T^*\caL$, and, by $p(y,\eta)$ the
principal symbol of $P$.

\begin{definition}  
$P$ is {\bf strictly hyperbolic with timelike codirection $\bld{\omega}$ }
when the following hold, 
\begin{enumerate}[align=left,labelwidth=0.8cm,labelsep=0cm,itemindent=0.7cm,
  leftmargin=.2cm,label=\bf{\roman*.}]
  \item  $\omega$
 is a smooth nonvanishing
one-form on $\caL$.
\item  $p(y,\omega(y)) \neq 0$  for all $y \in \caL$.
\item 
 If $(y,\eta) \in T^*\caL$ with $\eta \notin \R \omega(y)$, $p(y, \tau \omega(y)+ \eta)=0$ has two distinct real roots $\tau$.
\end{enumerate}
\end{definition}

\begin{remark}  Since $\caL$ is connected and the principal symbol is real valued,
$p(y,\omega(y))$ has one sign.
Multiplying $P$ by $-\sgn p(y,\omega(y))$ reduces to the case $p(y,\omega(y))<0$.
That is condition {\bf (i)} of the Proposition \ref{prop:strict hyperbolic}.
\end{remark}

Define $g^*_y$ to be the unique symmetric bilinear form on
$T^*_y\caL$ so that $g^*_y(\eta,\eta)= p(y,\eta)$. The next
proposition shows that $g^*_y$ is a Lorentzian
metric on $T^*_y\caL$.
\begin{proposition}
  \label{prop:strict hyperbolic}
  Suppose that $\VV$ is a vector space, $N\in \VV\setminus 0$, and
  $q(\eta)$ is a real quadratic form on $\VV$.  Denote by
  $\phi$ the unique symmetric bilinear form so that
  $\phi(\eta,\eta)=q(\eta)$.  Then the following are equivalent.
  \begin{enumerate}[align=left,labelwidth=0.8cm,labelsep=0cm,itemindent=0.8cm,
    leftmargin=0cm,label=\bf{(\roman*)}]
  \item \label{prop:strict hyperbolic1} $q(N)<0$ and the equation
    $q(\tau N + \eta) =0$ has two distinct real roots $\tau$ if $\eta\notin \R N$.
  \item \label{prop:strict hyperbolic2}  $\phi$ is a Lorentz metric on $\VV$ with $N$ timelike.
  \end{enumerate}
\end{proposition}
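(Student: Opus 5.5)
The plan is to reduce everything to the $\phi$-orthogonal complement of $N$ and a discriminant computation. Set $W:=\{\eta\in\VV:\ \phi(N,\eta)=0\}$. Whenever $q(N)\neq 0$ we have $\VV=\R N\oplus W$, since each $\eta$ decomposes as $\eta = \frac{\phi(N,\eta)}{q(N)}N + \big(\eta-\frac{\phi(N,\eta)}{q(N)}N\big)$. For $\eta\notin\R N$ the equation in $\tau$ reads
\begin{align*}
  q(\tau N+\eta)=\tau^2 q(N)+2\tau\,\phi(N,\eta)+q(\eta)=0 .
\end{align*}
When $q(N)\neq 0$ this is a genuine quadratic with discriminant $4\big(\phi(N,\eta)^2-q(N)q(\eta)\big)$. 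Writing $\eta=cN+w$ with $w\in W$, one has $\phi(N,\eta)=c\,q(N)$ and $q(\eta)=c^2q(N)+q(w)$, so the discriminant becomes $-4\,q(N)\,q(w)$. The condition $\eta\notin\R N$ is exactly $w\neq 0$.

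For \ref{prop:strict hyperbolic1}$\Rightarrow$\ref{prop:strict hyperbolic2}: since $q(N)<0$, having two distinct real roots for every $\eta\notin\R N$ means $-q(N)q(w)>0$ for all $w\in W\setminus 0$, so $q|_W$ is positive definite. Because $N$ and $W$ are $\phi$-orthogonal and $q(N)<0$, $\phi$ is nondegenerate of signature $(-,+,\dots,+)$. Hence $\phi$ is Lorentzian and $N$ is timelike.

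For \ref{prop:strict hyperbolic2}$\Rightarrow$\ref{prop:strict hyperbolic1}: $N$ timelike gives $q(N)<0$. The $\phi$-orthogonal complement $W$ of a timelike vector in a Lorentzian space is spacelike, because by Sylvester's law of inertia a negative definite subspace has dimension at most $1$, so $q|_W$ cannot have a negative or null direction. More concretely, if $w\in W\setminus 0$ had $q(w)\le 0$, then $\Span\{N,w\}$ would be negative semidefinite of dimension $2$; when $q(w)<0$ this contradicts the index being $1$, and when $q(w)=0$ it contradicts nondegeneracy combined with index $1$. The cleanest way to handle the null case is to use $W=N^\perp$: nondegeneracy gives $\phi=-1\oplus\phi|_W$ in a suitable basis, and index $1$ then forces $\phi|_W>0$. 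Therefore the discriminant $-4q(N)q(w)$ is positive for $w\neq0$, and there are two distinct real roots.

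The main obstacle is this inertia step in \ref{prop:strict hyperbolic2}$\Rightarrow$\ref{prop:strict hyperbolic1}, namely excluding null or negative directions in $N^\perp$. The rest is the algebraic discriminant identity.
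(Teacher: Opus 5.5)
Your proof is correct and is essentially the paper's argument: the paper also reduces (i)$\Rightarrow$(ii) to the discriminant in $\tau$, by completing the square after normalizing $q(N)=-1$, which gives $r_N(\eta)=\phi(\eta,\eta)+\phi(\eta,N)^2>0$ on a complement of $\R N$. For (ii)$\Rightarrow$(i) the paper passes to Minkowski coordinates with $N=(1,0,\dots,0)$. Your two variations are minor: using the $\phi$-orthogonal complement $N^\perp$ (where $r_N$ reduces to $q$ after normalization) makes the signature conclusion immediate from the block-diagonal form, a step the paper leaves implicit, and your Sylvester argument that $\phi|_{N^\perp}>0$ is exactly the content of the normal-form coordinates the paper invokes.
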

\begin{proof}
\ref{prop:strict hyperbolic2} $\Imply$ \ref{prop:strict hyperbolic1}.   There exist  coordinates $\tau,\xi$ on
$\VV$ so that $N=(1,0,\dots, 0)$ and $\phi= -\tau^2 + \xi\cdot \xi$.  
That \ref{prop:strict hyperbolic1}  is satisfied is then immediate. 

\ref{prop:strict hyperbolic1}  $\Imply$ \ref{prop:strict hyperbolic2}.  Dividing $q$ and $\phi$ by 
$-q(N)$ reduces to the case $q(N)=\phi(N,N) =-1$.
Expanding and completing the square yields, with $r_N(\eta) = \phi(\eta,\eta) +\phi(\eta,N)^2$,
\begin{align*}
q(\tau N + \eta)
=- \tau^2+2\phi(\eta, N) \tau+\phi(\eta,\eta)
=-\big(\tau - \phi(\eta,N)\big)^2 + r_N(\eta).
\end{align*}
For $\eta\notin \R N$ there are two distinct real roots so
$r_N(\eta)>0$.   
If $W$ a subspace of $\VV$ complementary to 
$\R N$, then $r_N(\eta)$ is positive definite on $W$.
It follows that $\phi$ has signature $-1,1,\dots, 1$.  In particular
$\phi$ is nondegenerate. 
\end{proof}

If $\bfv \in T_y\caL$, there exists a unique $\bfv^\flat \in T^*_y\caL$
such that $\dup{\eta}{\bfv} = g^*_y (\eta, \bfv^\flat)$ for all $\eta \in
T_y^* \caL$.     
For $ \bfv, \bfw \in T_y\caL$,
define
\begin{align}
  \label{eq: dual metric}
  g_y (\bfv, \bfw) = g^*_y (\bfv^\flat, \bfw^\flat) .
\end{align}
Then, $(\caL, g)$ is a Lorentzian manifold. 
In local coordinates, 
if the metric 
$g_y = (g_{ij}(y))_{1\leq i,j \leq d+1}$  then
one has 
$(\bfv^\flat)_i = g_{ij}(y) \bfv^j$ with the usual Einstein summation
convention. 
In addition,
$g_y^* = (g^{ij}(y))_{1\leq i,j \leq d+1}$, that is $g_y^* = (g_y)^{-1}$.

\subsection{Noncharacteristic-nonspacelike hypersurface deformations}
\label{sec:Noncharacteristic-nonspacelike hypersurface deformation}

\begin{proposition}
  \label{prop: d1O open set}
  $\cad^{1}\O$, from Definition \ref{def: reachable point2}, is an open subset of $\caL$. 
\end{proposition}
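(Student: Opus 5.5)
Fix $y\in\cad^1\O$. By definition there is a continuous deformation $(\M,F)$ of noncharacteristic nonspacelike $\Con^1$ hypersurfaces with $F_0(\M)\subset\O$, $F([0,1]\times\d\M)\subset\O$, and $y=F(a_0,m_0)$ for some $a_0\in[0,1]$, $m_0\in\M$. To show that $\cad^1\O$ is open, I will perturb $F$ so that the new deformation still satisfies the hypotheses of Definition~\ref{def: reachable point}, still consists of noncharacteristic nonspacelike hypersurfaces, and reaches every point of a small \nhd of $y$. The natural tool is the flow of a vector field: for $z\in\caL$ near $y$, choose a compactly supported smooth vector field $X_z$, small in $\Con^1$, supported near $y$, whose time-one flow $\Phi_z$ satisfies $\Phi_z(y)=z$. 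In a coordinate chart around $y$, take $X_z=\chi\,(z-y)$, with $\chi$ a cutoff equal to $1$ near $y$. Then $\Phi_z\to\mathrm{Id}$ in $\Con^1$ as $z\to y$.

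\begin{enumerate}
\item If $y\in\O$, there is nothing to prove, since $\O$ is open. So assume $y\notin\O$, and choose the support of $\chi$ disjoint from the compact set $K=F_0(\M)\cup F([0,1]\times\d\M)$. This is possible because $K\subset\O$ is compact, so $y$ has positive distance from $K$ in a chart.
\item Define $G(a,m)=\Phi_z(F(a,m))$. Then $G$ is continuous, and each $G_a=\Phi_z\circ F_a$ is a $\Con^1$ embedding, since $\Phi_z$ is a diffeomorphism. Also $G=F$ on $K$, so conditions (ii) and (iii) hold, and $z=G(a_0,m_0)$ is reached.
\item It remains to check that each $G_a(\M)$ is noncharacteristic nonspacelike. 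Its conormal at $\Phi_z(x)$ is $(D\Phi_z(x)^{-1})^{T}$ applied to the conormal $\nu$ of $F_a(\M)$ at $x$. The quantity $g^*(\nu,\nu)/|\nu|^2$, computed with an auxiliary Riemannian norm, is positive and continuous on the compact set of unit conormals to $F_a(\M)$, $a\in[0,1]$. Equivalently, the normals are spacelike, as required.
\end{enumerate}

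The main obstacle is step 3, namely getting a uniform positive lower bound. The set of unit conormals
\[
\{(F(a,m),\nu): a\in[0,1],\ m\in\M,\ \nu\ \text{unit conormal to}\ F_a(\M)\ \text{at}\ F(a,m)\}
\]
must be compact. This needs continuity of the tangent planes of $F_a(\M)$ in $(a,m)$, which Definition~\ref{def: reachable point} does not state explicitly, since $F$ is only $\Con^0$ in $a$. I will read the definition as making $a\mapsto F_a$ continuous into $\Con^1$ embeddings; this is the natural meaning and the same kind of regularity as $\Con^0([0,1];\Con^1)$ in Definition~\ref{def: reachable point - homotopy}. If it is not intended, the statement needs that assumption added.

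With compactness, let $c>0$ be the minimum of $g^*(\nu,\nu)$ over this set. Positivity of $g^*$ on conormals is an open condition, and $D\Phi_z\to \mathrm{Id}$ uniformly as $z\to y$. Hence for $z$ in a small \nhd $U$ of $y$, every conormal to every $G_a(\M)$ satisfies $g^*>c/2>0$. Therefore $U\subset\cad^1\O$, and $\cad^1\O$ is open.
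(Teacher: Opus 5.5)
\textbf{There is a gap in your Step 3, and the fix is not an extra hypothesis.}

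Because you set $G=\Phi_z\circ F$ on all of $[0,1]\times\M$, you perturb every surface $F_a(\M)$ that meets $\supp\chi$, not only the one through $y$. You therefore need a noncharacteristic margin that is uniform in $a$. Definition~\ref{def: reachable point} gives only that $F$ is jointly $\Con^0$ and that each $F_a$ is a $\Con^1$ embedding. Nothing stops the tangent planes of $F_a(\M)$ from oscillating as $a$ varies. For example, take small-amplitude, high-frequency ripples whose slopes approach the characteristic value as $a\to a^*$, while $F_{a^*}(\M)$ itself is fine. Then the set of unit conormals over $a\in[0,1]$ is not compact, and $\inf_a \min g^*(\nu,\nu)/|\nu|^2$ can be $0$. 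No fixed $\Phi_z\neq \mathrm{Id}$ is then guaranteed to keep every $G_a(\M)$ noncharacteristic nonspacelike. So your remark that the statement ``needs that assumption added'' is wrong: the proposition holds under the literal definition. What is missing is the idea of discarding the rest of the deformation.

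\textbf{The repair.} Stop the original deformation at $a_0$ and afterwards deform only the single surface $F_{a_0}(\M)$. This is what the paper does: it pushes $F_{a_0}(\M)$ near $y$ along a line transverse to it, using a cutoff, and drops $F$ for $a>a_0$. In your language:
\begin{enumerate}
\item Keep $F$ on $[0,a_0]$.
\item For $s\in[0,1]$, use $m\mapsto \Phi^s_{X_z}(F_{a_0}(m))$, where $\Phi^s_{X_z}$ is the time-$s$ flow of $X_z$, so that $\Phi^1_{X_z}=\Phi_z$.
\item Reparametrize $[0,a_0+1]$ to $[0,1]$.
\end{enumerate}
The checks you already made still go through: continuity at $a_0$, the $\Con^1$-embedding property, reaching $z$, and conditions (ii)--(iii). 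The boundary stays $F_{a_0}(\d\M)\subset\O$ at the new times because $\supp\chi\cap K=\emptyset$. Step 3 now only needs a positive lower bound for $g^*(\nu,\nu)/|\nu|^2$ over the unit conormals of the one compact $\Con^1$ surface $F_{a_0}(\M)$. That holds with no regularity in $a$. Finally, $\Phi^s_{X_z}\to\mathrm{Id}$ in $\Con^1$ uniformly for $s\in[0,1]$ as $z\to y$, since $\|X_z\|_{\Con^1}=O(|z-y|)$, which finishes the argument.
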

\begin{proof}
  If $\by \in \O$ then $\cad^1 \O$ is a \nhd of $\by$. 
 For $\by \in \cad^1 \O \setminus \O$, we prove that a \nhd of $\by$ belongs
 $\cad^1\caO$. Since $\by\in\cad^1 \O$, $\by$ is reached by a 
   noncharacteristic nonspacelike hypersurface deformation
as in  Definition~\ref{def: reachable point}, with  $\by = F(\ba, \bs)$, 
$\ba \in ]0,1]$, and, $\bs \in ]0,1[$.

\vskip.1cm {\bf Step 1.}  
The hypersurface $\caM_{\oa}$ is noncharacteristic and nonspacelike.
Choose $\bfv$ Lorentz orthogonal to $\caM_{\oa}$ at  $\by$.
In local coordinates, where $\by$ is the origin, consider the line
$\Sigma$ in $\R^{1+d}$ given by $\alpha \mapsto \alpha \bfv$.
Then, $\Sigma$ is transverse to $\caM_{\oa}$ at $\by$.
 Define on a \nhd of $(\by,0)\in
\caM_{\oa}\times \Sigma$,
\begin{align}
  \label{eq: local diffeo deformation}
(m,\bfw)   \mapsto
\psi(m,\bfw) = m  + \bfw
\ \in \R^{1+d}\,.
\end{align}
The transversality  implies that
the derivative of $\psi$ at $(\by,0)$ is invertible.  Therefore 
$\psi$ is a local $\Con^1$-diffeomorphism.

\vskip.1cm

{\bf Step 2.}  Next construct deformations sweeping out a \nhd of $0$ that are equal to
$F_a$ for $0\le a\le \oa$. 
The deformation is continued
by modifying $\caM_{\oa}$ only near $\by$ using
$\psi$ from Step~1.  Choose $\chi\in \Cinf(\caM_{\oa})$ supported
near $\by$, identically equal to 1 on a smaller \nhd, and with  $0\le \chi\le 1$.  
The support and 
$\delta>0$ are chosen so that  $\psi$ is a diffemorphism on
a \nhd of $\{\supp \chi \times [-\delta,\delta]\bfv$. 

If $y\in \psi(\{\chi=1\}\times [-\delta,\delta]\bfv)\setminus \caM_{\oa}$, write 
$y=\psi(m,\alpha \bfv)$.  
 Define 
$\omega=\sign(\alpha)$.  The point $y$ is reached by the deformation
 $F(a,m)=
\psi(m, (a-\oa)\, \chi(m)\,\omega \bfv)$ with $(a,m)\in [\oa, \oa +|\alpha|]\times\caM_{\oa}$. 

As ${\rm dist} (y, \caM_{\oa} )$ tends to zero, the deformed surfaces  for $a>\oa$ converge to
$\caM_{\oa}$ in the $\Con^1$~topology.  
Since $\caM_{\oa}$ is noncharacteristic and nonspacelike, it follows that
there is an $\eta>0$  so that 
 for ${\rm dist} (y, \caM_{\oa} )<\eta$,
 the  deformations are noncharacteristic and nonspacelike. 
\end{proof}

\subsection{Timelike homotopies}
\label{sec:Timelike homotopies}
\begin{proposition}
  \label{prop: h1O open set}
  $\cah^{1}\O$, from Definition \ref{def: reachable point - homotopy},
  is an open subset of $\caL$.
\end{proposition}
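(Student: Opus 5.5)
The plan mirrors the proof of Proposition~\ref{prop: d1O open set}: start from one homotopy reaching a point and extend it by a local perturbation that sweeps out a whole neighborhood of that point. Let $\by\in\cah^1\caO$. If $\by\in\caO$ there is nothing to prove, since $\caO\subset\cah^1\caO$ via constant homotopies of a short timelike arc in $\caO$. Otherwise $\by=\XX(\bsigma,\bs)$ for a homotopy $\XX$ with initial curve and boundary in $\caO$. Because $\XX(\sigma,a),\XX(\sigma,b)\in\caO$ for all $\sigma$, we have $\bs\in\,]a,b[$. The goal is to build a new homotopy, equal to $\XX$ for $\sigma\le\bsigma$, that then moves only near $s=\bs$ and reaches every point of a small neighborhood of $\by$. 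Reparametrize so that $\sigma\in[0,2]$: the first half runs $\XX$ on $[0,\bsigma]$, and the second half runs the perturbation.

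\textbf{Construction.} Work in a coordinate chart around $\by$ in which $\by=0$. Choose $\chi\in\Cinfc(]a,b[)$ with $0\le\chi\le1$ and $\chi=1$ near $\bs$, supported in a small interval $I$ around $\bs$ whose image $\XX(\bsigma,I)$ stays in the chart. For $\bfw\in\R^{1+d}$ with $|\bfw|<\delta$, define the curves
\begin{align*}
c_{\bfw}(s)=\XX(\bsigma,s)+\chi(s)\,\bfw ,
\end{align*}
and for $\bfw=0$ take $c_0=\XX_{\bsigma}$. The homotopy is $\tau\mapsto c_{\tau\bfw}$ for $\tau\in[0,1]$. Its boundary stays at $\XX(\bsigma,a),\XX(\bsigma,b)\in\caO$, since $\chi$ vanishes near $a$ and $b$. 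Concatenating it with $\XX|_{[0,\bsigma]}$ gives a map in $\Con^0([0,2];\Con^1([a,b];\caL))$ whose initial curve is $\XX_0\subset\caO$. The point $\XX(\bsigma,\bs)+\bfw=\by+\bfw$ is reached, because $\chi(\bs)=1$. Hence the whole ball $\{\by+\bfw:|\bfw|<\delta\}$ lies in $\cah^1\caO$.

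\textbf{Key step: the perturbed curves remain timelike.} This is the step that needs care. The derivative is $c_{\bfw}'(s)=\d_s\XX(\bsigma,s)+\chi'(s)\bfw$, and outside $I$ the curve is unchanged. On the compact set $\XX(\bsigma,\ovl I)$, two facts give a uniform margin:
\begin{enumerate}[label=(\arabic*)]
\item the timelike cones $\Cone^+_y$ are open and depend continuously on $y$, and $\d_s\XX(\bsigma,\cdot)$ is continuous with values in them;
\item the metric $g$ is continuous.
\end{enumerate}
Together these give $\eps>0$ such that any vector within $\eps$ of $\d_s\XX(\bsigma,s)$, based at any point within $\eps$ of $\XX(\bsigma,s)$, is forward timelike. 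Choosing $\delta$ with $\delta(1+\sup|\chi'|)<\eps$ keeps both the points and the tangents within $\eps$, so each $c_{\tau\bfw}$ is timelike. The $\Con^1$ convergence of $c_{\bfw}$ to $c_0$ as $\bfw\to0$ gives continuity in the homotopy parameter. This argument needs only the $\Con^1$ regularity in $s$ and the openness of the timelike condition, and the compactness of $[a,b]$ makes the choice of $\eps$ uniform.
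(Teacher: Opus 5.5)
Your proof is correct and follows the paper's argument. You keep $\XX$ up to $\bsigma$, then continue the homotopy by a cutoff-localized displacement of $\XX_{\bsigma}$ near $\by$, and preserve timelikeness via $\Con^1$-closeness together with openness of the timelike cones. The only difference is that the paper displaces along the Lorentz-orthogonal slice $\Sigma$ using the local diffeomorphism $\psi(\point,w)=\point+w$, whereas you add an arbitrary small vector $\bfw$ with a cutoff in the parameter $s$ and read off $\by+\bfw$ at $s=\bs$, which removes the need for the transversality/inverse-function step.
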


\begin{proof}
   If $\by \in \O$ then $\cah^1 \O$ is a \nhd of $\by$.
   Consider $\by \in \cah^1 \O \setminus \O$. It is reached by a
homotopy of
timelike curves $\XX$ as in Definition~\ref{def: reachable
   point - homotopy}, with  $\by = \XX(\bsigma, \bs)$ with
   $\bsigma \in ]0,1]$ and $\bs \in ]a,b[$.  

{\bf Step 1.}  In local coordinates where $\by$ is the origin, define
$\Sigma$ to be  the Lorentz orthogonal to  $\d_s {\mathbb X}(\by)$.
Then, $\Sigma$ is transverse to 
$\XX_{\bsigma}$ at $\by$. 
Choose a norm on $\Sigma$, denoted $|\,.\,|$, with smooth balls.
As in Step~1 of Proposition~\ref{prop: d1O open set},
define the local $\Con^1$-diffeomorphism
\begin{align}
  \label{eq: local diffeo homotopy}
  (\point, w)  \mapsto \psi(\point, w) = \point  + w,
\end{align}
on a \nhd of $(\by,0)\in \XX_{\bsigma}
\times \Sigma$.

\vskip.1cm

{\bf Step 2.} The homotopy is not modified for $0 \leq \sigma \leq \osigma$.
For $\sigma>\osigma$
the arc $\XX_\osigma$ is deformed only in a small \nhd of $0$
using the map $\psi$
from Step 1. Choose real valued $\chi\in \Cinf(\XX_{\osigma})$ supported in a
small \nhd of $0$ and identically equal to 1 on a smaller \nhd.
The support and $\delta>0$ are chosen so that
$\psi$ is a diffeomorphism on a \nhd of $\{\supp \chi\}\times \{w\in \Sigma:|w|\le \delta\}$.

If $y\in \psi(\{\chi=1\}\times \{| w |\le \delta\})\setminus \XX_{\osigma}$, write 
$y=\psi(\udl{\point}, \udl{w})$.  
 Express $\udl{w}$ in polar form $\udl{w}=r \omega$ with
$|\omega|=1$.  The point $y$ is reached by the continuous homotopy of
smooth curves $\XX(\sigma, s) = \psi(\point,
(\sigma-\osigma)\chi(\point)\,\omega)$ with $\osigma\le \sigma \le
\osigma+r$ and $\point = \XX(\bsigma, s)$.  These deformations sweep out a \nhd of $0$.

As $\dist(y, \XX_{\osigma})$ tends to 0, the deformed curves for $\sigma>0$
converge to $\XX_{\osigma}$ in the $\Con^1$~topology.  Therefore there  is an 
$\eta>0$ so that if $\dist(y, \XX_{\osigma})<\eta$
 the deformed curves are timelike.
\end{proof}

\begin{proof}[\bf Proof of the Timelike Homotopy Theorem~\ref{theorem:homotopy theorem}]
   {\bf Step 1. $\bld{ \cah^1 \O \subset \cad^\infty \O}$.}
Consider a homotopy of timelike curves $\XX$ as in
Definition~\ref{def: reachable point - homotopy}.  Set
\begin{align*}
J
=  \big\{\sigma\in [0,1]: \ 
  \XX\big([0,\sigma]\times [a,b]\big) \subset \cad^\infty \O
\big\}.
\end{align*}
By continuity of $\XX$, $J$ is open in $[0,1]$ as $\cad^\infty \O$ is
open, and $0\in J$ since $\XX_0 \subset \O\subset \cad^\infty
\O$.  By connectedness, it suffices to prove that $J$ is closed, that
is, to show that if $\sigma_n$ increases to $\bsigma$ and $\sigma_n\in
J$ then $\bsigma \in J$. Since $\Ends{\XX} \subset \O$, it suffices to
show that each point of $\XX\big(\{\bsigma\} \times ]a,b[\big)$ is in
            $\cad^\infty \O$.  Consider $\by = \XX(\bsigma,\bs)$
            with $\bs \in ]a,b[$.

Without  loss of generality,  suppose $\bs = 0$ . Use the local
$\Con^1$-diffeomorphism $\psi$ in \eqref{eq: local diffeo homotopy} and denote by $V$ its range.
The point $\XX(\sigma,0)$ converges to $\by$ as $\sigma \to
\bsigma$. Define the functions $\point(\sigma,s)$ and $\bfw(\sigma,s)$
such that $\XX(\sigma,s) = \psi(\point(\sigma,s),\bfw(\sigma,s))$ for
$(\sigma,s)$ close to $(\bsigma, \bs)$. They are continuous in
$\sigma$ and $\Con^1$ in $s$.  One has $\point(\sigma,s) \in \XX_{\bsigma}$ and $\point(\sigma,s) \to
\XX(\bsigma,s)$ and $\bfw(\sigma,s) \to 0$ as $\sigma \to \bsigma$,
uniformly in $s$.

With $\Sigma$ from the proof of Proposition \ref{prop: h1O open set},
define
the smooth hypersurface
\begin{align*}
  \M =\{ (z,\bfu):\, z\in  [-1,1]  \ \et\  \bfu\in \Sigma
  \ \avec \ \norm{\bfu}{}= 1\} \subset [-1,1] \times \Sigma.
\end{align*}
Choose $\chi \in \Cinfc(]-1,1[)$ such that  $0\leq \chi \leq 1$ and
            $\chi=1$ on $[-1/2, 1/2]$.
Suppose $\eps>0$, and $\delta>0$. Define the 
deformation of $\M$, \begin{align*}
  H: [0,1]\times \M &\to [-1,1]\times \Sigma
  \\
  (a, z, \bfu) &\mapsto (z, \bfh(a, z, \bfu)),
  \quad \where \ \ \bfh(a, z, \bfu) = (\delta + \eps a \chi(z)) \bfu,
\end{align*}
giving a surface of revolution $r= \delta + \eps a \chi(z)$ in
cylindrical coordinates. The initial surface is a cylinder of radius
$\delta$. The width of the final surface is $\sim \eps$ if $\delta \ll \eps$. The surfaces
are carried to $\caL$ using the diffeomorphism $\psi$ and the timelike curve $s\mapsto \XX(\sigma,s)$.  Define the
hypersurface deformation $F^\sigma$ by $F^\sigma = \psi \circ G^\sigma$ with
\begin{align*}
   [0,1] \times \M \ni (a,z, \bfu) 
   \mapsto
  G^\sigma (a,z, \bfu) = \big(\point(\sigma, \eps^{1/2} z)\, ,
  \bfw(\sigma, \eps^{1/2} z)\big)
  + \big(0, \bfh(a, z, \bfu)\big).
\end{align*}
The deformed surface has an $\eps^{1/2}$ extent in the $s$
variable.  

Since $\psi\big(\point(\sigma, \eps^{1/2} z)\, , \bfw(\sigma,
\eps^{1/2} z)\big) = \XX(\sigma, \eps^{1/2} z)$, one finds 
\begin{align*}
  \d_z F^\sigma(a,z, \bfu)
  &=
  \eps^{1/2} \big(\d_s \XX(\sigma, \eps^{1/2} z) + O(\eps)\big)
  + d \psi \big( G^\sigma(a,z, \bfu)\big) (0, \d_z \bfh(a, z, \bfu))\\
  &= \eps^{1/2} \big(\d_s \XX(\sigma, \eps^{1/2} z) + O(\eps)\big)
  + \eps a \chi'(z)  d \psi \big( G^\sigma(a,z, \bfu)\big)(\bfu).
\end{align*}
For $\eps$ \suff small, this is a timelike vector implying that
$F^\sigma(\{a\}\times \M)$ is noncharacteristic-nonspacelike by
Proposition~\ref{prop:hyper}-{\bf iii}, uniformly in $\sigma$ near
$\bsigma$, $a \in [0,1]$, $z \in [-1,1]$, $\delta\in [0,\eps]$. Fix
such a value for $\eps$ requiring moreover $\XX\big(\{\bsigma\} \times
[-2\eps^{1/2}, 2\eps^{1/2}]\big) \subset V$.

For $\sigma$ near $\bsigma$, there exists an unique  $s_\sigma$ such that
$\XX(\sigma, s_\sigma) - \by \in \Sigma$ and $s_\sigma \to 0$ as
$\sigma \to \bsigma$. In particular, $\alpha(\sigma, s_\sigma) = \by$. Choose $n \in \N$ such  that   $2 |s_{\sigma_n}| \leq\eps^{1/2}$ and $\XX(\sigma_n,s) \in V$ and $|\bfw(\sigma_n, s)| \leq
\eps/2$ if $s\in [-\eps^{1/2}, \eps^{1/2}]$. Set $\sigma=  \sigma_n$.

Since $\XX_\sigma \subset \cad^\ell \O$ for some $\ell$ as $\XX_\sigma$ is
compact, then $F^\sigma(\{0\}\times \M) \cup F^\sigma([0,1]\times \d\M) \subset
\cad^\ell \O$ for $\delta$ chosen \suff small.

Set $\bfv= \bfw(\sigma, s_\sigma)/ |\bfw(\sigma, s_\sigma)|$ and $z =
\eps^{-1/2} s_\sigma$.  For $\delta$ \suff small, there exists $a\in
    [0,1/2]$ such that $\delta + \eps a = |\bfw(\sigma, s_\sigma)|$. Then, as
    $|z| \leq 1/2$, one has $\chi(z)=1$ and one finds $F^\sigma(a,z,-\bfv) = \by$ as
    \begin{align*}
      G^\sigma(a,z,-\bfv)
      = \big(\alpha(\sigma, s_\sigma), \bfw(\sigma, s_\sigma)\big)
      + \big( 0, -(\delta + \eps a) \bfv \big)
      = \big( \alpha(\sigma, s_\sigma), 0 \big)
      = (\by, 0).
    \end{align*}
    Hence, $\by \in
    \cad^{\ell+1} \O$.

\medskip
{\bf Step 2. $\bld{\cad^1 \O \subset \cah^\infty \O}$.}
Suppose $F:[0,1] \times \M \to \caL$ is a
noncharacteristic-nonspacelike hypersurface deformation as in
Definition~\ref{def: reachable point}.  Define $\M_a = F \big(\{ a\}
\times \M\big)$ and $J = \big\{a\in [0,1]: \ \M_a \subset \cah^\infty
\O\big\}$.  By continuity of $F$, $J$ is open in $[0,1]$ as
$\cah^\infty \O$ is open, and $0\in J$ since $\M_0 \subset \O \subset
\cah^\infty \O$. By connectedness, it suffices to prove that $J$ is
closed, that is, to show that if $a_n$ increases to $\ba$ and $a_n\in
J$ then $\ba \in J$.  Since $F \big([0,1] \times \d\M\big)\subset \O$,
it suffices to show that $F\big(\{\ba\} \times (\M \setminus
\d\M)\big)\subset \cah^\infty \O$. Thus, consider $\by = F\big(\ba,
\bm)$ for some $\bm\in \M \setminus \d\M$. If $\by \in \cup_{n \in \N}
\M_{a_n}$ then $\by \in \cah^\infty \O$; assume that $\by \notin
\cup_{n \in \N} \M_{a_n}$.

Use the local $\Con^1$-diffeomorphism $\psi$ in \eqref{eq: local
  diffeo deformation} and denote by $V$ its range. For $V$ small,
$\Sigma$ is transverse to all hypersurfaces $\M_a$ in $V$. For each
$a$ this transverality implies that $\psi(m,\bfv)\in \M_a$ is
equivalent to $\bfv = G_a(m)$ with $G_a: \M_{\ba} \to \Sigma$ a
$\Con^1$ map, continuous in $a$. Set $\M_{\ba} \ni m \mapsto \psi_a(m)
=\psi(m, G_a(m))$, $\Con^1$ in $m$ and continuous in $a$.  Then,
$\{\psi_a(m)\} = \M_a \cap (m + \Sigma)$. Set $\by_a = \psi_a(\by)$.

Suppose $\bft$ is a timelike vector at $\by$, tangent to $\M_{\ba}$
(see Proposition~\ref{prop:hyper}-{\bf iii}). Suppose $a\leq  \ba$.  Set
$z(a,s) = \by_a + s \bft$.  For $|s|$ small, $z(a,s) = \psi(
\alpha(a,s), \bfu)\in V$, for some $\alpha(a,s)\in \M_{\ba}$ and $\bfu
\in \Sigma$. Define $Z(a,s) = \psi_a(\alpha(a,s)) \in \M_a$,
continuous in $a$ and $\Con^1$ in $s$. One has $Z(a,0) =\by_a$. For
$|s|$ small and $a$ close to $\ba$, $\d_s Z(a,s)$ is close to $\d_s Z(\ba, 0) = \bft$
so is timelike.

Choose $\chi \in \Cinfc(]-1,1[)$ such that $0\leq \chi \leq 1$ and
$\chi(0)=1$. 
For $\eps>0$, $\sigma \in [0,1]$, and $s \in [-1,1]$, define
\begin{align*}
  \XX^a(\sigma, s) = Z(a,\eps^{1/2} s)
  - \eps \sigma \chi(s) \bfu_a/|\bfu_a|,
  \quad \bfu_a = G_a(\by). 
\end{align*}
This is well defined if $a=a_n$ for some $n\in \N$. Then, $\bfu_a\neq 0$
since $\by \notin \cup_{n \in \N} \M_{a_n}$.  For $\eps$ small,
$\XX^a(\sigma, s) \subset V$ and $\d_s \XX^a(\sigma, s)$ is timelike,
uniformly in $a$ near $\ba$, $\sigma\in [0,1]$ and $s\in [-1,1]$,
implying that $\XX^a$ is a homotopy of timelike
curves. Fix such a value for $\eps$.

One has $\XX^a_0 \cup \Ends{\XX^a} \subset \M_a$.  Note that
$\bfu_{\ba}=0$ and choose $n\in \N$ such that $|\bfu_{a_n}|\leq
\eps$. Set $a = a_n$. Then, $\XX^a(\sigma,0)=\by_a- \bfu_a=\by$ for
$\sigma=\eps^{-1} |\bfu_a| \in [0,1]$. As $\M_a \subset \cah^\ell\O$
for some $\ell$, then $\by \in \cah^{\ell+1}\O$.



\vskip.1cm {\bf Step 3. End of proof.}
Prove $\cah^n \O \subset \cad^\infty \O$ for any $n \in \N$ by induction. It holds for
  $n=1$. Assume it holds
  for some $n$. Then 
  \begin{align*}
    \cah^{n+1} \O=  \cah^1 \big(\cah^n \O \big)
    \subset \cah^1 \big(\cad^\infty \O \big),
  \end{align*}
  by the induction hypothesis. 
  With  Step~1, one has $\cah^{n+1} \O \subset \cad^\infty \big(\cad^\infty \O
  \big) = \cad^\infty \O$, with the last equality given by
  Remark~\ref{remark: d 1 dinfty O}. Then, prove $\cad^n \O \subset \cah^\infty
  \O$ for any $n \in \N$ by the same argument simply exchanging the
  roles of the symbols $\cad$ and $\cah$, and using Step~2.
\end{proof}

\begin{figure}
  \begin{center}
    \subfigure[\label{fig:  h1 O neq h2 O-a}]
              {\resizebox{4cm}{!}{
 \begin{picture}(0,0)%
\includegraphics{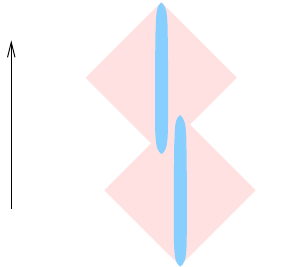}%
\end{picture}%
\setlength{\unitlength}{3947sp}%
\begin{picture}(2416,2124)(1561,-3823)
\put(2561,-3275){\makebox(0,0)[lb]{\smash{\fontsize{12}{14.4}\usefont{T1}{ptm}{m}{n}{\color[rgb]{0,0,0}$\cah^1 \O$}%
}}}
\put(1576,-2236){\makebox(0,0)[rb]{\smash{\fontsize{12}{14.4}\usefont{T1}{ptm}{m}{n}{\color[rgb]{0,0,0}$t$}%
}}}
\end{picture}%
              }}
    \qquad    \qquad 
     \subfigure[\label{fig:  h1 O neq h2 O-b}]
               {\resizebox{4cm}{!}{
  \begin{picture}(0,0)%
\includegraphics{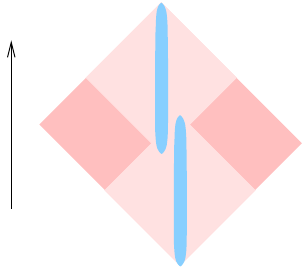}%
\end{picture}%
\setlength{\unitlength}{3947sp}%
\begin{picture}(2416,2124)(1561,-3823)
\put(2026,-2761){\makebox(0,0)[lb]{\smash{\fontsize{12}{14.4}\usefont{T1}{ptm}{m}{n}{\color[rgb]{0,0,0}$\cah^2 \O$}%
}}}
\put(1576,-2236){\makebox(0,0)[rb]{\smash{\fontsize{12}{14.4}\usefont{T1}{ptm}{m}{n}{\color[rgb]{0,0,0}$t$}%
}}}
\end{picture}%
               }}
    \caption{Example where $\cah^1 \O \subsetneq \cah^2 \O$. $\O$ is blue.}
  \label{fig: h1 O neq h2 O}
  \end{center}
\end{figure}

\begin{figure}
  \begin{center}
    \resizebox{7cm}{!}{
      \begin{picture}(0,0)%
\includegraphics{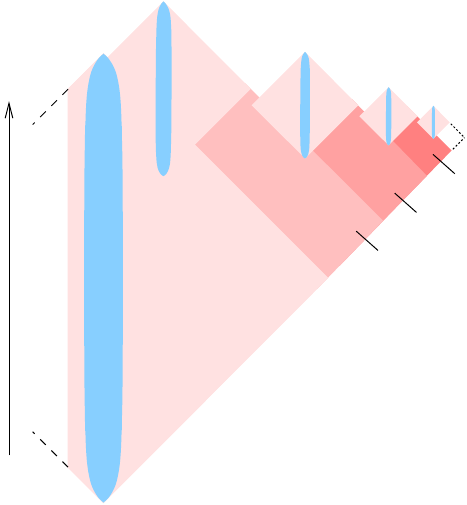}%
\end{picture}%
\setlength{\unitlength}{4144sp}%
\begin{picture}(3552,3832)(742,-2593)
\put(3961,-511){\makebox(0,0)[lb]{\smash{\fontsize{12}{14.4}\usefont{T1}{ptm}{m}{n}{\color[rgb]{0,0,0}$\cah^3 \O\setminus \cah^2\O$}%
}}}
\put(2279,-1326){\makebox(0,0)[b]{\smash{\fontsize{12}{14.4}\usefont{T1}{ptm}{m}{n}{\color[rgb]{0,0,0}$\cah^1 \O$}%
}}}
\put(757,302){\makebox(0,0)[rb]{\smash{\fontsize{12}{14.4}\usefont{T1}{ptm}{m}{n}{\color[rgb]{0,0,0}$t$}%
}}}
\put(3646,-826){\makebox(0,0)[lb]{\smash{\fontsize{12}{14.4}\usefont{T1}{ptm}{m}{n}{\color[rgb]{0,0,0}$\cah^2 \O\setminus \cah^1\O$}%
}}}
\put(4276,-196){\makebox(0,0)[lb]{\smash{\fontsize{12}{14.4}\usefont{T1}{ptm}{m}{n}{\color[rgb]{0,0,0}$\cah^4 \O\setminus \cah^3\O$}%
}}}
\end{picture}%
    }
  \caption{Example with $\cah^1 \O \subsetneq \cah^2 \O \subsetneq
\cah^3\O \subsetneq \cdots \subsetneq \cah^n\O$. $\O$ is blue.}
  \label{fig: h1 O neq h2 O-bis}
\end{center}
\end{figure}

\begin{figure}
  \begin{center}
    \subfigure[\label{fig: h1 O ter-a}]
              {\resizebox{7cm}{!}{
\begin{picture}(0,0)%
\includegraphics{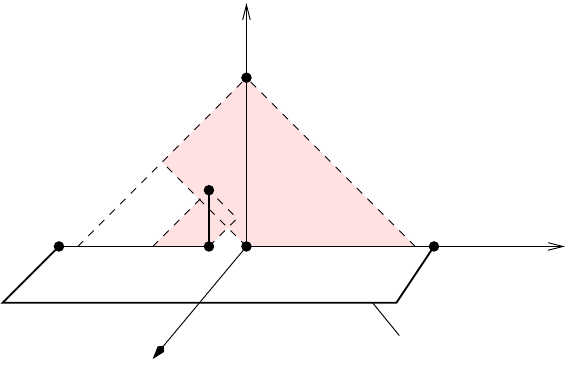}%
\end{picture}%
\setlength{\unitlength}{3947sp}%
\begin{picture}(4534,3090)(429,-3139)
\put(2476,-286){\makebox(0,0)[lb]{\smash{\fontsize{14}{16.8}\usefont{T1}{ptm}{m}{n}{\color[rgb]{0,0,0}$t$}%
}}}
\put(2857,-1726){\makebox(0,0)[b]{\smash{\fontsize{16}{19.2}\usefont{T1}{ptm}{m}{n}{\color[rgb]{0,0,0}$\cah^1 \O$}%
}}}
\put(2326,-586){\makebox(0,0)[rb]{\smash{\fontsize{14}{16.8}\usefont{T1}{ptm}{m}{n}{\color[rgb]{0,0,0}$A$}%
}}}
\put(2343,-2266){\makebox(0,0)[lb]{\smash{\fontsize{14}{16.8}\usefont{T1}{ptm}{m}{n}{\color[rgb]{0,0,0}$B$}%
}}}
\put(1726,-3061){\makebox(0,0)[lb]{\smash{\fontsize{14}{16.8}\usefont{T1}{ptm}{m}{n}{\color[rgb]{0,0,0}$x_2$}%
}}}
\put(4726,-1861){\makebox(0,0)[lb]{\smash{\fontsize{14}{16.8}\usefont{T1}{ptm}{m}{n}{\color[rgb]{0,0,0}$x_1$}%
}}}
\put(901,-1936){\makebox(0,0)[b]{\smash{\fontsize{14}{16.8}\usefont{T1}{ptm}{m}{n}{\color[rgb]{0,0,0}$D$}%
}}}
\put(3901,-2236){\makebox(0,0)[lb]{\smash{\fontsize{14}{16.8}\usefont{T1}{ptm}{m}{n}{\color[rgb]{0,0,0}$C$}%
}}}
\put(2104,-2254){\makebox(0,0)[b]{\smash{\fontsize{14}{16.8}\usefont{T1}{ptm}{m}{n}{\color[rgb]{0,0,0}$E$}%
}}}
\put(2104,-1479){\makebox(0,0)[b]{\smash{\fontsize{14}{16.8}\usefont{T1}{ptm}{m}{n}{\color[rgb]{0,0,0}$F$}%
}}}
\put(3642,-2893){\makebox(0,0)[lb]{\smash{\fontsize{14}{16.8}\usefont{T1}{ptm}{m}{n}{\color[rgb]{0,0,0}$\mathcal C$}%
}}}
\end{picture}%
              }}
    \qquad   
     \subfigure[\label{fig: h1 O ter-b}]
               {\resizebox{7cm}{!}{
 \begin{picture}(0,0)%
\includegraphics{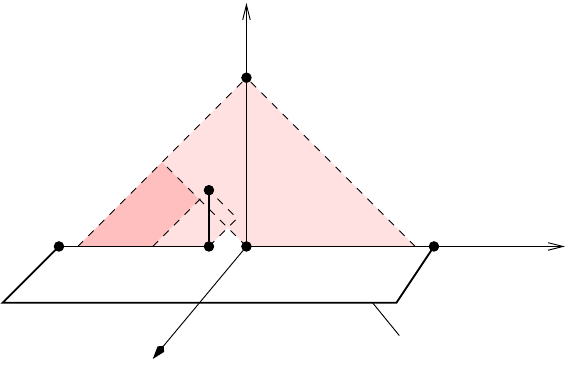}%
\end{picture}%
\setlength{\unitlength}{3947sp}%
\begin{picture}(4534,3090)(429,-3139)
\put(2476,-286){\makebox(0,0)[lb]{\smash{\fontsize{14}{16.8}\usefont{T1}{ptm}{m}{n}{\color[rgb]{0,0,0}$t$}%
}}}
\put(2326,-586){\makebox(0,0)[rb]{\smash{\fontsize{14}{16.8}\usefont{T1}{ptm}{m}{n}{\color[rgb]{0,0,0}$A$}%
}}}
\put(2343,-2266){\makebox(0,0)[lb]{\smash{\fontsize{14}{16.8}\usefont{T1}{ptm}{m}{n}{\color[rgb]{0,0,0}$B$}%
}}}
\put(1726,-3061){\makebox(0,0)[lb]{\smash{\fontsize{14}{16.8}\usefont{T1}{ptm}{m}{n}{\color[rgb]{0,0,0}$x_2$}%
}}}
\put(4726,-1861){\makebox(0,0)[lb]{\smash{\fontsize{14}{16.8}\usefont{T1}{ptm}{m}{n}{\color[rgb]{0,0,0}$x_1$}%
}}}
\put(901,-1936){\makebox(0,0)[b]{\smash{\fontsize{14}{16.8}\usefont{T1}{ptm}{m}{n}{\color[rgb]{0,0,0}$D$}%
}}}
\put(3901,-2236){\makebox(0,0)[lb]{\smash{\fontsize{14}{16.8}\usefont{T1}{ptm}{m}{n}{\color[rgb]{0,0,0}$C$}%
}}}
\put(1663,-1786){\makebox(0,0)[b]{\smash{\fontsize{16}{19.2}\usefont{T1}{ptm}{m}{n}{\color[rgb]{0,0,0}$\cah^2 \O$}%
}}}
\put(2104,-2254){\makebox(0,0)[b]{\smash{\fontsize{14}{16.8}\usefont{T1}{ptm}{m}{n}{\color[rgb]{0,0,0}$E$}%
}}}
\put(2104,-1479){\makebox(0,0)[b]{\smash{\fontsize{14}{16.8}\usefont{T1}{ptm}{m}{n}{\color[rgb]{0,0,0}$F$}%
}}}
\put(3642,-2893){\makebox(0,0)[lb]{\smash{\fontsize{14}{16.8}\usefont{T1}{ptm}{m}{n}{\color[rgb]{0,0,0}$\mathcal C$}%
}}}
\end{picture}%
               }}
    \caption{The points $A$, $B$, $C$, $D$, $E$, and $F$ lie in the
      $(t,x_1)$ plane.  $D,E,B,C$ lie on a line in $t=0$.  Dotted
      lines are null lines.}
  \label{h1 O neq h2 O-ter}
  \end{center}
\end{figure}

\subsection{Examples with strictly increasing $\cah^n$}
The  sets $\cah^n \O$ are nondecreasing. 
The example of
Figure~\ref{fig: h1 O neq h2 O} has $\cah^1 \O
\subsetneq \cah^2 \O$. The example sketched in Figure~\ref{fig: h1 O
  neq h2 O-bis} has  $\cah^n \O \subsetneq \cah^{n+1} \O$ for all $n$.
Both are for  Minkowski space $\Minkowski^{1+1}$.

The final example, in $\Minkowski^{1+2}$, has $\cah^1 \O \subsetneq \cah^2 \O$, with $\O$ connected. 
 Consider Figure~\ref{h1 O
  neq h2 O-ter} with a continuous piecewise linear curve $\mathcal C$ starting at point $A$ and ending at point $F$.
An important feature is that the part $\mathcal C$ joining point $B$ to point $E$ is spacelike. Choose a small \nhd  $\O$ of $\mathcal C$. In the figure $\O$ is
chosen \suff small to be only represented by $\mathcal C$.

\subsection{     {\bf Outline of proof of John's theorem \ref{theorem:John global}.} } 
\label{sec:JT}
Define
\begin{align*}
J:=\{a\in [0,1]: u=0\ \text{on a  \nhd  of}\ F([0,a]\times\caM) \} .
\end{align*}
Then $J$ is open in $[0,1]$.  Since the initial surface is in $\caO$
it follows that $0\in J$.  Define $\ba:=\sup J>0$.  It suffices to
prove that $\ba=1$.
     
Since the boundary of the deformation lies in $\caO$, $u=0$ on a \nhd
of $\d (F_{\ba}) $.  If $0<\ba<1$, the continuity of traces at
noncharacteristic surfaces implies that $u=\nabla u$ at interior
points of $F_{\ba}$.  Uniqueness in the Cauchy problem at $F_{\ba}$
implies that $u=0$ on a \nhd of $F_{\ba}$ contradicting the definition
of $\ba$.  Therefore $\ba=1$.  \hfill \qedsymbol \endproof

\section{Exponential map, arrival time, future, and  past sets}
\label{sec:First-arrival_future_past}

In this section and those that follow, we specialize to the case $\caL =
\R \times \R^d$.  Points of $\caL$ are denoted $y=(t,x)$.  
 The time component of $y$ is denoted  $t(y)$.
At places, one writes $y = (y_0, y_1, \dots, y_d)$ with $y_0
= t$ and $y_i = x_i$, $i=1, \dots, d$.

\begin{hypothesis}
  \label{hyp: metric}$\phantom{-}$
  \begin{enumerate}[align=left,labelwidth=*,labelsep=0cm,itemindent=*,leftmargin=0cm,label=\bf{\roman*.}]
   
  \item For all $\alpha\in \N^{1+d}$, $\d_y^\alpha g^{ij} \in
    L^\infty(\caL)$, $i, j \in \{ 0, \dots, d\}$.
  \item There exists $C_0>0$ such that for all $y \in \caL$, $|\det g_y| \geq C_0$.
  \item $\d_t$ is uniformly timelike and any time slice $\{ t=C\}
    \simeq \R^d$ is uniformly spacelike, that is, there is a $C_1>0$
    so that for all $y\in \caL$,
\begin{align*}
  g_{00}(y) = g_y (\d_t, \d_t) \leq -C_1
  \  \ \et \  \
  g^{00}(y)= g_y^* (d_t, d_t)
  = g_y(d_t^\sharp, d_t^\sharp) \leq -C_1.
\end{align*}
  \end{enumerate}
\end{hypothesis}
The {\it sharp} $\sharp$ isomorphism is the inverse of the {\it flat} $\flat$ isomorphism
introduced in \eqref{eq: dual metric}: if $\eta \in T_y^*\caL$ then
$\eta^\sharp \in T_y\caL$ is such that $g_y (\eta^\sharp, \bfv) = \dup{\eta}{\bfv}$, giving in local coordinates
$(\eta^\sharp)^i = g_y^{ij}\,  \eta_j$.

With the time orientation given by $\d_t$, $(\caL, g)$ is a
space-time. Section~\ref{sec:case L=R1+d} recalls useful properties
of this space-time and serves for reference. 

\subsection{Elements of pseudo-Riemannian geometry applied to the Lorentzian case}
\label{sec:elements_pseudo-Riemannian}

Recall some facts in pseudo-Riemannian geometry.  They will be applied
in the Lorentzian setting.  In particular, the exponential map and
correponding normal coordinates are crucial. See B. O'Neill's
excellent book \cite{ONeill} for more details.

By definition, {\sl a {\bf
    pseudo-Riemannian manifold $\caM$} has a smooth field of symmetric
  nondegenerate bilinear forms on the tangent bundle}.  The bilinear
form gives the {\it flat} diffeomorphism $\flat$ from $T \caM$ to $T^*
\caM$ and so yields a corresponding form on $T^* \caM$.  Denote the
latter by $p(y,\eta) = \sum g^{i j}(y)\,\eta_i\eta_j$.  

The function
$p$ has Hamiltonian vector field $H_p$ on $T^* \caM$ whose integral
curves satisfy Hamilton's equations,
\begin{equation}
\label{eq:hp}
\frac{dy}{ds} = \nabla_\eta p,
\qquad
\frac{d\eta}{ds}
= -\nabla_y p\,.
\end{equation}
The Hamiltonian  $p$ is constant on integral
curves. Since $p$ is homogeneous of degree two in 
$\eta$,  if $(y(s), \eta(s))$ is an integral curve and $\lambda>0$, then
$\big(y(\lambda s), \lambda \eta(\lambda s)\big)$ is  an integral curve.


The diffeomorphism $\flat$ yields a bijection from vector fields on $T
\caM$ to vector fields on $T^* \caM$.  The geodesic flow is generated
by a vector field on $T \caM$. The vector field $H_p$ gives a flow on
$T^* \caM$.  The bijection maps one to the other.  The projections on
$\caM$ of the integral curves of $H_p$ are the geodesics of $\caM$.

The vector field on $T \caM$ from the geodesic flow generates a one
parameter group of diffeomorphisms $s\mapsto \Phi_s$ on $T \caM$.
Any point $(y,0)\in T \caM$ is a fixed point of $\Phi_s$.  In
addition, for any real $\lambda$, $\Phi_{\lambda s}
(y,\bfv)=\Phi_s(y, \lambda \bfv)$, if $\bfv \in T_{y}\caM$.  The map $\Phi_1$ is a
smooth map from $T\caM$ to itself. 
  For any $y \in \caM$, the exponential map, $\exp_y: T_y\caM \to \caM$, is 
 its base component: $\exp_y(\bfv) = \pi_\caM \Phi_1(y, \bfv)$ for $\bfv
\in T_y \caM$. The scaling law implies that for any $y\in \caM$
the image by $\exp_y$ of a line through the origin of $T_y \caM$ is a
geodesic, called a {\bf radial geodesic}.  For $(y,\bfv)\in T \caM$, $\bfv$ is
the derivative at $s=0$ of the radial geodesic $s\mapsto \exp_y(s \bfv)$.
This implies that  the map $\exp_y$ has differential at the origin equal to the identity map
on $T_y \caM$.  Therefore $\exp_y$ is locally invertible.

\begin{definition}
\label{normalcoord}
A \nhd $\caN$ of $y$ in $\caM$ is called a {\bf normal \nhd} when
there is a star shaped $\widetilde\caN$ of $0\in T_y(\caM)$ so that
${\rm exp}$ is a diffeomorphism from $\widetilde\caN$ to $\caN$.  In
this case the set $\widetilde \caN$ defines normal coordinates and is
also called a normal \nhd.

A normal \nhd is {\bf convex} when it is a normal
\nhd of each of its points.
\end{definition}

Proposition 3.31 in \cite{ONeill} proves that 
a normal \nhd $\caN$ of $y$ is star shaped for geodesics
in the sense that for each $q\in \caN$, there is a unique 
geodesic from  $y$ to $q$.

\begin{proposition} 
  \label{prop:normal2}
There is a $r>0$ so that for all $y \in \R^{1+d}$ the set
$\caN=exp_y(\widetilde \caN)$ with
$\widetilde{\caN}=\{ \bfv \in T_y(\R^{1+d}): \ \sum_0^d (v^j)^2<r^2\}$ is a convex normal  \nhd.
 \end{proposition}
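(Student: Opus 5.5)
The plan is to run the classical construction of convex normal neighborhoods (O'Neill, Proposition 5.7 / Whitehead's theorem) while keeping every constant uniform in the base point $y$. The ingredient that makes this possible is Hypothesis~\ref{hyp: metric}. By \textbf{i} and \textbf{ii} the $g_{ij}$, $g^{ij}$ and all their derivatives are bounded on $\R^{1+d}$. Hence the Christoffel symbols $\Gamma^k_{ij}$ and all their derivatives are uniformly bounded. I work in the global coordinates of $\R^{1+d}$ throughout.

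\textbf{Step 1: a uniform domain for the exponential map.} Consider the geodesic equation $\ddot y^k=-\Gamma^k_{ij}(y)\dot y^i\dot y^j$ as a first order system in $(y,\bfv)$. Its right side has bounded derivatives on sets $|\bfv|\le R$. Standard ODE estimates then give an $r_0>0$, independent of $y$, such that $\Phi_s(y,\bfv)$ exists for $|s|\le1$ and $|\bfv|<r_0$ (Euclidean norm). They also give that $(y,\bfv)\mapsto \exp_y(\bfv)$ is smooth there, with derivatives of every order bounded uniformly in $y$. Since $d\exp_y(0)=\mathrm{Id}$, we get $|d\exp_y(\bfv)-\mathrm{Id}|\le C|\bfv|$ with $C$ independent of $y$. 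A quantitative inverse function theorem then shows the following. For $r_1\le r_0$ small and uniform in $y$, $\exp_y$ is a diffeomorphism from the Euclidean ball $\{|\bfv|<r_1\}$ onto its image, and that image contains the Euclidean ball $B(y,r_1/2)$ and lies in $B(y,2r_1)$. Applied to the map $E(y,\bfv)=(y,\exp_y\bfv)$, the same argument shows that $E$ is a diffeomorphism from $\{|\bfv|<r_1\}$ onto a set containing $\{(y,q): |q-y|<r_1/2\}$.

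\textbf{Step 2: uniform convexity of small balls.} Set $f_y(q)=\sum_j (q^j-y^j)^2$. Along any geodesic, $\frac{d^2}{ds^2}f_y=2|\dot q|^2-2\sum_k (q^k-y^k)\Gamma^k_{ij}\dot q^i\dot q^j\ge |\dot q|^2>0$ whenever $|q-y|\le \rho_0$, for some $\rho_0$ depending only on $\sup|\Gamma|$. So Euclidean balls $B(z,\rho)$ with $\rho\le\rho_0$ are geodesically convex in the sense of O'Neill's argument, uniformly in $z$.

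\textbf{Step 3: assembling.} Choose $r\ll\min(r_1,\rho_0)$ such that $\exp_y(\{|\bfv|<r\})$ is contained in a Euclidean ball $B$ of radius $\le\rho_0$ on which, by Step 1, each point $p\in B$ has $\exp_p$ diffeomorphic onto a set containing $B$. O'Neill's proof, applied with Step 2, shows that $\caN=\exp_y(\widetilde\caN)$ is convex. Concretely, for $p,q\in\caN$ the unique short geodesic from $p$ to $q$ given by $E^{-1}$ stays in the set where $f_y<r'^2$ by the maximum principle for the convex function $f_y$. It therefore stays inside $\caN$, and the corresponding preimage set $\exp_p^{-1}(\caN)$ is star shaped. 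This needs some care, since $\caN$ is the image of a Euclidean ball in $T_y$ rather than a Euclidean ball in $\R^{1+d}$. I would first compare $|\exp_y^{-1}q|$ with $|q-y|$ up to factor $1+Cr$, and then apply Step 2 to $F(q)=|\exp_y^{-1}q|^2$. Its Hessian along geodesics is $2|\dot q|^2+O(r)|\dot q|^2$ with uniform constants, so it is convex for $r$ small.

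The main obstacle is Step 3. Convexity has to hold for the specific set $\exp_y(\widetilde\caN)$ with the Euclidean ball $\widetilde\caN$ prescribed in the statement, not for a conveniently chosen smaller set, and $r$ must not depend on $y$. Using $F=|\exp_y^{-1}(\cdot)|^2$ in place of $f_y$ is what handles this. Its uniform strict convexity follows from the uniform $C^2$ closeness of $\exp_y^{-1}$ to $q\mapsto q-y$ established in Step 1.
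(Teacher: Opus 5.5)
Your proposal is correct and follows the same route as the paper. Uniform bounds on the geodesic flow coming from Hypothesis~\ref{hyp: metric} give a quantitative inverse function theorem with $r$ independent of $y$. Convexity of $\exp_y(\widetilde\caN)$ is then O'Neill's Proposition~5.7 argument (strict convexity of $|\exp_y^{-1}(\cdot)|^2$ along short geodesics) run with uniform constants, which is exactly what the paper's sketch asserts without detail.

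Two small corrections. Your Step~2 with the Euclidean $f_y$ ends up unused. Also, the estimate $2|\dot q|^2+O(r)|\dot q|^2$ rests on uniform $\Con^2$ bounds for $\exp_y^{-1}$ together with $|\exp_y^{-1}q|=O(r)$. It does not rest on $\Con^2$ closeness to $q\mapsto q-y$, which in fact fails at second order because $D^2\exp_y^{-1}(y)$ is given by the Christoffel symbols at $y$.
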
 
\begin{proof}[{\bf Sketch of Proof}]  The inverse function theorem implies that
  there is an $r>0$ to that the set is a normal \nhd.
  Indeed, the derivative of $\exp_y$ at $0\in T_y(\R^{1+d})$ is equal
  to the identity.
  
  The global coordinates $(y_0,\dots, y_{d})$ on $\R^{1+d}$ induce
  global coordinates $(y,\bfv)$ on $T(\R^{1+d})$.  Our
  hypotheses imply that  $\d_{y,\bfv}^\alpha \Phi_1 \in L^\infty(\R^{ 2(1+d ) })
  $ for each $\alpha$.  This guarantees the existence of the desired $r$.
    
    The proof of Proposition~5.7 in \cite{ONeill} proves that one can choose
    a smaller $r$ so  that the \nhd is convex, and that can be 
    done uniformly in $y$.
\end{proof}
Recall that a timelike or null vector $\bfv$ is called forward if $g_y(\d_t, \bfv) <0$.
For a timelike  vector it equivalently means that $\bfv$ in the same component of the cone of 
timelike vectors as $\d_t$; see Lemma~\ref{lemma: time-like cone}.
A timelike or null vector $\bfv$ is backward if and only if $-\bfv$ is forward
\begin{definition}  
\label{def:fivecones} If $\caL$ is Lorentzian and $y\in \caL$,  then
$T_y \caL\setminus 0$ is the disjoint union of five  connected cones.
\begin{enumerate}[align=left,labelwidth=0.8cm,labelsep=0cm,itemindent=0.8cm,
  leftmargin=0cm,label=\bf{\roman*.}]
\item The open convex cone $\Timelike^+_y$ (resp.  $\Timelike^-_y$) of 
{\bf forward (resp. backward) timelike  vectors}.

\item The open cone of {\bf strictly spacelike vectors}.
\item The codimension 1 cone $\Null^+_y$ (resp. $\Null^-_y$) of {\bf
  forward (resp. backward) null vectors}.
\end{enumerate}
One also uses `future' for `forward' and `past' for `backward' equivalently. Define $\Lambda^\pm(y) = \exp_y(\Null_y^\pm)$. 
\end{definition}

\begin{lemma}
\label{lem:timelikecross}
Suppose $y\in \caL$ and $\caN=\exp_y(\tcaN)$ is a normal
\nhd of $y$.  At each point $q\in \caN \cap \Lambda^+_y$, $q \neq y$,
$\Lambda^+(y)$ is a smooth embedded hypersurface that is
characteristic.  Future oriented piecewise $\Con^1$ timelike curves at $q$ cross the
tangent plane  $T_q \Lambda^+(y)$ transversally and enter
$\exp_y(\Timelike^+_y)$.
\end{lemma}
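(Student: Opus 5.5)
The plan is to work entirely in normal coordinates centered at $y$ and use the Gauss lemma. Define on $T_y\caL$ the quadratic function $Q(\bfv)=g_y(\bfv,\bfv)$ and on $\caN$ the function $\rho = Q\circ(\exp_y|_{\tcaN})^{-1}$. In $\tcaN$, the null cone $\Null^+_y\cap\tcaN$ is a smooth embedded hypersurface away from $0$: it is a component of $\{Q=0\}\setminus 0$, and $dQ_\bfv = 2g_y(\bfv,\cdot)\neq 0$ for $\bfv\neq0$ since $g_y$ is nondegenerate. Because $\exp_y$ is a diffeomorphism from $\tcaN$ onto $\caN$, the set $\Lambda^+(y)\cap\caN\setminus\{y\}=\exp_y(\Null^+_y\cap\tcaN)$ is a smooth embedded hypersurface, namely a regular piece of the level set $\{\rho=0\}$. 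In addition, $\exp_y(\Timelike^+_y\cap\tcaN)$ is the open region $\{\rho<0\}$ on the forward side, which is a connected component of $\{\rho<0\}\setminus\{y\}$ near $q$.

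The key step is the generalized Gauss lemma in pseudo-Riemannian form (O'Neill, Lemma 5.1 and its corollaries). It states that the position vector field $P=d\exp_y(\bfv)\,\bfv$ at $q=\exp_y(\bfv)$ satisfies $g_q(P,d\exp_y(\bfv)\bfw)=g_y(\bfv,\bfw)$ for all $\bfw$. Consequently $\operatorname{grad}\rho = 2P$ at $q$. For $q\in\Lambda^+(y)$, $\bfv$ is null and so $P$ is a nonzero null vector, since $d\exp_y(\bfv)$ is invertible. Hence the normal to $T_q\Lambda^+(y)$ is null and the hypersurface is characteristic. Also $P$ is tangent to the hypersurface, because $\rho(\exp_y(s\bfv))=s^2Q(\bfv)=0$. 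The vector $P$ is forward, because the radial geodesic $s\mapsto\exp_y(s\bfv)$ starts forward and stays causal, and the time orientation is continuous along it.

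For transversality, let $c$ be a future oriented piecewise $\Con^1$ timelike curve with $c(s_0)=q$, and take its right derivative $\bft\in\Timelike^+_q$. Then $d\rho(\bft)=2g_q(P,\bft)<0$, since two causal vectors in the same time cone have negative product unless both are null and parallel, and $\bft$ is timelike. Therefore $\bft\notin T_q\Lambda^+(y)=\ker d\rho_q$, which gives transversality, and $\rho(c(s))<0$ for $s>s_0$ small. Finally, to conclude that $c$ enters $\exp_y(\Timelike^+_y)$ rather than $\exp_y(\Timelike^-_y)$, use the fact that near $q\neq y$ the set $\{\rho<0\}$ has only the component adjacent to $\Lambda^+$. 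The backward image $\exp_y(\Timelike^-_y)$ has closure meeting $\Lambda^+$ only at $y$, since $\Null^+_y$ and $\overline{\Timelike^-_y}$ meet only at $0$ in $T_y\caL$, and $\exp_y$ is a homeomorphism onto $\caN$.

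The main obstacle is checking the sign and forwardness of $P$, that is, that $g_q(P,\bft)<0$ holds rather than $>0$. I would handle this by noting that $\exp_y$ maps the connected set $\Timelike^+_y\cap\tcaN$ onto a region where the radial geodesic tangents are forward near $y$. Continuity of the time orientation, together with the fact that the causal character is preserved along geodesics, then carries this forwardness to $q$.
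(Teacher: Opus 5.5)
Your proposal is correct and follows essentially the same route as the paper. The Gauss lemma identifies $d\exp_y(\bfv)\bfv$ as the null normal of $\Lambda^+(y)$ at $q$, and the inequality $g_q(\bft, d\exp_y(\bfv)\bfv)<0$ pulls back to the paper's $g_y(\bfu,\bfv)<0$, which is exactly your $d\rho_q(\bft)<0$ for $\rho=Q\circ(\exp_y|_{\tcaN})^{-1}$. Your version also justifies two points the paper asserts without argument: that $d\exp_y(\bfv)\bfv$ is future directed, by continuity along the radial null geodesic, and that the side $\{\rho<0\}$ near $q$ is $\exp_y(\Timelike^+_y)$ rather than $\exp_y(\Timelike^-_y)$, because $\Null^+_y\cap\overline{\Timelike^-_y}=\{0\}$.
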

\begin{proof}
   Since $\caN$ is a normal \nhd, $\exp_y\big|_{\tilde{\caN}}$ is a
   diffeomorphism. Since $\Null^+_y$ is smooth at all
   points other than the tip its diffeomorphic image,
   $\Lambda^+(y)\cap\caN$, is smooth at all points other than the tip.

   Suppose $q = \exp_y(\bfv)$ with $\bfv \in \Null^+_y$.  By the
   Gauss lemma, $w = d \exp_y(\bfv) (\bfv) \in T_q \caL$ yields the only null
   direction in $T_q \Lambda^+(y)$ and $(T_q \Lambda^+(y))^\perp =
   \Span\{w\}$ implying that $T_q \Lambda^+(y)$ is characteristic.

   Suppose $\rho$ is a future timelike curve and $\rho(s) =q$.  Then
   $\rho'(s)$ is timelike and $g_q(\rho'(s), w) <0$ as $w$ is a future
   null vector. Denote by $\bfu$ the only vector such that $d \exp_y
   (\bfv) (\bfu) = \rho'(s)$.  Then, $g_y(\bfu, \bfv) <0$ by the Gauss Lemma
   implying that $\bfu$ points towards $\Timelike^+_y$. Hence, $\rho$ enters
   $\exp_y(\Timelike^+_y)$.
\end{proof}

\subsection{Arrival time, future and past sets}
\begin{definition}
  \label{def: first arrival time}
Suppose $y\in \caL$. For $x \in \R^d$, define
$\atimeF_{y}(x)$ to be the {\bf future first-arrival time} at $x$ from $y$, that is,
\begin{multline}
  \label{eq: first arrival time}
  \atimeF_{y}(x) = \inf \{ t: \ \text{there is a future Lipschitz
    causal curve} \ \gamma \ \text{such that}\\ \gamma(0) = y \ \et
  \ \gamma(s)= (t,x) \ \avec \ s>0 \}.
\end{multline}
\end{definition}
Introduce the function
\begin{align}
  \label{eq: function tau max}
  \taum (t,x, \xi)= \max \{ \tau: \ p(t, x , \tau, \xi) =0\}, \qquad (t,x, \xi) \in \R^{1+d}\times \R^d.
\end{align}
\begin{proposition}[Section 7 in \cite{JMR:05}]
  \label{prop: JMR}
  Suppose $y =(\ut,\ux) \in \caL$ and $\Omega$ is a bounded open set of
   $\R^d$.  
   \begin{enumerate}[align=left,labelwidth=0.8cm,labelsep=0cm,itemindent=0.8cm,
       leftmargin=0cm,label=\bf{(\roman*)}]
     \item The function $\atimeF_y(x)$ is continuous in both $x$ and $y$. 
     
     \item For all $x \in \Omega$, a Lipschitz 
  causal curve $[0,S] \ni s \mapsto \gamma(s) = \big(t(s), x(s)\big)$  with
  $\gamma(0) = y$ and $\gamma(S)  = \big(\atimeF_{y}(x), x\big)$ achieves
       the infimum in \eqref{eq: first arrival time}.
  For any such causal curve $\gamma$  and
  all $s,s' \in [0,S]$ one has the dynamic programing principle
  \begin{align*}
    \atimeF_{\gamma(s)}\big(x(s') \big) = t(s'),  \quad \text{if} \ s < s'.
  \end{align*}
  
\item 
  The function $\Omega \ni x \mapsto  \atimeF_{y}(x)$ is a uniformly
  Lipschitz 
  solution to
  \begin{align}
    \label{eq: tmax equation}
    \taum\big( \atimeF_{y}(x), x, -d_x \atimeF_{y}(x) \big)=1,
    \ \text{\pp}\  x \in \R^d,
    \quad \atimeF_{y}(\ux) = \ut.
  \end{align}
  Moreover, it is the largest such solution in the sense that if
  $\zeta$ is uniformly Lipschitz function solution on $\Omega$ to
  \begin{align*}
    \taum\big( \zeta (x), x, -d_x \zeta (x) \big) \leq 1,
    \ \text{\pp}\  x \in \R^d,
    \quad \zeta (\x{0}) \leq  \yt{0},
  \end{align*}
  then $\zeta \leq \atimeF_{y}$ on $\Omega$.
\item If $\caN$ is a normal \nhd of $y$, then
  \begin{align*}
    \Lambda^+(y) \cap \caN = \{ (\atimeF_y(x),x): \ x \in \R^d\} \cap \caN.
  \end{align*}
  \end{enumerate}
\end{proposition}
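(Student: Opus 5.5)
The plan is to build everything on a compactness property of causal curves, which comes from the uniform bounds in Hypothesis~\ref{hyp: metric}. By \textbf{iii} there, $\d_t$ is uniformly timelike and the time slices are uniformly spacelike. So every forward causal curve, reparametrized by $t$, has spatial speed $|dx/dt|\le c$ with $c$ independent of the base point, and such curves have uniform Lipschitz bounds on compact $t$-intervals. I would carry out the steps in the order (ii), (i), (iii), (iv).

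\textbf{Step (ii): existence of minimizers and dynamic programming.} Take a minimizing sequence of causal curves from $y$ to $(t_n,x)$ with $t_n\downarrow\atimeF_y(x)$, all parametrized by $t$. Arzelà--Ascoli gives a uniformly convergent subsequence. The limit is again Lipschitz causal, because the closed forward cones $\ovl{\Cone^+}$ are convex and vary continuously, so the causality condition passes to weak-$*$ limits of the derivatives. The limit ends at $(\atimeF_y(x),x)$. For the dynamic programming principle, suppose there were a causal curve from $\gamma(s)$ reaching $x(s')$ at a time below $t(s')$. Concatenating it with $\gamma|_{[s',S]}$ after a slight time shift would contradict minimality at $x$. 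The shift stays causal because one can first go along $\d_t$, and $\d_t$ is timelike.

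\textbf{Step (i): continuity.} The uniform speed bound gives $|\atimeF_y(x)-\atimeF_y(x')|\le C|x-x'|$: from a point on the minimizer one can reach $x'$ by a causal segment. For continuity in $y$ I would use the compactness of Step (ii) together with a lower-semicontinuity argument. The reverse inequality would come from perturbing $y$ along $\d_t$ and using the openness of timelike cones.

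\textbf{Step (iii): the eikonal equation and maximality.} At a point of differentiability of $\atimeF_y$, the inequality $\taum(\atimeF_y,x,-d_x\atimeF_y)\le 1$ follows by moving from $(\atimeF_y(x),x)$ along short causal segments in every spatial direction. The resulting one-sided bounds say exactly that the conormal $(1,-d_x\atimeF_y)$ has $1\ge\taum$. For equality, I would use (iv) locally: near the endpoint, the last piece of a minimizer lies on the null cone $\Lambda^+$ of a nearby point of the curve, by the dynamic programming principle. The graph is therefore characteristic there. For maximality, note that $\taum(\zeta,x,-d\zeta)\le1$ means the graph of $\zeta$ is weakly spacelike. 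Then $s\mapsto t(s)-\zeta(x(s))$ is nondecreasing along every forward causal curve. Since it is $\ge0$ at $y$, any causal curve reaching $x$ does so at a time $\ge\zeta(x)$. Because $\zeta$ is only Lipschitz and the inequality holds only a.e., I would first prove this for mollified $\zeta$ with a slightly enlarged cone (using the metrics of Appendix~\ref{app:speeding up and slowing down metrics}) and then let the parameters tend to zero. I expect this regularization to be the \textbf{main obstacle}.

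\textbf{Step (iv): the null cone in a normal neighbourhood.} By the standard local causality result in convex normal neighbourhoods (O'Neill, Ch.~5), causal curves from $y$ stay in $\exp_y(\ovl{\Cone^+_y})$ while in $\caN$, and points of $\exp_y(\Cone^+_y)$ are reached by timelike curves. Lemma~\ref{lem:timelikecross} shows that timelike curves cross $\Lambda^+(y)$ strictly into $\exp_y(\Cone^+_y)$. Each vertical line $\{x\}\times\R$ is timelike, so it meets $\Lambda^+(y)\cap\caN$ at most once, and below that point it is outside the causal future. The first arrival point is therefore exactly the intersection point, which gives $\Lambda^+(y)\cap\caN=\{(\atimeF_y(x),x)\}\cap\caN$.
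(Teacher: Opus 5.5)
The paper gives no proof of this proposition (it is quoted from Section~7 of \cite{JMR:05}), so your outline has to stand on its own. The genuine gap is in Step~(iv). Local causality in a normal neighbourhood only controls causal curves from $y$ \emph{while they remain in} $\caN$. Your claims that a vertical line meets $\Lambda^+(y)\cap\caN$ at most once, and that below that point it is outside the causal future, also require excluding curves that leave $\caN$ and come back. The same goes for null geodesics, since $\Lambda^+(y)$ is the global image of $\exp_y$. Nothing in your argument does this, and for a general normal neighbourhood it cannot be done.

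Here is a counterexample. On $\R^{1+2}$ take $g=-dt^2+c(x)^{-2}|dx|^2$ with $c=1$ outside a ball and $c$ large inside it. Take a long thin convex box in the flat region containing $y$ and a long null segment from $y$ that passes beside the ball. This box is a convex normal neighbourhood of $y$, since $\exp_y$ is a translation there. Yet the far end of the segment lies in $\Lambda^+(y)\cap\caN$ strictly above the graph of $\atimeF_y$, because a curve crossing the fast ball arrives earlier.

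So (iv) holds only after a restriction, e.g.\ to $\caN\cap\{t<t_\caN\}$ as in Proposition~\ref{prop:local form of F}. There a causal curve from $y$ can leave $\caN$ only at times $\ge t_\caN$ and never comes back below that time. This restricted form is the only one the paper uses, and your argument does prove it. The equality step of (iii) must be run with this restricted version. That suffices, since $z$ can be taken arbitrarily close to the endpoint and Proposition~\ref{prop:normal2} gives normal neighbourhoods of uniform size.

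Three other steps lean on a tool you do not supply.
\begin{itemize}
\item \textbf{Dynamic programming.} The time-shifted tail of $\gamma$ is in general not causal when $g$ depends on $t$: a null tangent at $(t,x)$ can be spacelike at $(t-\Delta,x)$. Moving first along $\d_t$ does not repair this.
\item \textbf{Continuity in $y$.} Upper semicontinuity of $y\mapsto\atimeF_y(x)$ cannot come from openness of timelike cones alone, because minimizers are null and have no slack.
\end{itemize}
What is needed in both places is push-up, $p\ll q\le r\Rightarrow p\ll r$, proved by covering the causal curve with finitely many convex normal neighbourhoods, together with openness of chronological futures and pasts. For the continuity statements, Filippov's lemma for the inclusion $dx/dt\in K(t,x)$, with $K$ convex, compact and Lipschitz, also works.
\begin{itemize}
\item \textbf{Maximality.} Mollifying $\zeta$ only produces a subsolution for a slower metric $g^\delta$ with $\delta<1$. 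So monotonicity of $t-\zeta_\varepsilon(x)$ holds only along $g^\delta$-causal curves, which are fewer than the $g$-causal ones.
\end{itemize}
To remove the regularization you need the $g^\delta$ first-arrival time to converge to $\atimeF_y$ as $\delta\to1^-$. This is Lemma~\ref{lemma: continuity arrival time speed parameter}, or follows from Filippov's lemma again, and your sketch does not address it.
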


\bigskip
Similarly, $\atimeP_{y}(x)$ denotes the first-arrival time at $x$
in the past for backward causal curves starting at $y$. 
If $\usq =(\ut,\ux)$ and $\bsq= (\bt,\bx)$, then 
\begin{align}
  \label{eq: equiv tau past futur}
  \bt = \atimeF_{\usq}(\bx)
  \ \ \Equiv \ \ 
  \ut = \atimeP_{\bsq}(\ux).
\end{align}
Proposition~\ref{prop: JMR} can be adapted to $\atimeP_{y}(x)$.  Consequently,
if $\Omega$ is a bounded open set of $\R^d$ and $I$ a bounded
interval of $\R$, then $\atimeF_{(\ut,\ux)}(x)$ and
$\atimeP_{(\ut,\ux)}(x)$ are uniformly Lipschitz with respect to
$(\ut,\ux,x) \in I\times \Omega \times \Omega$.

\bigskip
\begin{definition}
  \label{def: future, past}
The future and  the past  of $y\in \caL$ are the open sets 
\begin{align*}
  &\Future(y) = \{ q\in \caL: \ \text{there exists a smooth
    future timelike curve from} \ y \ \text{to}\ q \},\\
  &\Past(y) = \{
  q \in \caL: \ \text{there exists a smooth past timelike
    curve from} \ y \ \text{to}\ q \}
\end{align*}
respectively.
Their boundaries are denoted $\d \Future(y)$ and $\d \Past(y)$.
\end{definition}
Lemma~\ref{lemma: alternative future} below shows that smooth timelike curves can be
replaced by Lipschitz curves in Definition~\ref{def: future, past}.
Finite speed of propagation implies the following result.
\begin{lemma}
  \label{lemma: boundedness tip future and past}
  Suppose $y =(\ut, \ux) \in \caL$.  If $T>\ut$ then $\Future(y) \cap \{
  t \leq T\}$ and $\d \Future(y) \cap \{ t \leq T\}$ are bounded.
\end{lemma}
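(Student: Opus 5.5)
The plan is a uniform speed bound: every causal curve from $y$ has spatial velocity bounded with respect to time. Hypothesis~\ref{hyp: metric} should give a constant $c_{\max}$, independent of the point, controlling how fast future causal curves can move in $x$ per unit of $t$. The future and its boundary should then lie inside a Euclidean cone $\{(t,x):\ t\ge\ut,\ |x-\ux|\le c_{\max}(t-\ut)\}$, which is bounded once we cut at $t\le T$.

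\textbf{Step 1: a pointwise speed bound.} Take $\bfv=(v^0,v')\in\ovl{\Timelike^+_y}\setminus 0$, so that $g_y(\bfv,\bfv)\le 0$. Write
\begin{align*}
g_y(\bfv,\bfv)=g_{00}(v^0)^2+2v^0 g_{0i}v^i+g_{ij}v^iv^j .
\end{align*}
The spatial block $h=(g_{ij})_{i,j\ge1}$ is the restriction of $g$ to $\{v^0=0\}$. It is positive definite because $g^{00}<0$ forces the $g$-orthogonal complement of the timelike vector $dt^\sharp$, namely $\ker dt$, to be spacelike.

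We want this positivity to be uniform. The Schur complement formula gives $g^{00}=\det h/\det g$. Together with $|\det g|\ge C_0$, the bounds $\d^\alpha g^{ij}\in L^\infty$ (hence $g_{ij}=(g^{ij})^{-1}$ is bounded), and $g^{00}\le -C_1$, this should yield $h\ge c\,\mathrm{Id}$ with $c>0$ independent of $y$. I expect this to be the only delicate point: extracting a uniform lower bound on the smallest eigenvalue of $h$ from the bounded entries and the lower bound on $\det h$.

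Granting it, causality gives
\begin{align*}
c|v'|^2\le g_{ij}v^iv^j\le |g_{00}|(v^0)^2+2|v^0|\,|g_{0\cdot}|\,|v'| .
\end{align*}
This implies $|v'|\le c_{\max}|v^0|$. Since $v^0\ne0$ and the vector is forward, i.e.\ in the component of $\d_t$, we get $v^0>0$.

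\textbf{Step 2: integrate along curves.} Let $\gamma$ be a future Lipschitz causal curve from $y$. Step 1 applied almost everywhere shows that $t(s)$ is nondecreasing and that $|x'(s)|\le c_{\max}\,t'(s)$. Integrating gives $|x(s)-\ux|\le c_{\max}(t(s)-\ut)$. Hence $\Future(y)$, and by closedness of the cone also $\ovl{\Future(y)}\supset\d\Future(y)$, lie in the cone $K=\{|x-\ux|\le c_{\max}(t-\ut),\ t\ge \ut\}$.

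\textbf{Step 3: conclude.} The slab $K\cap\{t\le T\}$ is contained in $[\ut,T]\times \overline{B}(\ux,c_{\max}(T-\ut))$, which is compact. This proves that both $\Future(y)\cap\{t\le T\}$ and $\d\Future(y)\cap\{t\le T\}$ are bounded.
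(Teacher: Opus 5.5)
\textbf{Verdict: correct strategy, but there is a genuine gap in the uniformity claimed in Step~1.}

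Your overall route is what the paper has in mind: a uniform bound $|v'|\le c_{\max}v^0$ for forward causal vectors, integrated along Lipschitz causal curves to trap $\ovl{\Future(y)}$ in a Euclidean cone. The paper gives no proof beyond the remark that the lemma follows from finite speed of propagation. Steps~2 and~3 are fine.

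\textbf{Where Step~1 fails.} The inference ``$\d^\alpha g^{ij}\in L^\infty$, hence $g_{ij}=(g^{ij})^{-1}$ is bounded'' is not valid. Bounding the inverse requires $|\det(g^{ij})|$ bounded below, i.e.\ $|\det(g_{ij})|$ bounded \emph{above}. Hypothesis~\ref{hyp: metric}.ii, with the paper's convention $g_y=(g_{ij})$, bounds $|\det(g_{ij})|$ only from \emph{below}, and that already follows from part~i.

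Your displayed inequality needs $\sup|g_{0i}|<\infty$, and this can fail. In $1+2$ dimensions take $g^{00}=-1$, $g^{01}=\delta$, $g^{11}=\delta^2$, $g^{22}=1$, all other entries zero, with $\delta(x)=e^{-|x|^2}$. Every part of Hypothesis~\ref{hyp: metric} holds: $g_{00}=-1/2$ and $|\det(g_{ij})|=1/(2\delta^2)\ge 1/2$. But $g_{01}=1/(2\delta)$, so your estimate only gives $|v'|\lesssim e^{|x|^2}v^0$. That does not even rule out escape to infinity in finite time. Yet
\[
g(\bfv,\bfv)=\tfrac12\bigl(v^1/\delta+v^0\bigr)^2-(v^0)^2+(v^2)^2
\]
shows the true speed is bounded. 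The lower bound on $h$, which you flagged as the delicate point, is actually harmless, since $h^{-1}=(g^{ij})_{i,j\ge1}-g^{0i}g^{0j}/g^{00}$.

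\textbf{The fix: run Step~1 on the cotangent side, where the hypotheses live.}
\begin{itemize}
\item Set $M=\sup_{y,i,j}|g^{ij}(y)|$. Pick $\lambda_0>0$, depending only on $C_1,M,d$, so that $\omega_\lambda=dt+\lambda\,\xi\cdot dx$ satisfies $g^*_y(\omega_\lambda,\omega_\lambda)\le -C_1+2\lambda_0\sqrt{d}\,M+\lambda_0^2\,dM<0$ for all $y$, all $|\xi|=1$ and all $|\lambda|\le\lambda_0$.
\item For a nonzero forward causal $\bfv$, $\omega_\lambda(\bfv)=g_y(\omega_\lambda^\sharp,\bfv)\neq 0$ by Lemma~\ref{lem:vspace}.
\item Also $\omega_0(\bfv)=v^0>0$ by Lemma~\ref{lemma: property g R1+d}. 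By continuity in $\lambda$, $v^0+\lambda\,\xi\cdot v'>0$ for all $|\lambda|\le\lambda_0$.
\item Choosing $\xi=-v'/|v'|$ and $\lambda=\lambda_0$ gives $|v'|<v^0/\lambda_0$.
\end{itemize}

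Equivalently, on the vector side, complete the square in $h$ instead of bounding the cross term crudely. With $w=v'-v^0g^{0\cdot}/g^{00}$ one gets $g(\bfv,\bfv)=h(w,w)+(v^0)^2/g^{00}$, and every constant is then controlled by $M$ and $C_1$ alone. With $c_{\max}=1/\lambda_0$, your Steps~2 and~3 go through unchanged.
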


The following proposition describes  $\Future(y)$ near $y$  
in terms  of the exponential map.
\begin{proposition}
\label{prop:local form of F}
Suppose that $y\in \caL$ and $\caN=\exp_y(\tcaN)$ is a bounded normal
\nhd of $y$.  Define
\begin{align*}
  t_\caN
  = \inf \big\{t(q): \ q\in\exp_y(\Timelike^+_y)\,\cap\,\d\caN\big\}
  = \min \big\{t(q): \ q\in \exp_y(cl \Timelike^+_y)\,\cap\,\d\caN\big\}>t(y).
\end{align*}
Then
\begin{align}
  \label{eq:twotips}
  \Future(y)\cap \{t<t_\caN\}
  =\exp_y (\Timelike^+_y) \cap \{t<t_\caN\}
  \subset \caN,
\end{align}
and
\begin{align}
    \label{eq: local identification boundary future}
    \d\Future(y) \cap \{ t < t_\caN\}
    = \Lambda^+(y) \cap \{ t < t_\caN\}
    \subset \caN.
\end{align}
  If  $q \in \d\Future(y) \cap \{ t < t_\caN\}$, it lies on a
  forward radial null geodesic in $\caN$ connecting $y$ to $q$.  Up to
  reparametrization, this is the only forward causal curve starting at
  $y$ and arriving at $q$.
\end{proposition}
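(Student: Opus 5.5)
The plan is to compare timelike curves leaving $y$ with radial geodesics of the normal \nhd $\caN$, using the Gauss lemma in the form already used in Lemma~\ref{lem:timelikecross}.

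\textbf{Step 1: $t_\caN > t(y)$ and the infimum is attained.} Since $\caN$ is bounded, $\exp_y(\overline{\Timelike^+_y})\cap\d\caN$ is compact: it is the image of a compact piece of the closed cone inside $\overline{\tcaN}$. So the infimum over the closure is a minimum. The infimum over the open cone equals this minimum by density. The minimum is strictly larger than $t(y)$ because of how $t$ behaves along the radial geodesics $s\mapsto\exp_y(s\bfv)$ with $\bfv\in\overline{\Timelike^+_y}\setminus 0$. The time $t$ increases strictly along each such geodesic: $\d_t$ is uniformly timelike, so $dt$ is positive on forward causal vectors, by Hypothesis~\ref{hyp: metric}. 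A point of $\d\caN$ is reached only at a parameter bounded away from $0$, so its time exceeds $t(y)$.

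\textbf{Step 2: the inclusion $\exp_y(\Timelike^+_y)\cap\{t<t_\caN\}\subset\Future(y)$ in \eqref{eq:twotips}.} This is immediate, since the radial geodesic from $y$ to such a point is smooth and timelike.

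\textbf{Step 3: the converse inclusion.} Take a future timelike curve $\rho$ from $y$ to $q$ with $t(q)<t_\caN$. Along $\rho$ the time increases, so $\rho$ stays in $\{t<t_\caN\}$. Near $y$, $\rho$ lies in $\exp_y(\Timelike^+_y)$, because its initial tangent vector is forward timelike. Now argue by continuity. $\rho$ cannot leave $\exp_y(\Timelike^+_y)\cap\caN$ through $\d\caN$, since those points have $t\ge t_\caN$. It cannot leave through $\Lambda^+(y)$ either: by Lemma~\ref{lem:timelikecross}, a future timelike curve hitting $\Lambda^+(y)$ enters $\exp_y(\Timelike^+_y)$, so it cannot reach $\Lambda^+(y)$ from inside. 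To make this rigorous I would use the function $Q(p)=g_y(\bfv_p,\bfv_p)$ with $p=\exp_y(\bfv_p)$. The Gauss lemma gives that $dQ$ applied to a future timelike vector is negative wherever $\bfv_p$ is forward causal. Hence $Q$ decreases strictly along $\rho$ and stays negative, so $\rho$ remains in the image of the open forward cone. This also gives $\subset\caN$.

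\textbf{Step 4: the boundary identity \eqref{eq: local identification boundary future}.} Inside $\caN\cap\{t<t_\caN\}$, the boundary of $\exp_y(\Timelike^+_y)$ is $\exp_y(\Null^+_y)\cup\{y\}=\Lambda^+(y)$. Combined with \eqref{eq:twotips}, the boundary of $\Future(y)$ in the open set $\{t<t_\caN\}$ is $\Lambda^+(y)\cap\{t<t_\caN\}$. Points at time exactly near $t_\caN$ need no treatment, because the statement only concerns $\{t<t_\caN\}$.

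\textbf{Step 5: uniqueness of the causal curve.} Repeat Step 3 with a forward causal curve $\rho$ ending at $q\in\Lambda^+(y)$. The same monotonicity shows that $Q\circ\rho$ is nonincreasing, and it vanishes at both ends. Hence $dQ(\rho')=0$ almost everywhere. By the equality case of the reverse Cauchy--Schwarz inequality (Gauss lemma), $\rho'$ is then parallel to the radial null direction. So $\rho$ is a reparametrized radial null geodesic.

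I expect the main obstacle to be the Gauss-lemma monotonicity of $Q$ along causal curves, including its equality case for merely Lipschitz curves, together with the fact that a Lipschitz curve in Step 3 cannot escape $\caN$ while $t<t_\caN$.
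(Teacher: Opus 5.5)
Your Steps 1--4 follow the paper's route. The paper proves \eqref{eq:twotips} by the same continuity argument as your Step 3: a smooth future timelike curve starts inside $\exp_y(\Timelike^+_y)$, cannot touch $\exp_y(\Null^+_y)$ by Lemma~\ref{lem:timelikecross}, and can only leave through $\d\caN$ at times $\ge t_\caN$. It then reads \eqref{eq: local identification boundary future} off from \eqref{eq:twotips}. Your function $Q$ is a quantitative form of the Gauss-lemma step inside Lemma~\ref{lem:timelikecross}.

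The gap is in Step 5. The monotonicity of $Q$ gives a sign for $dQ(\rho')$ only at points where $\bfv_{\rho(s)}$ is already known to be forward causal. In Step 3 you could start it because a smooth timelike curve has a forward timelike initial tangent, which puts $\rho(s)$ in $\exp_y(\Timelike^+_y)$ for small $s>0$. A forward causal curve in the paper's sense is only Lipschitz, need not have an initial tangent, and may be tangent to the null cone at $y$. At $y$ one has $Q=0$ and $dQ=0$, and $dQ(\rho')$ has no sign where $\bfv_{\rho(s)}$ is spacelike. So nothing you have shown prevents $\rho$ from entering $\{Q>0\}$ immediately. Hence ``the same monotonicity'' proves neither that $Q\circ\rho$ is nonincreasing nor that $\rho$ stays in $\caN$. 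Indeed, the bound $t\ge t_\caN$ only controls points of $\d\caN$ lying in $\exp_y(\overline{\Timelike^+_y})$.

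What is missing is the confinement of causal curves issued from $y$ to the closed cone image, which is the real content of the uniqueness claim. The paper gets it from the first-arrival time:
\begin{itemize}
\item Since $t(q)=\atimeF_y(x_q)$, every causal curve from $y$ to $q$ realizes the infimum in \eqref{eq: first arrival time}.
\item The dynamic programming principle of Proposition~\ref{prop: JMR}~(ii) then gives $t(s')=\atimeF_y(x(s'))$ all along the curve.
\item Proposition~\ref{prop: JMR}~(iv) identifies this graph with $\Lambda^+(y)$ inside $\caN$.
\item Lying on the smooth characteristic hypersurface $\Lambda^+(y)\setminus\{y\}$, the curve is null and follows the null generator field, hence is unique.
\end{itemize}
Once you know $\rho\subset\Lambda^+(y)$, i.e.\ $Q\equiv 0$ along $\rho$, your equality case of the reverse Cauchy--Schwarz inequality would also finish the proof.

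Alternatively, you could start your argument with the faster metrics $g^\delta$, $\delta>1$. For these, $\rho$ is timelike with tangents uniformly inside $\Timelike^{\delta,+}_{\rho(s)}$, and you would then let $\delta\to 1$. This route needs normal neighborhoods uniform in $\delta$, and it is not in your proposal.
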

\begin{proof}
The right hand side of \eqref{eq:twotips} is a subset of the left hand
side.  It suffices to show that $q\in \Future(y)\cap \{t<t_\caN\} $
implies that  $q\in \exp_y(\Cone^+_y)\cap \{t<t_\caN\}\cap\caN$.

Since $q\in \Future(y)$, choose a smooth future timelike curve $\gamma(s)$ from $y$ to $q$.
Since the initial derivative of $\gamma$ is timelike, it  belongs to $\Timelike^+_y$.
Define $\tgamma=\exp_y^{-1} \gamma$.
Then   $\tgamma$ takes values in 
$\Timelike^+_y\cap \tcaN$ near $y$.  

The key step is to show that $\tgamma$ stays in $\Timelike^+_y\cap \tcaN$ before $\gamma$ reaches $q$.
So long as $\widetilde \gamma$ takes values in $\tcaN$, Lemma
\ref{lem:timelikecross} implies that $\tgamma$ cannot touch
$\Null^+_y$.  The only way that $\gamma$ can leave $\exp_y(\Timelike^+_y)\cap \caN$ is by reaching a point $w\in \exp_y( cl \Timelike^+_y) \cap \d\caN$.
The choice of $t_\caN$ insures that $t(w)\ge t_\caN> t(q)$,
contradicting the choice of $\gamma$ as a future timelike curve
from $y$ to $q$. Hence,  $\gamma$ takes values in $\exp_y( \Timelike^+_y)\cap \caN$ where $q$ lies. Since $t(q)<t_\caN$, 
$q\in\exp_y( \Timelike^+_y)\cap \caN \cap\{t<t_\caN\}$,
completing the proof of \eqref{eq:twotips}.

Equation~\eqref{eq: local identification boundary future} follows
  from Equation \eqref{eq:twotips}.
Indeed, if $q \in \d\Future(y) \cap \{ t < t_\caN\}$, 
  \eqref{eq:twotips} implies that there is $\bfv \in
  \d\Timelike^+_y = \Null^+_y$ with $\exp_y(\bfv) = q$. Then $s\mapsto
  \exp_y(s\bfv)$ for $0\le s\le 1$ is a radial null geodesic connector.
  By Part~4 in Proposition~\ref{prop: JMR}, one
  has $q = (\atimeF_y(\ux), \ux)$ for some $\ux \in \R^d$. 

Suppose next that $\gamma(t)=(t,x(t))$ is a causal curve that starts
at $y$ and arrives at $q$.  The curve $\gamma$ is a minimizing curve
for \eqref{eq: first arrival time}.  The dynamic programming principle
together with Parts~2 and 4 in Proposition~\ref{prop: JMR} imply that
$\gamma(t) = \big(\atimeF_y(x(t)), x(t)\big) \in \Lambda^+(y)$.  The
curve $\gamma$ must be null at all points where it is differentiable.
Otherwise it would be timelike and would cross the smooth manifold
$\Lambda^+(y)$ transversally.

Since $(\Lambda^+(y) \cap \caN)\setminus \{y\}$ is smooth and
characteristic, there is a unique null vector $\bfw$ tangent to
$\Lambda^+(y)$ with time component equal to 1.  This defines a smooth
vector field tangent to $\Lambda^+(y)$ and wherever $\gamma$ is
differentiable, $\gamma^\prime=\bfw$.  Therefore $\gamma$ agrees in
$t>t(y)$ with the integral curve of $\bfw$ through $q$ traced backward
in time and approaching $y$ as $t\searrow t(y)$.  This uniquely
determines $\gamma$.
\end{proof}

\begin{proposition}
  \label{prop: minimizing curves}
  \begin{enumerate}[align=left,labelwidth=0.8cm,labelsep=0cm,itemindent=0.8cm,
      leftmargin=0cm,label=\bf{\roman*.}]
  \item 
    Any minimizing forward causal Lipschitz curve given in
    Proposition~\ref{prop: JMR} is a (reparametrization of a) null
    geodesic.
  \item If $y \in \caL$ and $x\in \R^d$, there is a future null geodesic connecting $y$ and $(\atimeF_y(x),x)$.
    \item Suppose $y\in \caL$. The lower bound on arrival time is achieved by solutions of
    $Pu=0$ with nonzero Cauchy data at $t(y)$ supported at $y$.
  \end{enumerate}
\end{proposition}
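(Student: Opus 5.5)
The plan is to prove part (i) by a local argument, to get (ii) from (i) and Proposition~\ref{prop: JMR}, and to get (iii) from finite speed together with propagation of singularities.

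\emph{Part (i).} Let $\gamma:[0,S]\to\caL$ be a minimizing forward causal Lipschitz curve from $y$ to $(\atimeF_y(x),x)$. Being a geodesic is a local property, and every piece of $\gamma$ is again minimizing. Indeed, by the dynamic programming principle of Proposition~\ref{prop: JMR}(ii), $t(\gamma(s'))=\atimeF_{\gamma(s)}(x(s'))$ for $s<s'$. Fix $s_0\in[0,S)$. By Proposition~\ref{prop:normal2} there is a normal neighborhood $\caN$ of $\gamma(s_0)$ of uniform size. Take the associated time $t_\caN$ from Proposition~\ref{prop:local form of F}, and choose $s_1>s_0$ so close that $\gamma([s_0,s_1])\subset\caN\cap\{t<t_\caN\}$.

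Since $\gamma(s_1)$ is reached at first-arrival time from $\gamma(s_0)$, it cannot lie in the open set $\Future(\gamma(s_0))$. If it did, a point slightly below it, $(t(\gamma(s_1))-\eta,x(s_1))$, would still be in the future, and this would contradict the infimum. On the other hand $\gamma(s_1)$ lies in the closure of $\Future(\gamma(s_0))$, because it is reached by a causal curve; one can also argue directly via Proposition~\ref{prop: JMR}(iv). Hence $\gamma(s_1)\in\d\Future(\gamma(s_0))\cap\{t<t_\caN\}$. The last statement of Proposition~\ref{prop:local form of F} then says that, up to reparametrization, the only forward causal curve from $\gamma(s_0)$ to $\gamma(s_1)$ is the radial null geodesic. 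So $\gamma|_{[s_0,s_1]}$ is a reparametrized null geodesic.

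Covering $[0,S]$ by such intervals, using compactness and the uniform size of the normal neighborhoods, shows that $\gamma$ is piecewise a null geodesic. To remove possible corners, I would repeat the argument centred at a breakpoint $\gamma(s_0-\epsilon)$, so that the breakpoint becomes an interior point of a single radial geodesic segment. Geodesics are determined by their $\Con^1$ data, so the pieces glue into one null geodesic. I expect this gluing at corners to be the main technical point, together with checking carefully that $\gamma(s_1)$ lands in $\d\Future$ and not inside $\Future$.

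\emph{Part (ii).} Proposition~\ref{prop: JMR}(ii) gives a minimizing Lipschitz causal curve from $y$ to $(\atimeF_y(x),x)$, and part (i) shows it is a null geodesic.

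\emph{Part (iii).} Take Cauchy data at $t=t(y)$ supported at $\{y\}$, for instance a $\delta$-type distribution with wavefront set containing all nonzero covectors over $y$. By sharp finite speed, as in \eqref{eq:sharp}, $\supp u\subset\ovl{\Future(y)}\cup\ovl{\Past(y)}$. Hence $u$ vanishes at every $(t,x)$ with $t(y)\le t<\atimeF_y(x)$, which is the lower bound. To see that the bound is attained, use the null geodesic from part (ii). It is the projection of a null bicharacteristic of $p$ starting over $y$, and that bicharacteristic lies in $\WF$ of the Cauchy data. By H\"ormander's propagation of singularities it stays in $\WF(u)$, so $(\atimeF_y(x),x)\in\singsupp u\subset\supp u$.
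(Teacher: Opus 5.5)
Your proposal is correct and follows essentially the same route as the paper. The paper uses dynamic programming to make every subarc minimizing, and Proposition~\ref{prop:local form of F} in uniformly sized normal neighborhoods to identify short subarcs with radial null geodesics. Your re-centering at $\gamma(s_0-\epsilon)$ is exactly the paper's use of symmetric intervals $[s-\delta,s+\delta]$. Parts (ii) and (iii) also match: (ii) follows from (i) together with Proposition~\ref{prop: JMR}, and (iii) from propagation of the wavefront set along the bicharacteristic over that null geodesic.

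Your extra care in placing $\gamma(s_1)\in\d\Future(\gamma(s_0))$ via Proposition~\ref{prop: JMR}(iv) is well placed. Going through Lemma~\ref{lemma: alternative future} instead would be circular, since that lemma is proved using the present proposition.
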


\begin{proof}
  Suppose $y \in \caL$, $x \in \R^d$ and $q =
  (\atimeF_y(x),x)$. Suppose $[0,S] \ni s \mapsto \gamma(s) =
  \big(t(s), x(s)\big)$ with $\gamma(0) = y$ and $\gamma(S) =q$
  achieves the infimum in \eqref{eq: first arrival time}.

  Need to show that $\gamma$ is a reparametrization of a geodesic
  on a \nhd of each point $\gamma(s)$.   Given $s$ choose
  a convex normal \nhd $\caN_s$ of $\gamma(s)$.  Proposition
  \ref{prop:normal2}  
  shows that the choice can be made so that $\ut:=\inf_s t_{\caN_s}>0$.
  
  For $s=0$
  choose $\delta>0$ so that $\gamma([0,\delta])\subset\caN$.   The
  dynamic programming programming principal implies that $\gamma$
  achieves the minimal arrival time from $\gamma(0)$ for all $0\le s\le S$.
  Proposition \ref{prop:local form of F} implies
  that for $0\le t\le t_{\caN_0}$, $\gamma$ is a reparametrization of 
  a radial geodesic from $\gamma(0)$.  A similar argument applies
  at the endpoint $s=S$.
  
  For $0<s<S$  choose $\delta>0$ so that $[s-\delta,s+\delta]\subset [0,S]$
  and $\gamma([s-\delta,s+\delta]\subset \caN_s$. The dynamic
  programming principle implies that $\gamma$ achieves the first arrival
  at $\gamma(s+\delta)$ from $\gamma(s-\delta)$.   Shrinking
  $\delta$ if necessary can suppose that $t \big(\gamma(s+\delta)\big) -t \big(\gamma(s-\delta)\big)<\ut$.  Then
    Proposition \ref{prop:local form of F} implies that the first
    arrival at $\gamma(s+\delta)$ from $\gamma(s-\delta)$
    is a reparametrizaion of the unique radial geodesic from $\gamma(s-\delta)$
    to $\gamma(s+\delta)$.   Thus $\gamma$ is a reparametrized geodesic
    on a \nhd of $\gamma(s)$.

  The second part combines Proposition~\ref{prop: JMR} and the first part.

  Third part. For solutions of $Pu=0$ the wavefront set $\WF (u)$ is invariant
   under the \bichar flow.  Suppose $q = (\atimeF_y(x), x)$ for some
   $x\in \R^d$. By 
   Proposition~\ref{prop: minimizing curves}, there is a future minimizing null geodesic $\rho$
   connecting $y$ and $q$. Associated with $\rho$ is a
   \bichar $\gamma$ that projects onto $\rho$.

   As in Example~\ref{ex:Minkowski-DD}.1, choose nonzero Cauchy data at $t(y)$ with support equal to $\{y\}$
   so that the point of $\gamma$ over $y$ belongs to $\WF (u)$. Then,
   the point of $\gamma$ over $q$ belongs to $\WF (u)$.  Therefore
   $q\in \singsupp u\subset \supp u$.  This solution with initial
   support at $y$ has support that reaches $q$, thus reaching $x$ in
   the minimal time $\atimeF_y(x)$.
\end{proof}
\begin{lemma}
  \label{lemma: alternative future}
  Suppose $y, q \in \caL$. The following three statements are equivalent
  \begin{enumerate}[align=left,labelwidth=0.8cm,labelsep=0cm,itemindent=0.8cm,
      leftmargin=0cm,label=\bf{\roman*.}]
  \item \label{lemma: alternative future1} $q \in \Future(y)$.
  \item \label{lemma: alternative future2} A Lipschitz
    future timelike curve connects $y$ and $q$.
  \item \label{lemma: alternative future3} $q = (t,x)$ and $t > \atimeF_y(x)$.
  \end{enumerate}

  The following four statements are equivalent
  \begin{enumerate}[align=left,labelwidth=0.8cm,labelsep=0cm,itemindent=0.8cm,
      leftmargin=0cm,label=\bf{\roman*.}]
  \item \label{lemma: closure future charact1}
    $q \in \ovl{\Future(y)}$.
  \item \label{lemma: closure future charact2}
    A smooth future  causal curve
    connects $y$ and $q$.
  \item \label{lemma: closure future charact3}
    A Lipschitz future  causal curve
    connects $y$ and $q$.
  \item \label{lemma: closure future charact4} $q = (t,x)$ and $t \geq  \atimeF_y(x)$.
  \end{enumerate}
\end{lemma}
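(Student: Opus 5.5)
The plan is to prove the two chains of equivalences by cycles of implications, with the arrival time $\atimeF_y$ as the common currency. For the first list, \ref{lemma: alternative future1}$\Imply$\ref{lemma: alternative future2} is trivial, since smooth curves are Lipschitz. For \ref{lemma: alternative future2}$\Imply$\ref{lemma: alternative future3}, take a Lipschitz future timelike curve $\gamma$ from $y$ to $q=(t,x)$. Because it is causal, Definition~\ref{def: first arrival time} gives $t\ge \atimeF_y(x)$, and it remains to exclude equality. If $t=\atimeF_y(x)$, then $\gamma$ achieves the infimum. Proposition~\ref{prop: minimizing curves}\,(i) then says $\gamma$ is a reparametrized null geodesic, which contradicts its being timelike on a set of positive measure. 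One point needs care: Proposition~\ref{prop: minimizing curves} is stated for the minimizers given by Proposition~\ref{prop: JMR}. Its proof, however, only uses the dynamic programming principle, and that holds for any minimizer, because a curve achieving the infimum from $y$ also achieves it from each of its intermediate points. Otherwise concatenation would beat it.

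For \ref{lemma: alternative future3}$\Imply$\ref{lemma: alternative future1}, let $t>\atimeF_y(x)$. By Proposition~\ref{prop: minimizing curves}\,(ii) there is a future null geodesic $\rho$ from $y$ to $q_0=(\atimeF_y(x),x)$. The vertical segment from $q_0$ to $q$ is timelike, since $\d_t$ is timelike. Concatenating gives a piecewise smooth causal curve from $y$ to $q$ that is not null everywhere. I would then perturb it to a smooth timelike curve with the same endpoints. The local tool is Lemma~\ref{lem:timelikecross} and Proposition~\ref{prop:local form of F}: if $z\in\rho$ is close to $q_0$, then $q\in\exp_z(\Timelike^+_z)$ once $q$ is close enough to $q_0$ in time, after shrinking. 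The cleaner route is to first move $q$ slightly along the timelike segment, so that $q\in\Future(q_1)$ for some $q_1=(t_1,x)$ with $\atimeF_y(x)<t_1<t$. Then I show that the open set $\Future(q_1)\ni q$, taken backwards, contains points of the form $\rho(s)$ shifted upward. Concretely, I would prove the standard push-up fact: the points of $\Future$ reachable from $y$ along $\rho$ with a small timelike bump form an open set containing $q$. A finite covering of $\rho$ by convex normal neighborhoods (Proposition~\ref{prop:normal2}) lets me replace each null geodesic piece by a timelike radial geodesic, using \eqref{eq:twotips} in each neighborhood. Gluing and smoothing the corners while keeping tangents in the open convex cones $\Timelike^+$ yields a smooth future timelike curve. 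I expect this push-up/smoothing step to be the main obstacle.

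For the second list, \ref{lemma: closure future charact2}$\Imply$\ref{lemma: closure future charact3}$\Imply$\ref{lemma: closure future charact4} are immediate from the definition of $\atimeF_y$. For \ref{lemma: closure future charact4}$\Imply$\ref{lemma: closure future charact2}: if $t>\atimeF_y(x)$, use the first part. If $t=\atimeF_y(x)$, the null geodesic from Proposition~\ref{prop: minimizing curves}\,(ii) is a smooth causal curve. For \ref{lemma: closure future charact4}$\Imply$\ref{lemma: closure future charact1}: the points $(t+\eps,x)$ lie in $\Future(y)$ by the first part and converge to $q$. For \ref{lemma: closure future charact1}$\Imply$\ref{lemma: closure future charact4}: take $q_n=(t_n,x_n)\in\Future(y)$ with $q_n\to q$. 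Then $t_n>\atimeF_y(x_n)$, and the continuity of $\atimeF_y$ in Proposition~\ref{prop: JMR}\,(i) gives $t\ge\atimeF_y(x)$ in the limit.
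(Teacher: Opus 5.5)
Your proof is correct. Apart from the implication iii$\Rightarrow$i of the first list, it follows the paper. In ii$\Rightarrow$iii you exclude $t=\atimeF_y(x)$ by Proposition~\ref{prop: minimizing curves}, and you derive the second list from the first using continuity of $\atimeF_y$ and, when $t=\atimeF_y(x)$, a minimizing null geodesic.

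For iii$\Rightarrow$i the paper uses a different and much shorter device: it slows the metric down. The map $\delta\mapsto\atimeFd{\delta}_y(x)$ is continuous and strictly decreasing (Lemma~\ref{lemma: continuity arrival time speed parameter}, Corollary~\ref{cor: adjusting speed}), so some $\delta<1$ gives $\atimeFd{\delta}_y(x)=t$. A minimizing $\gd$-null geodesic from $y$ to $q$ is then directly a smooth $g$-timelike curve, so no concatenation and no corner rounding are needed.

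Your route is the classical push-up argument: null geodesic to $q_0$, then the vertical segment, then deform. It needs only one $g$-minimizer plus local geometry, and it establishes a reusable fact: a causal curve followed by a timelike one can be replaced by a timelike one. The price is an induction along $\rho$ and a smoothing step.

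The step you flag as the main obstacle does go through with the paper's tools. Let $z_{k-1},z_k$ be consecutive points of $\rho$ in a convex normal neighborhood $\caN$ of $z_{k-1}$, with a piecewise smooth timelike curve running from $z_k$ to $q$. Choose $w$ on that curve so close to $z_k$ that the broken curve $z_{k-1}\to z_k\to w$ stays in $\caN$. Lemma~\ref{lem:timelikecross}, together with the no-exit argument in the proof of Proposition~\ref{prop:local form of F}, gives $w\in\exp_{z_{k-1}}(\Timelike^+_{z_{k-1}})$, so the radial geodesic from $z_{k-1}$ to $w$ is timelike. Start at $q_0$ with the vertical segment, descend finitely many times along $\rho$ to $y$, and round the corners as in Step~2 of Proposition~\ref{prop: homotopic double-cones}.

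Your route also has a small bonus. It handles automatically the case where $x$ is the spatial coordinate of $y$ (then $q_0=y$). The paper's appeal to Corollary~\ref{cor: adjusting speed}, which is stated only for $x$ different from that coordinate, leaves this case implicit.
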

\begin{proof}
  {\bf First set of equivalences}.
  \ref{lemma: alternative future1} $\Imply$
  \ref{lemma: alternative future2} is clear.

  \ref{lemma: alternative future2} $\Imply$
  \ref{lemma: alternative future3}. Suppose $\rho(s)$ is a Lipschitz future timelike
  curve connecing $y$ and $q = (t,x)$. Then, $t \geq \atimeF_y(x)$ by
  \eqref{eq: first arrival time}.  Suppose $t=\atimeF_y(x)$. Then,
  $\rho$ is a causal curve achieving the infimum in \eqref{eq: first
    arrival time}. By Proposition~\ref{prop: minimizing curves}, $\rho$ is
  a reparametrization of a null geodesic contradicting its timelike
  nature. Hence, $t> \atimeF_y(x)$.

  \ref{lemma: alternative future3} $\Imply$ \ref{lemma: alternative
    future1}. Suppose $q=(t,x)$ is such that $t >\atimeF_y(x)$.
    Definition~\ref{def: speedup slowdown metric} and
  Corollary~\ref{cor: adjusting speed} imply that 
    there
  exists a slower metric $g^\delta$, $0<\delta<1$, such that $t =
  \atimeFd{g^{\delta}}_y(x)$.  Then, by Proposition~\ref{prop: minimizing curves},
  there is a null geodesic $\gamma$ for $g^\delta$ that connects $y$
  and $q$. This curve is smooth and timelike for $g$.  Thus, $q \in
  \Future(y)$.

  \medskip
      {\bf Second set of equivalences}. \ref{lemma: closure future charact2} $\Imply$ \ref{lemma: closure
    future charact3} is clear. \ref{lemma: closure future charact3}
  $\Imply$ \ref{lemma: closure future charact4} is a consequence of
  Definition~\ref{def: first arrival time}.  \ref{lemma: closure
    future charact1} $\Equiv$ \ref{lemma: closure future charact4}
  follows from the first set of equivalences.

  \ref{lemma: closure future charact1} $\Imply$ \ref{lemma: closure
    future charact2}.  Suppose $q=(t,x) \in \ovl{\Future(y)}$. If $q \in
  \Future(y)$ then \ref{lemma: closure future charact2} holds by
  the first set of equivalences. If $q \in \d \Future(y)$, then $t
  = \atimeF_y(x)$. 
  Proposition~\ref{prop: minimizing curves} give a forward null geodesic
  connecting $y$ and $q$.
\end{proof}
Lemma~\ref{lemma: alternative future} implies that
\begin{align}
  \label{eq: boundary future}
  \d\Future(y) \,=\, \big\{ (t,x)\in \caL:\  t = \atimeF_y(x) \big\}.
\end{align}

\begin{remark}
  \label{remark: lemma: closure future charact}
  By Proposition~\ref{prop: minimizing curves} and Lemma~\ref{lemma:
    alternative future}, one has $\d \Future(y) \subset \Lambda^+(y)
  \subset \ovl{\Future(y)}$. However,  $\Lambda^+(y)$ can be different from $\d
  \Future(y)$ as future null geodesics starting at $y$ can enter
  $\Future(y)$.  Proposition~\ref{prop: JMR},
  Proposition~\ref{prop:local form of F}, and \eqref{eq: boundary
    future}, imply that future null geodesics (or their
  reparametrizations) all achieve minimization in \eqref{eq: first
    arrival time} {\it near} $y$.  Section~\ref{sec:Second_doldums_example} constructs examples of null geodesic connectors for
  points far from $y$ that arrive after the first arrival time.
   
\end{remark}
By Lemma~\ref{lemma: alternative future},
$\Future(z) \subset \Future(y)$ if $z \in \Future(y)$.
The following lemma gives a sharper version.
\begin{lemma}
  \label{lemma: inclusion future sets}
  Let $y,z \in \caL$. 
  \begin{enumerate}[align=left,labelwidth=0.8cm,labelsep=0cm,itemindent=0.8cm,
      leftmargin=0cm,label=\bf{\roman*.}]
  \item  One has $\Future(z) \subset \Future(y)$ if $z \in \ovl{\Future(y)}$.
  \item One has $\ovl{\Future(z)} \subset \Future(y)$ if $z \in \Future(y)$. 
  \end{enumerate}
\end{lemma}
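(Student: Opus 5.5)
Both parts reduce to concatenating curves and using the characterizations of Lemma~\ref{lemma: alternative future}: membership in $\Future(y)$ means a Lipschitz future timelike curve from $y$ exists, and membership in $\ovl{\Future(y)}$ means a Lipschitz (or smooth) future causal curve exists. The general principle I would establish is that a future causal curve followed by a future timelike curve can be pushed to a timelike curve. The cleanest way to do this in the present setting is through first-arrival times and the slower metrics $g^\delta$ of Appendix~\ref{app:speeding up and slowing down metrics}, rather than through a variational argument.

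\textbf{Part i.} Let $z\in\ovl{\Future(y)}$ and $q\in\Future(z)$, and write $q=(t,x)$. By Lemma~\ref{lemma: alternative future}, $t>\atimeF_z(x)$. I would take a Lipschitz future causal curve $\gamma_1$ from $y$ to $z$ and a Lipschitz future timelike curve $\gamma_2$ from $z$ to $q$. Since $\Future(z)$ is open and $\atimeF$ is continuous in the base point (Proposition~\ref{prop: JMR}(i)), I would pick a point $z'$ on $\gamma_2$ close to $z$ with $z'\in \Future(z)$ and $q\in\Future(z')$. The key step is then to show that $z'\in\Future(y)$. The concatenation of $\gamma_1$ with the portion of $\gamma_2$ from $z$ to $z'$ is a causal Lipschitz curve from $y$ to $z'$ that is not null everywhere, since it contains a timelike piece. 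Hence it is not a reparametrized null geodesic, and by Proposition~\ref{prop: minimizing curves}(i) it cannot achieve the first-arrival infimum at $x(z')$. Therefore $t(z')>\atimeF_y(x(z'))$, i.e.\ $z'\in\Future(y)$ by Lemma~\ref{lemma: alternative future}. Finally, concatenating a Lipschitz timelike curve from $y$ to $z'$ with $\gamma_2$ from $z'$ to $q$ gives a Lipschitz future timelike curve from $y$ to $q$, so $q\in\Future(y)$.

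The one subtlety is to justify that a minimizer must be null at almost every point, not merely that it is a geodesic up to reparametrization. Proposition~\ref{prop: minimizing curves}(i) says the curve is a reparametrized null geodesic, hence null wherever it is differentiable. The concatenated curve has timelike tangent on a set of positive measure, which gives the contradiction.

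\textbf{Part ii.} Let $z\in\Future(y)$. Since $\Future(y)$ is open, there is a small neighborhood $U$ of $z$ contained in $\Future(y)$; this neighborhood in fact contains a point $w\in \Past(z)$ with $w\in\Future(y)$. Concretely, I would follow the Lipschitz timelike curve from $y$ to $z$ and take $w$ slightly before $z$ on it, so that $w\in\Future(y)$ and $z\in\Future(w)$. Now take $q\in\ovl{\Future(z)}$. By part i applied with base point $w$ (since $z\in\Future(w)$ and $q\in\ovl{\Future(z)}$), I would like $q\in\Future(w)$. That direction is the mirror of part i: a timelike curve followed by a causal curve. The same non-minimality argument works: the concatenation from $w$ through $z$ to $q$ is causal and contains a timelike piece, so $t(q)>\atimeF_w(x(q))$, hence $q\in\Future(w)$. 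Then $q\in\Future(w)\subset\Future(y)$, using that $\Future$ is transitive for timelike concatenation.

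In fact the non-minimality argument alone proves both parts directly. In each case, the concatenation of a causal curve and a timelike curve, in either order, from $y$ to $q$ is causal but not everywhere null. Therefore $t(q)>\atimeF_y(x(q))$, and Lemma~\ref{lemma: alternative future} gives $q\in\Future(y)$. For part ii, I would also note that if $q=z$ the claim is just $z\in\Future(y)$. The main obstacle is only the measure-theoretic point above, which is settled by Proposition~\ref{prop: minimizing curves}(i).
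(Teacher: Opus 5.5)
Your proof is correct, but it takes a different route from the paper.

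\textbf{The paper's route.} The paper never re-invokes the structure of minimizers. For i it shifts $z$ upward to $\tilde z=(t_z+\eps,x_z)$. This point lies in $\Future(y)$ because $t_z\ge \atimeF_y(x_z)$, which is the closed-future characterization in Lemma~\ref{lemma: alternative future}. It lies in $\Past(z')$ for small $\eps$ because $\Past(z')$ is open. Two timelike curves are then concatenated. For ii it shifts downward to $\tilde z=(t_z-\eps,x_z)\in\Future(y)$ and runs the time-reversed argument: $z\in\ovl{\Past(z')}$ gives $\tilde z\in\Past(z')$ by the past analogue of Lemma~\ref{lemma: alternative future}.

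\textbf{Your route.} You prove a ``push-up'' principle. A future causal curve from $y$ that is timelike on a set of positive measure cannot realize $\atimeF_y$ at its endpoint. The reason is that, by Proposition~\ref{prop: minimizing curves}(i), a minimizer is a reparametrized null geodesic, so its tangent is never timelike. The endpoint is therefore in $\Future(y)$ by iii$\Rightarrow$i of Lemma~\ref{lemma: alternative future}. This is the same argument the paper uses for ii$\Rightarrow$iii in that lemma, extended to curves that are only partly timelike.

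\textbf{Comparison.} Your approach treats both orders of concatenation (causal then timelike, timelike then causal) uniformly. It needs neither past arrival times nor the $\eps$-shift, and it gives a slightly more general statement. The paper's version is lighter at this point: it only uses the level-set description of $\ovl{\Future(y)}$ and openness of futures and pasts.

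\textbf{Simplification.} The auxiliary points $z'$ in part i and $w$ in part ii are unnecessary. Your final paragraph, applied directly to the concatenated curve from $y$ to $q$, already proves both parts. Note that $q\neq y$ because $t(q)>t(y)$ in both cases, so the infimum in \eqref{eq: first arrival time} is taken over a nontrivial curve.
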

Assertion {\bf ii} implies that 
\begin{align}
  \label{eq:empty_intersection_boundary_future}
  \d\Future(z) \cap \d\Future(y) = \emptyset \ \ \si \ \  z \in \Future(y).
\end{align}
\begin{proof}
  {\bf i.}  Suppose $z' \in \Future(z)$ and $z=(t_z,x_z) \in \ovl{\Future(y)}$. Then,
  $z \in \Past(z')$ and $\tilde{z} = (t_z+\eps,x_z) \in \Past(z')$ 
  for $\eps>0$ \suff small. Thus, there exists two timelike curves: one
  starting at $y$ and ending at $\tilde{z}$ and one starting at
  $\tilde{z}$ and ending at $z'$. Concatenated, they give a timelike
  curve starting at $y$  and ending at $z'$, meaning $z' \in \Future(y)$ by Lemma~\ref{lemma: alternative future}.

  {\bf ii.}  Suppose $z' \in \ovl{\Future(z)}$ with $z=(t_z,x_z) \in \Future(y)$. Set  $\tilde{z} = (t_z-\eps,x_z) \in \Future(y)$ for $\eps>0$ small and proceed as for  {\bf i.}
\end{proof}

The following result extends part of Lemma~\ref{lem:timelikecross}
ouside of normal \nhds.
\begin{lemma}
  \label{lemma: tangent plane future-set}
  Suppose $y\in \caL$, $q \in \d\Future(y)$, and $\d\Future(y)$ is differentiable at  $q$. Then, $\d\Future(y)$ is characterisitc at $q$.
\end{lemma}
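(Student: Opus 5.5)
We argue by contradiction, using the description \eqref{eq: boundary future} of $\d\Future(y)$ as the graph $t=\atimeF_y(x)$ of a Lipschitz function. Write $q=(\atimeF_y(\bx),\bx)$. Since $\d\Future(y)$ is differentiable at $q$, it is the graph of $\atimeF_y$ with $\atimeF_y$ differentiable at $\bx$. The tangent plane $T_q\d\Future(y)$ has conormal $\eta=dt-d_x\atimeF_y(\bx)$. Its Lorentzian normal $\eta^\sharp$ cannot be timelike: otherwise $T_q\d\Future(y)$ would be spacelike, whereas the graph of $\atimeF_y$ contains null geodesic segments through $q$. So it suffices to exclude that $T_q\d\Future(y)$ is noncharacteristic nonspacelike, i.e.\ that $p(q,\eta)>0$.

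Assume $p(q,\eta)>0$. By Proposition~\ref{prop:hyper}-{\bf iii}, which was already used in the proof of the Timelike Homotopy Theorem, a noncharacteristic nonspacelike hyperplane contains a timelike vector. Hence $T_q\d\Future(y)$ contains a forward timelike vector $\bft=(1,\bfv)$ with $\bfv=d\bx$ direction and time component $d_x\atimeF_y(\bx)\cdot\bfv=1$. Consider the straight curve in local coordinates $c(s)=q-s\bft$ for small $s>0$. It is a past timelike curve ending at $q$; equivalently, traversed forward, it is a future timelike curve from $c(s)$ to $q$. Its spatial part is $\bx-s\bfv$ and its time is $\atimeF_y(\bx)-s$. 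Differentiability gives
\begin{align*}
\atimeF_y(\bx-s\bfv)=\atimeF_y(\bx)-s\,d_x\atimeF_y(\bx)\cdot\bfv+o(s)=\atimeF_y(\bx)-s+o(s),
\end{align*}
so $c(s)$ lies within $o(s)$ of $\d\Future(y)$. This is not yet a contradiction, so we tilt: use instead $\bft_\epsilon=(1-\epsilon,\bfv)$. This vector is still timelike for small $\epsilon>0$, since the timelike cone is open. Then $c_\epsilon(s)=q-s\bft_\epsilon$ has time $\atimeF_y(\bx)-s+\epsilon s$, which exceeds $\atimeF_y(\bx-s\bfv)$ for $s$ small relative to $\epsilon$. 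By Lemma~\ref{lemma: alternative future}, $c_\epsilon(s)\in\Future(y)$.

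Now Lemma~\ref{lemma: inclusion future sets}{\bf ii} applies. The point $q$ lies on a future timelike curve from $c_\epsilon(s)$, so $q\in\Future(c_\epsilon(s))\subset\ovl{\Future(c_\epsilon(s))}\subset\Future(y)$. This contradicts $q\in\d\Future(y)$, since $\Future(y)$ is open and therefore disjoint from its boundary. Hence $p(q,\eta)\le0$. Together with the exclusion of the spacelike case, $p(q,\eta)=0$, i.e.\ $\d\Future(y)$ is characteristic at $q$.

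The main obstacle is the exclusion of the spacelike case. The cleanest route is to use Proposition~\ref{prop: minimizing curves}: a future null geodesic $\rho$ from $y$ reaches $q$ and minimizes arrival time, and by the dynamic programming principle it lies in $\d\Future(y)$ just before $q$. Hence its null tangent $\rho'$ at $q$ lies in $T_q\d\Future(y)$. A spacelike hyperplane contains no null vectors, so the tangent plane is not spacelike. One must check that the one-sided tangent of $\rho$ at $q$ is indeed tangent to the graph; this follows from the differentiability of $\atimeF_y$ at $\bx$ and the fact that $\rho$ is $\Con^1$ up to $q$, being a geodesic.
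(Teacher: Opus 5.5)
Your proof is correct, but it takes a different route from the paper's.

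\textbf{The paper's route.} It picks a point $\gamma(\us-\delta)$ slightly before $q$ on the minimizing null generator. Then $\Future(\gamma(\us-\delta))\subset\Future(y)$, and both boundaries pass through $q$. So $\atimeF_{\gamma(\us-\delta)}-\atimeF_y\ge 0$ attains its minimum at $\bx$, and the two tangent planes at $q$ coincide. For small $\delta$, $\d\Future(\gamma(\us-\delta))$ agrees near $q$ with the smooth cone $\Lambda^+(\gamma(\us-\delta))$. Lemma~\ref{lem:timelikecross} (the Gauss lemma in a normal neighborhood) then gives characteristicity in one stroke.

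\textbf{Your route.} You split the hyperplane classification into two elementary exclusions. First, differentiating $t=\atimeF_y(x(t))$ along the generator at $q$ puts a nonzero null vector in $T_q\d\Future(y)$, which rules out the spacelike case. Second, the tilting argument rules out timelike vectors in the tangent plane. It is a first-order form of achronality: $q\in\d\Future(y)$ cannot lie in the timelike future of any point of the open set $\Future(y)$.

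\textbf{Comparison.} Your argument avoids the exponential map, normal neighborhoods, and the constraint $t(q)<t_\caN$ needed for the auxiliary cone. It uses only $\Future(y)=\{t>\atimeF_y(x)\}$, transitivity of the timelike relation, and the existence of minimizing null geodesics. It also isolates the two general facts that force the conclusion: $\d\Future(y)$ is achronal and ruled by null generators. The paper's route is shorter given the machinery already built, and it identifies the tangent plane explicitly as $T_q\Lambda^+(\gamma(\us-\delta))$.

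\textbf{One detail to state.} In the tilting step, fix $\epsilon$ first so that the constant vector $\bft_\epsilon$ is timelike on a whole neighborhood of $q$. This uses continuity of $g$ in the base point, not just openness of the cone at $q$. Only then take $s$ small. The straight segment from $c_\epsilon(s)$ to $q$ is then a smooth future timelike curve, and Definition~\ref{def: future, past} applies directly.
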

\begin{proof}
  Choose $\gamma(s)$ to be a future null geodesic connecting $y$ to
  $q$ with $\gamma(\us)=q$.  For $0<\delta \ll 1$ consider $\gamma$ on
  $\us-\delta\le s\le \us+\delta$.  For $\delta$ sufficiently small,
  this is the first arriving causal curve from $\gamma(\us-\delta)$
  and Lemma \ref{lem:timelikecross} applies. Lemma~\ref{lemma:
    inclusion future sets} gives $\ovl{\Future(\gamma(\us-\delta))}
  \subset \ovl{\Future(y)}$.  As $\d \Future(\gamma(\us-\delta))$ and $\d
  \Future(y)$ are both differentiable at $q$, their tangent planes coincide.
  Lemma \ref{lem:timelikecross} implies they are characteristic, as $\d \Future\big(\gamma(\us-\delta)\big)$ coincides with $\Lambda^+\big(\gamma(\us-\delta)\big)$ in a \nhd of $q$. 
\end{proof}

\subsection{Foliation of a future set and local description near the tip}
\label{eq: foliation}
Given  a timelike curve $\gamma$  through
$y$,
 \eqref{eq:empty_intersection_boundary_future} provides
a foliation of $\Future(y)$  anchored at $\gamma$.

\begin{lemma}
  \label{lemma: foliation future past}
  Suppose $y =(\ut, \ux) \in \caL$,  $\ovl{t}>\ut$, and,
  $[\ut,\ot]
  \ni t \mapsto \gamma(t) = (t,x(t))$ is a future timelike curve with 
  $\gamma(\ut) = y$. Then,
  \begin{align*}
    \Future(y) \setminus \Future(\gamma(\bt))\, =\,
      \bigsqcup_{\ut < t \leq  \bt} \d\Future(\gamma(t)) \,.
  \end{align*}
 \end{lemma}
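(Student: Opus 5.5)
The plan is to prove the set equality by three inclusions: the right-hand union lies in the left-hand side, the boundaries in the union are pairwise disjoint, and every point of the left-hand side lies on one of them. Throughout, the boundary of a future set is described by \eqref{eq: boundary future}: for each $t$, $\d\Future(\gamma(t))$ is the graph $\{(s,x): s=\atimeF_{\gamma(t)}(x)\}$.

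\textbf{Containment of the union in the left-hand side.} Fix $\ut<t\le\bt$. Since $\gamma$ is future timelike, $\gamma(t)\in\Future(y)$. Lemma~\ref{lemma: inclusion future sets}(ii) then gives $\d\Future(\gamma(t))\subset\ovl{\Future(\gamma(t))}\subset\Future(y)$. Next I must show that $\d\Future(\gamma(t))$ does not meet $\Future(\gamma(\bt))$.
\begin{itemize}
\item If $t=\bt$, this holds because $\Future(\gamma(\bt))$ is open.
\item If $t<\bt$, suppose $q\in\d\Future(\gamma(t))\cap\Future(\gamma(\bt))$. Because $\gamma(\bt)\in\Future(\gamma(t))$, Lemma~\ref{lemma: inclusion future sets}(ii) gives $\Future(\gamma(\bt))\subset\ovl{\Future(\gamma(\bt))}\subset\Future(\gamma(t))$. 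This places $q$ in the open set $\Future(\gamma(t))$, so $q$ cannot lie on its boundary, a contradiction.
\end{itemize}

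\textbf{Disjointness.} For $t<t'$ we have $\gamma(t')\in\Future(\gamma(t))$, so disjointness of $\d\Future(\gamma(t))$ and $\d\Future(\gamma(t'))$ is exactly \eqref{eq:empty_intersection_boundary_future}.

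\textbf{Covering, and the main obstacle.} Fix $q=(s,x)\in\Future(y)\setminus\Future(\gamma(\bt))$ and set $f(t)=\atimeF_{\gamma(t)}(x)$ for $t\in[\ut,\bt]$. By Proposition~\ref{prop: JMR}(i), $f$ is continuous in $t$. By Lemma~\ref{lemma: alternative future}, $q\in\Future(y)$ means $s>f(\ut)$, and $q\notin\Future(\gamma(\bt))$ means $s\le f(\bt)$. The intermediate value theorem then gives some $t\in(\ut,\bt]$ with $f(t)=s$, i.e.\ $q\in\d\Future(\gamma(t))$ by \eqref{eq: boundary future}. Such $t$ is unique by the disjointness step; alternatively, $f$ is strictly increasing by Lemma~\ref{lemma: inclusion future sets}.

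The only delicate point is this continuity of $t\mapsto\atimeF_{\gamma(t)}(x)$, which is exactly what Proposition~\ref{prop: JMR}(i) supplies, together with the strict inequality at $\ut$. Both are already available, so the covering step reduces to the intermediate value argument.
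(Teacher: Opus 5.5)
Your proof is correct and is essentially the paper's argument. In both, the covering step is an intermediate value theorem applied to a continuous arrival-time function, and the reverse inclusion and disjointness come from Lemma~\ref{lemma: inclusion future sets} and \eqref{eq:empty_intersection_boundary_future}. The only variation is that the paper uses $t\mapsto \atimeP_q(x(t))-t$, with the base point $q$ fixed and the spatial point moving along $\gamma$. By \eqref{eq: equiv tau past futur} this has the same sign as your $s-\atimeF_{\gamma(t)}(x)$, and it needs continuity of the arrival time only in the spatial variable rather than in the base point.
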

Similar formulas hold for $\Past(y)$.
The geometry is sketched in Figure~\ref{fig: tip emission}. 

\begin{proof}
  Suppose $q \in \Future(y)$ and define the continuous function $f(t) =
  \atime^-_q (x(t))-t$. If $f(\bt)>0$ then $q \in
  \Future(\gamma(\bt))$. If $f(\bt)\leq 0$, there exists $t \in
          ]t_z,\bt]$ such that $f(t) =0$ by the intermediate value
            theorem since $f(\ut)>0$. This means that $q \in
   \d\Future(\gamma(t))$. Therefore
   \begin{align*}
     \Future(q)  \setminus \Future(\gamma(\bt)) \ \subset\  \bigsqcup_{\ut < t \leq  \bt} \d\Future(\gamma(t)) \,.
   \end{align*}
    Lemma~\ref{lemma: inclusion
     future sets}
implies 
opposite inclusion.
\end{proof}
\begin{figure}
\begin{center}
  \resizebox{5.4cm}{!}{
\begin{picture}(0,0)%
\includegraphics{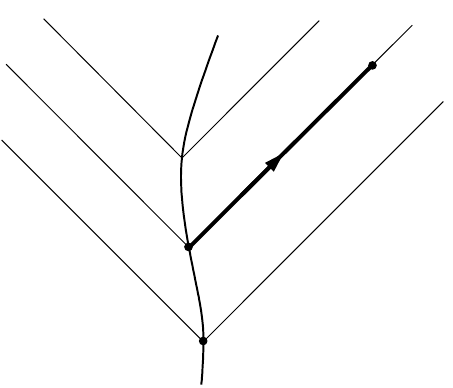}%
\end{picture}%
\setlength{\unitlength}{4144sp}%
\begin{picture}(3437,2941)(1065,-3447)
\put(4487,-1335){\makebox(0,0)[lb]{\smash{\fontsize{12}{14.4}\usefont{T1}{ptm}{m}{n}{\color[rgb]{0,0,0}$\d \Future(q)$}%
}}}
\put(2584,-2466){\makebox(0,0)[lb]{\smash{\fontsize{12}{14.4}\usefont{T1}{ptm}{m}{n}{\color[rgb]{0,0,0}$\gamma(s)$}%
}}}
\put(3944,-1085){\makebox(0,0)[lb]{\smash{\fontsize{12}{14.4}\usefont{T1}{ptm}{m}{n}{\color[rgb]{0,0,0}$y$}%
}}}
\put(2614,-931){\makebox(0,0)[rb]{\smash{\fontsize{12}{14.4}\usefont{T1}{ptm}{m}{n}{\color[rgb]{0,0,0}$\gamma$}%
}}}
\put(4232,-665){\makebox(0,0)[lb]{\smash{\fontsize{12}{14.4}\usefont{T1}{ptm}{m}{n}{\color[rgb]{0,0,0}$\d \Future\big(\gamma(s)\big)$}%
}}}
\put(2669,-3231){\makebox(0,0)[lb]{\smash{\fontsize{12}{14.4}\usefont{T1}{ptm}{m}{n}{\color[rgb]{0,0,0}$q$}%
}}}
\end{picture}%
}
  \caption{Foliated geometry of $\Future(y)$ near $y$.}
  \label{fig: tip emission}
\end{center}
\end{figure}

\section{\Dcones}
\label{sec:dcones}
\begin{definition}
  \label{def: double-cone}
  Suppose $y,y' \in \caL$. Define the {\bf \dcone} with past-tip $y$ and
  future-tip $y'$ as the open set
  \begin{align*}
    \DC (y,y') =\Future(y) \cap  \Past(y').
  \end{align*}
\end{definition}
Then, $\DC (y,y')$ is nonempty if and only if $y' \in \Future(y)$. One has 
\begin{align*}
 \DC (y,y') =  \{ (t,x) \in \caL: \  \atimeF_{y}(x) < t < \atimeP_{y'}(x)\}.
\end{align*}
In Definition~\ref{def: dcone intro}, the notation $\DD(\gamma) = \DC (y,y')$ is used, with $\gamma$ a timelike curve connecting $y$ and $y'$.
The \dcone is the intersection of two cones in the Minkowski case. It resembles a cone near its two tips.
\begin{lemma}
  \label{lemma: boundedness double cone}
{\bf i.}    \Dcones are bounded. 

{\bf ii}  If  $y, y', z, z' \in \caL$ with $z, z' \in \ovl{\DC (y,y')}$,  then, $\DC
  (z,z') \subset \DC (y,y')$.
\end{lemma}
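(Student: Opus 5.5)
For part \textbf{i}, I would rely on Lemma~\ref{lemma: boundedness tip future and past}. Write $y'=(t',x')$. Every point $q=(t,x)\in\DC(y,y')\subset\Past(y')$ satisfies $t<t'$: along a past timelike curve the time coordinate strictly decreases, since $\d_t$ is timelike and every forward timelike vector $\bfv$ has $dt(\bfv)>0$. Equivalently, Lemma~\ref{lemma: alternative future} applied to the past gives $t<\atimeP_{y'}(x)\le t'$. Hence
\begin{align*}
  \DC(y,y') \subset \Future(y)\cap\{t\le t'\}.
\end{align*}
If $t'>t(y)$, the right-hand side is bounded by Lemma~\ref{lemma: boundedness tip future and past} with $T=t'$. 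Otherwise $\DC(y,y')$ is empty, because $\Future(y)\subset\{t>t(y)\}$.

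For part \textbf{ii}, I would use Lemma~\ref{lemma: inclusion future sets}\,\textbf{i} together with its past analogue, obtained by reversing time orientation.

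\begin{enumerate}[align=left,labelwidth=0.8cm,labelsep=0cm,itemindent=0.8cm,leftmargin=0cm,label=\bf{(\alph*)}]
\item Since $z\in\ovl{\DC(y,y')}\subset\ovl{\Future(y)}$, that lemma gives $\Future(z)\subset\Future(y)$.
\item Since $z'\in\ovl{\DC(y,y')}\subset\ovl{\Past(y')}$, the past version gives $\Past(z')\subset\Past(y')$.
\item Intersecting the two inclusions yields
\begin{align*}
  \DC(z,z')=\Future(z)\cap\Past(z')\subset\Future(y)\cap\Past(y')=\DC(y,y').
\end{align*}
\end{enumerate}

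The only step needing care is the past analogue of Lemma~\ref{lemma: inclusion future sets}. Its proof transposes verbatim: if $z'\in\ovl{\Past(y')}$ and $w\in\Past(z')$, set $\tilde z'=(t_{z'}-\eps,x_{z'})$ for small $\eps>0$. Then $\tilde z'\in\Future(w)$, and $\tilde z'\in\Past(y')$ by the past version of Lemma~\ref{lemma: alternative future}. Concatenating the past timelike curves from $y'$ to $\tilde z'$ and from $\tilde z'$ to $w$ shows $w\in\Past(y')$.

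For the first step to hold, $\tilde z'\in\Past(y')$ requires $t_{z'}-\eps<\atimeP_{y'}(x_{z'})$. This follows from $t_{z'}\le\atimeP_{y'}(x_{z'})$, which is the closure characterization in Lemma~\ref{lemma: alternative future} adapted to the past.
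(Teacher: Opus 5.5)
Your proof is correct and follows the paper's argument. Part \textbf{i} uses $\DC(y,y')\subset\Future(y)\cap\{t\le t'\}$ together with Lemma~\ref{lemma: boundedness tip future and past}, and part \textbf{ii} combines Lemma~\ref{lemma: inclusion future sets}\,\textbf{i} with its past analogue. Your transposition of that lemma to pasts, and your treatment of the empty case $t'\le t(y)$, fill in details the paper leaves implicit.
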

\begin{proof}
{\bf i.}    If $\DC (y,y') \neq \emptyset$ with $y'= (t',x')$, then $\DC
  (y,y') \subset \Future(y) \cap \{ t \leq t'\}$. Boundedness follows from Lemma~\ref{lemma: boundedness tip future and past}. 
  
  {\bf ii.}  Follows from  Lemma~\ref{lemma: inclusion future sets}.
\end{proof}

A consequence is the following proposition.
\begin{proposition}
  \label{prop: dcones upperbound for Oinfty}
  If $y, y' \in \caL$ and $\O \subset \DC (y,y')$ is an open set, then, 
  $\cah^{\infty}\O \subset \DC (y,y')$.
\end{proposition}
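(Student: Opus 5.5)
The plan is to prove by induction that $\cah^n\O\subset\DC(y,y')$ for every $n\ge 1$; the conclusion for $\cah^\infty\O$ then follows by taking the union. Since $\cah^{n+1}\O=\cah^1(\cah^n\O)$ and $\cah^n\O$ is open (Proposition~\ref{prop: h1O open set}), it suffices to prove the base statement for an arbitrary open set: if $\O'\subset\DC(y,y')$ is open, then $\cah^1\O'\subset\DC(y,y')$. Applying this with $\O'=\cah^n\O$ gives the inductive step.

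For the base statement, fix a timelike homotopy $\XX$ on $[0,1]\times[a,b]$ whose initial curve $\XX_0$ and boundary $\Ends\XX$ lie in $\O'$. For each $\sigma\in[0,1]$, the curve $s\mapsto\XX(\sigma,s)$ is $\Con^1$ and timelike. Up to reversing the parametrization, which is a fixed choice by continuity and connectedness of $[0,1]$, it is future timelike. Its endpoints $z_\sigma=\XX(\sigma,a)$ and $z'_\sigma=\XX(\sigma,b)$ lie in $\Ends\XX\subset\O'\subset\DC(y,y')$.

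Each interior point $\XX(\sigma,s)$, $a<s<b$, is joined to $z_\sigma$ by a backward timelike arc and to $z'_\sigma$ by a forward timelike arc of the same curve. Hence $\XX(\sigma,s)\in\Future(z_\sigma)\cap\Past(z'_\sigma)=\DC(z_\sigma,z'_\sigma)$. Lemma~\ref{lemma: boundedness double cone}(ii), applied with $z_\sigma,z'_\sigma\in\DC(y,y')\subset\ovl{\DC(y,y')}$, gives $\DC(z_\sigma,z'_\sigma)\subset\DC(y,y')$. The endpoints themselves are already in $\O'$. Thus the whole image $\XX([0,1]\times[a,b])$ lies in $\DC(y,y')$, which proves $\cah^1\O'\subset\DC(y,y')$.

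Two details need care, though neither is a real obstacle.

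The first is the time orientation of the curves. A timelike curve has its tangent in $\Cone^+$ or in $\Cone^-$ at each point. By continuity of the tangent in $s$, and the fact that the two cones are disjoint, the orientation is constant along each curve. By the $\Con^0$-dependence on $\sigma$ in the $\Con^1$ topology, it is also constant in $\sigma$. If the curves are past-directed, one simply exchanges the roles of $z_\sigma$ and $z'_\sigma$.

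The second is that Definition~\ref{def: future, past} uses smooth timelike curves, while the subarcs of $\XX_\sigma$ are only $\Con^1$. Lemma~\ref{lemma: alternative future} allows Lipschitz timelike curves instead, so this is harmless.

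Notably, the argument never uses the initial curve beyond the requirement that the endpoints lie in $\O'$.
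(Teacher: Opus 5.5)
Your proof is correct and follows the paper's argument. A point reached by a timelike homotopy lies on a timelike curve whose endpoints $z,z'$ belong to the boundary $\Ends\XX\subset\O$, so it lies in $\DC(z,z')\subset\DC(y,y')$ by Lemma~\ref{lemma: boundedness double cone}(ii), and iterating gives the result for every $\cah^n\O$. Your remarks on time orientation and on $\Con^1$ versus smooth curves (via Lemma~\ref{lemma: alternative future}) only make explicit what the paper leaves implicit.
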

\begin{proof}
   Assume $\DC(y,y') \neq \emptyset$. If $q \in \cah^1\O$ then there
   exists a  future timelike curve joining two points $z$ and $z'$ of
   $\O$ passing through $q$. Hence $q \in \DC (z,z') \subset \DC
   (y,y')$ by Lemma~\ref{lemma: boundedness double cone}.ii. This gives
   $\cah^1\O\subset \DC (y,y')$. Iterations give 
   $\cah^n \O  \subset \DC (y,y')$.   
 \end{proof}
Hence,  \dcones are an upper-bound for set of 
points that can be reached by iterated timelike homotopies from any open set of the \dcone.

\subsection{\Dcones with special temporal symmetries}     
\label{sec:Dcones with special temporal symmetries}

\begin{proposition}
  \label{prop: homotopic double-cones}
 Suppose that the Lorentz metric $g = - dt^2 \, +\, \gR(.,.)$ with $\gR$ a
  time independent Riemannian metric on $\R^d$.  Suppose
  $\y{0}=(-\sft,0_{\R^d})$ and $\y{1}=(\sft,0_{\R^d})$ with $\sft>0$. Suppose
  $\O$ is an open set such that $]\y{0}, \y{1}[ \subset \O \subset
      \DC(\y{0},\y{1})$. Then, $\DC(\y{0},\y{1}) = \cah^1\O=
      \cah^\infty\O$.
\end{proposition}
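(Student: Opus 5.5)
By Proposition~\ref{prop: dcones upperbound for Oinfty}, $\cah^\infty\O\subset\DC(\y{0},\y{1})$. Since $\cah^1\O\subset\cah^\infty\O$ trivially, it suffices to prove $\DC(\y{0},\y{1})\subset\cah^1\O$. So fix $q=(\ut,\ux)\in\DC(\y{0},\y{1})$. The aim is to build a single timelike homotopy whose initial curve is (a compact subarc of) the vertical segment $]\y{0},\y{1}[$, whose endpoints stay in $\O$, and which passes through $q$.

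The key is to exploit the static structure $g=-dt^2+\gR$. A curve $t\mapsto(t,x(t))$ is timelike exactly when $\gR(\dot x,\dot x)<1$. Let $d$ be the Riemannian distance of $\gR$. Using the characterization of futures and pasts by arrival times (Lemma~\ref{lemma: alternative future}), I would first check that $\atimeF_{\y{0}}(x)=-\sft+d(0,x)$ and $\atimeP_{\y{1}}(x)=\sft-d(0,x)$. The point is that causal curves satisfy $|\dot x|_{\gR}\le 1$, so length considerations give the lower bound, and Riemannian near-minimizers, reparametrized at speed just below $1$, give the matching upper bound. Hence $q\in\DC$ means $d(0,\ux)<\sft-|\ut|$.

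Pick $\eps>0$ with $d(0,\ux)+\eps<\sft-|\ut|$, and a smooth curve $c:[0,1]\to\R^d$ from $0$ to $\ux$ with $\gR$-length $L<d(0,\ux)+\eps$. Then fix a parameter interval $[-\sft+\eta,\sft-\eta]$ with $\eta>0$ small, so that the endpoints $(\pm(\sft-\eta),0)$ lie on the segment inside $\O$. Define
\begin{align*}
  \XX(\sigma,t)=\big(t,\,c(\sigma\,\phi(t))\big),
\end{align*}
where $\phi:[-\sft+\eta,\sft-\eta]\to[0,1]$ is a smooth tent-like bump with $\phi(\pm(\sft-\eta))=0$ and $\phi(\ut)=1$. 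Then $\XX(0,\cdot)$ is the vertical segment, which lies in $\O$; the endpoints $\XX(\sigma,\pm(\sft-\eta))=(\pm(\sft-\eta),0)$ are fixed in $\O$; and $\XX(1,\ut)=q$. Continuity in $\sigma$ into $\Con^1$ is clear.

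The main obstacle is the timelike condition $|\sigma\phi'(t)|\,|c'|_{\gR}<1$. I would take $c$ parametrized proportionally to arclength, so $|c'|_{\gR}=L$. I would then choose $\phi$ rising from $0$ to $1$ on $[-\sft+\eta,\ut]$ with slope slightly above $1/(\ut+\sft-\eta)$, and falling on $[\ut,\sft-\eta]$ with slope slightly above $1/(\sft-\eta-\ut)$, smoothed near the peak and near the endpoints. It therefore suffices to have $L<\min(\ut+\sft,\sft-\ut)=\sft-|\ut|$ with margin, and this is what the choice of $\eps$ and $\eta$ small gives. Smoothing the corner at $\ut$ costs nothing, since near the peak $|\phi'|$ is small. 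Since $\sigma\le1$, every intermediate curve is timelike. This gives $q\in\cah^1\O$, and hence $\DC(\y{0},\y{1})=\cah^1\O=\cah^\infty\O$.
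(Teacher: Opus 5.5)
Your proof is correct. It reaches the key inclusion $\DC(\y{0},\y{1})\subset\cah^1\O$ with a different homotopy from the paper's.

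\textbf{The paper's route.} The paper builds a ``curtain'' over a null geodesic $\rho$ of the slower metric $g^{1/(1+\eps)}$ joining $(\uT,x_z)$ to $(T,0)$, together with its time-mirror $\rho^*$. Each intermediate curve follows $\rho^*$, then a vertical segment, then $\rho$. These curves are only piecewise smooth, so a separate mollification step (Step~2) is needed to make them $\Con^1$ and timelike. That step uses the slower-metric cones and a uniform continuity argument. Finding $\rho$ also relies on the existence of a first-arrival null geodesic.

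\textbf{Your route.} Your family $\XX(\sigma,t)=(t,c(\sigma\phi(t)))$ is smooth in $(\sigma,t)$ from the start, so no mollification is needed. Because $g=-dt^2+\gR$, timelikeness of a graph $t\mapsto(t,x(t))$ is exactly the speed bound $|\dot x|_{\gR}<1$. The whole verification therefore reduces to the scalar inequality $\sigma|\phi'|L<1$. You need only a near-minimizing Riemannian path, not a minimizing geodesic, and nothing from arrival-time theory beyond the elementary bound $d_{\gR}(0,\ux)<\sft-|\ut|$. The case $\ut=0$ also needs no special treatment.

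\textbf{What the paper's route buys.} Its final curve contains the whole vertical segment through both $z$ and $z^*$. More importantly, its smoothing procedure for piecewise smooth homotopies built from null geodesics is reused in Step~1 of Theorem~\ref{thm:small_double_cones}. There the metric is not static, and your reduction of timelikeness to a Riemannian speed bound is not directly available.

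\textbf{Details to state explicitly.}
\begin{itemize}
\item Take $c$ regular, so that its reparametrization proportional to arclength is still $\Con^1$.
\item Take $0\le\phi\le 1$, so that $c(\sigma\phi(t))$ is defined.
\item The case $\ux=0$ is trivial, since then $q\in\,]\y{0},\y{1}[\,\subset\O$.
\end{itemize}
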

By changing $x$-coordinates and using the time invariance of the metric, the result extends to
$\y{0}=(\yt{0},x)$ and $\y{1}=(\yt{1},x)$ with $\yt{0} < \yt{1}$ and $x \in \R^d$

For metrics as in Proposition~\ref{prop: homotopic double-cones},
Lorentzian objects frequently have descriptions in terms of familiar
Riemannian objects.  Subsequent sections  construct
examples of such metrics with interesting properties. Two concern
double cones with axis on vertical lines and depend on
Proposition~\ref{prop: homotopic double-cones}.

\begin{proof}
  Set $\DC=\DC(\y{0},\y{1})$.  Denote by $d_\gR(x,x')$ the geodesic
  distance from $x$ to $x'$ in $\R^d$.  Then,  $(t,x)=y \in \Future(\y{0})$ if and only if $t>d(x,0)-\sft$, and
  $ y \in \Past(\y{1})$ if and only if $t<\sft-d_\gR(x,0)$.
  Therefore,
\begin{align*}
\DC = \big\{(t,x): \ |t|< \sft -d_\gR(x,0)\big\}.
\end{align*}
In particular, 
$(t,x) \in  \DC \  \Equiv\  [-|t|,|t|]\times \{x\} \subset \DC.$
It suffices to prove that $\DC \subset \cah^1\O$.  Then,
  Proposition~\ref{prop: dcones upperbound for Oinfty} implies that
  $\DC \subset \cah^1\O\subset \cah^\infty\O\subset\DC$.

\vskip.1cm {\bf Step 1.  Homotopy of piecewise
  smooth timelike curves.}  For $z=(t_z,x_z)\in \DC$, the mirror image
$z^*=(-t_z,x_z)\in \DC$.  Construct a timelike homotopy $\XX$ reaching
$z$ proving $z
\in \cah^1 \O$.  It uses two times $0< \uT< T< \sft$ that 
depend on $z$.

The vertical segment $[-|t_z|,|t_z|]\times\{x_z\}\in\DD$.  If
$t_z\ne 0$ define $\uT=|t_z|$.  If $t_z=0$ choose $\uT>0$ so that
$(\uT,x_z)\in \DD$. Equivalently, $(-\uT,x_z)\in
\DD$. In either case, $\sft-\uT>d_\gR(0,x_z)$. Choose $\eps>0$
so that $T= \uT + (1+\eps)d_\gR(0,x_z)< \sft$.  Then $(\uT, x_z)\in \d
\Past_{\!g'} (T,0)$ for the slower metric $g' =
g^{1/(1+\eps)}= - (1+ \eps)^{-2} dt^2 + \gR$ from  Appendix~\ref{app:speeding up and slowing down metrics} and Remark~\ref{remark:
  speedup slowdown metric}.  Choose a future reparameterized null geodesic,
$\rho(t) =(t,u(t))$, connecting $(\uT,x_z)$ to $(T,0)$ for the metric
$g'$.  The curve $\rho$ and its mirror image $\rho^*$ traced
backward are timelike for $g$.  $\rho^*$ is future oriented and connects $(-T,0)$ to  
$(-\uT,x_z)$. 

Define a continuous homotopy 
$$
 [0,1] \times [-T, T]\,\ni\,
 (\sigma,t)
 \ \ 
 \mapsto\ \
\XX(\sigma, t)\,=\, (t,x(\sigma,t))
$$
of piecewise smooth timelike curves with
intial curve, $\XX_0$, and boundary, $\Ends{\XX}$, in $\O$ and 
with   final curve including the vertical segment
 $[-\uT,\uT]\times\{x_z\}$ passing through $z$.  Unrolling the cylinder over
$\rho$ yields a curvilinear trapezoidal region with vertical edges sketched in
Figure~\ref{fig: homotopy-curtain}.

\begin{figure}
  \begin{center}
    \resizebox{4cm}{!}{
\begin{picture}(0,0)%
\includegraphics{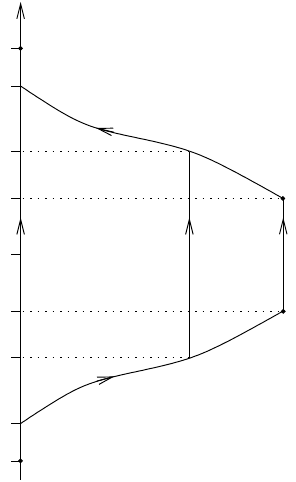}%
\end{picture}%
\setlength{\unitlength}{3947sp}%
\begin{picture}(2357,3849)(2086,-3223)
\put(2379,182){\makebox(0,0)[lb]{\smash{\fontsize{12}{14.4}\usefont{T1}{ptm}{m}{n}{\color[rgb]{0,0,0}$\y{1}$}%
}}}
\put(2101,-1486){\makebox(0,0)[rb]{\smash{\fontsize{12}{14.4}\usefont{T1}{ptm}{m}{n}{\color[rgb]{0,0,0}$0$}%
}}}
\put(2101,-136){\makebox(0,0)[rb]{\smash{\fontsize{12}{14.4}\usefont{T1}{ptm}{m}{n}{\color[rgb]{0,0,0}$T$}%
}}}
\put(2101,-1036){\makebox(0,0)[rb]{\smash{\fontsize{12}{14.4}\usefont{T1}{ptm}{m}{n}{\color[rgb]{0,0,0}$\uT$}%
}}}
\put(2101,-1936){\makebox(0,0)[rb]{\smash{\fontsize{12}{14.4}\usefont{T1}{ptm}{m}{n}{\color[rgb]{0,0,0}$-\uT$}%
}}}
\put(3571,-2092){\makebox(0,0)[rb]{\smash{\fontsize{12}{14.4}\usefont{T1}{ptm}{m}{n}{\color[rgb]{0,0,0}$\sigma$}%
}}}
\put(2113,-2293){\makebox(0,0)[rb]{\smash{\fontsize{12}{14.4}\usefont{T1}{ptm}{m}{n}{\color[rgb]{0,0,0}$-T_\sigma$}%
}}}
\put(2107,-643){\makebox(0,0)[rb]{\smash{\fontsize{12}{14.4}\usefont{T1}{ptm}{m}{n}{\color[rgb]{0,0,0}$T_\sigma$}%
}}}
\put(2176,464){\makebox(0,0)[rb]{\smash{\fontsize{12}{14.4}\usefont{T1}{ptm}{m}{n}{\color[rgb]{0,0,0}$t$}%
}}}
\put(2101,178){\makebox(0,0)[rb]{\smash{\fontsize{12}{14.4}\usefont{T1}{ptm}{m}{n}{\color[rgb]{0,0,0}$\sft$}%
}}}
\put(2455,-1783){\rotatebox{90.0}{\makebox(0,0)[lb]{\smash{\fontsize{12}{14.4}\usefont{T1}{ptm}{m}{n}{\color[rgb]{0,0,0}$\sigma=0$}%
}}}}
\put(3163,-360){\makebox(0,0)[lb]{\smash{\fontsize{12}{14.4}\usefont{T1}{ptm}{m}{n}{\color[rgb]{0,0,0}$\rho$}%
}}}
\put(3261,-2501){\makebox(0,0)[lb]{\smash{\fontsize{12}{14.4}\usefont{T1}{ptm}{m}{n}{\color[rgb]{0,0,0}$\rho^*$}%
}}}
\put(4428,-1893){\makebox(0,0)[lb]{\smash{\fontsize{12}{14.4}\usefont{T1}{ptm}{m}{n}{\color[rgb]{0,0,0}$z$}%
}}}
\put(4288,-1775){\rotatebox{90.0}{\makebox(0,0)[lb]{\smash{\fontsize{12}{14.4}\usefont{T1}{ptm}{m}{n}{\color[rgb]{0,0,0}$\sigma=1$}%
}}}}
\put(4385,-1013){\makebox(0,0)[lb]{\smash{\fontsize{12}{14.4}\usefont{T1}{ptm}{m}{n}{\color[rgb]{0,0,0}$z^*$}%
}}}
\put(2113,-2838){\makebox(0,0)[rb]{\smash{\fontsize{12}{14.4}\usefont{T1}{ptm}{m}{n}{\color[rgb]{0,0,0}$-T$}%
}}}
\put(2114,-3133){\makebox(0,0)[rb]{\smash{\fontsize{12}{14.4}\usefont{T1}{ptm}{m}{n}{\color[rgb]{0,0,0}$-\sft$}%
}}}
\put(2371,-3125){\makebox(0,0)[lb]{\smash{\fontsize{12}{14.4}\usefont{T1}{ptm}{m}{n}{\color[rgb]{0,0,0}$\y{0}$}%
}}}
\end{picture}%
}
    \caption{Flattened symmetric  `trapezoid'.}
  \label{fig: homotopy-curtain} 
  \end{center}
\end{figure}
The initial curve of the homotopy is the vertical segment on the left connecting
$(-T,0)$ to $(T,0)$.  
The final curve follows the boundary counterclockwise from lower left
corner to upper left corner.  It passes through $z$ in the vertical
segment on the right.  To be more precise, set $T_\sigma = (1-\sigma)T + \sigma \uT \geq \uT$
and
\begin{align*}
  \XX(\sigma,t)=\begin{cases}
  \rho^*(t)=(t,-u(-t))
  &\pour \ -T\leq t\leq - T_\sigma,\\
  \big(t,u(T_\sigma)\big)
  & \pour\ - T_\sigma\leq t \leq T_\sigma,\\
  \rho(t)=(t,u(t))
  & \pour\ \ T_\sigma\leq t \leq T.
  \end{cases}
\end{align*}
For $0\le \sigma\le 1$ the curve starts at $t=-T$ and
follows $\rho^*(t)$ for $\sigma(T-\uT)$ units of time. 
For the next $2 T_\sigma$ units of time, the
curve is vertical. Hence, the $x$-component of
$\XX(\sigma,t)$ is independent of $t$ on the band $|t|\le \uT$.
For the final $\sigma(T-\uT)$ units of time the curve follows
$\rho(t)$, reaching $(T,0)$.  The homotopy has mirror symmetry with
respect to $t=0$.  The time derivative is discontinuous when $t=\pm
T_\sigma$.

\medskip
 {\bf Step 2.  Smoothing the homotopy.}  
   Choose $0\le j\in \Cinfc(\R)$ with
 $\int j=1$ and   $\supp j\!\subset [-1,1]$.
  
 Smooth  $\d_t x(\sigma, .)$ as follows.
 Suppose $0<\eta<\uT$ to be chosen small in what follows. For  $-T\le t\le T$ the smoothed value 
 $\zeta_\eta(\sigma,t)$ is defined to be a weighted  average
 of $\d_t x(\sigma, .)$ on the interval 
 $I_\eta(t)=[c_\eta(t)-\eta,c_\eta(t)+\eta]\subset [-T,T]$
 of length
 $2\eta$ with center 
 $c_\eta(t) = (T-\eta)t/T$.
 When $t=-T$ the inteval is $[-T,-T+2\eta]$ and
 when $t=T$ the interval is $[T-2\eta,T]$.

   The interval $I_\eta(t)$ is parameterized by $c_\eta(t) +\eta s$
   with $-1<s<1$.  The smoothed time derivative is given in terms of
   $\d_tx(\sigma,.)$ by an integral operator with smooth kernel,
 \begin{align*}
 \zeta_\eta(\sigma,t)  = 
 \int     j(s)\,  (\d_t x)(\sigma, c_\eta(t)+\eta s)\, ds,
 \qquad
 \forall n, \ \
 \d_t^n\zeta_\eta\in \Con^0([0,1]\times [-T,T]).
 \end{align*}
 Note that as $\eta<T_\sigma$, then $\zeta_\eta=\d_t x=0$ for $|t| \leq T_\sigma-\eta$. 
 Define $x_\eta(\sigma,.)$  by
   \begin{equation}
   \label{eq:recipe}
   \d_t x_\eta(\sigma,t) = \zeta_{\eta}(\sigma,t) ,
   \qquad
   x_\eta( \sigma, 0)  = x( \sigma,0).
   \end{equation}
Define $\XX^\eta(\sigma,t)=(t, x_\eta(\sigma,t))$.

  If one replaces $\zeta_\eta$ by $\d_tx(\sigma,t)$ in
   \eqref{eq:recipe} the resulting problem has solution
   $x(\sigma,t)$.
   Since $x_\eta(\sigma,t) = x(\sigma,t)=x( \sigma,0)$ for $|t| \le T_\sigma-\eta$,
    the final curve ($\sigma=1$)
    passes through $z$  as $T_1 -\eta = \uT-\eta>0$.  
    
    Define $S\subset [0,1]\times [-T,T]$ to be the locus of jump discontinuities of $\d_t x$.  Define
  \begin{align*}
  c_1:=\Norm{\d_t x} { L^\infty([0, 1]\times[-T, T]  )  } , 
  \qquad
  c_2:=\Norm{\d_t^2 x}{L^\infty (  (   [0, 1]\times[-T, T] )  \setminus S )}.
  \end{align*}
  For $\sigma\in [0,1]$, define $S_\sigma\subset [-T,T]$ to be the 
 locus of jump discontinuities of $t\mapsto x(\sigma,t)$.
  Then,
  \begin{align*}
    \Norm{(\d_t x -\zeta_\eta)(\sigma, t)}{}
    \lesssim
     \begin{cases}
       c_1 & \pour \ \sigma \in [0,1] \
      \et \ \dist(t,S_\sigma) \leq 2\eta, \\
      \eta c_2  & \pour \  \sigma \in [0,1] \
      \et \ \dist(t,S_\sigma) \geq  2\eta.
    \end{cases}
  \end{align*}
 Subtracting the equations satisfied by $x(\sigma,t)$ and
 $x_\eta(\sigma,t)$ shows that $|\XX^\eta-\XX|\lesssim \eta$ on $[0,1]
 \times [-T,T]$. Therefore there  is an $\eta_1>0$ so that 
 for $0<\eta\leq
 \eta_1$,
  the endpoints and initial curve of $\XX^\eta$ are in $ \O$.
   
  To show that the curves $\XX^\eta(\sigma,.)$ are timelike for $g$,
  employ slower metrics $g^\delta$ from
  Appendix~\ref{app:speeding up and slowing down metrics}.
  Remark~\ref{remark: speedup slowdown metric} shows that the cone of
  future timelike vectors $\Timelike^+_q$ increase as $\delta$
  increases.  Denote by $\Timelike_q^{\delta,+}$ the cone associated
  with $\gd$. Since $\XX$ is timelike, $\d_t \XX(\sigma, t) \in
  \Timelike_{\XX(\sigma, t)}^+$.  A continuity and compactness
  argument applied to the tangents implies that there is a $\delta<1$
  so that $\d_t \XX(\sigma, t) \in \Timelike_{\XX(\sigma,
    t)}^{\delta,+}\subset \Timelike_{\XX(\sigma, t)}^{+}$.

     Uniform continuity  of
  $\XX(\sigma,t)$ and $g$  imply that if 
  $1>\delta$
   there
  is $0< \eta_2(\delta)< \eta_1$   so that
   \begin{align}
   \label{eq:uc}
   \forall
   (\sigma,t,\ut)\in [0,1]\times [-T,T]\times [-T,T],
   \qquad
   \forall
   |t - \ut| \leq \eta_2
   \ \ \Imply \ \ 
   \Timelike_{\XX(\sigma, t)}^{\delta,+}
   \subset
   \Timelike_{\XX(\sigma, \ut)}^{+}
   .
 \end{align}
For $(\sigma,\ut)\in [0,1] \times [-T,T]$, $\d_t \XX^\eta(\sigma,\ut)$
is a convex linear combination of the tangent vectors $\d_t
\XX(\sigma,t)$ with $c_\eta(t)-\eta\le \ut\le c_\eta(t)+\eta$.  For
$\eta<\eta_1$ the tangents lie in $\Timelike_{\XX(\sigma,
  t)}^{\delta,+}$.  Equation \eqref{eq:uc} implies that for
$\eta<\eta_2$, they belong to $\Timelike_{\XX(\sigma, \ut)}^{+}$.
Therefore, their convex combinations belong to $\Timelike_{\XX(\sigma, \ut)}^{+} $,
proving that
$\XX(\sigma,.)$ is timelike for $g$.
\end{proof}

\subsection{Plunging condition}
\label{sec:Plunging_condition}
The following lemma asserts that future causal curves do not leave
$\Future(y)$ after entering it. 
\begin{lemma}
  \label{lemma: plunging0}
  Suppose  $\gamma(t)=(t,x(t))$ is a future causal curve.\\[-15pt]
  \begin{enumerate}[align=left,labelwidth=0.8cm,labelsep=0cm,itemindent=0.8cm,
  leftmargin=0cm,label=\bf{(\roman*)}]
\item
  Suppose $y \in \caL$.  If
  $\gamma$ enters $\Future(y)$, there exists $T\in \R$ such that $\gamma(t) \in
  \Future(y) \  \Equiv  \ t > T$.  One says that $\gamma$ enters $\Future(y)$ at time
  $T$.
\item Suppose $\y{0}$, $\y{1} \in \caL$ and $\gamma$ enters $\DC(\y{0}, \y{1})\neq
  \emptyset$. Then, $\gamma$ enters $\Future(\y{0})$ at some time $T$ and
  $\gamma(T) \in \d\Future(\y{0}) \cap \Past(\y{1})$. One says that $\gamma$
      enters $\DC(\y{0}, \y{1})$ at time $T$.
  \end{enumerate}
\end{lemma}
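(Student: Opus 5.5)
The plan is to reduce part (i) to the monotonicity of the function $f(t) := t - \atimeF_y(x(t))$ along $\gamma$, using the characterization $\Future(y)=\{(t,x): t>\atimeF_y(x)\}$ from Lemma~\ref{lemma: alternative future}. The key observation is the forward invariance supplied by Lemma~\ref{lemma: inclusion future sets}.i. If $\gamma(t_1)\in\Future(y)$ and $t_2>t_1$, the restriction of $\gamma$ to $[t_1,t_2]$ is a future causal curve, so $\gamma(t_2)\in\ovl{\Future(\gamma(t_1))}$. Lemma~\ref{lemma: inclusion future sets}.ii, applied with $z=\gamma(t_1)\in\Future(y)$, then gives $\ovl{\Future(\gamma(t_1))}\subset\Future(y)$, and hence $\gamma(t_2)\in\Future(y)$. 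Thus the set $I=\{t:\gamma(t)\in\Future(y)\}$ is an upward-closed subset of the domain of $\gamma$.

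Next, $I$ is open, because $\Future(y)$ is open and $\gamma$ is continuous. It is also nonempty, by the assumption that $\gamma$ enters $\Future(y)$. An open, nonempty, upward-closed subset of an interval is of the form $]T,\infty[$ intersected with the domain. Taking $T=\inf I$ then gives $\gamma(t)\in\Future(y)\Equiv t>T$. If the domain of $\gamma$ is all of $\R$, finiteness of $T$ needs a short argument: for $t$ very negative, $t(\gamma(t))<t(y)$, so $\gamma(t)\notin\Future(y)$, since points of $\Future(y)$ have time larger than $t(y)$. If the domain is bounded below and $I$ is the whole domain, $T$ is its left endpoint.

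For part (ii), apply (i) with $y=\y{0}$. Since $\gamma$ enters $\DC(\y{0},\y{1})\subset\Future(\y{0})$, it enters $\Future(\y{0})$ at some time $T$, and $\gamma(T)\notin\Future(\y{0})$. Since $\gamma(t)\in\Future(\y{0})$ for $t>T$, continuity gives $\gamma(T)\in\ovl{\Future(\y{0})}$, and therefore $\gamma(T)\in\d\Future(\y{0})$. This requires that $T$ lies in the domain of $\gamma$, i.e.\ that the curve was not already inside $\Future(\y{0})$ from its starting point. I would handle this by implicitly taking $\gamma$ defined on an interval extending far enough back, or by reading ``enters'' as meaning the curve starts outside; this is the one point of interpretation to settle.

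It remains to show $\gamma(T)\in\Past(\y{1})$. Pick $t_1$ with $\gamma(t_1)\in\DC(\y{0},\y{1})$; then $t_1>T$ and $\gamma(t_1)\in\Past(\y{1})$. The time-reversed version of Lemma~\ref{lemma: inclusion future sets}.ii, applied to the past causal curve $\gamma$ traced backward from $\gamma(t_1)$ to $\gamma(T)$, gives $\gamma(T)\in\ovl{\Past(\gamma(t_1))}\subset\Past(\y{1})$. This is exactly the same monotonicity mechanism as in (i), run backward in time.

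I expect the only delicate point to be the endpoint and domain conventions, namely ensuring that $T$ is attained within the curve's parameter interval. The geometric content is entirely carried by Lemma~\ref{lemma: inclusion future sets}.
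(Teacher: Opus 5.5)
Your proof is correct and follows the paper's route. The paper says only that the lemma rests on Lemma~\ref{lemma: inclusion future sets}, and your argument (the set of times in $\Future(y)$ is open and upward closed by part ii, then the time-reversed version gives $\gamma(T)\in\Past(\y{1})$) is exactly that argument written out. Two small points: the endpoint question in (ii) is already settled by your own backward-monotonicity step, since it places the starting point of $\gamma$ in $\Past(\y{1})$, so a curve starting outside $\DC(\y{0},\y{1})$ starts outside $\Future(\y{0})$ and $T$ lies in its parameter interval. Also, your first paragraph cites part i of Lemma~\ref{lemma: inclusion future sets} where you actually use part ii.
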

The proof is based on Lemma~\ref{lemma: inclusion future sets}.
A similar statement holds for past causal curves.
\begin{definition}
  \label{def: plunging}
  Suppose $z, y \in \caL$ with $z \notin\Future(y)$.
  \begin{enumerate}[align=left,labelwidth=0.8cm,labelsep=0cm,itemindent=0.8cm,
  leftmargin=0cm,label=\bf{(\roman*)}]
 \item One says that $\ovl{\Future(z)}$ {\bf plunges} into
   $\Future(y)$ if  any future
    causal curve starting at $z$ enters $\Future(y)$.
 \item One says that $\Lambda^+ (z)$  {\bf plunges} into
   $\Future(y)$ if  any future null geodesic starting at $z$
   enters $\Future(y)$.
  \end{enumerate}
\end{definition}
Similar definition apply for  $\ovl{\Past(z)}$,  $\Lambda^- (z)$, and $\Past(y)$.

\begin{definition}
  \label{def: plunging bis}
  Suppose $z, \y{0}, \y{1} \in \caL$ with $z \notin\Future(\y{0})$ and $\DC= \DC(\y{0}, \y{1}) \neq \emptyset$.
  \begin{enumerate}[align=left,labelwidth=0.8cm,labelsep=0cm,itemindent=0.8cm,
  leftmargin=0cm,label=\bf{(\roman*)}]
 \item One says that $\ovl{\Future(z)}$ {\bf plunges} into
   $\Future(\y{0})$ {\bf through} $\DC$ if  any future
    causal curve starting at $z$ enters $\DC$.
 \item One says that $\Lambda^+ (z)$  {\bf plunges} into
   $\Future(\y{0})$ {\bf through} $\DC$ if  any future null geodesic starting at $z$
   enters $\DC$.
  \end{enumerate}
\end{definition}

\begin{lemma}
  \label{lemma: plunging1}
  Suppose $y, z \in \caL$ with $z\notin \Future(y)$. The following statements are equivalent.\\[-0.7cm]
  \begin{enumerate}[align=left,labelwidth=0.8cm,labelsep=0.1cm,itemindent=0.9cm,
  leftmargin=0cm,label=\bf{(\roman*)}]
  \item $\ovl{\Future(z)}$ plunges into $\Future(y)$.
  \item $\Lambda^+(z)$ plunges into $\Future(y)$.
  \item $\Lambda^+(z)\setminus
    \Future(y)$ is compact.
    \item $\ovl{\Future(z)} \setminus
      \Future(y)$ is compact.
  \end{enumerate}
\end{lemma}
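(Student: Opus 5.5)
I plan to prove the cycle (i)$\Imply$(ii)$\Imply$(iii)$\Imply$(iv)$\Imply$(i). The implication (i)$\Imply$(ii) is immediate, because future null geodesics from $z$ are future causal curves.

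For (ii)$\Imply$(iii), I parametrize future null geodesics from $z$ by the compact set $K$ of null directions $\bfv\in\Null^+_z$ normalized by $dt(\bfv)=1$. I also use the affine parameter $s\mapsto\exp_z(s\bfv)$, which is defined for all $s\geq 0$ by the bounded geometry of Hypothesis~\ref{hyp: metric}. By (ii) and Lemma~\ref{lemma: plunging0}, for each $\bfv$ there is $s_\bfv$ with $\exp_z(s\bfv)\in\Future(y)$ for all $s>s_\bfv$. Since $\Future(y)$ is open and $\exp_z$ is continuous, the condition $\exp_z(s\bfv)\in\Future(y)$ persists for $\bfv'$ near $\bfv$ at a fixed $s$, and then for all larger $s$ by Lemma~\ref{lemma: plunging0}. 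Compactness of $K$ then gives a uniform $S$ with $\exp_z(s\bfv)\in\Future(y)$ for all $s\geq S$ and all $\bfv\in K$. Hence $\Lambda^+(z)\setminus\Future(y)\subset\exp_z(\{s\bfv: 0\le s\le S,\ \bfv\in K\})$, which is compact. A closed subset of this compact set is compact, and $\Lambda^+(z)\setminus\Future(y)$ is indeed closed in it because $\Future(y)$ is open.

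For (iii)$\Imply$(iv), I use the fact that $\ovl{\Future(z)}\setminus\Future(y)$ is closed. By Lemma~\ref{lemma: boundedness tip future and past} it therefore suffices to bound the time coordinate on this set. Let $T$ exceed the maximal time on the compact set $\Lambda^+(z)\setminus\Future(y)$ and on $\{y\}$. Take $q=(t,x)\in\ovl{\Future(z)}$ with $t>T$, and suppose $q\notin\Future(y)$. Let $q_0=(\atimeF_z(x),x)\in\d\Future(z)\subset\Lambda^+(z)$ (Remark~\ref{remark: lemma: closure future charact}). If $\atimeF_z(x)>T$, then $q_0\in\Future(y)$, and the vertical segment from $q_0$ to $q$ is causal, so $q\in\Future(y)$ by Lemma~\ref{lemma: inclusion future sets}. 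The harder case is $\atimeF_z(x)\le T<t$. I handle it by following the null geodesic from $z$ through $q_0$ and, more usefully, by working with the level sets $\d\Future(z)$ pushed up. Consider the point $(T,x)$: if it lies in $\ovl{\Future(z)}$, I move the starting point of the argument to the shifted base point $z_\eps=(t(z)-\eps,x(z))$. I expect this step to be the main obstacle.

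Here is the route I would try first. Set $R=\{q\in\ovl{\Future(z)}: t(q)=T'\}$ for some $T'>T$. Every point of $\ovl{\Future(z)}$ with time $\ge T'$ lies in $\ovl{\Future(r)}$ for some $r\in R$, by restricting a causal curve from $z$ to its part after time $T'$. So it suffices to show $R\subset\Future(y)$. A point $r\in R$ is reached by a causal curve from $z$; among the causal curves from $z$, the maximal-delay ones are null geodesics ending on $\Lambda^+(z)$. I would instead argue by contradiction, using the maximality characterization in Proposition~\ref{prop: JMR}(iii) for $\atimeP_y$: if $r\notin\Future(y)$, then $t(r)\le\atimeF_y(x(r))$. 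Running backward along a past causal curve from $r$ to $z$, I get a first point where it meets $\d\Future(y)$ or stays outside. Connecting this to $\Lambda^+(z)$ via Proposition~\ref{prop: minimizing curves} should produce a point of $\Lambda^+(z)\setminus\Future(y)$ at time $>T$, contradicting the choice of $T$. I expect this to need care, since causal curves need not be null.

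For (iv)$\Imply$(i), let $\gamma$ be a future causal curve from $z$, parametrized by $t$ and extended for all time by Hypothesis~\ref{hyp: metric}(iii). Its points lie in $\ovl{\Future(z)}$, and their times tend to infinity. Compactness of $\ovl{\Future(z)}\setminus\Future(y)$ bounds the times of those points not in $\Future(y)$, so $\gamma$ enters $\Future(y)$.
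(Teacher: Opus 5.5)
Your implications (i)$\Imply$(ii), (ii)$\Imply$(iii) and (iv)$\Imply$(i) are correct and essentially the paper's. One small point on (ii)$\Imply$(iii): the paper parametrizes the null geodesics from $z$ by $t$ rather than by an affine parameter. That avoids your claim that $s\mapsto\exp_z(s\bfv)$ exists for all $s\ge0$, which Hypothesis~\ref{hyp: metric} does not really justify.

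The genuine gap is in (iii)$\Imply$(iv). Your ``hard case'' $\atimeF_z(x)\le T<t$ is left open, and the route you sketch aims at the wrong bound. You try to show $\ovl{\Future(z)}\cap\{t>T\}\subset\Future(y)$, with $T$ the maximal time on $K=\Lambda^+(z)\setminus\Future(y)$, by producing a point of $K$ above time $T$. But above every $x$ with $\atimeF_z(x)\le\atimeF_y(x)$, the set $\ovl{\Future(z)}\setminus\Future(y)$ contains the whole vertical segment from $(\atimeF_z(x),x)$ up to $(\atimeF_y(x),x)\in\d\Future(y)$. Nothing forces the top of that segment to lie on $\Lambda^+(z)$. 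In the second doldrum example, take $z=(t_z,x_{\max})$ with $t_{\max}-t_z$ small. The segment from $z$ to $y_{\max}$ lies in $\ovl{\Future(z)}\setminus\Future(\y{0})$, yet no null geodesic from $z$ returns to $x_{\max}$ that fast. In fact the maximal time on $\ovl{\Future(z)}\setminus\Future(y)$ is exactly $\max_{\pi(K)}\atimeF_y$, which may exceed $\max_K t$. So the contradiction you are hoping for has no reason to materialize.

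The missing idea is to use $K$ only to confine the $x$-projection, and let $\atimeF_y$ bound the time. Your own easy case shows that $q_0\in\Future(y)$ forces $q\in\Future(y)$. So if $q=(t,x)\in\ovl{\Future(z)}\setminus\Future(y)$, then $q_0=(\atimeF_z(x),x)\in\d\Future(z)\setminus\Future(y)\subset K$. Hence $x\in L:=\pi(K)$, which is compact. Then $t\le\atimeF_y(x)\le\max_L\atimeF_y<\infty$ by continuity of $\atimeF_y$ (Proposition~\ref{prop: JMR}). Closedness together with Lemma~\ref{lemma: boundedness tip future and past} then gives compactness. This is the paper's argument, and it needs no case distinction, no level sets, and no argument by contradiction.
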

Similar equivalent statements hold for $\ovl{\Past(z)}$ and $\Lambda^-(z)$
plunging into $\Past(y)$. Such phenomena do not occur for the Minkowski
metric.  Section~\ref{sec:Second_doldums_example} contains an
example where they occur.
\begin{proof}
  {\bf (i)} $\Imply$ {\bf (ii)} is clear.

  {\bf (ii)} $\Imply$ {\bf (iii)}. $\Lambda^+(z)$ is the union of all
  future null geodesics starting at $z$. Such null geodesics can be
  labeled by an initial unit tangent vector
  $\bfv \in \Null^+_z \cap {\mathbb S}^d$. The latter is
  compact. Since all future null geodesics starting at $z$ enter
  $\Future(y)$ then, by compactness, there exists $T>0$ such that all
  these null geodesics are in $\Future(y)$ for $t > T$. This implies
  that $\Lambda^+(z)\cap \{t>T\}\subset \Future(y)$, giving
  $\Lambda^+(z)\setminus \Future(y)$ bounded by Lemma~\ref{lemma:
    boundedness tip future and past}, hence the result.

  {\bf (iii)} $\Imply$ {\bf (iv)}.  Define $K = \Lambda^+(z) \setminus
  \Future(y)$ and $L$ its $x$-projection. $L$ is compact in $\R^d$.
  Suppose $(t,x) \in \ovl{\Future(z)} \setminus \Future(y)$, then
  $\atimeF_z(x) \leq t \leq \atimeF_y(x)$. Thus $(\atimeF_z(x), x) \in
  K$, that is $x \in L$. This gives $t \leq \sup_L \atimeF_y <
  \infty$.  $\ovl{\Future(z)} \setminus \Future(y)$ is bounded by
  Lemma~\ref{lemma: boundedness tip future and past}, hence the
  result.

  {\bf (iv)} $\Imply$ {\bf (i)}. Write $z= (t_z,x_z)$. As $K' =\ovl{\Future(z)} \setminus
  \Future(y)$ is compact,  $K' \subset \{t \leq T\}$
  for some $T> t_z$. Suppose $\gamma(t) = (t, x(t))$ is a future causal curve  starting at $z$. 
  Then $\gamma(t) \notin K'$ if
  $t>T$: $\gamma$ enters $\Future(y)$.
\end{proof}

\begin{lemma}
  \label{lemma: plunging2}
  Suppose  $\y{0}, \y{1}, z \in \caL$ with $z\notin\DC(\y{0},
  \y{1})\neq \emptyset$.
  
  {\bf 1.}  The following statements are equivalent.\\[-0.7cm]
  \begin{enumerate}[align=left,labelwidth=0.8cm,labelsep=0.1cm,itemindent=0.9cm,
      leftmargin=0cm,label=\bf{(\roman*)}]
  \item $\ovl{\Future(z)}$ plunges into $\Future(\y{0})$ through $\DC(\y{0}, \y{1})$.
  \item $\ovl{\Future(z)}  \setminus \Future(\y{0})\subset \Past(\y{1})  $.
  \end{enumerate}
  
   {\bf 2.}  The following two statements are equivalent.\\[-0.7cm] 
  \begin{enumerate}[align=left,labelwidth=0.8cm,labelsep=0.1cm,itemindent=0.9cm,
      leftmargin=0cm,label=\bf{(\roman*)}]
  \item  $\Lambda^+(z)$ plunges into  $\Future(\y{0})$ through $\DC(\y{0}, \y{1})$.
  \item $\Lambda^+(z) \setminus \Future(\y{0})\subset \Past(\y{1})  $.
  \end{enumerate}  

  \end{lemma}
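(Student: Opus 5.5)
Both parts rest on the same two facts: a future causal curve that enters a future set stays in it (Lemma~\ref{lemma: plunging0}), and the causal curves relevant to $\Lambda^+(z)$ in part 2 are the null geodesics from $z$. So I would argue part 1 in detail and obtain part 2 by the same reasoning with "future causal curve from $z$" replaced by "future null geodesic from $z$" and $\ovl{\Future(z)}$ replaced by $\Lambda^+(z)$.

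\textbf{(i) $\Imply$ (ii) in part 1.} Take $q\in\ovl{\Future(z)}\setminus\Future(\y{0})$. By Lemma~\ref{lemma: alternative future}, some future causal curve $\gamma(t)=(t,x(t))$ runs from $z$ to $q$; extend it to all later times, for instance by a vertical continuation, which is timelike. By hypothesis $\gamma$ enters $\DC=\DC(\y{0},\y{1})$. By Lemma~\ref{lemma: plunging0}(ii), it then enters $\Future(\y{0})$ at some time $T$ with $\gamma(T)\in\d\Future(\y{0})\cap\Past(\y{1})$, and $\gamma(t)\in\Future(\y{0})$ exactly when $t>T$. Since $q\notin\Future(\y{0})$, we get $t(q)\le T$. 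The piece of $\gamma$ from $q$ to $\gamma(T)$ is future causal, so $q\in\ovl{\Past(\gamma(T))}$. Because $\gamma(T)\in\Past(\y{1})$, the past analogue of Lemma~\ref{lemma: inclusion future sets}(ii) gives $\ovl{\Past(\gamma(T))}\subset\Past(\y{1})$. Hence $q\in\Past(\y{1})$.

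\textbf{(ii) $\Imply$ (i) in part 1.} Let $\gamma$ be a future causal curve from $z$. Every point of $\gamma$ lies in $\ovl{\Future(z)}$. While $\gamma$ is outside $\Future(\y{0})$, hypothesis (ii) puts it inside $\Past(\y{1})$. The task is to show that $\gamma$ eventually lies in $\Future(\y{0})$ and, at that moment, is still in $\Past(\y{1})$.
\begin{itemize}
\item Since $\Past(\y{1})\subset\{t<t(\y{1})\}$, the curve cannot remain outside $\Future(\y{0})$ past time $t(\y{1})$, so it does enter $\Future(\y{0})$.
\item By Lemma~\ref{lemma: plunging0}(i), it enters at some time $T$, with $\gamma(T)\in\d\Future(\y{0})$, so $\gamma(T)\notin\Future(\y{0})$.
\item Hypothesis (ii) then gives $\gamma(T)\in\Past(\y{1})$.
\item Since $\Past(\y{1})$ is open, $\gamma(t)\in\Future(\y{0})\cap\Past(\y{1})=\DC$ for $t$ slightly larger than $T$.
\end{itemize}
So $\gamma$ enters $\DC$.

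\textbf{Part 2.} The argument transfers directly, taking $\gamma$ to be a null geodesic from $z$ and $q$ a point of it. For (i)$\Imply$(ii), the step $q\in\ovl{\Past(\gamma(T))}$ still holds because the segment of the null geodesic from $q$ to $\gamma(T)$ is causal; the inclusion $\ovl{\Past(\gamma(T))}\subset\Past(\y{1})$ is unchanged. For (ii)$\Imply$(i), the points used all lie on $\Lambda^+(z)$, so hypothesis (ii) of part 2 applies to them.

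The main obstacle is the extendibility of null geodesics to all times. I will assume it follows from the bounds in Hypothesis~\ref{hyp: metric}, since the proof of Lemma~\ref{lemma: plunging1} uses it implicitly. I also need to check the boundary case $q=\gamma(T)$ in (i)$\Imply$(ii); it is covered directly, since $\gamma(T)\in\Past(\y{1})$.
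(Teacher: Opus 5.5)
Your proof is correct and is essentially the paper's argument. For (i)$\Imply$(ii) you locate the entry point $\gamma(T)\in\d\Future(\y{0})\cap\Past(\y{1})$ via Lemma~\ref{lemma: plunging0} and pull $\Past(\y{1})$ back along the curve; for (ii)$\Imply$(i) you use the openness of $\Past(\y{1})$ at the entry point, and your bound $\Past(\y{1})\subset\{t<t(\y{1})\}$ simply inlines the compactness step that the paper routes through Lemma~\ref{lemma: plunging1}. One small point, shared with the paper: in part 2, if a null geodesic enters $\Future(\y{0})$ right at $z$ (possible when $z\in\d\Future(\y{0})$), then $\gamma(T)=z$, and applying (ii) there requires counting the tip $z$ as a point of $\Lambda^+(z)$, as the paper implicitly does (e.g.\ in the proof of Lemma~\ref{lemma:non_plunging_argument}).
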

Similar equivalent statements hold for $\ovl{\Past(z)}$ plunging into
$\Past(\y{1})$ through $\DC(\y{0}, \y{1})$.
\begin{proof} 

{\bf 1.}  {\bf (i)} $\Imply$ {\bf (ii)}. Any future causal curve
$\gamma$ starting at $z = (t_z, x_z)$ enters $\DC(\y{0}, \y{1})$ at
some time $T_\gamma>t_z$ and $\gamma([t_z, T_\gamma]) =
\gamma([t_z,+\infty[) \setminus \Future(\y{0}) \subset \Past(\y{1})$
    by Lemma~\ref{lemma: plunging0}. The result follows as
    $\ovl{\Future(z)}$ is the union of all future causal curves
    starting at $z$ by Lemma~\ref{lemma: alternative future}.
 
{\bf (ii)} $\Imply$ {\bf (i)}. $\ovl{\Future(z)} \setminus \Future(\y{0})$
is compact since bounded by Lemma~\ref{lemma: boundedness tip future
  and past}. Thus $\ovl{\Future(z)}$ plunges into $\Future(\y{0})$ by
Lemma~\ref{lemma: plunging1}, meaning any future causal curve starting
at $z$ enters $\Future(\y{0})$. The point where such a curve enters
$\Future(\y{0})$ is in the open set $\Past(\y{1})$. Therefore, the curve
enters $\DC(\y{0}, \y{1})$.

{\bf 2.}  Similar to the proof of {\bf 1.}
  \end{proof}

\begin{remark}  
The statements in {\bf 1}  clearly imply their
counterparts in {\bf 2}.   The converse
does not hold. In Proposition~\ref{prop: DC with hole no plunging} and
Figure~\ref{fig: doldrums3} in Section~\ref{sec:Second_doldums_example} we construct an example where $\Lambda^+(z)$ plunges into
$\Future(\y{0})$ through $\DC(\y{0}, \y{1})$ but $\ovl{\Future(z)}$ does not
plunge into $\Future(\y{0})$ through $\DC(\y{0}, \y{1})$.
\end{remark}

The following lemma provides a situation in which the plunging of
$\Lambda^+(z)$ through a \dcone cannot occur.  Denote by $\pi:
\R^{1+d} \to \R^d$ the projection $\pi(t,x) = x$.
\begin{lemma}
  \label{lemma:non_plunging_argument}
  Suppose $\y{0}, \y{1}, z =(t_z, x_z) \in \caL$ with $z\notin \DC =
  \DC(\y{0}, \y{1})\neq \emptyset$. Suppose that $x_z \notin \ovl{\pi
    \DC}$. Then, $\Lambda^+(z)$ does not plunge into
 $\Future(\y{0})$ through  $\DC(\y{0}, \y{1})$.
\end{lemma}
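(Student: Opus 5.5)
Since $z\notin\DC$ and the negation of plunging is existential, the plan is to exhibit a single future null geodesic from $z$ that never enters $\DC$; by Definition~\ref{def: plunging bis} this shows that $\Lambda^+(z)$ does not plunge into $\Future(\y{0})$ through $\DC$. The hypothesis $x_z\notin\ovl{\pi\DC}$ supplies the geodesic: a curve whose spatial projection stays outside $\pi\DC$ cannot meet $\DC$. So I would look for a future null geodesic from $z$ whose $x$-projection remains in the open set $\R^d\setminus\ovl{\pi\DC}$.

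Step 1. The set $\ovl{\pi\DC}$ is compact, because $\DC$ is bounded by Lemma~\ref{lemma: boundedness double cone}\textbf{i}. Since $x_z$ lies outside it, there is $r>0$ with $B(x_z,r)\cap\ovl{\pi\DC}=\emptyset$.

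Step 2. A naive first attempt is to take any future null geodesic $\gamma(t)=(t,x(t))$ from $z$, parametrized by $t$. By Hypothesis~\ref{hyp: metric} its spatial speed $|dx/dt|$ is bounded by a uniform constant $c$. Hence $x(t)\in B(x_z,r)$ for $t_z\le t< t_z+r/c$, so $\gamma$ avoids $\DC$ on that interval. This only handles an initial segment, however; afterwards the geodesic could wander into $\pi\DC$, so it does not yet give the lemma.

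Step 3. This is the main obstacle, and it is where I would spend the effort: choosing the initial null direction so that the geodesic never enters $\DC$. I would try a time argument. If $t_z\ge t(\y{1})$, then $\DC\subset\{t<t(\y{1})\}\subset\{t<t_z\}$ lies strictly before every future curve from $z$, and there is nothing to prove. The real case is $t_z<t(\y{1})$, where $\DC$ is confined to the compact slab $\{t\le t(\y{1})\}$. There I would pick the geodesic moving \emph{away} from $\pi\DC$. Concretely, choose a linear functional $\ell$ on $\R^d$ with $\ell(x_z)>\max_{\ovl{\pi\DC}}\ell$. Such an $\ell$ exists only if $x_z\notin\operatorname{conv}\ovl{\pi\DC}$, which may fail, and I expect this convexity issue to be the weak point. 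I would then aim for $\ell(x(t))$ nondecreasing on $[t_z,t(\y{1})]$.

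Step 4. An alternative that avoids convexity and uses only the compactness from Step 1 is an existence-of-escape argument. The curve $\gamma$ must enter $\DC$ through $\d\Future(\y{0})\cap\Past(\y{1})$, by Lemma~\ref{lemma: plunging0}(ii). Consider the family of future null geodesics from $z$, indexed by the compact sphere $\Null^+_z\cap\mathbb S^d$. Suppose every one of them enters $\DC$. By Lemma~\ref{lemma: plunging2} we then have $\Lambda^+(z)\setminus\Future(\y{0})\subset\Past(\y{1})$, and the aim is to contradict this. The tip $z$ itself lies in $\Lambda^+(z)$ and not in $\Future(\y{0})$: indeed $x_z\notin\pi\DC$, and if $z$ were in $\Future(\y{0})$ then $z\in\Past(\y{1})$ would place $x_z\in\pi\DC$. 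The inclusion therefore gives $z\in\Past(\y{1})$. Combined with $z\notin\Future(\y{0})$, this puts $z$ outside $\DC$ yet inside $\Past(\y{1})$; moreover $x_z\notin\ovl{\pi\DC}$ rules out $z\in\ovl{\Future(\y{0})}$, since that would put $x_z\in\ovl{\pi\DC}$. The remaining step is to derive a contradiction from this situation, but I have not found the argument. It would presumably have to come from following a geodesic staying near the boundary of $\Past(\y{1})$, and that step is unresolved.
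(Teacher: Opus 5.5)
Your proposal does not reach a proof. Step~2 is only local. In Step~3, even when a separating functional $\ell$ exists, nothing forces any null geodesic from $z$ to keep $\ell(x(t))$ nondecreasing, so that route establishes nothing. Step~4 stops exactly where the work lies. Its setup is the paper's: argue by contradiction, so that $z\in\Past(\y{1})\setminus\ovl{\Future(\y{0})}$ and, by Lemma~\ref{lemma: plunging2}, $\Lambda^+(z)\setminus\Future(\y{0})\subset\Past(\y{1})$. What is missing is to study $G=\Lambda^+(z)\setminus\Future(\y{0})$ through its projection. Since $\d\Future(z)\subset\Lambda^+(z)\subset\ovl{\Future(z)}$, one has $\pi G=\{x:\atimeF_z(x)\le\atimeF_{\y{0}}(x)\}$. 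This set is closed by continuity of arrival times, bounded because $G\subset\Past(\y{1})\cap\{t\ge t_z\}$, and contains $x_z$. At any $\udl x\in\d(\pi G)$, continuity gives $\atimeF_z(\udl x)=\atimeF_{\y{0}}(\udl x)$. Hence $\udl z=(\atimeF_z(\udl x),\udl x)$ lies in $G\cap\d\Future(\y{0})\subset\d\Future(\y{0})\cap\Past(\y{1})\subset\ovl{\DC}$. So plunging forces $\d(\pi G)\subset\ovl{\pi\DC}$, and the contradiction requires a boundary point of $\pi G$ outside $\ovl{\pi\DC}$.

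Producing that point is exactly where your convexity worry is justified. It is also where the paper's own proof is incomplete. The paper takes $\udl x$ maximizing $\dist(\cdot,\ovl{\pi\DC})$ over $\pi G$ and then approximates it by points of $\R^d\setminus\pi G$, but such a maximizer need not lie on $\d(\pi G)$. In fact the statement is false when $\ovl{\pi\DC}$ encloses $x_z$. In $\R^{1+2}$ take $g=-dt^2+c(x)^{-2}|dx|^2$ with $c(x)=\phi(|x-q|)$, $q=(2,0)$, $c=\eps$ on $B(q,9/10)$ and $c=1$ off the unit ball $\BB$ about $q$. Set $\y{0}=(-5,0)$ and $\y{1}=(5,0)$, so that $\DC=\{|t|<5-d_\gR(x,0)\}$. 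Let $z=(-d_q,q)$ with $d_q=\int_0^1\phi(r)^{-1}\,dr$. For $\eps$ small, $d_\gR(q,0)\ge d_q>5$, so $x_z\notin\ovl{\pi\DC}$ and $z\notin\Future(\y{0})$. Yet every future null geodesic from $z$ projects onto a radial unit-speed $\gR$-geodesic from $q$. It reaches $\d\BB$ at time $0$ at a point $w$ with $d_\gR(w,0)\le 1+\pi<5$, and hence passes through $(0,w)\in\DC$. So $\Lambda^+(z)$ does plunge through $\DC$. The argument, and your Step~4, goes through under the extra hypothesis that $x_z$ lies in an unbounded component of $\R^d\setminus\ovl{\pi\DC}$; this holds, for instance, if $x_z\notin\operatorname{conv}\ovl{\pi\DC}$. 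Then a path from $x_z$ to infinity inside that component must leave the compact set $\pi G$, and its first exit point is a point of $\d(\pi G)$ outside $\ovl{\pi\DC}$.
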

\begin{proof}
  The result holds if $z \in \Past(\y{1})^c$.  The proof is by contradiction.
  Suppose that $z \in
  \Past(\y{1})$ and that $\Lambda^+(z)$ plunges into $\Future(\y{0})$ through $\DC(\y{0}, \y{1})$.
  Set $G = \Lambda^+(z) \setminus \Future(\y{0})$. It is compact as it is
  closed and a subset of the bounded set $\{t\geq t_z\} \cap
  \Past(\y{1})$ by Lemma~\ref{lemma: plunging2}. Since $x_z \in \pi G
  \setminus \ovl{\pi \DC}$, there exists $\udl{x} \in \pi G$ such that
  \begin{align*}
    \dist(\udl{x}, \ovl{\pi \DC}) = \sup_{x \in \pi G} \dist(x, \ovl{\pi \DC})>0.
  \end{align*}
  From the local geometry of $\Lambda^+(z)$ near $z$ observe that $x_z$
 is in the interior of $\pi G$. Thus, $\udl{x} \neq x_z$.  Set
  $\udl{t} = T^+_z (\udl{x})$. Then, $\udl{z} = (\udl{t}, \udl{x}) \in
 G$. 

 Consider a future null geodesic  $\rho(t)$ starting  at  $z$ and  reaching
 $\udl{z}$. By assumption it enters $\DC$ at $t=T$ for some $T>0$, and thus it enters $\Future(\y{0})$ at $t=T$; see Lemma~\ref{lemma: plunging0}. As $\rho(\udl{t}) = \udl{z}$ then $\udl{t} \leq T$ since $\udl{z} \in G$. As $\udl{x} \notin \ovl{\pi \DC}$ one has $\udl{z} \notin \ovl{\DC}$ yielding $\udl{t} < T$. Thus, $\udl{z} \notin \d\Future(\y{0})$. 

  Consider $(x_n)_n \subset \R^d \setminus \pi G$ such that $x_n \to
  \udl{x}$.  Set $t_n = T^+_z(x_n)$. Then, $z_n = (t_n, x_n) \in
  \Lambda^+(z) \setminus G$. Hence, $z_n \in \Future(\y{0})$. Yet, as $z_n \to \udl{z}$ and $\udl{z} \notin \d\Future(\y{0})$, a contradiction. 
\end{proof}

\section{\Dds, \dcones, and doldrums examples}
\label{sec:Double-cones_dds}
 In this section, one assumes that the unique continuation property of
 Definition~\ref{def:unique_continuation} holds across
 all noncharacteristic hypersurfaces.

 Suppose $\gamma(s)$ is a future oriented timelike curve. 
 Definition~\ref{def:domain_of_determination2} yields
 \begin{align*}
    Z_{\gamma(]a,b[)} = \mathop{\cap} Z_\O,
 \end{align*}
 for $a< b$, where the intersection is over for all open \nhds $\O$ of
  $\gamma(]a,b[)$. Theorem \ref{theorem:ZO homotopy} and  Proposition~\ref{prop:
  homotopic double-cones} imply the following result.
\begin{proposition}
  \label{prop: Dcones DD split metric}
  Suppose $g = - dt^2  + \gR$, with $\gR$ a time independent
  Riemannian metric on $\R^d$, and $\gamma(t) = (t, \x{0})$, with fixed $\x{0} \in \R^d$.  If $a <
  b$ then $\DC\big(\gamma(a), \gamma(b)\big)\subset Z_{\gamma(
    ]a,b[)}$.
\end{proposition}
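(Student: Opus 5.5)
The plan is to unwind Definition~\ref{def:domain_of_determination2}: it suffices to show that for every open set $\O \supset \gamma(]a,b[)$ one has $\DC\big(\gamma(a),\gamma(b)\big) \subset Z_\O$. Since $Z_\O$ is monotone in $\O$, we may replace $\O$ by the smaller open set $\O' = \O \cap \DC\big(\gamma(a),\gamma(b)\big)$. This set still contains the open segment $]\gamma(a),\gamma(b)[$, because the vertical segment is timelike and its interior points lie in $\Future(\gamma(a))\cap\Past(\gamma(b))$. Then $Z_{\O'} \subset Z_\O$, so it is enough to treat $\O'$, which satisfies $]\gamma(a),\gamma(b)[ \subset \O' \subset \DC\big(\gamma(a),\gamma(b)\big)$.

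Next we reduce to the symmetric configuration of Proposition~\ref{prop: homotopic double-cones}. Because the metric $-dt^2+\gR$ is invariant under time translations, translating by $t \mapsto t - (a+b)/2$ is an isometry. After this translation the tips become $(-\sft,\x{0})$ and $(\sft,\x{0})$ with $\sft=(b-a)/2>0$. A translation of the $x$-coordinates moves $\x{0}$ to $0$, and $\gR$ is simply transported to another time-independent Riemannian metric. This is exactly the extension remarked after Proposition~\ref{prop: homotopic double-cones}. That proposition then yields $\DC\big(\gamma(a),\gamma(b)\big)=\cah^1\O'=\cah^\infty\O'$.

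Finally, apply Theorem~\ref{theorem:ZO homotopy}. Its hypotheses hold: $P$ is the strictly hyperbolic operator associated with $g$, and uniqueness in the Cauchy problem at all noncharacteristic nonspacelike hypersurfaces is assumed throughout this section. Alternatively, the coefficients are $t$-independent, hence analytic in $t$, so Theorem~\ref{thm:Robbiano-tataru} applies. The theorem gives $\cah^\infty\O' \subset Z_{\O'}$. Chaining the inclusions,
\begin{align*}
\DC\big(\gamma(a),\gamma(b)\big)=\cah^\infty\O'\subset Z_{\O'}\subset Z_\O .
\end{align*}
Intersecting over all $\O$ gives the claim.

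The only step that needs care is checking that the coordinate changes preserve the exact hypotheses of Proposition~\ref{prop: homotopic double-cones}, and that $\O'$ is open and contains the segment; both are immediate. All the real work is already contained in the cited results.
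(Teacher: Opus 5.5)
This is correct and is the paper's own argument: the paper simply combines Theorem~\ref{theorem:ZO homotopy} with Proposition~\ref{prop: homotopic double-cones}, and your passage to $\O'=\O\cap\DC\big(\gamma(a),\gamma(b)\big)$ and the translation to symmetric tips supply the details it leaves implicit. One small caution: your alternative appeal to Theorem~\ref{thm:Robbiano-tataru} is not warranted by the hypotheses, since only the metric (the principal part of $P$) is assumed time independent and the lower-order terms may depend on $t$; rely instead on the section's standing uniqueness assumption, which is what your main line already does.
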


%

Suppose $\gamma$ is a timelike curve in the Minkowski space
$\Minkowski^{1+d}$. Example~\ref{ex:Minkowski-DD} proves that 
$Z_{\gamma( ]a,b[)} = \DC\big(\gamma(a), \gamma(b)\big)$.
Theorem~\ref{thm:small_double_cones}, the first result of the section, proves
that $Z_{\gamma(]a,b[)}=\DC(\gamma(a), \gamma(b))$ for $|b-a|$ is \suff small.
This
result does not require any special metric structure. Second, we
construct an example of a metric $- dt^2 + \gR$ and a timelike curve
$\gamma(s)$, for which $\DC(\gamma(a), \gamma(b))$ is not a \dd of
$\gamma(]a, b[)$. That is $Z_{\gamma(]a,b[)}$ is striclty smaller than
    $\DC(\gamma(a), \gamma(b))$.  Third, we construct an example  of a metric $- dt^2 + \gR$ for
    which $Z_{\gamma(]a,b[)}$ is strictly larger than $\DC(\gamma(a),
    \gamma(b))$.  Necessarily in these examples $|b-a|$ is not small,
    and $\gamma(s)$ is not a time axis in view of
    Proposition~\ref{prop: Dcones DD split metric} and
    Theorem~\ref{thm:small_double_cones}.
These two examples show that 
 $\DC(\gamma(a),\gamma(b))$ is an unreliable candidate for 
 $Z_{\gamma(]a,b[)}$.  
\subsection{Small \dcones}
\label{sec:small_dcones_are_exact_dds}

\begin{theorem}
  \label{thm:small_double_cones}
  Suppose $I \subset \R$ is an interval and $\gamma: I \to \caL$ is a
  timelike curve. For any  $s_0 \in I$, there exists $\eps>0$ such that $\DC(\gamma(a),
  \gamma(b)) = Z_{\gamma(]a,b[)}$ if $s_0-\eps \leq a< b \leq s_0+ \eps$.
\end{theorem}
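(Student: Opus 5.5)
Two inclusions must be shown: $\DC(\gamma(a),\gamma(b))\subset Z_{\gamma(]a,b[)}$ (a lower bound via the Timelike Homotopy Theorem) and $Z_{\gamma(]a,b[)}\subset \DC(\gamma(a),\gamma(b))$ (an upper bound via propagation of singularities, as in Example~\ref{ex:Minkowski-DD}). Everything is localized first. By Proposition~\ref{prop:normal2}, pick a convex normal \nhd $\caN$ of $\gamma(s_0)$ of uniform size and normal coordinates centered at $\gamma(s_0)$ with $\d_s\gamma(s_0)$ along the time axis. For $\eps$ small, every double cone $\DC(\gamma(a),\gamma(b))$ with $a,b\in[s_0-\eps,s_0+\eps]$ lies deep inside $\caN$ and below the times $t_\caN$ of Proposition~\ref{prop:local form of F} for both tips. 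Hence $\Future(\gamma(a))$ and $\Past(\gamma(b))$ are given near the double cone by $\exp_{\gamma(a)}(\Timelike^+)$ and $\exp_{\gamma(b)}(\Timelike^-)$, and their boundaries are smooth characteristic cones $\Lambda^\pm$ away from the tips. After rescaling by $\eps$, the metric is $C^\infty$-close to Minkowski, with errors $O(\eps)$.

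\textbf{Lower bound.} Fix an open $\O\supset\gamma(]a,b[)$. By Theorem~\ref{theorem:ZO homotopy} it suffices to show $\DC(\gamma(a),\gamma(b))\subset\cah^\infty\O$. I will mimic Proposition~\ref{prop: homotopic double-cones}. Given $z$ in the double cone, take $a<a'<b'<b$ with $z\in\DC(\gamma(a'),\gamma(b'))$, which is possible by openness and continuity of arrival times. Then build a piecewise-smooth curve from $\gamma(a')$ to $\gamma(b')$ through $z$: a timelike geodesic from $\gamma(a')$ to $z$ followed by one from $z$ to $\gamma(b')$, both of which exist inside the convex normal \nhd. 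Homotope $\gamma|_{[a',b']}$ to this curve through timelike curves with fixed endpoints. In rescaled normal coordinates, the straight-line convex interpolation $(1-\sigma)\gamma+\sigma\beta$ of the two curves (parameterized by normal time) has velocities that are convex combinations of vectors in cones that are uniformly $O(\eps)$-close to the Minkowski forward cone, and these remain in the actual forward cone because $z$ lies strictly inside the double cone. Margins are ensured by shrinking to a slower metric $g^\delta$ as in the smoothing step of Proposition~\ref{prop: homotopic double-cones}. That same smoothing step then removes the corner at $z$. The endpoints stay fixed in $\O$, and the initial curve lies in $\O$.

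\textbf{Upper bound.} Let $\uy\notin\ovl{\DC(\gamma(a),\gamma(b))}$. I need $u$ with $Pu=0$, $u=0$ near $\gamma(]a,b[)$, and $\uy\in\supp u$. By time symmetry, suppose $\uy\notin\ovl{\Future(\gamma(a))}$ with $\uy$ in the relevant region. The plan is to find a null bicharacteristic through $\uy$ whose projection $\zeta$ avoids $\ovl{\gamma([a,b])}$, launch Cauchy data with wavefront on it at a time slice where the support can be kept away from $\gamma$, and use sharp finite speed \eqref{eq:sharp} together with the non-plunging Lemma~\ref{lemma:non_plunging_argument} to show $\supp u$ does not meet a \nhd of $\gamma(]a,b[)$. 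The support of $u$ lies in $\ovl{\Future(w)}\cup\ovl{\Past(w)}$ for points $w$ near the launch point, so I must check that these sets miss $\gamma([a,b])$. In the small regime this comes from the near-Minkowski cone geometry inside $\caN$: null geodesics stay close to straight null lines, and futures of points near a null line through $\uy$ outside the cone $\ovl{\Future(\gamma(a))}$ do not reach $\gamma$ within the double cone time range. Points far outside $\caN$ are handled by finite speed directly, since the double cone is tiny. Taking the intersection over $\O$ then gives $Z_{\gamma(]a,b[)}\subset\ovl{\DC}$, and openness arguments as in Example~\ref{ex:Minkowski-DD} upgrade this to equality with the open double cone, in the precise sense of Definition~\ref{def:domain_of_determination2}.

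\textbf{Main obstacle.} The hard step is the upper bound: ruling out plunging, i.e.\ making sure the support of the constructed solution genuinely avoids a \nhd of $\gamma(]a,b[)$ for every point near $\d\DC$, including near the tips where the cones degenerate. This must hold uniformly, with $\eps$ chosen once for all $a,b$. I would first try to prove a quantitative version of Lemma~\ref{lemma:non_plunging_argument} in normal coordinates, comparing with Minkowski via the slower and faster metrics $g^\delta$ of Appendix~\ref{app:speeding up and slowing down metrics}.
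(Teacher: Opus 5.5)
There is a genuine gap, and it sits in the hard half of the theorem, the upper bound $Z_{\gamma(]a,b[)}\subset\DC$.

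\textbf{The right criterion.} Asking for a null bicharacteristic through $\uy$ whose projection avoids $\ovl{\gamma([a,b])}$ is not enough. Data localized at a point $w$ give a solution vanishing near $\gamma(]a,b[)$ only if $w\in R:=\ovl{\Future(\gamma(a))}^c\cap\ovl{\Past(\gamma(b))}^c$. So you need a null geodesic through $\uy$ that actually reaches $R$.

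Take $\uy$ in the shell $\Past(\gamma(b))\setminus(\ovl{\Past(\gamma(a))}\cup\ovl{\DC})$. Every past null geodesic from $\uy$ stays in $\Past(\gamma(b))$. You therefore need a future null geodesic from $\uy$ that leaves $\Past(\gamma(b))$ before entering $\Future(\gamma(a))$. In other words, $\Lambda^+(\uy)$ must not plunge into $\Future(\gamma(a))$ through $\DC$.

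\textbf{Far points.} Since $\Past(\gamma(b))\setminus\ovl{\Past(\gamma(a))}=\{\atimeP_{\gamma(a)}(x)<t<\atimeP_{\gamma(b)}(x)\}$ is nonempty over every $x\in\R^d$, the shell contains points arbitrarily far from $\caN$.
\begin{itemize}
\item For those points, ``finite speed directly'' gives nothing. Finite speed only bounds supports from above; it cannot produce a null geodesic from $\uy$ into $R$.
\item The near-Minkowski picture is also unavailable there.
\item Plunging is a real phenomenon: Proposition~\ref{prop:second_doldrum} exhibits it for large double cones. So smallness has to be exploited globally, not only inside $\caN$.
\end{itemize}

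\textbf{Near points.} Even inside $\caN$, a comparison with Minkowski carrying $O(\eps)$ errors cannot handle shell points arbitrarily close to $\d\DC$ or to the tips. The separation you need there has no margin, so it must come from exact causal facts.

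\textbf{What the paper uses instead.} Outside the cushion $\cushion$:
\begin{itemize}
\item It uses \eqref{eq:step0_proj}. This holds because $\d\Future(\gamma(s_0))$ and $\d\Past(\gamma(s_0))$ are graphs over all of $\R^d$, so far from the tip they never project back over $x(s_0)$.
\item Continuity of arrival times propagates this to \eqref{eq: small DC proj condition}--\eqref{eq: small DC proj condition-bis} for a slightly enlarged $\DC'=\DC(\gamma(a'),\gamma(b'))$.
\item Then $x_{\uy}\notin\ovl{\pi\DC'}$, and Lemma~\ref{lemma:non_plunging_argument}, an exact global argument, excludes plunging through $\DC'$.
\item The enlargement guarantees, via Lemma~\ref{lemma: inclusion future sets}, that the point reached lies in the open set $R$.
\end{itemize}
Inside the cushion:
\begin{itemize}
\item Lemma~\ref{lemma: foliation future past} puts $\uy$ on $\d\Future(\gamma(\beta))$ for some $\ua<\beta<a$.
\item The minimizing generator through $\uy$ stays on $\d\Future(\gamma(\beta))$, hence off $\ovl{\Future(\gamma(a))}$ by \eqref{eq:empty_intersection_boundary_future}, until it leaves $\Past(\gamma(b))$.
\end{itemize}
You need both ingredients, or a substitute for each.

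\textbf{Boundary of $\DC$.} $Z_{\gamma(]a,b[)}$ need not be open, so the openness step must go through the open set $Z_\DC\supset Z_{\gamma(]a,b[)}$. Your solutions launched from $R$ do control $Z_\DC$, because they vanish on all of $\DC$.

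\textbf{Lower bound.} Your argument is a genuinely different route from the paper's, which sweeps with $g^\delta$-null geodesics $\varphi_1,\varphi_2$ and a vertical segment through $z$. It can be made to work, but not for the reason you give. The margin coming from ``$z$ strictly inside $\DC$'' degenerates as $z\to\d\DC$, while $\eps$ must be fixed before $z$.

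What does work is convexity together with the uniform timelike margin $c$ of $\gamma$ alone:
\begin{itemize}
\item Parameterize both curves by $t$ and set $f_p(w)=g_p\bigl((1,w),(1,w)\bigr)$, which is convex in $w$.
\item With $p_\sigma=(1-\sigma)\gamma(t)+\sigma\beta(t)$, use $|p_\sigma-\beta(t)|=(1-\sigma)|\gamma(t)-\beta(t)|$ and the fact that $\DC$ has diameter $O(\eps)$.
\item This gives $f_{p_\sigma}(w_\sigma)\le(1-\sigma)f_{p_\sigma}(w_\gamma)+\sigma f_{p_\sigma}(w_\beta)\le(1-\sigma)(-c+C\eps)$.
\end{itemize}
Finally, anchor the smoothing \eqref{eq:recipe} at $t=t_z$ rather than $t=0$, so that the smoothed final curve still passes through $z$.
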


 Proposition \ref{prop:first_doldrum} 
(resp.
 \ref{prop:second_doldrum})
 gives examples  of large double cones where $Z_{\gamma(]a,b[)}$ is strictly smaller 
 (resp. strictly larger) than $\DC$.
When Theorem~\ref{thm:small_double_cones} applies then
 $Z_{\gamma(]a,b[)}$ only depends on the endpoints $\gamma(a)$ and
 $\gamma(b)$.

 \begin{proof}
   
   {\bfseries Step~0. How small is small?}
   
    Parameterize $\gamma$ as $I \ni t \mapsto\gamma(t) =  (t,x(t))$.  
    Without loss of generality  take 
   $I  = [-T,T]$  with  $T>0$ and $s_0=0$.

   Choose   a convex normal \nhd  $\caN_1$ of $\gamma(0)$
   as  in Proposition~\ref{prop:normal2}.  Choose $\mu>0$ so
   that the future  $\Future\big(\gamma(0)\big)\cap \{|t|<\mu\}\subset\caN_1$  and past
   $\Past\big(\gamma(0)\big)\cap \{|t|<\mu\}\subset \caN_1$ are described in 
   Proposition \ref{prop:local form of F}.
   Choose a convex normal \nhd of $\gamma(0)$
   that is contained in the band
   \begin{equation}
     \label{eq:NinBand}
     \caN\ \subset\ \caN_1\,\cap\, \{|t|<\mu\}.
   \end{equation}
   Choose $\ua$ and $\ob$ so that $-T< \ua < 0 < \bb< T$, and so that
   $\cushion = \DC(\gamma(\ua), \gamma(\bb)) \subset \caN$.  $\cushion$ is
   called the {\it cushion \dcone}.
  
  With $\pi: \R^{1+d} \to \R^d$  the projection $\pi(t,x) = x$,
   \begin{align}
     \label{eq:step0_proj}
     x(0) \notin
     \ovl{\pi \Big( \big(\d\Future(\gamma(0))
       \, \cup\,  \d\Past(\gamma(0))\big) \setminus \cushion\Big)}.
   \end{align}
   This is sketched in Figure~\ref{fig: small dcones steps0}.
   \begin{figure}
     \begin{center}
       \resizebox{9cm}{!}{
\begin{picture}(0,0)%
\includegraphics{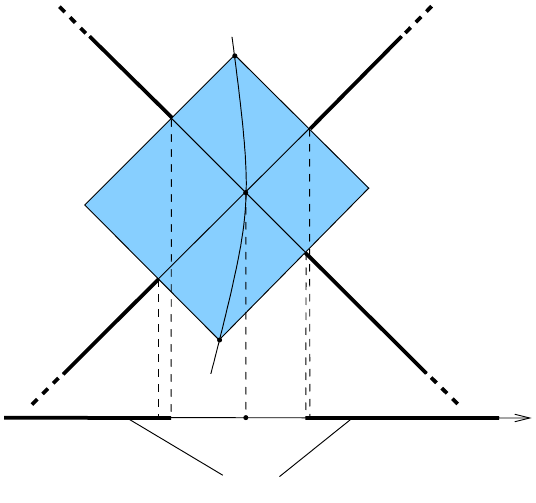}%
\end{picture}%
\setlength{\unitlength}{3947sp}%
\begin{picture}(4267,4009)(2514,-3938)
\put(4410,-340){\makebox(0,0)[lb]{\smash{\fontsize{8}{9.6}\usefont{T1}{ptm}{m}{n}{\color[rgb]{0,0,0}$\gamma(\ob)$}%
}}}
\put(3451,-1561){\makebox(0,0)[lb]{\smash{\fontsize{8}{9.6}\usefont{T1}{ptm}{m}{n}{\color[rgb]{0,0,0}$\cushion$}%
}}}
\put(4418,-1503){\makebox(0,0)[rb]{\smash{\fontsize{8}{9.6}\usefont{T1}{ptm}{m}{n}{\color[rgb]{0,0,0}$\gamma(0)$}%
}}}
\put(4217,-2695){\makebox(0,0)[rb]{\smash{\fontsize{8}{9.6}\usefont{T1}{ptm}{m}{n}{\color[rgb]{0,0,0}$\gamma(\ua)$}%
}}}
\put(6604,-3458){\makebox(0,0)[lb]{\smash{\fontsize{8}{9.6}\usefont{T1}{ptm}{m}{n}{\color[rgb]{0,0,0}$x$}%
}}}
\put(4463,-3887){\makebox(0,0)[b]{\smash{\fontsize{8}{9.6}\usefont{T1}{ptm}{m}{n}{\color[rgb]{0,0,0}$\pi\Big(\big(\d\Future(\gamma(0)) \cup\Past(\gamma(0))\big) \setminus \cushion\Big)$}%
}}}
\put(5551,-2461){\makebox(0,0)[lb]{\smash{\fontsize{8}{9.6}\usefont{T1}{ptm}{m}{n}{\color[rgb]{0,0,0}$\d\Past(\gamma(0))$}%
}}}
\put(5551,-586){\makebox(0,0)[lb]{\smash{\fontsize{8}{9.6}\usefont{T1}{ptm}{m}{n}{\color[rgb]{0,0,0}$\d\Future(\gamma(0))$}%
}}}
\put(4501,-3436){\makebox(0,0)[b]{\smash{\fontsize{8}{9.6}\usefont{T1}{ptm}{m}{n}{\color[rgb]{0,0,0}$\pi\big(\gamma(0)\big)$}%
}}}
\end{picture}%
         }
       \caption{Sketch of the geometry described in
         \eqref{eq:step0_proj}, which leaves wiggle room for
         \eqref{eq: small DC proj condition} and \eqref{eq: small DC
           proj condition-bis} to hold. }
       \label{fig: small dcones steps0}
     \end{center}
     \end{figure}
 Finite speed  of propagation and continuity  imply there exists $0< \eps_1 <\mu$ 
   so that 
   for any $\alpha$, $\beta$ such that $-\eps_1 \leq  \alpha< \beta \leq \eps_1$ one has
    \begin{align}
      \label{eq: small DC proj condition}
      \ovl{\pi\big( \Past(\gamma(\beta)) \setminus
        \big( \Past(\gamma(\alpha))\cup \cushion \big) \big)}
     \, \cap\,  \ovl{\pi \DC(\gamma(\alpha), \gamma(\beta))}
      = \emptyset,
    \end{align}
    and
    \begin{align}
      \label{eq: small DC proj condition-bis}
      \ovl{\pi\big( \Future(\gamma(\alpha)) \setminus
        \big( \Future(\gamma(\beta))\cup \cushion\big) \big)}
     \, \cap\,  \ovl{\pi \DC(\gamma(\alpha), \gamma(\beta))}
     = \emptyset.
    \end{align}
   
Choose $\eps_2\in ]0,\eps_1[$ so that
    $\DC(]\gamma(-\eps_2),\gamma(\eps_2)[) \subset \{| t|<\mu/2\}$.
    This leaves a factor two of wiggle room compared to
    \eqref{eq:NinBand}.  Set $\eps = \min \{ |\ua|, \bb, \eps_2\}$.
    In what follows, suppose
\begin{align*}
  -\eps < a <  b <  \eps.
\end{align*}
This is the constraint on $a,b$.  Define $\DC = \DC(\gamma(a),
\gamma(b))$.

 Figure~\ref{fig: small dcones steps} sketches the regions associated
 with the steps of the proof.
\begin{figure}
  \begin{center}
    \subfigure[\label{fig: small DC-1} Steps 1--3]
              {\resizebox{5cm}{!}{
\begin{picture}(0,0)%
\includegraphics{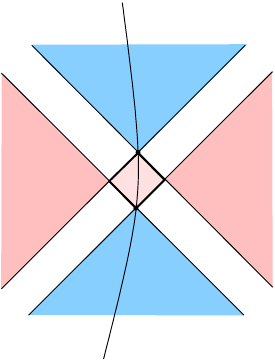}%
\end{picture}%
\setlength{\unitlength}{3947sp}%
\begin{picture}(2194,2874)(3372,-2923)
\put(4328,-1545){\makebox(0,0)[lb]{\smash{\fontsize{10}{12}\usefont{T1}{ptm}{m}{n}{\color[rgb]{0,0,0}$\DC$}%
}}}
\put(4489,-1523){\makebox(0,0)[lb]{\smash{\fontsize{7}{8.4}\usefont{T1}{ptm}{m}{n}{\color[rgb]{0,0,0}$\circled{1}$}%
}}}
\put(4551,-1298){\makebox(0,0)[lb]{\smash{\fontsize{10}{12}\usefont{T1}{ptm}{m}{n}{\color[rgb]{0,0,0}$\gamma(b)$}%
}}}
\put(4406,-1752){\makebox(0,0)[rb]{\smash{\fontsize{10}{12}\usefont{T1}{ptm}{m}{n}{\color[rgb]{0,0,0}$\gamma(a)$}%
}}}
\put(3904,-1526){\makebox(0,0)[rb]{\smash{\fontsize{8}{9.6}\usefont{T1}{ptm}{m}{n}{\color[rgb]{0,0,0}$\circled{2}$}%
}}}
\put(5171,-1520){\makebox(0,0)[lb]{\smash{\fontsize{8}{9.6}\usefont{T1}{ptm}{m}{n}{\color[rgb]{0,0,0}$\circled{2}$}%
}}}
\put(4507,-763){\makebox(0,0)[lb]{\smash{\fontsize{8}{9.6}\usefont{T1}{ptm}{m}{n}{\color[rgb]{0,0,0}$\circled{3}$}%
}}}
\put(4443,-2344){\makebox(0,0)[lb]{\smash{\fontsize{8}{9.6}\usefont{T1}{ptm}{m}{n}{\color[rgb]{0,0,0}$\circled{3}$}%
}}}
\end{picture}%
}}
    \qquad    \qquad 
    \subfigure[\label{fig: small DC-2} Steps 4 and 5]
              {\resizebox{5cm}{!}{
\begin{picture}(0,0)%
\includegraphics{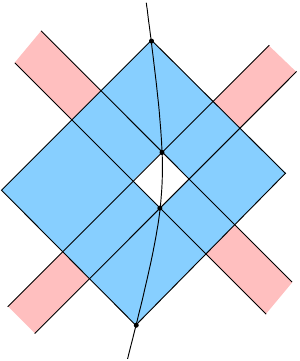}%
\end{picture}%
\setlength{\unitlength}{3947sp}%
\begin{picture}(2385,2874)(3181,-2923)
\put(4272,-2812){\makebox(0,0)[lb]{\smash{\fontsize{10}{12}\usefont{T1}{ptm}{m}{n}{\color[rgb]{0,0,0}$\gamma(\ua)$}%
}}}
\put(4410,-340){\makebox(0,0)[lb]{\smash{\fontsize{10}{12}\usefont{T1}{ptm}{m}{n}{\color[rgb]{0,0,0}$\gamma(\ob)$}%
}}}
\put(4328,-1545){\makebox(0,0)[lb]{\smash{\fontsize{10}{12}\usefont{T1}{ptm}{m}{n}{\color[rgb]{0,0,0}$\DC$}%
}}}
\put(3615,-2417){\makebox(0,0)[b]{\smash{\fontsize{8}{9.6}\usefont{T1}{ptm}{m}{n}{\color[rgb]{0,0,0}$\circled{5}$}%
}}}
\put(5230,-2271){\makebox(0,0)[b]{\smash{\fontsize{8}{9.6}\usefont{T1}{ptm}{m}{n}{\color[rgb]{0,0,0}$\circled{5}$}%
}}}
\put(5297,-737){\makebox(0,0)[b]{\smash{\fontsize{8}{9.6}\usefont{T1}{ptm}{m}{n}{\color[rgb]{0,0,0}$\circled{5}$}%
}}}
\put(4551,-1298){\makebox(0,0)[lb]{\smash{\fontsize{10}{12}\usefont{T1}{ptm}{m}{n}{\color[rgb]{0,0,0}$\gamma(b)$}%
}}}
\put(4406,-1752){\makebox(0,0)[rb]{\smash{\fontsize{10}{12}\usefont{T1}{ptm}{m}{n}{\color[rgb]{0,0,0}$\gamma(a)$}%
}}}
\put(3451,-1561){\makebox(0,0)[lb]{\smash{\fontsize{10}{12}\usefont{T1}{ptm}{m}{n}{\color[rgb]{0,0,0}$\cushion$}%
}}}
\put(3751,-1636){\makebox(0,0)[lb]{\smash{\fontsize{8}{9.6}\usefont{T1}{ptm}{m}{n}{\color[rgb]{0,0,0}$\circled{4}$}%
}}}
\put(3676,-736){\makebox(0,0)[b]{\smash{\fontsize{8}{9.6}\usefont{T1}{ptm}{m}{n}{\color[rgb]{0,0,0}$\circled{5}$}%
}}}
\end{picture}%
              }}
    \caption{Regions associated with each step of the
  proof of Theorem~\ref{thm:small_double_cones}.}
  \label{fig: small dcones steps}
  \end{center}
\end{figure}

\medskip
{\bfseries Step~1: $\bld{Z_{\gamma(]a,b[)} = Z_\DC}$.} 

Since $\gamma(]a,b[)\subset\DC$, $Z_{\gamma(]a,b[)}\subset Z_\DC$.  It
    suffices to prove that $\DC \subset Z_{\gamma(]a,b[)}$ as it
    implies $Z_\DC \subset Z_{(Z_{\gamma(]a,b[)})}=Z_{\gamma(]a,b[)}$.
    Suppose $\O$ is an open subset of $\DC$ with $\gamma(]a,b[)
    \subset \O$, and $z = (t_z,x_z) \in \DC$. We prove that $z \in
    \cah^1 \O$, so Theorem~\ref{theorem:ZO homotopy} implies $z \in
    Z_\O$.  Since $\O$ arbitrary, conclude that $z \in
    Z_{\gamma(]a,b[)}$.

Use the slower metrics $\gd$ with $\delta$ slightly smaller than 1
constrained as follows. The sets $\Future^\delta\big(\gamma(a)\big)$
and $\Past^\delta\big(\gamma(b)\big)$ associated with $\gd$ increase
with $\delta$ and are continuous in $\delta$.  Therefore there is an
$\delta_1<1$ so that the curve $\gamma([a,b])$ is timelike for
$g^{\delta_1}$ and $z \in \DC^{\delta_1} =
\DC^{\delta_1}\big(\gamma(a),\gamma(b)\big)$.
    
For $\eta>0$ define $\ut_z=t_z-\eta$ and $\ot_z=t_z+\eta$.  Since
$z=(t_z,x_z) \in \DC^{\delta_1}$ can choose $\eta>0$ so that
$\uz=(\ut_z,x_z)$ and $\oz=(\ot_z,x_z)$ belong to $\DC^{\delta_1}$.
Because there was a factor of two of wiggle room in the choice of
$\eps_1$, can choose $\delta_2\in ]\delta_1,0[$ so that the futures
    and pasts for $\gd$ for $|t|<\eps_1$ are described by
    Proposition \ref{prop:local form of F} for $\delta_2< \delta<0$.
    That is the constraint on $\delta$.

For $y = (t_y,x) \in \DC^\delta$, define $\varphi_1 (t,y)=
(t,\x{1}(t))$ for $t\in[T_y^-, t_y]$ and $\varphi_2 (t,y)=
(t,\x{2}(t))$ for $t\in [t_y, T_y^+]$, two future oriented
reparameterized null geodesics for the $\gd$ metric satisfying,
\begin{align*}
  \varphi_1 (T_y^-,y) \in \gamma(]a,b[), \quad
    \varphi_1 (t_y,y)=\varphi_2 (t_y,y)=y, \quad
    \varphi_2 (T_y^+,y) \in \gamma(]a,b[)\,.
\end{align*}
Note that given $y \in \DC^\delta$, $\varphi_1$ and $\varphi_2$ are
uniquely defined.

\medskip
Set
\begin{align*}
  \zeta (t)= \begin{cases}
    \varphi_1 (t, \uz)
    & \pour \ T_\uz^- \leq t \leq \ut_z,\\
    \big( \frac{\ot_z-t}{\ot_z-\ut_z} \big) \uz
    + \big( \frac{t - \ut_z}{\ot_z-\ut_z} \big) \oz
    & \pour\ \ut_z \leq t \leq \ot_z.
    \end{cases}
\end{align*}
This point follows $\varphi_1$ from a point on $\gamma(]a,b[)$ to
      $\uz$, and then is vertical between $\uz$ and $\oz$.  For
      $(\sigma,t) \in [T_\uz^-, \ot_z]\times [T_\uz^-, T_\oz^+]$,
      define the homotopy of piecewise  smooth timelike curves for the metric $g$
     by
\begin{align*}
      \XX(\sigma,t) = 
      \begin{cases}
        \zeta (t) & \pour \ T_\uz^- \leq t \leq \sigma,\\
        \varphi_2(t, \zeta (\sigma)) & \pour \ \sigma \leq t \leq T_{\zeta(\sigma)}^+,\\
        \gamma(t) & \pour \ T_{\zeta(\sigma)}^+ \leq t \leq T_\oz^+.
      \end{cases}
\end{align*}
Then $t \mapsto \XX(\sigma,t)$ follows the $\zeta(t)$ curve until it
reaches $\zeta(\sigma)$, then it follows $\varphi_2(.,\zeta (\sigma))$
until it reaches $\gamma(]a,b[)$, and finally it follows $\gamma$
    until reaching the point $\gamma(T_{z(\sigma)}^+)$. The third and
    vertical segment is needed so all the curves in the homotopy are
    defined on the same time interval $[T_\uz^-, T_\oz^+]$. The
    homotopy $\XX$ is sketched in Figure~\ref{fig: homotopy-small-DC}.
      
The time derivatives of $t \mapsto \XX(\sigma,t)$ only fails to be
continuous at three points at most: $t= \ut_z$ if $\sigma > \ut_z$,
$t= \sigma$, and $t = T_{\zeta(\sigma)}^+$ if $\sigma<\ot_z$. The
homotopy $\XX(\sigma, t)$ has initial curve, $\XX_0$, and boundary,
$\Ends{\XX}$, in $\O$ and it reaches $z$ as $\XX(\sigma, t_z)=z$ for $
t_z\leq \sigma \leq \ot_z$. Adapting the proof of
Proposition~\ref{prop: homotopic double-cones}, one smooths $\XX$
yielding a homotopy of smooth timelike curves with intitial curve and
boundary in $\caO$ and passing through $z$.  This gives $z \in \cah^1 \O$
completing the proof that $\DC \subset Z_{\gamma(]a,b[)}$.

\begin{figure}
  \begin{center}
    \resizebox{8cm}{!}{
\begin{picture}(0,0)%
\includegraphics{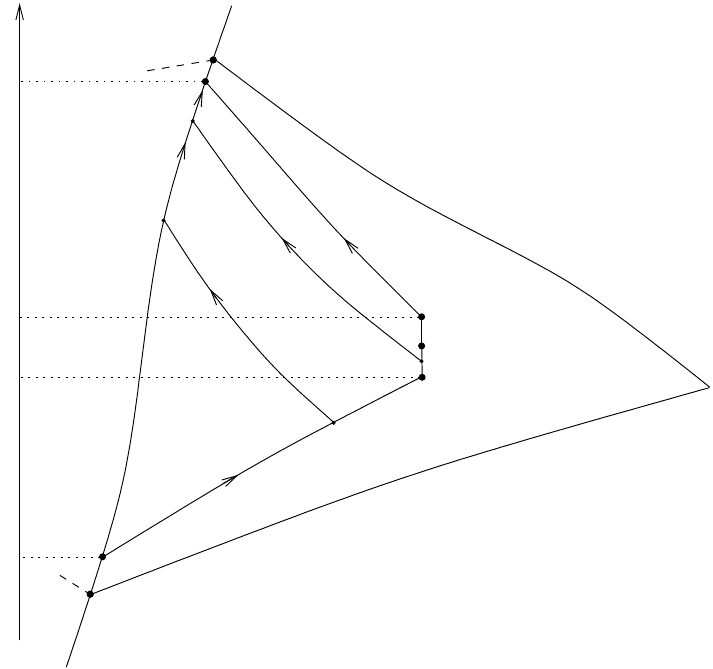}%
\end{picture}%
\setlength{\unitlength}{3947sp}%
\begin{picture}(5691,5338)(1951,-5102)
\put(4334,-1168){\makebox(0,0)[lb]{\smash{\fontsize{12}{14.4}\usefont{T1}{ptm}{m}{n}{\color[rgb]{0,0,0}$\varphi_2$}%
}}}
\put(5420,-2562){\makebox(0,0)[lb]{\smash{\fontsize{12}{14.4}\usefont{T1}{ptm}{m}{n}{\color[rgb]{0,0,0}$z$}%
}}}
\put(5378,-2250){\makebox(0,0)[lb]{\smash{\fontsize{12}{14.4}\usefont{T1}{ptm}{m}{n}{\color[rgb]{0,0,0}$\oz$}%
}}}
\put(5390,-2910){\makebox(0,0)[lb]{\smash{\fontsize{12}{14.4}\usefont{T1}{ptm}{m}{n}{\color[rgb]{0,0,0}$\uz$}%
}}}
\put(2182, 89){\makebox(0,0)[lb]{\smash{\fontsize{12}{14.4}\usefont{T1}{ptm}{m}{n}{\color[rgb]{0,0,0}$t$}%
}}}
\put(1987,-2815){\makebox(0,0)[rb]{\smash{\fontsize{12}{14.4}\usefont{T1}{ptm}{m}{n}{\color[rgb]{0,0,0}$\ut_z$}%
}}}
\put(1966,-2354){\makebox(0,0)[rb]{\smash{\fontsize{12}{14.4}\usefont{T1}{ptm}{m}{n}{\color[rgb]{0,0,0}$\ot_z$}%
}}}
\put(1966,-461){\makebox(0,0)[rb]{\smash{\fontsize{12}{14.4}\usefont{T1}{ptm}{m}{n}{\color[rgb]{0,0,0}$T_{\oz}^+$}%
}}}
\put(1966,-4268){\makebox(0,0)[rb]{\smash{\fontsize{12}{14.4}\usefont{T1}{ptm}{m}{n}{\color[rgb]{0,0,0}$T_{\uz}^-$}%
}}}
\put(3745,-252){\makebox(0,0)[lb]{\smash{\fontsize{12}{14.4}\usefont{T1}{ptm}{m}{n}{\color[rgb]{0,0,0}$\gamma(b)$}%
}}}
\put(3860, 63){\makebox(0,0)[lb]{\smash{\fontsize{12}{14.4}\usefont{T1}{ptm}{m}{n}{\color[rgb]{0,0,0}$\gamma$}%
}}}
\put(2680,-4752){\makebox(0,0)[lb]{\smash{\fontsize{12}{14.4}\usefont{T1}{ptm}{m}{n}{\color[rgb]{0,0,0}$\gamma(a)$}%
}}}
\put(4341,-3461){\makebox(0,0)[lb]{\smash{\fontsize{12}{14.4}\usefont{T1}{ptm}{m}{n}{\color[rgb]{0,0,0}$\varphi_1$}%
}}}
\put(3681,-2063){\makebox(0,0)[lb]{\smash{\fontsize{12}{14.4}\usefont{T1}{ptm}{m}{n}{\color[rgb]{0,0,0}$\sigma< \ut_z$}%
}}}
\put(4762,-1645){\makebox(0,0)[lb]{\smash{\fontsize{12}{14.4}\usefont{T1}{ptm}{m}{n}{\color[rgb]{0,0,0}$\sigma=\ot_z$}%
}}}
\put(6192,-2847){\makebox(0,0)[lb]{\smash{\fontsize{12}{14.4}\usefont{T1}{ptm}{m}{n}{\color[rgb]{0,0,0}$\DC(\gamma(a), \gamma(b))$}%
}}}
\end{picture}%
    }
    \caption{Constuction of a homotopy in a small \dcone.}
  \label{fig: homotopy-small-DC} 
  \end{center}
\end{figure}

\medskip
The remaining steps  prove that $\Z_\DC =
\DC$.

\medskip
{\bfseries Step~2: $\bld{Z_{\DC} \cap {\Future(\gamma(a))}^c \cap
    \ovl{\Past(\gamma(b))}^c= \emptyset}$ and $\bld{Z_{\DC} \cap
    \ovl{{\Future(\gamma(a))}}^c \cap \Past(\gamma(b))^c=
    \emptyset}$.}\\ First, suppose $z\in {\ovl{\Future(\gamma(a))}}^c
\cap {\ovl{\Past(\gamma(b))}}^c$. There exists an open \nhd $U$ of $z$
such that $U \subset {\ovl{\Future(\gamma(a))}}^c \cap
{\ovl{\Past(\gamma(b))}}^c$. This gives
\begin{align*}
  y \in U
  \quad \Longrightarrow \quad
  \DC \cap \big( \ovl{\Future(y)} \cup \ovl{\Past(y)}\big)
  = \emptyset.
\end{align*}
Suppose $u^0(x) \in \Cinf(\R^d)$ is supported in the $x$-projection of
$U \cap \{ t= t_z\}$ and $x_z \in \supp(u^0)$.  Then, the solution $u$
of
\begin{align}
  \label{eq:boom}
  P u =0, \qquad u|_{t=t_z}= u^0, \ \ \d_t u|_{t=t_z}=0,
\end{align}
vanishes in $\DC$ and $z\in \supp u$. Thus, $z \notin Z_{\DC}$.

Second, suppose $z \in \d \Future(\gamma(a)) \cap
\ovl{\Past(\gamma(b))}^c$. Then $(t, x_z) \in
    {\ovl{\Future(\gamma(a))}}^c \cap {\ovl{\Past(\gamma(b))}}^c$ for
    $t_z-t>0$ small. This gives $z\notin Z_{\DC}$ as $Z_{\DC}$ is
    open.  One argues similarly to prove that $Z_{\DC} \cap
    \ovl{{\Future(\gamma(a))}}^c \cap \Past(\gamma(b))^c= \emptyset$.
    Points in $\d \Future(\gamma(a)) \cap \d \Past(\gamma(b))$ are not
    covered in Step~2.  They are treated in Step~4.

\medskip
    {\bfseries Step~3: $\bld{Z_{\DC} \cap
    \ovl{\Past(\gamma(a))} = \emptyset}$ and $\bld{Z_{\DC} \cap
    \ovl{\Future(\gamma(b))} = \emptyset}$.} \\
Suppose $y \in \Past(\gamma(a))$. Then $\d\Future(y) \cap
\d\Future\big(\gamma(a)\big) =\emptyset$ by \eqref{eq:empty_intersection_boundary_future}. Consider $z \in \d\Future(y)
\setminus \ovl{\Past(\gamma(b))}$. Then, $z \in
{\ovl{\Future(\gamma(a))}}^c \cap {\ovl{\Past(\gamma(b))}}^c$.

There exists a \bichar starting at $(y,\eta)$ with some $\eta \in
T^*_y\caL$ and reaching $(z, \zeta)$ with $\zeta \in T^*_z\caL$. This
\bichar projects onto a null geodesic connecting $y$ and $z$, thus
lying in $\Lambda^+(y)$ that identifies with $\d\Future(y)$ in the
considered \nhd by Proposition~\ref{prop:local form of F}. In
particular, it remains away from $\DC$.
          
For $u^0$ and $u^1$  supported in ${\ovl{\Future(\gamma(a))}}^c
\cap {\ovl{\Past(\gamma(b))}}^c$, the solution $u$ of 
\begin{align}
  \label{eq:boom_WF}
  P u =0, \qquad u|_{t=t_z}= u^0, \ \ \d_t u|_{t=t_z}=u^1
\end{align}
 vanishes in $\DC$.  Choose $u^0$ and $u^1$ so that $(z,\zeta) \in
 \WF(u)$.  The invarience of $\WF(u)$ under the flow of the hamilton
 field $H_p$ in \eqref{eq:hp} implies that $(y,\eta) \in
 \WF(u)$. Therefore, $y \in \supp u$, so $y \notin Z_{\DC}$. Hence,
 $\Past(\gamma(a)) \cap Z_{\DC} = \emptyset$.  Since $Z_{\DC}$ is
 open, this implies that $\ovl{\Past(\gamma(a))} \cap Z_{\DC} =
 \emptyset$.  The proof that $\ovl{\Future(\gamma(b))} \cap Z_{\DC} =
 \emptyset$ is similar.
\begin{figure}
  \begin{center}
    \resizebox{7cm}{!}{
\begin{picture}(0,0)%
\includegraphics{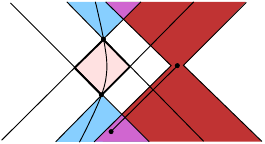}%
\end{picture}%
\setlength{\unitlength}{3947sp}%
\begin{picture}(2107,1147)(3650,-2096)
\put(4328,-1545){\makebox(0,0)[lb]{\smash{\fontsize{6}{7.2}\usefont{T1}{ptm}{m}{n}{\color[rgb]{0,0,0}$\DC$}%
}}}
\put(5090,-1406){\makebox(0,0)[lb]{\smash{\fontsize{6}{7.2}\usefont{T1}{ptm}{m}{n}{\color[rgb]{0,0,0}$z$}%
}}}
\put(4526,-1946){\makebox(0,0)[rb]{\smash{\fontsize{6}{7.2}\usefont{T1}{ptm}{m}{n}{\color[rgb]{0,0,0}$y$}%
}}}
\put(4519,-1293){\makebox(0,0)[lb]{\smash{\fontsize{6}{7.2}\usefont{T1}{ptm}{m}{n}{\color[rgb]{0,0,0}$\gamma(b)$}%
}}}
\put(4416,-1751){\makebox(0,0)[rb]{\smash{\fontsize{6}{7.2}\usefont{T1}{ptm}{m}{n}{\color[rgb]{0,0,0}$\gamma(a)$}%
}}}
\end{picture}%
    }
    \caption{Geometry of the argument of Step~3. Blue and pink region as in Figure~\ref{fig: small DC-1}. In red, the region
      where the solution in \eqref{eq:boom_WF} has potentially some
      support. Purple is the intersection of the blue and red regions.}
  \label{fig: small dcones steps3}
  \end{center}
\end{figure}

\medskip
{\bfseries Step~4: $\bld{( Z_{\DC}\cap \cushion )
    \setminus \DC =\emptyset}$.}\\
First suppose $y \in \cushion \setminus \ovl{\Future(\gamma(a))}$.  The
foliation of Lemma~\ref{lemma: foliation future past}.  implies that
$y\in \d\Future(\gamma(\beta))$ for some $\ua< \beta < a$.  A future
null geodesic achieving the minimal arrival time connects
$\gamma(\beta)$ to $y$.  This null geodesic exits $\Past(\gamma(\ob))$
in finite time. At that time it exists $\cushion$.
    
Thanks to \eqref{eq:NinBand}, it lies in
$\d\Future(\gamma(\beta))\subset \Future(\gamma(\ua))$ before reaching
this exit point, thus remaining away from $\ovl{\Future(\gamma(a))}$
by \eqref{eq:empty_intersection_boundary_future}.  It exits
$\Past(\gamma(b))$ before exiting $\Past(\gamma(\ob))$, and reaches a
point $z \in \ovl{\Future(\gamma(a))}^c \cap
\ovl{\Past(\gamma(b))}^c$. One concludes that $y \notin Z_{\DC}$ as in
Step~3 with a solution of \eqref{eq:boom_WF}.
    
Second, suppose $y =(t_y,x_y) \in \cushion \cap
\d\Future(\gamma(a))$. Then, $(t,x_y) \in \cushion \setminus
\ovl{\Future(\gamma(a))}$ for $t_y-t>0$ small. Thus, $(t,x_y) \notin
Z_{\DC}$, which gives $y \notin Z_{\DC}$ as $Z_{\DC}$ is open.
\begin{figure}
  \begin{center}
    \resizebox{7cm}{!}{
\begin{picture}(0,0)%
\includegraphics{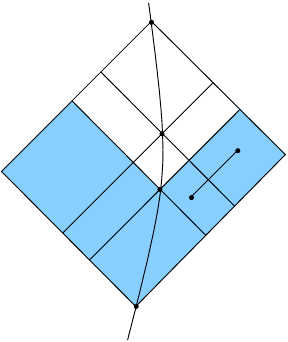}%
\end{picture}%
\setlength{\unitlength}{3947sp}%
\begin{picture}(2298,2720)(3181,-2923)
\put(4410,-340){\makebox(0,0)[lb]{\smash{\fontsize{8}{9.6}\usefont{T1}{ptm}{m}{n}{\color[rgb]{0,0,0}$\gamma(\ob)$}%
}}}
\put(4328,-1545){\makebox(0,0)[lb]{\smash{\fontsize{8}{9.6}\usefont{T1}{ptm}{m}{n}{\color[rgb]{0,0,0}$\DC$}%
}}}
\put(4406,-1752){\makebox(0,0)[rb]{\smash{\fontsize{8}{9.6}\usefont{T1}{ptm}{m}{n}{\color[rgb]{0,0,0}$\gamma(a)$}%
}}}
\put(3451,-1561){\makebox(0,0)[lb]{\smash{\fontsize{8}{9.6}\usefont{T1}{ptm}{m}{n}{\color[rgb]{0,0,0}$\cushion$}%
}}}
\put(4551,-1298){\makebox(0,0)[lb]{\smash{\fontsize{8}{9.6}\usefont{T1}{ptm}{m}{n}{\color[rgb]{0,0,0}$\gamma(b)$}%
}}}
\put(4311,-2734){\makebox(0,0)[lb]{\smash{\fontsize{8}{9.6}\usefont{T1}{ptm}{m}{n}{\color[rgb]{0,0,0}$\gamma(\ua)$}%
}}}
\put(4753,-1836){\makebox(0,0)[lb]{\smash{\fontsize{8}{9.6}\usefont{T1}{ptm}{m}{n}{\color[rgb]{0,0,0}$y$}%
}}}
\put(5124,-1441){\makebox(0,0)[lb]{\smash{\fontsize{8}{9.6}\usefont{T1}{ptm}{m}{n}{\color[rgb]{0,0,0}$z$}%
}}}
\end{picture}%
      }
    \caption{Geometry of the argument of Step~4. The blue region is $\cushion \setminus \ovl{\Future(\gamma(a))}$.}
  \label{fig: small dcones steps4}
  \end{center}
\end{figure}

The case $y \in \cushion \setminus \Past(\gamma(b))$ is similar.

\medskip
{\bfseries Step~5: $\bld{Z_{\DC} \subset \DC}$.}\\ Prove that 
$y=(t_y,x_y) \notin \cushion$ implies  $y \notin Z_{\DC}$. After Steps~2,
3 and 4, only two cases require study: $y \in \Past(\gamma(b))
\setminus (\ovl{\Past(\gamma(a))}\cup \cushion)$ and $y\in
\Future(\gamma(a)) \setminus (\ovl{\Future(\gamma(b))}\cup \cushion)$; see
Figure~\ref{fig: small dcones steps}.

Suppose $y \in \Past(\gamma(b)) \setminus (\ovl{\Past(\gamma(a))}\cup
\cushion)$. Claim: there exists a future null geodesic that starts at $y$,
never meets $\ovl{\DC}$, and reaches a point $z\in
{\ovl{\Future(\gamma(a))}}^c \cap {\ovl{\Past(\gamma(b))}}^c$. One then
constructs a solution as in \eqref{eq:boom_WF} implying that $y \notin
Z_{\DC}$.

To prove the claim choose $a'$ and $b'$ such that $-\eps\leq a' < a$
and $b < b'\leq \eps$.  Set $\DC' = \DC(\gamma(a'),
\gamma(b'))$. Then, \eqref{eq: small DC proj condition}--\eqref{eq:
  small DC proj condition-bis} hold for $\alpha = a'$ and $\beta=
b'$. As $y \in \Past(\gamma(b')) \setminus
(\ovl{\Past(\gamma(a'))}\cup \cushion)$, then
Lemma~\ref{lemma:non_plunging_argument} implies that $\Lambda^+(y)$
does not plunge into $\Future(\gamma(a'))$ through $\DC'$. Therefore,
either $\Lambda^+(y)$ does not plunge into $\Future(\gamma(a'))$ or it
does plunge into $\Future(\gamma(a'))$ in which case it does not meet
$\DC'$

If $\Lambda^+(y)$ does not plunge into
$\Future(\gamma(a'))$, then there exists a future null geodesic that
starts at $y$ and never enters $\Future(\gamma(a'))$. It reaches a
point $z \in \Past(\gamma(b'))^c$. One has $z\in
{\ovl{\Future(\gamma(a))}}^c \cap {\ovl{\Past(\gamma(b))}}^c$.

If $\Lambda^+(y)$ plunges into $\Future(\gamma(a'))$
yet not through $\DC'$, then there exists a future null geodesic
$\rho$ that starts at $y$, never meets $\DC'$, and enters
$\Future(\gamma(a'))$ at $z= \rho(s_z) \in
\d\Future(\gamma(a'))$. Then $\rho(s) \in \Future(\gamma(a'))$ for $s
> s_z$. Thus $\rho(s) \notin \Past(\gamma(b'))$ for $s > s_z$, giving
$z \notin \Past(\gamma(b'))$ as $\Past(\gamma(b'))$ is open. Hence,
$z\in {\ovl{\Future(\gamma(a))}}^c \cap {\ovl{\Past(\gamma(b))}}^c$. 
\begin{figure}
  \begin{center}
    \resizebox{7cm}{!}{
\begin{picture}(0,0)%
\includegraphics{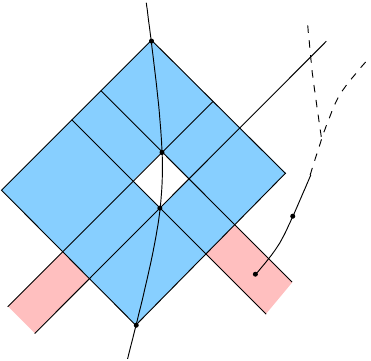}%
\end{picture}%
\setlength{\unitlength}{3947sp}%
\begin{picture}(2953,2874)(3181,-2923)
\put(4272,-2812){\makebox(0,0)[lb]{\smash{\fontsize{8}{9.6}\usefont{T1}{ptm}{m}{n}{\color[rgb]{0,0,0}$\gamma(\ua)$}%
}}}
\put(4410,-340){\makebox(0,0)[lb]{\smash{\fontsize{8}{9.6}\usefont{T1}{ptm}{m}{n}{\color[rgb]{0,0,0}$\gamma(\ob)$}%
}}}
\put(4328,-1545){\makebox(0,0)[lb]{\smash{\fontsize{8}{9.6}\usefont{T1}{ptm}{m}{n}{\color[rgb]{0,0,0}$\DC$}%
}}}
\put(4551,-1298){\makebox(0,0)[lb]{\smash{\fontsize{8}{9.6}\usefont{T1}{ptm}{m}{n}{\color[rgb]{0,0,0}$\gamma(b)$}%
}}}
\put(4406,-1752){\makebox(0,0)[rb]{\smash{\fontsize{8}{9.6}\usefont{T1}{ptm}{m}{n}{\color[rgb]{0,0,0}$\gamma(a)$}%
}}}
\put(3451,-1561){\makebox(0,0)[lb]{\smash{\fontsize{8}{9.6}\usefont{T1}{ptm}{m}{n}{\color[rgb]{0,0,0}$\cushion$}%
}}}
\put(5262,-2310){\makebox(0,0)[lb]{\smash{\fontsize{8}{9.6}\usefont{T1}{ptm}{m}{n}{\color[rgb]{0,0,0}$y$}%
}}}
\put(5581,-1833){\makebox(0,0)[lb]{\smash{\fontsize{8}{9.6}\usefont{T1}{ptm}{m}{n}{\color[rgb]{0,0,0}$z$}%
}}}
\put(5461,-2074){\makebox(0,0)[lb]{\smash{\fontsize{8}{9.6}\usefont{T1}{ptm}{m}{n}{\color[rgb]{0,0,0}$\rho$}%
}}}
\end{picture}%
    }
    \caption{Geometry of the argument of Step~5. Colored regions as in
      Figure~\ref{fig: small DC-2}. Dashed are the possible behaviors of a future null geodesic $\rho$ initiated at $y$.}
  \label{fig: small dcones steps5}
  \end{center}
\end{figure}
    
The claim is proven. The case $y\in \Future(\gamma(a))
 \setminus (\ovl{\Future(\gamma(b))}\cup \cushion)$ is  similar. 
\end{proof}

\subsection{First doldrum example}

\begin{definition}
\label{def:doldrum2}
Define an operator $P$ on $\R^{1+2}$ with smooth coefficients.  In
polar coordinates it is given by $P = \d_t^2 -c(r)^2 \d_r^2 -
\d_\theta^2$ for $r\ge 1/4$ with $c(r)> 0$, and $c=\eps\ll 1$ in $r\ge
1/2$.
\end{definition}
\begin{figure}
\begin{center}
  \resizebox{5cm}{!}{
\begin{picture}(0,0)%
\includegraphics{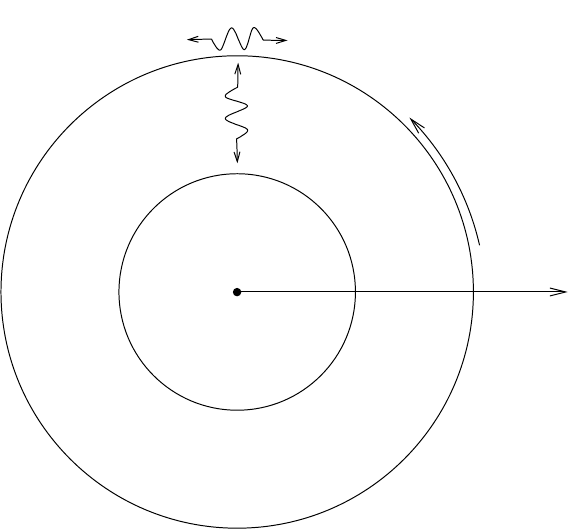}%
\end{picture}%
\setlength{\unitlength}{4144sp}%
\begin{picture}(4340,4028)(3863,-5198)
\put(5773,-2090){\makebox(0,0)[lb]{\smash{\fontsize{12}{14.4}\usefont{T1}{ptm}{m}{n}{\color[rgb]{0,0,0}$\eps$}%
}}}
\put(8101,-3301){\makebox(0,0)[lb]{\smash{\fontsize{12}{14.4}\usefont{T1}{ptm}{m}{n}{\color[rgb]{0,0,0}$r$}%
}}}
\put(5667,-1329){\makebox(0,0)[b]{\smash{\fontsize{12}{14.4}\usefont{T1}{ptm}{m}{n}{\color[rgb]{0,0,0}$1$}%
}}}
\put(7398,-2545){\makebox(0,0)[lb]{\smash{\fontsize{12}{14.4}\usefont{T1}{ptm}{m}{n}{\color[rgb]{0,0,0}$\theta$}%
}}}
\put(6585,-3557){\makebox(0,0)[lb]{\smash{\fontsize{12}{14.4}\usefont{T1}{ptm}{m}{n}{\color[rgb]{0,0,0}$1/2$}%
}}}
\put(7489,-3563){\makebox(0,0)[lb]{\smash{\fontsize{12}{14.4}\usefont{T1}{ptm}{m}{n}{\color[rgb]{0,0,0}$1$}%
}}}
\end{picture}%
}
\caption{First doldrums example.}
\label{fig:doldrum_example_1}
\end{center}
\end{figure}
The associated metric has the  temporal symmetries of Section~\ref{sec:Dcones with special temporal symmetries}.  This section treats timelike curves that
are not vertical. The speeds of propagation   in $r\geq 1/2$ are sketched in Figure~\ref{fig:doldrum_example_1}.

\begin{proposition}
\label{prop:first_doldrum}
{\bf i.}   For this operator there are two timelike arcs with the same endpoints
that are {\it not} homotopic by  timelike arcs with fixed 
endpoints.   

{\bf ii.}  Denote by $\DC$ the \dcone generated by the endpoints.
There is a solution of $Pu=0$ that vanishes on a \nhd $\O \subset \DC$ of
one of the open timelike arcs and is not identically zero inside $\DC$.
\end{proposition}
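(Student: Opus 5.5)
The plan is to build both parts in the annular region $\tfrac12\le r\le 1$, where $c=\eps$. There the metric is $-dt^2+\eps^{-2}dr^2+d\theta^2$. In the coordinates $(t,\rho,\theta)$ with $\rho=r/\eps$, the universal cover (unrolling $\theta\in\R$) is a piece of flat Minkowski space $\Minkowski^{1+2}$. This piece is very wide in $\rho$, of width $1/(2\eps)$. Fix $r_0=3/4$, so $\rho_0=3/(4\eps)$, and fix a time $T$ with $2\pi<T<2\sqrt2\,\pi$, taking $\eps\ll 1/T$. Set $q_1=(0,r_0,0)$ and $q_2=(T,r_0,0)$. Let $\gamma_1$ be the helix $t\mapsto(t,r_0,2\pi t/T)$, which is timelike since $2\pi/T<1$. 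Let $\gamma_2$ be the vertical segment $t\mapsto (t,r_0,0)$. Both join $q_1$ to $q_2$.

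\textbf{Part i.} A forward timelike curve from $q_1$ moves radially at $\eps$-speed. Hence for $t\le T$ it stays in $|r-r_0|\le \eps T\ll 1/4$, so inside the annulus $\{1/2<r<1\}$. This uniform confinement applies to every curve of a timelike homotopy with fixed endpoints $q_1,q_2$. Such a homotopy is therefore a homotopy in the annulus, so it preserves the winding number about $r=0$ of the closed loop it forms with $\gamma_2$ traversed backwards. Since $\gamma_1$ winds once and $\gamma_2$ winds zero times, the two arcs are not homotopic through timelike arcs with fixed endpoints.

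\textbf{Part ii.} The target point is $z=\gamma_2(T/2)=(T/2,r_0,0)$. By the distance formula of Proposition~\ref{prop: homotopic double-cones}, $z\in\DC=\DC(q_1,q_2)$. I want a solution $u$ of $Pu=0$ vanishing near $\gamma_1$ with $z\in\singsupp u$.
\begin{itemize}
\item I will work on the cover, in flat coordinates. The lifts of $\gamma_1$ are the straight lines $\tilde\gamma_k$ from $a_k=(0,\rho_0,2\pi k)$ to $b_k=(T,\rho_0,2\pi(k+1))$. I will construct $v$ on the cover vanishing near every $\tilde\gamma_k$, with $\tilde z=(T/2,\rho_0,0)\in\singsupp v$.
\item I will then set $u=\sum_{k}v(t,\rho,\theta+2\pi k)$. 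This sum is locally finite by finite speed, it is $2\pi$-periodic, and it solves $Pu=0$.
\end{itemize}
Since $u(t,\rho,\theta)$ near a point of $\gamma_1$ is a sum of values of $v$ near lifts of that point, $u$ vanishes near $\gamma_1$. Singularities of the translates $v(\cdot,\cdot,\cdot+2\pi k)$ with $k\neq 0$ at $\tilde z$ would have to come from $v$ near $(T/2,\rho_0,2\pi k)$. I will arrange $v$ to be smooth there by localizing its wavefront set along a single bicharacteristic. Then $z\in\singsupp u\subset\supp u$, while $\DC$ contains $z$.

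The main obstacle is constructing $v$, using the null-ray/wavefront argument of Example~\ref{ex:Minkowski-DD} and Step~3 of Theorem~\ref{thm:small_double_cones}. A forward null line from $\tilde z$ must avoid every closed double cone $\ovl{\DC(a_k,b_k)}$ and escape to a region outside all $\ovl{\Future(a_k)}\cup\ovl{\Past(b_k)}$. I will try the purely radial ray $s\mapsto(T/2+s,\rho_0+s,0)$, $s\ge0$. Its points lie in $\Past(b_k)$ iff $\sqrt{s^2+4\pi^2(k+1)^2}<T/2-s$. This forces $k=-1$ and $s<T/4$, because $T<4\pi$. Its points lie in $\Future(a_{-1})$ iff $s>(4\pi^2-T^2/4)/T$. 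Both conditions hold simultaneously only if $T^2>8\pi^2$, which our choice of $T$ excludes. Hence the ray never meets any $\DC(a_k,b_k)$, and I expect the boundary cases to be handled by openness as in Step~2. For large $s$ the ray leaves all the pasts $\Past(b_k)$. Its distance $\rho-\rho_0$ exceeds $T$ while $\rho$ stays far below $1/(2\eps)$, so the flat model remains valid.

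I will pick $s_1$ large and place Cauchy data at time $T/2+s_1$, localized near the ray point $w$ with wavefront set on the bicharacteristic over the ray. By sharp finite speed, $\supp v\subset\ovl{\Future(w')}\cup\ovl{\Past(w')}$ for $w'$ near $w$. These sets miss every $\tilde\gamma_k$, since $w'$ is outside all $\ovl{\Future(a_k)}$ and $\ovl{\Past(b_k)}$. Invariance of the wavefront set under the $H_p$ flow \eqref{eq:hp} gives $\tilde z\in\singsupp v$, with $v$ smooth at the other lifts. The remaining point to check is the bookkeeping of the translates near $w$, which I expect to follow from the same distance inequalities.
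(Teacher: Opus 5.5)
Your Part~\textbf{i} is correct and is essentially the paper's argument. During $0\le t\le T$, timelike curves from $q_1$ cannot leave the slow annulus. So a timelike homotopy with fixed endpoints lives in a region with the topology of an annulus, and your winding-number obstruction applies; the paper phrases it as the impossibility of passing over $r=0$.

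Part~\textbf{ii} has a genuine gap in the final support claim.
\begin{itemize}
\item Your point $\tilde z=(T/2,\rho_0,0)$ lies vertically above $a_0=(0,\rho_0,0)$. So $\tilde z\in\Future(a_0)$, and the whole forward ray from $\tilde z$ stays in $\Future(a_0)$. Concretely, $w=(T/2+s_1,\rho_0+s_1,0)$ is at spatial distance $s_1<T/2+s_1$ from $a_0$.
\item Hence ``$w'$ is outside all $\ovl{\Future(a_k)}$'' is false for $k=0$. The set $\Past(w')$ is open and contains $a_0$, hence the initial piece $\tilde\gamma_0(]0,\delta[)$ of a lift of $\gamma_1$. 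Sharp finite speed therefore gives no vanishing there.
\item Your inequalities only show that the ray avoids the $\DC(a_k,b_k)$ and eventually leaves the pasts $\Past(b_k)$. The futures were never checked.
\end{itemize}
Another ray or another target will not fix this. Every lift of a point of $\DC(q_1,q_2)$ lies in $\Future(a_k)\cap\Past(b_m)$ for some $k,m$. Any data point causally related to it, which is what you need to propagate a singularity there, therefore has a finite-speed support bound containing an end portion of some lift of $\gamma_1$. In $1+2$ dimensions there is no Huygens principle, so localized data generically give a solution filling the interior of its cone. A method based only on support bounds, like Step~3 of Theorem~\ref{thm:small_double_cones}, cannot exhibit a point of the double cone outside $Z_{\gamma_1}$.

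A smaller issue: the periodized $v$ lives only where the flat model holds, whereas a solution of $Pu=0$ on all of $\R^{1+2}$ is wanted.

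The missing idea, which the paper uses, is an exact solution whose support is far thinner than the finite-speed bound: a wave travelling in $\theta$.
\begin{itemize}
\item For $r\ge 1/4$, $P=\d_t^2-c(r)^2\d_r^2-\d_\theta^2$ annihilates $f(\theta-t)$ for any smooth $2\pi$-periodic $f$.
\item Set $u_1=\zeta(r)f(\theta-t)$ with $\zeta=0$ near $r=0$ and $\zeta=1$ for $r\ge1/4$. Set $u=u_1+u_2$, where $Pu_2=-Pu_1$ with zero Cauchy data at $t=0$.
\item Since $\supp Pu_1\subset\{r\le1/4\}$ and radial speeds are at most $\eps$ in $r\ge1/2$, $u_2$ vanishes near $r=r_0$ for $t$ in a neighborhood of $[0,T]$. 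This also gives you the global solution you were missing.
\end{itemize}
With your arcs this works as is. Along $\gamma_1$ one has $\theta-t\in[2\pi-T,0]$. At $z$ one has $\theta-t=-T/2$, which is the midpoint of the gap $]-2\pi,2\pi-T[$ and so lies outside $[2\pi-T,0]+2\pi\Z$ because $T<4\pi$. Take $f$ supported near $-T/2+2\pi\Z$ with $f(-T/2)\ne0$. Then $u=0$ near $\gamma_1$ and $u(z)\ne0$ with $z\in\DC$. The constraint $T<2\sqrt2\,\pi$ becomes unnecessary.
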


\begin{remark}
  \label{rem:first_doldrum}
  The second part of Proposition~\ref{prop:first_doldrum} shows that
  $\DC$ cannot be used as a lower bound for $Z_{\gamma(]0,T[)}$. It also
  shows that for the \nhd $\O$, $\cah^\infty \O$ and thus
  $\cah^\infty\big( \gamma(]0,T[)\big)$ are proper subsets of
    $\DC$. Compare with the case of small \Dcones for which
    $\cah^1\big( \gamma(]a,b[\big) = \DC\big(\gamma(a),
    \gamma(b)\big) = Z_{\gamma(]a,b[)}$; see Theorem~\ref{thm:small_double_cones} and its
    proof.
\end{remark}
\begin{figure}
  \begin{center}
    \subfigure[ \label{fig: double-cone-circle1}]
              {\resizebox{5.4cm}{!}{
\begin{picture}(0,0)%
\includegraphics{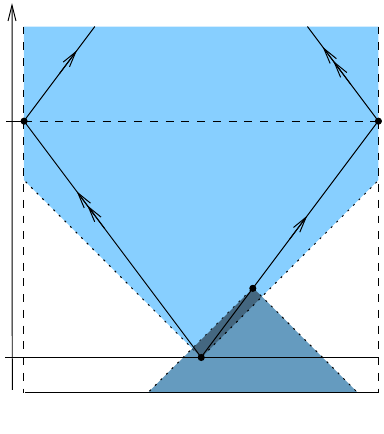}%
\end{picture}%
\setlength{\unitlength}{4144sp}%
\begin{picture}(2914,3297)(-181,-2441)
\put(1724,-1290){\makebox(0,0)[rb]{\smash{\fontsize{12}{14.4}\usefont{T1}{ptm}{m}{n}{\color[rgb]{0,0,0}$\gamma^+(t)$}%
}}}
\put(-143,711){\makebox(0,0)[rb]{\smash{\fontsize{12}{14.4}\usefont{T1}{ptm}{m}{n}{\color[rgb]{0,0,0}$t$}%
}}}
\put(-161,-122){\makebox(0,0)[rb]{\smash{\fontsize{12}{14.4}\usefont{T1}{ptm}{m}{n}{\color[rgb]{0,0,0}$T$}%
}}}
\put(-166,-1916){\makebox(0,0)[rb]{\smash{\fontsize{12}{14.4}\usefont{T1}{ptm}{m}{n}{\color[rgb]{0,0,0}$0$}%
}}}
\put(532,-706){\makebox(0,0)[lb]{\smash{\fontsize{12}{14.4}\usefont{T1}{ptm}{m}{n}{\color[rgb]{0,0,0}$\gamma^-$}%
}}}
\put(1438,-2370){\makebox(0,0)[b]{\smash{\fontsize{12}{14.4}\usefont{T1}{ptm}{m}{n}{\color[rgb]{0,0,0}$2 \pi \mathbb T^1$}%
}}}
\put(1412,-2043){\makebox(0,0)[b]{\smash{\fontsize{12}{14.4}\usefont{T1}{ptm}{m}{n}{\color[rgb]{0,0,0}$\gamma^+(0)$}%
}}}
\end{picture}%
              }}
    \qquad \quad 
    \subfigure[ \label{fig: double-cone-circle2}]
              {\resizebox{5.4cm}{!}{
 \begin{picture}(0,0)%
\includegraphics{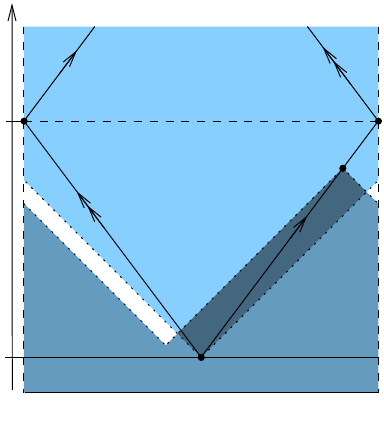}%
\end{picture}%
\setlength{\unitlength}{4144sp}%
\begin{picture}(2914,3297)(-181,-2441)
\put(2285,-548){\makebox(0,0)[rb]{\smash{\fontsize{12}{14.4}\usefont{T1}{ptm}{m}{n}{\color[rgb]{0,0,0}$\gamma^+(t)$}%
}}}
\put(-143,711){\makebox(0,0)[rb]{\smash{\fontsize{12}{14.4}\usefont{T1}{ptm}{m}{n}{\color[rgb]{0,0,0}$t$}%
}}}
\put(-161,-122){\makebox(0,0)[rb]{\smash{\fontsize{12}{14.4}\usefont{T1}{ptm}{m}{n}{\color[rgb]{0,0,0}$T$}%
}}}
\put(-166,-1916){\makebox(0,0)[rb]{\smash{\fontsize{12}{14.4}\usefont{T1}{ptm}{m}{n}{\color[rgb]{0,0,0}$0$}%
}}}
\put(532,-706){\makebox(0,0)[lb]{\smash{\fontsize{12}{14.4}\usefont{T1}{ptm}{m}{n}{\color[rgb]{0,0,0}$\gamma^-$}%
}}}
\put(1438,-2370){\makebox(0,0)[b]{\smash{\fontsize{12}{14.4}\usefont{T1}{ptm}{m}{n}{\color[rgb]{0,0,0}$2 \pi \mathbb T^1$}%
}}}
\put(1412,-2043){\makebox(0,0)[b]{\smash{\fontsize{12}{14.4}\usefont{T1}{ptm}{m}{n}{\color[rgb]{0,0,0}$\gamma^+(0)$}%
}}}
\end{picture}%
             }}
    
    \subfigure[$t=T$ \label{fig: double-cone-circle3}]
              {\resizebox{5.5cm}{!}{
\begin{picture}(0,0)%
\includegraphics{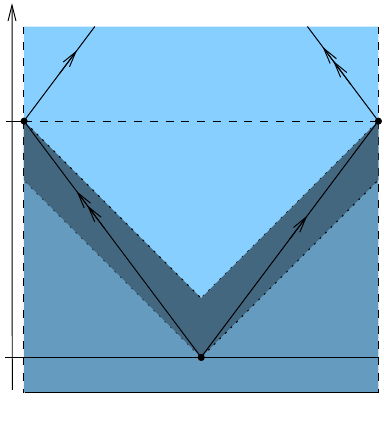}%
\end{picture}%
\setlength{\unitlength}{4144sp}%
\begin{picture}(2914,3297)(-181,-2441)
\put(386,304){\makebox(0,0)[lb]{\smash{\fontsize{12}{14.4}\usefont{T1}{ptm}{m}{n}{\color[rgb]{0,0,0}$\gamma^+$}%
}}}
\put(-143,711){\makebox(0,0)[rb]{\smash{\fontsize{12}{14.4}\usefont{T1}{ptm}{m}{n}{\color[rgb]{0,0,0}$t$}%
}}}
\put(-161,-122){\makebox(0,0)[rb]{\smash{\fontsize{12}{14.4}\usefont{T1}{ptm}{m}{n}{\color[rgb]{0,0,0}$T$}%
}}}
\put(-166,-1916){\makebox(0,0)[rb]{\smash{\fontsize{12}{14.4}\usefont{T1}{ptm}{m}{n}{\color[rgb]{0,0,0}$0$}%
}}}
\put(1438,-2370){\makebox(0,0)[b]{\smash{\fontsize{12}{14.4}\usefont{T1}{ptm}{m}{n}{\color[rgb]{0,0,0}$2 \pi \mathbb T^1$}%
}}}
\put(1412,-2043){\makebox(0,0)[b]{\smash{\fontsize{12}{14.4}\usefont{T1}{ptm}{m}{n}{\color[rgb]{0,0,0}$\gamma^+(0)$}%
}}}
\put(2566,-16){\makebox(0,0)[rb]{\smash{\fontsize{12}{14.4}\usefont{T1}{ptm}{m}{n}{\color[rgb]{0,0,0}$\gamma^+(T)$}%
}}}
\put(2386,434){\makebox(0,0)[lb]{\smash{\fontsize{12}{14.4}\usefont{T1}{ptm}{m}{n}{\color[rgb]{0,0,0}$\gamma^-$}%
}}}
\put(218,-464){\makebox(0,0)[rb]{\smash{\fontsize{12}{14.4}\usefont{T1}{ptm}{m}{n}{\color[rgb]{0,0,0}$\gamma^-$}%
}}}
\put(2364,-653){\makebox(0,0)[lb]{\smash{\fontsize{12}{14.4}\usefont{T1}{ptm}{m}{n}{\color[rgb]{0,0,0}$\gamma^+$}%
}}}
\end{picture}%
              }}
    \caption{$\DC(\gamma^+(0),\gamma^+(t))\cap \{ r=1\}$ in dark blue,
      $\Future(\gamma^+(0))$ in ligher blue, and $\Past\big(\gamma^+(t)
      \big)$ in intermediate blue, for various values of $t$.}
  \label{fig: Lack of continuity double cones}
  \end{center}
\end{figure}

\begin{proof}
Define the timelike curves on the cylinder $r=1$ polar coordinates, 
$\gamma^\pm(t) = (t, 1 , \pm 3 t/4)$.   Then, at $T = 4 \pi
/3$,  $\gamma^+(T) = \gamma^-(T) = (T,1,\pi)$. 
Any homotopy of $\gamma^+([0,T])$ to $\gamma^-([0,T])$ with fixed endpoints must cross
the $t$-axis so pass over $r=0$. A timelike curve that hits the
origin is stuck in $1/2\leq r\leq 3/4$ for at least $C/\eps$ units of
time.  For $\eps$ small, this shows that it cannot get from initial point $(0, 1 , 0)$ 
to final points $(T,1,\pi)$. This proves {\bf i}.

Figure~\ref{fig: Lack of continuity double cones} sketches $\gamma^+$,
$\gamma^-$, and the evolution of $\DC\big(\gamma^+(0), \gamma^+(t)\big)$
as $t$ increases.

\begin{figure}
\begin{center}
  \resizebox{6.4cm}{!}{
\begin{picture}(0,0)%
\includegraphics{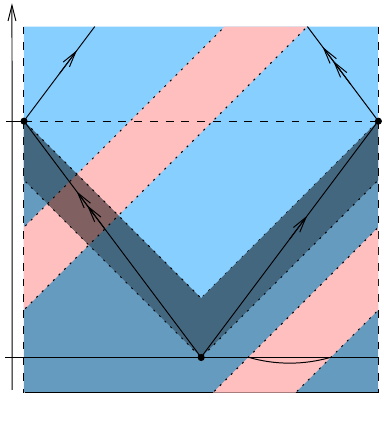}%
\end{picture}%
\setlength{\unitlength}{4144sp}%
\begin{picture}(2914,3298)(5129,-2711)
\put(5167,441){\makebox(0,0)[rb]{\smash{\fontsize{12}{14.4}\usefont{T1}{ptm}{m}{n}{\color[rgb]{0,0,0}$t$}%
}}}
\put(5149,-392){\makebox(0,0)[rb]{\smash{\fontsize{12}{14.4}\usefont{T1}{ptm}{m}{n}{\color[rgb]{0,0,0}$T$}%
}}}
\put(5144,-2186){\makebox(0,0)[rb]{\smash{\fontsize{12}{14.4}\usefont{T1}{ptm}{m}{n}{\color[rgb]{0,0,0}$0$}%
}}}
\put(6748,-2640){\makebox(0,0)[b]{\smash{\fontsize{12}{14.4}\usefont{T1}{ptm}{m}{n}{\color[rgb]{0,0,0}$2 \pi \mathbb T^1$}%
}}}
\put(7876,-286){\makebox(0,0)[rb]{\smash{\fontsize{12}{14.4}\usefont{T1}{ptm}{m}{n}{\color[rgb]{0,0,0}$\gamma^+(T)$}%
}}}
\put(7696,164){\makebox(0,0)[lb]{\smash{\fontsize{12}{14.4}\usefont{T1}{ptm}{m}{n}{\color[rgb]{0,0,0}$\gamma^-$}%
}}}
\put(5528,-734){\makebox(0,0)[rb]{\smash{\fontsize{12}{14.4}\usefont{T1}{ptm}{m}{n}{\color[rgb]{0,0,0}$\gamma^-$}%
}}}
\put(7674,-923){\makebox(0,0)[lb]{\smash{\fontsize{12}{14.4}\usefont{T1}{ptm}{m}{n}{\color[rgb]{0,0,0}$\gamma^+$}%
}}}
\put(5696, 34){\makebox(0,0)[lb]{\smash{\fontsize{12}{14.4}\usefont{T1}{ptm}{m}{n}{\color[rgb]{0,0,0}$\gamma^+$}%
}}}
\put(6256,-601){\rotatebox{45.0}{\makebox(0,0)[b]{\smash{\fontsize{12}{14.4}\usefont{T1}{ptm}{m}{n}{\color[rgb]{0,0,0}$\supp u$}%
}}}}
\put(7218,-2333){\makebox(0,0)[b]{\smash{\fontsize{12}{14.4}\usefont{T1}{ptm}{m}{n}{\color[rgb]{0,0,0}$\supp f$}%
}}}
\put(7741,-1771){\rotatebox{45.0}{\makebox(0,0)[b]{\smash{\fontsize{12}{14.4}\usefont{T1}{ptm}{m}{n}{\color[rgb]{0,0,0}$\supp u$}%
}}}}
\put(6653,-2316){\makebox(0,0)[b]{\smash{\fontsize{12}{14.4}\usefont{T1}{ptm}{m}{n}{\color[rgb]{0,0,0}$\gamma^+(0)$}%
}}}
\end{picture}%
  }
\caption{A case where a double-cone $\DC(\gamma^+(0),\gamma^+(T))$, in
  dark blue, is not a \dd. The pink band is $\supp u\cap \{r=1\}$. The
  two regions intersect.}
\label{fig: DC not domain of determinacy}
\end{center}
\end{figure}
\medskip
Choose $\zeta \in \Cinf(\R)$ so that $\zeta(r)=0$ for $r<1/8$
and $\zeta(r)=1$ for $r\ge 1/4$, and $f\in \Cinfc(]\pi /4,3\pi /4[)$ with
      $f(\pi/2) =1$.  Define $u_1=\zeta(r) f(\theta-t)$.  Then $u_1$
      is smooth on $\R^{1+2}$, $\supp(Pu_1)\subset \{r\leq 1/4\}$, and
\begin{align}
   \label{eq:repair}
   \supp u_1 \subset \big\{(t,r,\theta):\,
   \pi/4 < \theta-t < 3\pi /4 \big\}.
\end{align}
Define $u_2$ to be the solution of $Pu_2 =-Pu_1$ with
$u_2|_{t=0}=\d_t u_2|_{t=0}=0$.  Define $u=u_1+u_2$ so $Pu=0$.

For $1/2\le r\le 1$ the support of $u_2$ expands radially at speed
$\le \eps$.  Therefore, for $\eps$ small, $\supp u_2$ does not reach $
\{r=1\} \cap \{0\le t\le T\}$.  Then, $\supp u_2$ does not contribute
to $\supp u$, the pink band in Figure~\ref{fig: DC not domain of
  determinacy}. One checks that $u$ vanishes in a \nhd of
$\gamma^+([0, T])$ and that $\gamma^-([0, T])$ meets $\supp u$. This
completes the proof of {\bf ii.}
\end{proof}

\subsection{A doldum example with plunging}
\label{sec:Second_doldums_example}

\begin{proposition}
  \label{prop:second_doldrum}
  Set $\gamma(t)=(t, 0_{\R^d})$.
  There are metrics $g_{(t,x)} = -dt^2 + c(x)^{-2} |dx|^2$ on $\caL = \R^{1+d}$ so that
  $\DC(\gamma(0), \gamma(t))$ is strictly smaller than
  $Z_{\gamma(]0,t[)}$ for $t>0$ \suff large. 
\end{proposition}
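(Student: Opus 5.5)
The plan is to use the split metric $g=-dt^2+\gR$ with $\gR=c(x)^{-2}|dx|^2$. Here $c\equiv 1$ outside a compact set, and $c$ is very small (a doldrum) on a ``shell'' region around the axis. The shell is chosen so that the future of $\gamma(0)$ and the past of $\gamma(t)$ wrap around it. By the computation in the proof of Proposition~\ref{prop: homotopic double-cones}, with $d_\gR$ the Riemannian distance,
\begin{align*}
\DC(\gamma(0),\gamma(t))=\{(s,x):\ d_\gR(x,0)<s<t-d_\gR(x,0)\}.
\end{align*}
Concretely, take $d\ge 2$ and let $c=\eps$ on a closed set $D$ shaped like a thick ball $B$ with a thin radial tunnel removed. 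The tunnel joins the axis region to a compact ``cavity'' $K$ behind $D$, where $c=1$. Since the tunnel is thin, $d_\gR(0,x)$ for $x\in K$ is moderate. Points of $K$ that are shielded by $D$ from all short paths except through the tunnel then have comparatively large $d_\gR(0,x)$. The aim is to arrange a point $x_*$ just outside $\overline{\pi\DC}$, in the sense that $2d_\gR(0,x_*)\ge t$, while still $x_*\in\pi\,\cah^\infty$ of a \nhd of the arc. The simplest alternative is $d=1$ with a doldrum interval and a fast region beyond it. I expect a doldrum enclosing a cavity reachable only through a tunnel to work better in $d\ge2$, because timelike curves can reach points in ways Riemannian distance does not capture only via iterated homotopies anchored on intermediate reached sets.

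The key mechanism is iteration, via Theorem~\ref{theorem:ZO homotopy}. Proposition~\ref{prop: homotopic double-cones} gives $\DC(\gamma(0),\gamma(t))\subset\cah^1\O\subset Z_\O$. Then $Z_\O$ contains the open set $W=\DC$, which is large near $t/2$. Next we apply homotopies whose initial curves and endpoints lie in $W$ but which are not vertical arcs from the tips. Take a timelike curve that starts at a point of $W$ near the doldrum boundary at time about $t/2-\tau$, runs around the slow region, and comes back into $W$ at time about $t/2+\tau$. It can be homotoped outward into the region $\{2d_\gR(0,x)\ge t\}$, provided the relevant ``local double cone'' of its endpoints, $\DC(z,z')$ with $z,z'\in W$, sticks out of $\DC(\gamma(0),\gamma(t))$. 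This does not contradict Lemma~\ref{lemma: boundedness double cone}.ii, since only $z,z'\in\overline{\DC}$ forces containment. So the real point is to show that $\cah^2\O\not\subset\DC$. Contrary to Proposition~\ref{prop: dcones upperbound for Oinfty}, this can happen only if the homotopy's curves leave $\DC$.

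That proposition shows $\cah^\infty\O\subset\DC$ whenever $\O\subset\DC$, so the extra points must come from neighbourhoods $\O$ of $\gamma(]0,t[)$ that are not contained in $\DC$. Every open $\O\supset\gamma(]0,t[)$ sticks out of $\DC$ near the tips. The construction must therefore exploit plunging as in Lemma~\ref{lemma: plunging1}. I would arrange the slow shell so that $\Lambda^+(\gamma(0))$ fails to be $\d\Future(\gamma(0))$: null geodesics from $\gamma(0)$ re-enter $\Future(\gamma(0))$ after wrapping the doldrum, leaving a void, i.e.\ a region $V$ behind the doldrum and outside $\overline{\Future(\gamma(0))}$. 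I also arrange that $\overline{\Future(z)}\setminus\Future(\gamma(0))$ is compact for $z$ near $\gamma(0)$ (and the time-reversed statement near $\gamma(t)$). Then, for any small \nhd of the tip, a point $z$ slightly outside $\Future(\gamma(0))$ has a short timelike homotopy into $V$. That homotopy starts on a short arc near $\gamma(0)$ and ends in $\DC$, using that every future causal curve from $z$ plunges into $\DC$ (Lemma~\ref{lemma: plunging2}). This puts points of $V\cap\Past(\gamma(t))\setminus\overline{\DC}$ into $\cah^\infty\O$ for every $\O$, hence into $Z_{\gamma(]0,t[)}$.

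The main obstacle is the quantitative design of $c$. It must simultaneously produce a genuine void just past the doldrum, make the plunging hold uniformly for all $z$ in arbitrarily small \nhds of $\gamma(0)$, and require $t$ large so that the void lies in $\Past(\gamma(t))$. I would first try a radial shell $c=\eps$ on $1\le|x|\le 2$ with a small window in one direction. I would compute the arrival times via $d_\gR$ and the Hamilton--Jacobi characterization of Proposition~\ref{prop: JMR}, and check the plunging through criterion~(iv) of Lemma~\ref{lemma: plunging1}.
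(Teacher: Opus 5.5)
\textbf{There is a genuine gap, and it is structural rather than quantitative.} Every step of your plan runs through Theorem~\ref{theorem:ZO homotopy}, i.e.\ through $\cah^\infty\O\subset Z_\O$. Iterated timelike homotopies from thin neighborhoods of the arc can never reach a point outside $\DC(\gamma(0),\gamma(t))$, so no choice of $c$ can rescue the argument.

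To see this, note that a point reached by a timelike homotopy with boundary in $\O$ lies on a timelike arc joining two points of $\O$. Hence $\cah^1\O\subset F(\O)\cap P(\O)$, where $F(\O)=\cup_{w\in\O}\Future(w)$ and $P(\O)=\cup_{w\in\O}\Past(w)$. By transitivity of futures and pasts, the same bound holds for $\cah^\infty\O$.

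Now take $p=(t_p,x_p)\notin\Future(\gamma(0))$ and fix $s\in]0,t[$. Lemma~\ref{lemma: inclusion future sets}.ii gives $p\notin\ovl{\Future(\gamma(s))}$, i.e.\ $\atimeF_{\gamma(s)}(x_p)>t_p$. Continuity of $w\mapsto\atimeF_w(x_p)$ then yields $\rho(s)>0$ with $p\notin\Future(w)$ whenever $|w-\gamma(s)|<\rho(s)$. Setting $\O=\cup_s B(\gamma(s),\rho(s))$, you get $p\notin\cah^\infty\O$. The symmetric argument applies to the past, so $\bigcap_{\O}\cah^\infty\O\subset\DC(\gamma(0),\gamma(t))$. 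For this metric this is in fact an equality, by Proposition~\ref{prop: homotopic double-cones}.

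This breaks your plan in three places:
\begin{itemize}
\item Your central claim, that points of the void $V$ (which lies outside $\ovl{\Future(\gamma(0))}$) belong to $\cah^\infty\O$ for every $\O$, is false. The protrusion of $\O$ near the tips can be made arbitrarily thin, and $Z_{\gamma(]0,t[)}$ is an intersection over all such $\O$.
\item Your earlier paragraph fails too. For $z,z'\in W=\DC$ you do have $z,z'\in\ovl{\DC}$, so Lemma~\ref{lemma: boundedness double cone}.ii forces $\DC(z,z')\subset\DC$.
\item Compactness of $\ovl{\Future(z)}\setminus\Future(\gamma(0))$ cannot hold for all $z$ near $\gamma(0)$ when $c\equiv 1$ near infinity. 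For $z=(-\eta,0)$ this set is the unbounded strip $\{d_\gR(0,x)-\eta\le t\le d_\gR(0,x)\}$.
\end{itemize}

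\textbf{The missing ingredient is uniqueness in the Cauchy problem at spacelike hypersurfaces (sharp finite speed), used backward in time from inside $\DC$.} Noncharacteristic nonspacelike deformations cannot supply it. This is the content of Proposition~\ref{prop: double-cone not upper-bound}.

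That proposition works as follows. Once $\DC(\gamma(a),\gamma(b))\subset Z_{\gamma(]a,b[)}$ (Proposition~\ref{prop: Dcones DD split metric}), suppose $\ovl{\Future(z)}$ plunges into $\Future(\gamma(a))$ through $\DC$. Then the compact set $K=\ovl{\Future(z)}\setminus\Future(\gamma(a))$ lies in the past of a spacelike hypersurface $\Sigma_\eps\subset\DC$. This $\Sigma_\eps$ is a mollification of $\d\Past^{1+\mu}(\gamma(s_0))$ for a faster metric, and every future causal curve from $K$ crosses it. Since $u$ has zero Cauchy data on $\Sigma_\eps$, finite speed gives $u=0$ on $K$.

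The plunging point comes from a single ball doldrum $\BB$ centered at $q=(2,0,\dots,0)$; no tunnel or cavity is needed. Take $x_{\max}$ maximizing $\atimeF_{\gamma(0)}$ on $\BB$, and choose $z=(t_z,x_{\max})$ with
\[
\sup_{\d\BB}\atimeF_{\gamma(0)}<t_z<t_{\max}.
\]
Any causal curve from $z$ either crosses $\d\BB$ after time $t_z$, and so enters $\Future(\gamma(0))$, or stays in $\BB$ beyond $t_{\max}$. Then take $b$ large so that $K\subset\Past(\gamma(b))$.

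Your picture of a void created by the embrace is the right one, but plunging must be fed into finite speed, not into homotopies.
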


When the hypotheses of the next propostion hold,
 $z\notin \DC(\gamma(a), \gamma(b))$ and $z \in Z_{\gamma(]a,b[)}$.
 Proposition \ref{prop:second_doldrum} is proved by constructing 
 examples satisfying the hypotheses.

\begin{proposition}
  \label{prop: double-cone not upper-bound}
  Suppose $\gamma(s)$ is a future timelike curve with
  $ \DC(\gamma(a), \gamma(b))  \subset Z_{\gamma(]a,b[)}$, and
  a point 
  $z \notin
  \DC(\gamma(a), \gamma(b))$.
 
\begin{enumerate}[align=left,labelwidth=0.8cm,labelsep=0cm,itemindent=0.8cm,
  leftmargin=0cm,label=\bf{\roman*.}]
\item If $\ovl{\Future(z)}$ plunges into $\Future(\gamma(a))$ through $\DC(\gamma(a),
  \gamma(b))$ then $\ovl{\Future(z)} \setminus \Future(\gamma(a))
  \subset  Z_{\gamma(]a,b[)}$. \vskip.1cm
\item 
  If $\ovl{\Past(z)}$ plunges into $\Past(\gamma(b))$ through $\DC(\gamma(a),
\gamma(b))$ then $\ovl{\Past(z)} \setminus \Past(\gamma(b))
\subset Z_{\gamma(]a,b[)}$.
\end{enumerate}
\end{proposition}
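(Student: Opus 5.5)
Set $\DC=\DC(\gamma(a),\gamma(b))$. The plan for \textbf{i} is to show that every point of $K:=\ovl{\Future(z)}\setminus\Future(\gamma(a))$ lies in $Z_\DC$, which equals $Z_{\gamma(]a,b[)}$ under the hypothesis $\DC\subset Z_{\gamma(]a,b[)}$, exactly as in Step~1 of the proof of Theorem~\ref{thm:small_double_cones}. Since $Z$ of a set is the intersection of the $Z_\O$ over open $\O$ containing it, it suffices to fix an open $\O\supset\DC$ and prove $K\subset Z_\O$. The tool is John's theorem applied to a single sweep of hypersurfaces, and the plunging hypothesis is exactly what makes that sweep compactly supported.

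The key geometric input comes from Lemma~\ref{lemma: plunging2}: plunging through $\DC$ gives $K\subset\Past(\gamma(b))$, and $K$ is compact by Lemma~\ref{lemma: plunging1}. Every future causal curve from $z$ leaves $K$ through $\d\Future(\gamma(a))\cap\Past(\gamma(b))\subset\ovl{\DC}$. I would use a slightly faster metric $g^\delta$ with $\delta>1$ (Appendix~\ref{app:speeding up and slowing down metrics}) and a slightly earlier base point $\gamma(a')$ with $a'<a$. By compactness and continuity, the plunging survives this perturbation, and the exit region is then an open subset of $\DC$, hence contained in $\O$.

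Next, take a point $q\in K$ and pick $w\in\Past(z)$ close to $z$, so that $q\in\Future(w)$. The sets $\ovl{\Future^\delta(w')}\setminus\Future(\gamma(a'))$, as $w'$ moves along a timelike curve from $w$ up through $q$, supply boundaries $\d\Future^\delta(w')$. These are spacelike for $g$ and hence noncharacteristic. Restricted to the complement of $\Future(\gamma(a'))$, they are compact caps whose edges lie inside $\DC\subset\O$. Reversing the parameter, one starts far up along the timelike curve, where the cap is tiny or lies in $\O$, and moves backwards to reach $q$. This gives a deformation of noncharacteristic surfaces with initial surface and boundary in $\O$. The point $q$ is then in $Z_\O$, either by John's Theorem~\ref{theorem:John global} with local uniqueness at spacelike surfaces, or more simply by sharp finite speed applied to solutions vanishing on $\O$. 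In the latter form: a solution vanishing on a neighborhood of $\DC$ vanishes near the edges of the caps, and an energy estimate on the region between successive caps and $\d\Future(\gamma(a'))$ propagates the zero.

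The main obstacle is making the caps genuine $\Con^1$ hypersurfaces with boundary. The sets $\d\Future^\delta(w')$ are only Lipschitz, and their intersection with $\d\Future(\gamma(a'))$ may be nontransversal. To handle this, I would replace the boundaries by smooth spacelike approximations obtained by mollifying the arrival-time function $\atimeF$, using the strict inequality in \eqref{eq: tmax equation} available after the speed perturbation. I would also cut off transversally inside the open set $\DC$, where there is room thanks to the compactness of $K$. Part \textbf{ii} follows by time reversal, exchanging futures with pasts and $\gamma(a)$ with $\gamma(b)$.
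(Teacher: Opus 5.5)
Your overall idea is sound: use the compactness that plunging provides to push the zeros of $u$ from $\DC$ into $K$ through spacelike surfaces. But there is a genuine gap, because both perturbations you make go in the wrong direction, and the caps you build do not have the properties you claim.

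First, with $a'<a$ one has $\gamma(a)\in\Future(\gamma(a'))$, so part~ii of Lemma~\ref{lemma: inclusion future sets} gives $\ovl{\Future(\gamma(a))}\subset\Future(\gamma(a'))$. The edges of $\d\Future^\delta(w')\setminus\Future(\gamma(a'))$ lie on $\d\Future(\gamma(a'))$, hence outside $\ovl{\DC}$, where nothing is known about $u$. To move the exit set $\d\Future(\gamma(a))\cap\Past(\gamma(b))$ into the open set $\DC$ you need a later point, $a'>a$.

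Second, and more seriously, plunging does not survive speeding up the metric on the future side. Take a time-independent metric $g=-dt^2+\gR$, which is the setting of Proposition~\ref{prop:second_doldrum} where this result is used, and write $w'=(t(w'),x_{w'})$. Then $\atimeFd{\delta}_{w'}(x)=t(w')+d_\gR(x_{w'},x)/\delta$, while $\atimeF_{\gamma(a')}(x)=t(\gamma(a'))+d_\gR(x_{\gamma(a')},x)$. For every $\delta>1$ the first is smaller than the second once $|x|$ is large. Hence $\ovl{\Future^\delta(w')}\setminus\Future(\gamma(a'))$ is unbounded for every $w'$, including those ``far up''. Your caps are therefore never compact and extend to spatial infinity, so neither John's theorem nor your energy argument can even be started.

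Compactness of $K$ and continuity in $\delta$ only give control on bounded sets. To rescue your route you would have to take $a'>a$, localize to a bounded neighborhood $U$ of $K$, and prove $\ovl{\Future^\delta(w')}\cap\d U\subset\Future(\gamma(a'))$ uniformly in $w'$ for $\delta$ close to $1$. The proposal does none of this.

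The paper avoids all of this by putting the speed-up on the past side, where boundedness is automatic. Since $K$ is compact and contained in $\Past(\gamma(b))$ (Lemma~\ref{lemma: plunging2}), one has $K\subset\Past(\gamma(s_0))$ for some $s_0<b$. For small $\mu>0$ this gives $K\subset\Past^{1+\mu}(\gamma(s_0))$ and $\Past^{1+\mu}(\gamma(s_0))\cap\Future(\gamma(a))\subset\DC$; the set involved is bounded, and Lemma~\ref{lemma: continuity arrival time speed parameter} supplies the inclusions.

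The paper then uses a single surface. The Lipschitz graph $\d\Past^{1+\mu}(\gamma(s_0))$ has $g^{1+\mu}$-null, hence $g$-timelike, conormals almost everywhere. Mollifying its defining function in $x$ gives a smooth $g$-spacelike surface $\Sigma_\eps$ with $\Sigma_\eps\cap\Future(\gamma(a))\subset\DC$, where $u=0$.

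By plunging and Lemma~\ref{lemma: plunging0}, every future $g$-causal curve from a point of $K$ enters $\Future(\gamma(a))$ while still in $\Past^{1+\mu}(\gamma(s_0))$. It then crosses $\Sigma_\eps$ inside $\Future(\gamma(a))$. Sharp finite speed with zero Cauchy data on $\Sigma_\eps\cap\Future(\gamma(a))$ then gives $u=0$ on $K$. Only $g$-causal curves emanating from $K$ are used, so no compactness is lost, and no family of caps, edges, or John's theorem is needed.
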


\begin{proof}[\bf Proof of Proposition~\ref{prop: double-cone not upper-bound}]
  Treat {\bf i.}  The other
  is similar.
  %
\begin{figure}
\begin{center}
  \resizebox{12.4cm}{!}{
\begin{picture}(0,0)%
\includegraphics{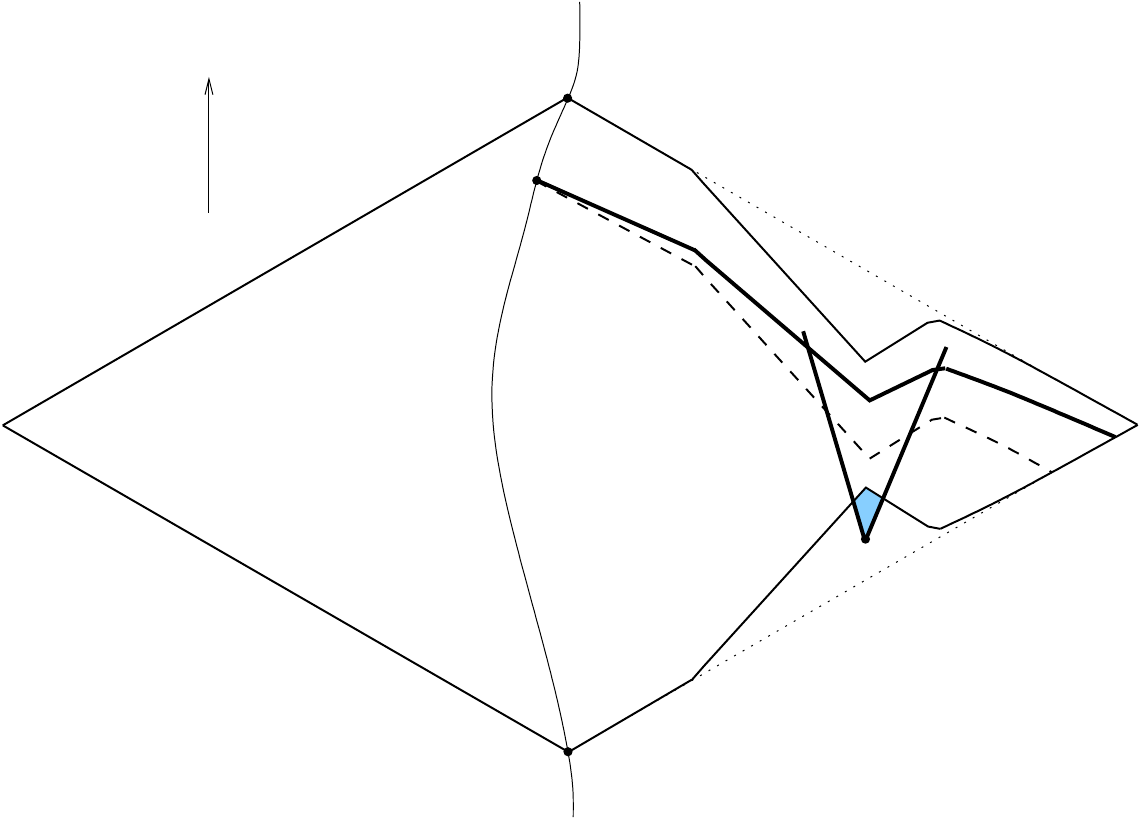}%
\end{picture}%
\setlength{\unitlength}{4144sp}%
\begin{picture}(8693,6238)(-422,-6391)
\put(8104,-3689){\makebox(0,0)[lb]{\smash{\fontsize{19}{22.8}\usefont{T1}{ptm}{m}{n}{\color[rgb]{0,0,0}$\d\Past^{1+\mu}(\gamma(s_0))$}%
}}}
\put(1036,-916){\makebox(0,0)[rb]{\smash{\fontsize{19}{22.8}\usefont{T1}{ptm}{m}{n}{\color[rgb]{0,0,0}$t$}%
}}}
\put(7625,-3985){\makebox(0,0)[lb]{\smash{\fontsize{19}{22.8}\usefont{T1}{ptm}{m}{n}{\color[rgb]{0,0,0}$\d\Past(\gamma(s_0))$}%
}}}
\put(3555,-1643){\makebox(0,0)[rb]{\smash{\fontsize{19}{22.8}\usefont{T1}{ptm}{m}{n}{\color[rgb]{0,0,0}$\gamma(s_0)$}%
}}}
\put(6247,-3450){\makebox(0,0)[b]{\smash{\fontsize{19}{22.8}\usefont{T1}{ptm}{m}{n}{\color[rgb]{0,0,0}$\Future(z)$}%
}}}
\put(3850,-6083){\makebox(0,0)[rb]{\smash{\fontsize{19}{22.8}\usefont{T1}{ptm}{m}{n}{\color[rgb]{0,0,0}$\gamma(a)$}%
}}}
\put(3826,-876){\makebox(0,0)[rb]{\smash{\fontsize{19}{22.8}\usefont{T1}{ptm}{m}{n}{\color[rgb]{0,0,0}$\gamma(b)$}%
}}}
\put(6183,-4509){\makebox(0,0)[b]{\smash{\fontsize{19}{22.8}\usefont{T1}{ptm}{m}{n}{\color[rgb]{0,0,0}$z$}%
}}}
\end{picture}%
}
\caption{Geometry of the proof of Proposition~\ref{prop: double-cone not upper-bound}. The compact $K=\ovl{\Future(z)} \setminus \Future(\gamma(a))$ is the colored region.}
\label{fig:plunging_prop}
\end{center}
\end{figure}

  Define $K = \ovl{\Future(z)} \setminus \Future(\gamma(a))$, compact by
  Lemma~\ref{lemma: plunging1}. 
   Lemma~\ref{lemma: inclusion future sets}  implies 
   that every future causal curve starting
  in $K$ reaches $\d \Past(\gamma(b)) \cap
  \Future(\gamma(a))$. 
   It suffices to show that
   if  $\O$ is an  open \nhd of $\gamma(]a,b[)$
  and $u$ is a solution of  $Pu=0$ with $u=0$ on $\caO$,
         that $u=0$ on $K$.
      
      Lemma~\ref{lemma: plunging2}, the continuity of 
      $(y,x) \mapsto
  \atimeP_{y}(x)$, and the compactness of $K$ imply that 
   there for $a< s_0 < b$ sufficiently close to $b$, so that $K \subset
  \Past(\gamma(s_0))$.
  
  With $\mu>0$ and $\delta=1+\mu$,
      $g^{\delta}=g^{1+\mu}$ 
   is the faster metric
  from  Definition~\ref{def: speedup slowdown metric}. 
  and 
  $\Past^{1+\mu}(\gamma(s_0))$  the past of $\gamma(s_0)$ with
  respect to $g^{1+\mu}$.  
  Lemma~\ref{lemma: continuity arrival time speed
    parameter}  implies that 
    there exists $\mu>0$ such that $K \subset
  \Past^{1+\mu}(\gamma(s_0))$ and $\Past^{1+\mu}(\gamma(s_0)) \cap
  \Future(\gamma(a)) \subset \DC(\gamma(a), \gamma(b))$.
  Any future causal curve starting
  at a point of $K$ reaches $\d \Past^{1+\mu}(\gamma(s_0))\cap \Future(\gamma(a))$.

  By Proposition~\ref{prop: JMR},
  $\Sigma_0 = \d\Past^{1+\mu}(\gamma(s_0))$ is the graph of the
  Lipschitz function
  $x \mapsto \atime^{+}_{g^{1+\mu}, \gamma(s_0)} (x)$. If
  $h(y) = t -\atime^{+}_{g^{1+\mu}, \gamma(s_0)} (x)$ then Lemma
  \ref{lemma: tangent plane future-set} implies that
  $d h(y)\in T^*_y(\R^{1+d})$ is null for almost all $y\in \Sigma_0$
  for $g^{1+\mu}$.  Since the open cone $\Timelike_y^{*,\delta, +}$
  decreases with $\delta$, $d h(y)\in \Timelike_y^{*,1+\mu/2, +}$ for
  almost all $y\in \Sigma_0$. Recall that $\Timelike_y^{*,\delta, +}$
  is the dual cone of $\Timelike_y^{\delta, +}$; see Section~\ref{app:speeding up and slowing down metrics}.
  
    Uniform
  continuity of $y \mapsto \Timelike_y^{*,1+\mu/2, +}$ on a compact sets,
  implies that there is an $\eta>0$ with the following property.
 {\sl If $L$ a compact \nhd of $\ovl{\DC(\gamma(a),\gamma(b))}$,
  $y =(t,x), y'=(t',x')$,  $y, y' \in\Sigma_0 \cap L$, and, 
  $ \Norm{x-x'}{}\leq \eta$,  then,}
  \begin{align}
    \label{eq: unif cont Gamma}
    d h(y')
    \,\in\, \Timelike_y^{*,+} 
    \,=\,
     \Timelike_y^{*,1, +} \,.
   \end{align}
   Choose
  $\chi\in\Cinfc(\R^d)$, with $\int \chi=1$, and 
  for $\eps>0$,
  set $\chi_\eps =
  \eps^{-d} \chi(x/\eps)$.  Smoothing  with convolution in $x$, define
  $\atime_\eps(x) = \chi_\eps *
  \atime^{+}_{g^{1+\mu}, \gamma(s_0)} (x)$ and $h_\eps(y) = t - \atime_\eps(x) = \chi_\eps * h (y)$.
  Then,
  \begin{align*}
    d \atime_\eps(x) \,=\, \chi_\eps *
    d \atime^{+}_{g^{1+\mu},\gamma(s_0)} (x), \qquad d h_\eps(y)\, =\, \chi_\eps * d h (y).
  \end{align*}
  Denote by $\Sigma_\eps$ the hypersurface given by $h_\eps =0$. For
  $\eps$ sufficiently small, $\Sigma_\eps\cap \Future(\gamma(a)) \subset
  \DC(\gamma(a),\gamma(b))$.
  By hypothesis $ \DC(\gamma(a),\gamma(b))\subset Z_{\gamma_(]a,b[)}$,
  so $u=0$ on  $\Sigma_\eps\cap \Future(\gamma(a))$.
 
    For $\eps$
  small, \eqref{eq: unif cont Gamma}  shows that
  $dh_\eps(y)$ is a 
     convex linear combination of elements of
  the convex set
  $\Timelike_y^{*,+}$.   Thus,  $d h_\eps(y) \in \Timelike_y^{*,+}$, proving that 
  $\Sigma_\eps$ is spacelike in $\DC(\gamma(a),\gamma(b))$.

   Any causal curve starting at a
  point of $K$ reaches $\Sigma_\eps\cap \Future(\gamma(a))$.  It follows
  from sharp finite speed for $P$
  with vanishing Cauchy data at
  $\Sigma_\eps\cap \Future(\gamma(a))$,
  that $u=0$ on $K$.  This completes  the proof.
\end{proof}

\begin{proof}[\bf Proof of Proposition~\ref{prop:second_doldrum}]
Suppose $q = (2, 0, \dots, 0)\in \R^d$ and $\eps>0$.  Choose a smooth function
$c(x)$ such that
\begin{itemize}[align=left,labelwidth=*,labelsep=*,itemindent=*,
  leftmargin=0cm]
\item  $\eps \leq c(x) \leq 1$;
\item $c(x) = 1$ for $x$ outside the open ball $\BB$ centered at $q$ with radius $1$; 
\item $c(x) = \eps$ in the ball centered at $q$ with radius $9/10$,
\end{itemize}
yielding the metric $g_{(t,x)} = -dt^2 + c(x)^{-2} |dx|^2$ with the 
symmetries of Section~\ref{sec:Dcones with special temporal
  symmetries}.   Figure~\ref{fig: doldrums1}
sketches $c(x)$.
\begin{figure}
  \begin{center}
    \resizebox{4cm}{!}{
\begin{picture}(0,0)%
\includegraphics{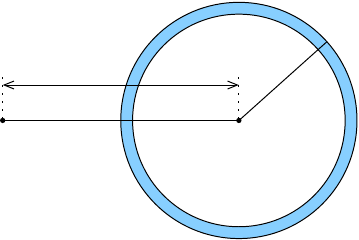}%
\end{picture}%
\setlength{\unitlength}{4144sp}%
\begin{picture}(2728,1814)(4121,-4388)
\put(6346,-3346){\makebox(0,0)[lb]{\smash{\fontsize{16}{19.2}\usefont{T1}{ptm}{m}{n}{\color[rgb]{0,0,0}$1$}%
}}}
\put(4141,-3751){\makebox(0,0)[b]{\smash{\fontsize{16}{19.2}\usefont{T1}{ptm}{m}{n}{\color[rgb]{0,0,0}$0$}%
}}}
\put(5986,-3706){\makebox(0,0)[lb]{\smash{\fontsize{16}{19.2}\usefont{T1}{ptm}{m}{n}{\color[rgb]{0,0,0}$q$}%
}}}
\put(5986,-4111){\makebox(0,0)[b]{\smash{\fontsize{16}{19.2}\usefont{T1}{ptm}{m}{n}{\color[rgb]{0,0,0}$c=\eps$}%
}}}
\put(4366,-4111){\makebox(0,0)[lb]{\smash{\fontsize{16}{19.2}\usefont{T1}{ptm}{m}{n}{\color[rgb]{0,0,0}$c=1$}%
}}}
\put(5086,-3121){\makebox(0,0)[rb]{\smash{\fontsize{16}{19.2}\usefont{T1}{ptm}{m}{n}{\color[rgb]{0,0,0}$2$}%
}}}
\end{picture}%
    }
    \caption{The function $c(x)$ for the second doldrums example.}
  \label{fig: doldrums1} 
  \end{center}
\end{figure}

\begin{figure}
  \begin{center}
    \resizebox{9cm}{!}{
\begin{picture}(0,0)%
\includegraphics{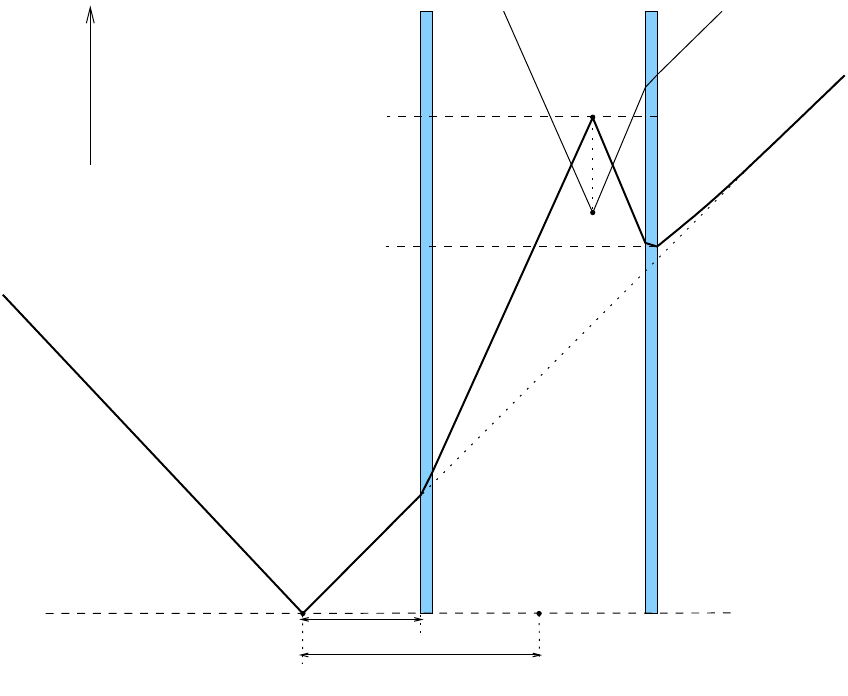}%
\end{picture}%
\setlength{\unitlength}{4144sp}%
\begin{picture}(6459,5289)(1563,-2758)
\put(3798,-2305){\makebox(0,0)[rb]{\smash{\fontsize{16}{19.2}\usefont{T1}{ptm}{m}{n}{\color[rgb]{0,0,0}$0$}%
}}}
\put(6035,1757){\makebox(0,0)[b]{\smash{\fontsize{16}{19.2}\usefont{T1}{ptm}{m}{n}{\color[rgb]{0,0,0}$y_{\max}$}%
}}}
\put(6089,757){\makebox(0,0)[b]{\smash{\fontsize{16}{19.2}\usefont{T1}{ptm}{m}{n}{\color[rgb]{0,0,0}$z$}%
}}}
\put(4428,593){\makebox(0,0)[rb]{\smash{\fontsize{16}{19.2}\usefont{T1}{ptm}{m}{n}{\color[rgb]{0,0,0}$t=\sup_{x\in \d\BB} \atimeF_{y^{(0)}}(x)$}%
}}}
\put(4432,1589){\makebox(0,0)[rb]{\smash{\fontsize{16}{19.2}\usefont{T1}{ptm}{m}{n}{\color[rgb]{0,0,0}$t=t_{\max}$}%
}}}
\put(7278,-2183){\makebox(0,0)[lb]{\smash{\fontsize{16}{19.2}\usefont{T1}{ptm}{m}{n}{\color[rgb]{0,0,0}$t=0$}%
}}}
\put(5671,-1906){\makebox(0,0)[b]{\smash{\fontsize{16}{19.2}\usefont{T1}{ptm}{m}{n}{\color[rgb]{0,0,0}$q$}%
}}}
\put(5716,-1231){\makebox(0,0)[b]{\smash{\fontsize{16}{19.2}\usefont{T1}{ptm}{m}{n}{\color[rgb]{0,0,0}$c=\eps$}%
}}}
\put(2341,2279){\makebox(0,0)[lb]{\smash{\fontsize{16}{19.2}\usefont{T1}{ptm}{m}{n}{\color[rgb]{0,0,0}$t$}%
}}}
\put(6346,2324){\makebox(0,0)[rb]{\smash{\fontsize{16}{19.2}\usefont{T1}{ptm}{m}{n}{\color[rgb]{0,0,0}$\Future(z)$}%
}}}
\put(3916,-1816){\makebox(0,0)[b]{\smash{\fontsize{16}{19.2}\usefont{T1}{ptm}{m}{n}{\color[rgb]{0,0,0}$\y{0}$}%
}}}
\put(3691,-1231){\makebox(0,0)[lb]{\smash{\fontsize{16}{19.2}\usefont{T1}{ptm}{m}{n}{\color[rgb]{0,0,0}$c=1$}%
}}}
\put(4771,-2671){\makebox(0,0)[b]{\smash{\fontsize{16}{19.2}\usefont{T1}{ptm}{m}{n}{\color[rgb]{0,0,0}$2$}%
}}}
\put(4321,-2401){\makebox(0,0)[b]{\smash{\fontsize{16}{19.2}\usefont{T1}{ptm}{m}{n}{\color[rgb]{0,0,0}$1$}%
}}}
\put(2251,-241){\makebox(0,0)[lb]{\smash{\fontsize{16}{19.2}\usefont{T1}{ptm}{m}{n}{\color[rgb]{0,0,0}$\Future(y^{(0)})$}%
}}}
\end{picture}%
    }
    \caption{Future set $\Future(\y{0})$ for the doldrums example and a
      point $z$ whose future plunges into $\Future(\y{0})$.}
  \label{fig: doldrums2} 
  \end{center}
\end{figure}

\medskip
If $x \in \d\BB$, the shortest path connecting the origin in $\R^d$ to
$x$ and not entering $\BB$ is of length less than $T = 1 + 2 \pi$, a
far from optimal estimate, implying $\atimeF_{\y{0}} (x) \leq T$, with
$\y{0}$ the origin in $\R^{1+d}$.  Choosing $\eps>0$ \suff small
yields $\atimeF_{\y{0}} (q) > T$.
Since $x \mapsto \atimeF_{\y{0}} (x)$ is continuous, there exists
$x_{\max}\in \BB$ such that
\begin{align*}
  t_{\max} = \atimeF_{\y{0}} (x_{\max}) =  \sup_{x \in \BB} \atimeF_{\y{0}} (x).
\end{align*}
The point $y_{\max} = (t_{\max}, x_{\max})$ is indicated in
Figure~\ref{fig: doldrums2}.  The set $\d \Future(\y{0})$ is not
differentiable at $y_{\max}$. Otherwise its tangent hyperplane would be
spacelike contradicting Lemma~\ref{lemma: tangent plane future-set}.


\vskip.1cm
Consider $z = (t_z, x_{\max})$ with $\sup_{x \in \d\BB} \atimeF_{\y{0}}
(x) < t_z < t_{\max}$. We claim that
\begin{align}
  \label{eq: claim doldrum2}
  z \notin \ovl{\Future(\y{0})},
  \quad
  {\rm and},
  \quad 
  \ovl{\Future(z)} \ \text{plunges into} \ \Future(\y{0}).
\end{align}
Figure~\ref{fig: doldrums2} is helpful for intuition.
 From the continuity of $x \mapsto \atimeF_{\y{0}} (x)$ and $t_z
 < t_{\max}$ one deduces that $z \notin \ovl{\Future(\y{0})}$.
 From the definition of $x_{\max}$, one has
 \begin{align}
   \label{eq: claim doldrum2-2}
   t > t_{\max}  \ \et \ x \in \ovl{\BB}
   \ \  \Imply \ \
   (t,x) \in \Future(\y{0}).
 \end{align}
 Suppose $y=(t,x) \in \ovl{\Future(z)}$ with $x \notin \ovl{\BB}$.
 Lemma~\ref{lemma: alternative future} implies that 
  there
 exists a future causal curve $\rho(s)$ starting at $z$ and
 ending at $y$.  By continuity, it reaches a
 point $y'= (t', x')$ such that $t_z < t'< t$ and $x' \in \d\BB$.
 As $t_z > \sup_{\d\BB} \atimeF_{\y{0}}$,  $y' \in \Future
 (\y{0})$ implying $y \in \Future(\y{0})$ by Lemma~\ref{lemma: inclusion future sets} since $y \in
 \ovl{\Future(y')}$. Hence,
      \begin{align}
   \label{eq: claim doldrum2-3}
   (t,x) \in \ovl{\Future(z)}
   \ \et \ x \notin \ovl{\BB}
   \ \ \Imply \ \ 
   (t,x) \in \Future(\y{0}).
      \end{align}
      Together, \eqref{eq: claim doldrum2-2} and \eqref{eq: claim
        doldrum2-3} imply that $\ovl{\Future(z)}\setminus \Future(\y{0})$ is compact. 
           Lemma~\ref{lemma: plunging1}
        implies
      the second part of \eqref{eq: claim doldrum2}. The claim is proved.
 
\medskip
Introduce the
curve $\gamma(t) = (t,0)$ along the time axis. One has $\gamma(0) = \y{0}$ and $\ovl{\Future(z)} \setminus \Future(\y{0}) \subset
\Past(\gamma(b))$  for $b
>0$ \suff large, meaning that $\ovl{\Future(z)}$ plunges into $\Future(\gamma(0))$ through
$\DC\big(\gamma(0),\gamma(b)\big)$. Proposition~\ref{prop: Dcones
  DD split metric}
  implies that $\DC\big(\gamma(0),\gamma(t)\big)\subset Z_{\gamma(]0,[t)}$.
  Thus, for $t=b$ \suff large, the assumptions of
      Proposition~\ref{prop: double-cone not upper-bound} are
  satisfied, concluding the construction.
\end{proof}

\begin{figure}
  \begin{center}
    \subfigure[\label{fig: doldrums3a}]
              {\resizebox{5cm}{!}{
\begin{picture}(0,0)%
\includegraphics{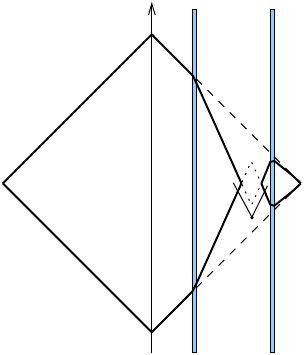}%
\end{picture}%
\setlength{\unitlength}{4144sp}%
\begin{picture}(2314,2698)(2709,-60)
\put(3731, 75){\makebox(0,0)[rb]{\smash{\fontsize{10}{12}\usefont{T1}{ptm}{m}{n}{\color[rgb]{0,0,0}$\gamma(0)$}%
}}}
\put(3759,2323){\makebox(0,0)[rb]{\smash{\fontsize{10}{12}\usefont{T1}{ptm}{m}{n}{\color[rgb]{0,0,0}$\gamma(t)$}%
}}}
\put(3922,2466){\makebox(0,0)[lb]{\smash{\fontsize{10}{12}\usefont{T1}{ptm}{m}{n}{\color[rgb]{0,0,0}$t$}%
}}}
\put(4611,881){\makebox(0,0)[rb]{\smash{\fontsize{10}{12}\usefont{T1}{ptm}{m}{n}{\color[rgb]{0,0,0}$z$}%
}}}
\end{picture}%
              }}
    \qquad    \qquad 
     \subfigure[\label{fig: doldrums3b}]
     {\resizebox{6.34cm}{!}{\includegraphics{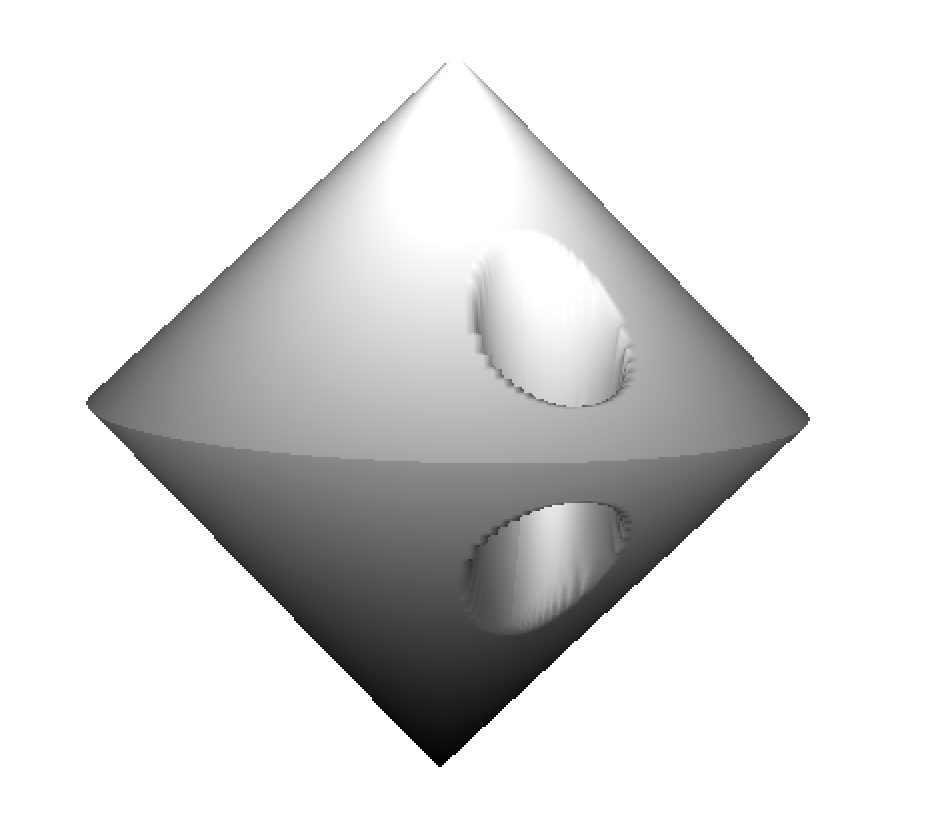}}}
    \caption{Sketches of a \dcone with nonzero genus: (a) cross section,
      (b) a 3d-rendition.}
  \label{fig: doldrums3} 
  \end{center}
\end{figure}

\medskip
This example has additional consequences.
\begin{proposition}
  \label{prop:second_doldrum_DC_with_hole}
  There are metrics $g_{(t,x)} = -dt^2 + c(x)^{-2} |dx|^2$ 
  and vertical timelike $\gamma(t)=(t,0)$ so that 
  the boundary of $\DC(\gamma(-t), \gamma(t))$ has nonzero genus.
\end{proposition}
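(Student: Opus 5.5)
We reuse the doldrum metric from the proof of Proposition~\ref{prop:second_doldrum}: $c=\eps$ on the ball of radius $9/10$ about $q=(2,0,\dots,0)$, $c=1$ outside $\BB$, and $\eps$ small. The metric is time independent and invariant under $t\mapsto -t$. Hence $\Past\big((t,0)\big)$ is the mirror image of $\Future\big((-t,0)\big)$, and, as in the proof of Proposition~\ref{prop: homotopic double-cones},
\begin{align*}
\DC(\gamma(-t),\gamma(t)) = \{(s,x):\ |s| < t - d(x,0)\},
\end{align*}
where $d=d_\gR$ is the Riemannian distance for $\gR = c^{-2}|dx|^2$. The genus of $\d\DC$ is therefore the topology of the sublevel set $\{x: d(x,0)<t\}$. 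Indeed, $\DC$ is the region between the graphs of $\pm(t-d(x,0))$ over $U_t=\{d(x,0)<t\}$, which is homeomorphic to $U_t\times(-1,1)$. Its boundary is the double of $\ovl{U_t}$ along $\d U_t$. So it suffices to choose $t$ (and $d\ge 2$) so that $\ovl{U_t}$ is not simply connected, or more precisely has the topology of a solid torus (or a ball with a cavity, in the sense relevant to genus).

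\textbf{Key step: the distance sublevel set.} In the notation of the previous proof, every $x\in\d\BB$ has $d(x,0)\le T=1+2\pi$, while $d(q,0)>T$ once $\eps$ is small. By the same argument $d(x,0) > T$ for all $x$ in the small ball $B(q,1/2)$: any path reaching $B(q,1/2)$ must cross the annulus $1/2\le|x-q|\le 9/10$, where $c=\eps$, so its length is at least $0.4/\eps$.

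Choose $t$ with $\sup_{\d\BB} d(\cdot,0) < t < \inf_{B(q,1/2)} d(\cdot,0)$. Then $U_t$ contains $\{|x|\le t-1\}\cup(\R^d\setminus\BB)\cap\{d<t\}$, together with a collar inside $\BB$ along $\d\BB$. It excludes $B(q,1/2)$. Since $t$ is bounded independently of $\eps$ and $c=1$ outside $\BB$, the outer boundary of $U_t$ lies far from $\BB$. Thus $\ovl{U_t}$ contains the shell $\{1/2<|x-q|\le1\}$ region near $\d\BB$ and omits a ball around $q$. So $\ovl{U_t}$ is a topological ball with an interior cavity. This is the $t$-slice picture of Figure~\ref{fig: doldrums3}.

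For $d=2$ the cavity is a hole in a planar domain, so $\ovl{U_t}$ is an annulus. The double of an annulus along its two boundary circles is a torus, which gives genus one for the $2$-dimensional boundary $\d\DC\subset\R^{1+2}$. This matches Figure~\ref{fig: doldrums3b}, so I would work in $d=2$. There $\d\DC$ is a surface and genus is unambiguous.

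\textbf{Main obstacle: regularity of the level set.} One must show that $\d U_t$ consists of exactly two Jordan curves, an outer one and one around $q$, so that $\ovl{U_t}$ is a genuine closed annulus and the doubling argument applies. The function $d(\cdot,0)$ is only Lipschitz, so its level sets may be irregular. The outer curve is harmless, since outside $\BB$ the distance is Euclidean-like away from cut points created by $\BB$.

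For the inner component, I would first argue topologically. $U_t$ is open and connected (it is star shaped along minimizing geodesics from $0$). Its complement has exactly two components: the unbounded one, and the compact one containing $B(q,1/2)$. The latter holds because every point between $\d\BB$ and $B(q,1/2)$ lies either in $U_t$ or is separated from infinity by the collar in $U_t$.

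Then I would compute the genus homologically instead of requiring smooth boundary curves. $\d\DC$ is a compact topological surface: it is the union of two Lipschitz graphs over $\ovl{U_t}$ glued along $\{d=t\}$. By Mayer–Vietoris, its first Betti number is $2\,b_1(\ovl{U_t})$ when the boundary pieces are circles, in particular nonzero. If regularity of the level set causes trouble, I would perturb $t$ to a value where $\{d=t\}$ is well behaved. Alternatively, I would replace $\DC$ by a slightly slower/faster double cone, using Appendix~\ref{app:speeding up and slowing down metrics}, with the same topology.
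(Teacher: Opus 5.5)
Your proposal is correct and follows the paper's route. The paper's proof simply reuses the doldrum metric of Proposition~\ref{prop:second_doldrum} and points to the cross-section in Figure~\ref{fig: doldrums3a}. Your description $\DC(\gamma(-t),\gamma(t))=\{|s|<t-d(x,0)\}$, with $t$ chosen so that $d<t$ on $\d\BB$ and $d>t$ on $B(q,1/2)$, is that picture made explicit.

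A few of your intermediate claims are false as written but are not used. The inclusion $\{|x|\le t-1\}\subset U_t$ fails because that ball meets $B(q,1/2)$. The claim $\ovl{U_t}\supset\{1/2<|x-q|\le 1\}$ fails because $d\gg t$ where $c=\eps$. You also need neither exactly two complementary components nor any regularity of the level set to see nontrivial topology. The map $x\mapsto(t-d(x,0),x)$ is a section of the projection $\d\DC\to\{d\le t\}$, so $H_1(\{d\le t\})$ injects into $H_1(\d\DC)$. The circle $\d\BB$ winds around $q\notin\{d\le t\}$, so it already gives a nonzero class in $H_1(\d\DC)$.
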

With the contruction in the proof of Proposition~\ref{prop:second_doldrum}, Figure~\ref{fig: doldrums3a} serves as a proof of
Proposition~\ref{prop:second_doldrum_DC_with_hole}. A \dcone with
genus one is obtained. A 3d rendition is given in Figure~\ref{fig: doldrums3b} to ease intuition. The example can be adapted to achieve any
finite genus for a \dcone.

The point $z$ in Figure~\ref{fig: doldrums3a} gives the following proposition.
\begin{proposition}
  \label{prop: DC with hole no plunging}
  There are metrics $g$, \dcones $\DC = \DC(\y{0}, \y{1})$, and point $z$ such that
  $\Lambda^+(z)$  plunges into $\Future(\y{0})$ through $\DC$
  but $\ovl{\Future(z)}$  does not plunge into $\Future(\y{0})$ through $\DC$.
\end{proposition}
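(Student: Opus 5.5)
The plan is to reuse the doldrum metric from the proof of Proposition~\ref{prop:second_doldrum}, with the symmetric double cone $\DC=\DC(\gamma(-t),\gamma(t))$ of Proposition~\ref{prop:second_doldrum_DC_with_hole}, $\y{0}=\gamma(-t)$, $\y{1}=\gamma(t)$. Since the metric is time independent, shift time so that $\y{0}$ plays the role of the origin. We take $z=(t_z,x_z)$ in the hole of the double cone, as in Figure~\ref{fig: doldrums3a}. Concretely, $x_z=x_{\max}$ and $t_z$ lies in the interval between $\sup_{\d\BB}\atimeF_{\y{0}}$ and $t_{\max}$, so $z\notin\ovl{\Future(\y{0})}$. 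The time $t$ is chosen large enough that every point of $\ovl{\Future(z)}\setminus\Future(\y{0})$ with $x\notin\BB$ lies in $\Past(\y{1})$.

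First we show that $\Lambda^+(z)$ plunges through $\DC$. A future null geodesic from $z$ starts inside the ball $\BB$, where $c=\eps$ on the smaller ball. Since $z$ lies above $\sup_{\d\BB}\atimeF_{\y{0}}$, the argument giving \eqref{eq: claim doldrum2-3} shows that once the geodesic reaches $\d\BB$ it is already in $\Future(\y{0})$. The key point is that a null geodesic moves at the local speed $c$ and never stalls. It therefore reaches $\d\BB$ within a uniformly bounded time: at most about $1/\eps$ plus the time to cross the transition annulus. This bound holds uniformly over the compact set of initial directions, by the compactness argument in Lemma~\ref{lemma: plunging1}(ii)$\Rightarrow$(iii).

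We choose $\y{1}$ so that its past contains all these exit points. The geodesic then enters $\Future(\y{0})$ at a point of $\Past(\y{1})$, hence enters $\DC$. The criterion used is Lemma~\ref{lemma: plunging2}.2, applied to the compact set $\Lambda^+(z)\setminus\Future(\y{0})$. To make this work, $t_z$ must be chosen, and $t$ made large relative to $1/\eps$, so that this compact set sits in $\Past(\y{1})$.

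Second, and this is the main obstacle, we show that $\ovl{\Future(z)}$ does not plunge through $\DC$. A causal curve may stall: the vertical curve $s\mapsto(s,x_z)$ is causal, and since $x_z\in\BB$ it stays out of $\Future(\y{0})$ until time $t_{\max}$. We then arrange that $(t_{\max},x_{\max})\notin\Past(\y{1})$, i.e.\ $t_{\max}\ge\atimeP_{\y{1}}(x_{\max})$. This is plausible because backward propagation into the doldrum from $\y{1}$ is just as slow, so $\Past(\y{1})$ also has a dent above the ball. This is exactly the hole in Figure~\ref{fig: doldrums3a}.

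By Lemma~\ref{lemma: plunging2}.1, the vertical curve then lies in $\ovl{\Future(z)}\setminus\Future(\y{0})$ but leaves $\Past(\y{1})$, so $\ovl{\Future(z)}$ does not plunge through $\DC$. (It does still plunge into $\Future(\y{0})$, by \eqref{eq: claim doldrum2}.) Choosing $t$ requires balancing two conditions. On the one hand, $\atimeP_{\y{1}}(x_{\max})-t_{\max}$ is controlled from above by the doldrum depth, of order $1/\eps$, via the time symmetry $\atimeP_{\y{1}}(x)=2t-\atimeF_{\y{0}}(x)$ about the mid-time. On the other hand, the null exit times must be smaller. Since null geodesics cross the $\eps$-ball in time $\sim 1/\eps$ while $t_{\max}$ is also $\sim 1/\eps$, I expect this to require tuning the radii of $\BB$ versus the $\eps$-region, e.g.\ using a thin doldrum shell so that first arrival at $x_{\max}$ is late while null geodesics from $z$ exit quickly. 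If the constants still do not fit, the fallback is to perturb $z$ or use a nonradial profile for $c$. Verifying these inequalities is where the real work lies.
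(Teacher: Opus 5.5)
You have the right configuration: it is the genus-one symmetric double cone of Figure~\ref{fig: doldrums3a}, which is essentially all the paper offers as proof. Your second half is also correct. After the time shift, $\y{1}=(2\tau,0)$, so $\atimeP_{\y{1}}=2\tau-\atimeF_{\y{0}}$. The vertical curve over $x_{\max}$ then meets $\d\Future(\y{0})$ at $(t_{\max},x_{\max})$, which is not in $\Past(\y{1})$ once $\tau\le t_{\max}$, and Lemma~\ref{lemma: plunging2}.1 applies.

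The gap is the first half, which you leave open yourself. Your step ``a null geodesic never stalls, hence reaches $\d\BB$ in uniformly bounded time'' does not follow. Rays can be totally reflected in the transition annulus, where the speed climbs from $\eps$ to $1$. For radial $c$, Bouguer's invariant $|x-q|\sin\phi/c$ is conserved, so a ray from $x_{\max}$ with $|x_{\max}-q|\sin\phi_0>\eps$ never reaches $|x-q|=1$; and $|x_{\max}-q|$ is only known to be $O(\eps)$. Compactness of the set of directions gives no uniform bound when exit can fail. Even one reflection adds a crossing of the $\eps$-ball, roughly $1.8/\eps$, to the exit time. Your exit-point criterion needs every exit time below $2\tau\le 2t_{\max}\le 2/\eps+O(1)$, so a single reflection already breaks it. Finally, asking the half-height to be large relative to $1/\eps$ contradicts the hole condition $\tau\le t_{\max}$.

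The missing idea is that you need not wait for exit: every null geodesic from $z$ enters $\Future(\y{0})$ after time $O(1)$, deep inside the doldrum.

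\emph{The arrival time inside the doldrum.} Set $S=\{|x-q|=9/10\}$. In the $\eps$-ball,
$\atimeF_{\y{0}}(x)=\min_{y\in S}\big(\atimeF_{\y{0}}(y)+|x-y|/\eps\big)$.
Hence $|x_{\max}-q|\le\eps\omega$, with $\omega=\max_S\atimeF_{\y{0}}-\min_S\atimeF_{\y{0}}$, which is bounded independently of $\eps$ for radial $c$.

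\emph{Entry bound along each ray.} The null geodesic from $z=(t_z,x_{\max})$ in direction $e$ is $\sigma\mapsto(t_z+\sigma,\,x_{\max}+\eps\sigma e)$ until it meets $S$ at $y_e$. Along it,
$\atimeF_{\y{0}}(x_{\max}+\eps\sigma e)\le D_e-\sigma$, where $D_e=\atimeF_{\y{0}}(y_e)+|x_{\max}-y_e|/\eps\le t_{\max}+3\omega$.
Take $t_z=t_{\max}-K$. The geodesic then enters $\Future(\y{0})$ at height at most $(D_e+t_z)/2\le t_{\max}-(K-3\omega)/2$, after a Euclidean displacement of only $O(\eps)$.

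\emph{Choice of parameters.} Fix $K>3\omega$, take $\eps$ small, and choose $\tau\in\big(t_{\max}-(K-3\omega)/2,\,t_{\max}\big]$. Each entry point $(h,x)$ satisfies $h=\atimeF_{\y{0}}(x)<\tau$, so $h<2\tau-h=\atimeP_{\y{1}}(x)$ and the point lies in $\Past(\y{1})$. Thus every null geodesic from $z$ enters $\DC$. Your vertical-curve argument still applies because $\tau\le t_{\max}$. No exit from $\BB$, no tuning of radii, and no lower bound $t_z>\sup_{\d\BB}\atimeF_{\y{0}}$ are needed.
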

This proves that the properties 1 and 2 in Lemma~\ref{lemma: plunging2} are not equivalent.

\appendix
\section{Aspects of Lorentzian geometry}
\label{app:Lorentzian_geometry}

\subsection{Lorentzian vector spaces}
 \label{sec:Lorentzian vector spaces}
 
\begin{definition}
  \label{def: Lorentzian vector spaces}
  A {\bf Lorentzian vector space} is a pair $(\VV,g)$ where $\VV$ is a
  finite dimensional real vector space with $\dim (\VV) \geq 2$ and
  $g$ is a nondegenerate symmetric bilinear form on $\VV$ with
  signature $-1,1,\dots,1$. One calls $g$ a  {\bf Lorentzian
    metric}.

  Suppose $\bfv \in \VV$. It is called {\bf timelike} if
  $g(\bfv,\bfv)<0$, {\bf null} if $g(\bfv,\bfv)=0$, and
  {\bf spacelike} if $g(\bfv,\bfv)>0$.
Two vectors $\bfv$, $\bfw$ are {\bf orthogonal}, denoted
  $\bfv\perp\bfw$, if $g(\bfv,\bfw)=0$.  
\end{definition}

\begin{remark}
  \label{rem: metric}
  \begin{enumerate}[align=left,labelwidth=0.8cm,labelsep=0cm,itemindent=0.8cm,
      leftmargin=0cm,label=\bf{\roman*.}]
  \item \label{rem:sig} If $\WW\subset \VV$ is a subspace on which
    $g\leq 0$, then $\dim \WW \le 1$ because of the signature of $g$.
  \item \label{rem:null} If $\bft$ is timelike and $\bfv$ is
    spacelike, they have a null convex linear combination: set
    $\bfw(s) = (1-s) \bft + s\bfv$ and note that $g\big( \bfw(0),
    \bfw(0)\big) <0$ and $g\big( \bfw(1), \bfw(1)\big) >0$.
  \item \label{remark: orthogonal spacelike vector}
    If $g(\bfv, \bfv) \neq 0$ and $\bfu \in \VV$, one has $\bfv
  \perp\bfu + \alpha \bfv$ for some $\alpha\in \R$.
  \end{enumerate}
\end{remark}
\begin{lemma}
  \label{lem:vspace}
  If $\bft$ is timelike and $\bfv \perp \bft$, then $\bfv$ is spacelike.
\end{lemma}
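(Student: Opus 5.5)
The proof will be a short argument by contradiction using the Lorentzian signature. Suppose $\bft$ is timelike and $\bfv\perp\bft$, but $\bfv$ is not spacelike, i.e.\ $g(\bfv,\bfv)\le 0$. Consider the subspace $\WW=\Span\{\bft,\bfv\}$. The plan is to show that $g\le 0$ on $\WW$ and that $\dim\WW=2$. Together these contradict Remark~\ref{rem: metric}.\ref{rem:sig}.

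First, $g\le 0$ on $\WW$. For $\alpha,\beta\in\R$, orthogonality gives
\begin{align*}
  g(\alpha\bft+\beta\bfv,\alpha\bft+\beta\bfv)=\alpha^2 g(\bft,\bft)+\beta^2 g(\bfv,\bfv)\le 0,
\end{align*}
since both terms are nonpositive.

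Second, $\dim\WW=2$, and here the case $\bfv=0$ has to be handled separately. The statement implicitly concerns $\bfv\neq 0$, since the zero vector is orthogonal to everything but is not spacelike. So assume $\bfv\neq0$. If $\bfv$ were a multiple of $\bft$, say $\bfv=\lambda\bft$ with $\lambda\neq0$, then $0=g(\bfv,\bft)=\lambda g(\bft,\bft)$ would force $\lambda=0$, a contradiction. Hence $\bft$ and $\bfv$ are independent and $\dim\WW=2$.

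This contradicts Remark~\ref{rem: metric}.\ref{rem:sig}, so $g(\bfv,\bfv)>0$. Nothing in the argument is a real obstacle; the only care needed is the $\bfv\neq0$ convention.

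If one prefers not to rely on the remark, there is a direct check. Choose a $g$-orthonormal basis with $\bft$ along $e_0$, which is possible after normalizing $\bft$. Then $\bft^\perp=\Span\{e_1,\dots,e_d\}$, and by the signature $-1,1,\dots,1$ the form $g$ is positive definite on this span.
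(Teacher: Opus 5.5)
Your proof is correct. Restricting to $\bfv\neq 0$ is the right reading: the zero vector is not spacelike, and the paper only applies the lemma to nonzero vectors, as in Remark~\ref{rem:lorentzorthogonal}.

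The paper states the lemma without proof. It does place Remark~\ref{rem: metric}.\ref{rem:sig} immediately before it, so your argument, building a two-dimensional subspace on which $g\le 0$, is evidently the intended one. Your alternative via a $g$-orthonormal basis is the content of Lemma~\ref{lem:darboux}.
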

%
\begin{definition}
  For $d\ge 1$ the {\bf Minkowski space} $\Minkowski^{1+d}$ is 
  $\R^{1+d}_{t,x}$  with the Lorentzian  metric $g_{\Minkowski} \big( (t,x),
  (t,x)\big) = -t^2 +\sum_kx_k^2$.
\end{definition}
\begin{example} The linear transformatons of $\Minkowski^{1+d}$ to
  itself that preserve the metric and map $(1,0,\dots,0)$ to itself
  are the orthogonal transformations of $\RR^d_x$.
\end{example}
\begin{lemma}
  \label{lem:darboux}
  Suppose that $(\VV, g)$ is a Lorenztian vector space of dimension
  $1+d$ and that $\bft\in \VV$ satisfies $g(\bft,\bft)=-1$.  Then there is
  an isomorphism of Lorenzian vector spaces from $\VV$ to $\Minkowski^{1+d}$ that sends
  $\bft$ to $(1,0,\dots,0)$.  The map is unique up to an orthogonal
  transformation of the $x_1, \dots, x_d$ coordinates in
  $\Minkowski^{1+d}$.
\end{lemma}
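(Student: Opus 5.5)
The plan is the standard Gram--Schmidt-type reduction to a Minkowski orthonormal basis. Set $\bfe_0=\bft$ and let $\WW=\bft^\perp=\{\bfv\in\VV:\ g(\bfv,\bft)=0\}$. Since $g(\bft,\bft)=-1\neq 0$, Remark~\ref{rem: metric}.\ref{remark: orthogonal spacelike vector} gives the decomposition $\VV=\R\bft\oplus\WW$: for every $\bfu$ one has $\bfu+\alpha\bft\in\WW$ with $\alpha=g(\bfu,\bft)$. The sum is direct because $\bft\notin\WW$. Hence $\dim\WW=d$.

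By Lemma~\ref{lem:vspace}, every nonzero $\bfv\in\WW$ is spacelike, so $g|_{\WW}$ is positive definite and $(\WW,g|_\WW)$ is a Euclidean space. Choose an orthonormal basis $\bfe_1,\dots,\bfe_d$ of it. Then $g(\bfe_0,\bfe_0)=-1$, $g(\bfe_0,\bfe_k)=0$ and $g(\bfe_j,\bfe_k)=\delta_{jk}$ for $j,k\ge1$. The linear map $\Phi:\VV\to\Minkowski^{1+d}$ sending $\bfe_0\mapsto(1,0,\dots,0)$ and $\bfe_k$ to the $k$-th standard basis vector is therefore an isometric isomorphism with $\Phi(\bft)=(1,0,\dots,0)$. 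This gives existence.

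For uniqueness, suppose $\Phi_1,\Phi_2$ are two such isomorphisms. Then $A=\Phi_2\circ\Phi_1^{-1}$ is a linear isometry of $\Minkowski^{1+d}$ fixing $(1,0,\dots,0)$. By the Example preceding the lemma, $A$ is an orthogonal transformation of the $x$ coordinates. To avoid relying on that Example without proof, one can check it directly: $A$ preserves $(1,0,\dots,0)^\perp=\{0\}\times\R^d$, on which the metric is Euclidean, so $A$ restricts there to an element of $O(d)$ and acts as the identity on the $t$ axis. Therefore $\Phi_2=A\circ\Phi_1$ with $A$ orthogonal in $x_1,\dots,x_d$.

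None of these steps is a real obstacle. The only point needing care is the positivity of $g$ on $\bft^\perp$, and that is exactly Lemma~\ref{lem:vspace}; its own proof would use Remark~\ref{rem: metric}.\ref{rem:sig} applied to $\Span\{\bft,\bfv\}$, on which $g\le0$ if $\bfv$ were not spacelike.
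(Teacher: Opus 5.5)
Your proposal is correct. The paper states Lemma~\ref{lem:darboux} without proof, but the ingredients you use are exactly the ones it places immediately before the lemma: part (iii) of Remark~\ref{rem: metric} for $\VV=\R\bft\oplus\bft^\perp$, Lemma~\ref{lem:vspace} for positivity of $g$ on $\bft^\perp$, and the Example on isometries of $\Minkowski^{1+d}$ fixing $(1,0,\dots,0)$ for uniqueness. So your Gram--Schmidt argument is evidently the intended one. Your direct check that such an isometry preserves $\{0\}\times\R^d$ makes the uniqueness part self-contained. The converse direction is immediate: composing with $1\oplus O$, $O\in O(d)$, again gives an admissible map.
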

This result is   useful as  it lets one move constructions from $\VV$ to 
$\Minkowski^{1+d}$.
\begin{lemma}
  \label{lemma: time-like cone}
  The set of timelike vectors $\Cone$ is an open cone made of two
  convex connected components with the map $\bfv\mapsto -\bfv$
  interchanging them. If $\bfv, \bfw \in \Cone$, they lie in the same
  component if and only if $g(\bfv, \bfw )< 0$.
\end{lemma}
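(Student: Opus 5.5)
The statement to prove is Lemma~\ref{lemma: time-like cone}: the timelike cone $\Cone$ is open, has two convex components exchanged by $\bfv\mapsto-\bfv$, and two timelike vectors lie in the same component iff $g(\bfv,\bfw)<0$. I would first reduce to Minkowski space. Choose any timelike $\bft$ and normalize it so that $g(\bft,\bft)=-1$. Lemma~\ref{lem:darboux} then gives a linear isometry $\VV\to\Minkowski^{1+d}$ sending $\bft$ to $(1,0,\dots,0)$. Linear isometries preserve the timelike cone, convexity, connected components, the sign of $g$, and commute with $\bfv\mapsto-\bfv$. So it suffices to treat $g=-t^2+|x|^2$.

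In Minkowski space, $\Cone=\{(t,x): |x|<|t|\}$, which is open as the preimage of an open set under a continuous map. It is the disjoint union of $\Cone^+=\{t>|x|\}$ and $\Cone^-=\{t<-|x|\}$, and $t\neq0$ on $\Cone$.
\begin{itemize}
\item $\Cone^+$ is convex: if $t_1>|x_1|$ and $t_2>|x_2|$, then for $\lambda\in[0,1]$ the triangle inequality gives $\lambda t_1+(1-\lambda)t_2>|\lambda x_1+(1-\lambda)x_2|$. Hence $\Cone^+$ is connected.
\item $\Cone^-=-\Cone^+$, so $\Cone^-$ is also convex and connected, and $\bfv\mapsto-\bfv$ swaps the two pieces.
\item Since $t$ is continuous and never zero on $\Cone$, the sets $\Cone^\pm$ are relatively open and closed in $\Cone$. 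They are therefore exactly the two connected components.
\end{itemize}

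For the inner product criterion, take $\bfv=(t,x)$ and $\bfw=(s,y)$ in $\Cone$. Cauchy--Schwarz gives $|x\cdot y|\le|x||y|<|t||s|$. Hence $g(\bfv,\bfw)=-ts+x\cdot y$ has the sign of $-ts$. Thus $g(\bfv,\bfw)<0$ iff $t$ and $s$ have the same sign, i.e. iff $\bfv,\bfw$ lie in the same component.

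There is no real obstacle here. The only point needing care is checking that the Minkowski reduction is legitimate, namely that connected components and convexity transfer under a linear isomorphism, which is immediate. An alternative that avoids Lemma~\ref{lem:darboux} is to use the reverse Cauchy--Schwarz inequality $g(\bfv,\bfw)^2\ge g(\bfv,\bfv)g(\bfw,\bfw)>0$. This shows $g(\bfv,\bfw)\neq0$ on $\Cone\times\Cone$, after which the argument is one of connectedness and sign. The reduction above is simpler, so I would use it.
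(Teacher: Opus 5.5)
Your proof is correct. The paper gives no proof of this lemma; it places it right after Lemma~\ref{lem:darboux} with the remark that this lemma lets one move constructions from $\VV$ to $\Minkowski^{1+d}$. Your argument is exactly that reduction, followed by the elementary Minkowski computation: the triangle inequality for convexity, the clopen sign argument on $t$ for the components, and Cauchy--Schwarz for the sign of $g(\bfv,\bfw)$.
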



\medskip
Subspaces of a Lorentzian vector space fall into
four disjoint 
families.
\begin{enumerate}[align=left,labelwidth=*,labelsep=0cm,itemindent=*,leftmargin=0cm,label=\bf{\roman*}.]
\item\label{subspace1}Those on which $g$ is strictly positive definite.
\item\label{subspace2}Those on which $g$ is positive semidefinite and not
 strictly positive definite.
\item\label{subspace3}Those on which $g(\bfv,\bfv)$ takes both positive and negative
  values.
\item\label{subspace4}Those on which $g$ is strictly negative definite. 
\end{enumerate}
Examples of two dimensional subspaces of $\Minkowski^{1+2}$ are
illustrated in Figure~\ref{fig: Examples of noncharacteristic
  hyperplanes}.
\subsection{Orthogonals to subspaces}

 \begin{definition}
    \label{def: Lorentzian orthogonal}
 If $\WW$ is a subspace of $\VV$, its {\bf orthogonal} with respect to $g$ is 
\begin{align*}
 \WW^\perp :=
\{ \bfv\in \VV; \ 
 \forall \bfw\in \WW \ \ g(\bfv,\bfw) =0\}.
 \end{align*}
  \end{definition}
  
  \noindent
   Nondegeneracy of $g$   implies  that 
 $\dim \WW+\dim \WW^\perp=\dim \VV$.

 \begin{remark}
   \label{rem:lorentzorthogonal}
   \begin{enumerate}
   \item $\WW\cap \WW^\perp$ is a linear subspace.  If
     $\bfu\in \WW\cap \WW^\perp$ then $g(\bfu,\bfu)=0$.  Remark~\ref{rem: metric}-\ref{rem:sig} implies that $\dim (\WW\cap \WW^\perp) \leq 1$.

   \item In cases~\ref{subspace1} and \ref{subspace4} of the
     classification of subspaces, $\WW\cap \WW^\perp=\{0\}$ and
     $\WW\oplus \WW^\perp=\VV$.  In case~\ref{subspace4}, Lemma
     \ref{lem:vspace} shows that the nonzero elements of $\WW^\perp$
     are spacelike.
 \end{enumerate}
 \end{remark}

%
%

\subsection{Hyperplanes}
\begin{example}
  \label{ex: Minkowski2}
  Consider the space  $\Minkowski^{1+d}$.
  A hyperplane $\tau t + \xi_1x_1 + \xi_2 x_2 + \xi_3 x_3=0$
  has equivalent equation $g_{\Minkowski}\big( (-\tau,\xi)\,,\,(t,x)\big) =0$.
     For D'Alembert's operator $\partial_t^2 - \sum_k \partial_k^2$,  this subspace $\HH$ is 
   \begin{equation*}
   \begin{matrix} 
  {\rm characteristic} &\Equiv & 
  \tau^2 \,=\, \sum_k \xi_k^2 
  &\Equiv &g_{\Minkowski}\big( (\tau,\xi), (\tau,\xi)\big)=0  ,
  \cr
   {\rm noncharacteristic}  &\Equiv &
  \tau^2  \,\neq\, \sum_k \xi_k^2 
  &\Equiv &g_{\Minkowski}\big( (\tau,\xi), (\tau,\xi)\big)\neq
  0 ,
  \cr
   {\rm spacelike}  &\Equiv &
  \tau^2 \, >\, \sum_k \xi_k^2
  &\Equiv &g_{\Minkowski}\big( (\tau,\xi), (\tau,\xi)\big)< 0 .
  \end{matrix}
 \end{equation*}
   The remaining option, $\tau^2<\sum_k \xi_k^2 $, that is,
   $g_{\Minkowski}\big( (\tau,\xi), (\tau,\xi)\big)> 0$, 
    yields hyperplanes that are noncharacteristic-nonpacelike.
  \end{example}

  \begin{definition}  
    \label{def: hyperplanes}
    A hyperplane $\HH$ of a Lorentzian vector space has orthogonal
    space $\HH^\perp$ that is one dimensional.  Denote by $\bfv$ a
    basis for $\HH^\perp$.
    $\HH$ is called {\bf characteristic} when
        $g(\bfv,\bfv)=0$. Otherwise $\HH$ is called 
          {\bf noncharacteristic}.     

          A noncharactristic $\HH$ is {\bf spacelike}
          when $g(\bfv,\bfv)<0$, that is, $\bfv$ is timelike.
         The {\bf noncharacteristic-nonspacelike}
            hyperplanes are those with $g(\bfv,\bfv)>0$, that is,
            $\bfv$ is  spacelike.
  \end{definition}
\begin{figure}
  \begin{center}
    \subfigure[ \label{fig: spacelike hyperplane 0}]
              {\resizebox{3.5cm}{!}{
\begin{picture}(0,0)%
\includegraphics{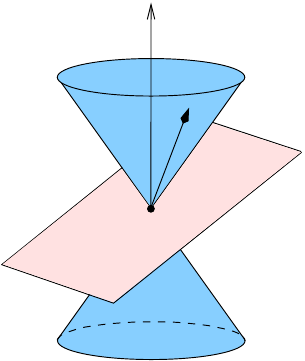}%
\end{picture}%
\setlength{\unitlength}{3947sp}%
\begin{picture}(2426,2877)(1042,-2626)
\put(2326, 89){\makebox(0,0)[lb]{\smash{\fontsize{12}{14.4}\usefont{T1}{ptm}{m}{n}{\color[rgb]{0,0,0}$t$}%
}}}
\put(2463,-681){\makebox(0,0)[rb]{\smash{\fontsize{12}{14.4}\usefont{T1}{ptm}{m}{n}{\color[rgb]{0,0,0}$\bfv$}%
}}}
\put(1226,-1900){\makebox(0,0)[lb]{\smash{\fontsize{12}{14.4}\usefont{T1}{ptm}{m}{n}{\color[rgb]{0,0,0}$\HH$}%
}}}
\end{picture}%

                  }}
    \qquad    \qquad 
     \subfigure[ \label{fig: spacelike hyperplane 1}]
               {\resizebox{2.8cm}{!}{
\begin{picture}(0,0)%
\includegraphics{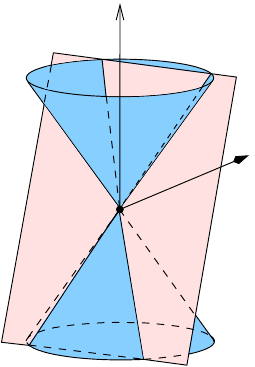}%
\end{picture}%
\setlength{\unitlength}{3947sp}%
\begin{picture}(2024,2922)(1291,-2671)
\put(3300,-992){\makebox(0,0)[lb]{\smash{\fontsize{12}{14.4}\usefont{T1}{ptm}{m}{n}{\color[rgb]{0,0,0}$\bfv$}%
}}}
\put(2326, 89){\makebox(0,0)[lb]{\smash{\fontsize{12}{14.4}\usefont{T1}{ptm}{m}{n}{\color[rgb]{0,0,0}$t$}%
}}}
\put(1463,-1914){\makebox(0,0)[lb]{\smash{\fontsize{12}{14.4}\usefont{T1}{ptm}{m}{n}{\color[rgb]{0,0,0}$\HH$}%
}}}
\end{picture}%
               }}
 \qquad    \qquad 
     \subfigure[ \label{fig: spacelike hyperplane 2}]
               {\resizebox{3.5cm}{!}{
\begin{picture}(0,0)%
\includegraphics{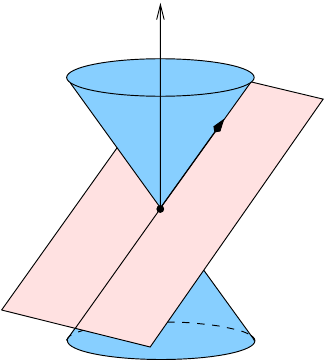}%
\end{picture}%
\setlength{\unitlength}{3947sp}%
\begin{picture}(2597,2875)(968,-2624)
\put(2649,-760){\makebox(0,0)[rb]{\smash{\fontsize{12}{14.4}\usefont{T1}{ptm}{m}{n}{\color[rgb]{0,0,0}$\bfv$}%
}}}
\put(2326, 89){\makebox(0,0)[lb]{\smash{\fontsize{12}{14.4}\usefont{T1}{ptm}{m}{n}{\color[rgb]{0,0,0}$t$}%
}}}
\put(1108,-2224){\makebox(0,0)[lb]{\smash{\fontsize{12}{14.4}\usefont{T1}{ptm}{m}{n}{\color[rgb]{0,0,0}$\HH$}%
}}}
\end{picture}%
               }}
               \caption{Hyperplanes in $\Minkowski^{1+2}$: (a)~spacelike hyperplane,
                 (b)~noncharacteristic-nonspacelike hyperplane,
                 (c)~characteristic hyperplane.}
  \label{fig: Examples of noncharacteristic hyperplanes}
  \end{center}
\end{figure}  
  \begin{proposition}
    \label{prop:hyper}
    Suppose that $\HH$ is  a hyperplane of $\VV$.
    \begin{enumerate}[align=left,labelwidth=0.2cm,labelsep=0.2cm,itemindent=0.4cm,
        leftmargin=0cm,label=\bf{\roman*.}]
    \item The following four statements are equivalent.     
      \begin{enumerate}[align=left,labelwidth=0.2cm,labelsep=0.2cm,itemindent=0.8cm,leftmargin=0cm,
          label=\bf{\alph*.}]

        \item \label{prop:hyperplane1} $\HH$ is charateristic
        \item \label{prop:hyperplane1a}  $\HH \cap \HH^\perp =\Span\{\bfn\}$ with $\bfn$ a
          nonzero null vector.
      \item \label{prop:hyperplane1b}  there is a
        unique null line contained in $\HH$; it is $\HH^\perp$.
      \item \label{prop:hyperplane1c}  $g_{|\HH}$ positive semidefinite and not
        positive definite.\\[-8pt]    
      \end{enumerate}
    \item \label{prop:hyperplane2} $\HH$  is noncharacteristic and spacelike $\Equiv$
      all its nonzero vectors are spacelike, that is, $g_{|\HH}$ is 
      positive definite. \\[-8pt]
    \item The following three statements are equivalent.

      \begin{enumerate}[align=left,labelwidth=0.2cm,labelsep=0.2cm,
          itemindent=0.8cm,leftmargin=0cm,label=\bf{\alph*.}]
      \item  $\HH$ is noncharacteristic-nonspacelike
      
      \item \label{prop:hyperplane3a} $\HH$
        contains a timelike vector.
      \item \label{prop:hyperplane3b}  $g_{|\HH}(\bfv, \bfv)$
        takes both positive and negative values
        if $\dim(\VV) \geq 3$.\\[-8pt]
      \end{enumerate} 
    \item \label{prop:hyperplane4} $\HH$ is characteristic or $\HH$ is
      spacelike $\Equiv$ every timelike vector is transverse to $\HH$. \\[-8pt]
    \item \label{prop:hyperplane5} If $\dim(\VV) \geq 3$, $\HH$ is
      noncharacteristic-nonspacelike or $\HH$ is characteristic
      $\Equiv$ $\HH$ contains a null line.
    \end{enumerate}
  \end{proposition}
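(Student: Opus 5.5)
The plan is to reduce everything to a normal form. By Lemma~\ref{lem:darboux}, after rescaling a timelike vector (if needed), we may identify $(\VV,g)$ with $\Minkowski^{1+d}$. Every statement in Proposition~\ref{prop:hyper} is invariant under isomorphisms of Lorentzian vector spaces, so it suffices to work in $\Minkowski^{1+d}$. Write $\HH=\bfv^\perp$ with $\bfv\neq 0$ spanning $\HH^\perp$. The remaining freedom (orthogonal maps in $x$, together with boosts, which are Lorentz isometries) lets us take $\bfv$ in one of three normal forms: $\bfv=(1,0,\dots,0)$ if $\bfv$ is timelike, $\bfv=(0,1,0,\dots,0)$ if $\bfv$ is spacelike, and $\bfv=(1,1,0,\dots,0)$ if $\bfv$ is null. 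The corresponding hyperplanes are $\{t=0\}$, $\{x_1=0\}$ and $\{t=x_1\}$. Since these three cases are exhaustive and mutually exclusive, it is enough to compute which of the listed properties holds in each case. Each equivalence then follows by matching cases.

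In case $\{t=0\}$, $g_{|\HH}=\sum x_k^2$ is positive definite, which gives \textbf{ii} and the spacelike half of \textbf{iv}. In case $\{x_1=0\}$, $g_{|\HH}=-t^2+\sum_{k\ge2}x_k^2$. This form contains the timelike vector $(1,0,\dots,0)$, and it takes both signs when $d\ge2$, i.e. $\dim\VV\ge3$. It also contains null lines such as $(1,0,1,0,\dots)$ when $d\ge2$. This gives \textbf{iii} and one direction of \textbf{v}. In case $\{t=x_1\}$, write $\bfw=(s,s,x_2,\dots,x_d)$. Then $g(\bfw,\bfw)=\sum_{k\ge2}x_k^2\ge0$, with equality exactly on $\Span\{(1,1,0,\dots)\}=\HH^\perp\subset\HH$. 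This proves \textbf{a}$\Rightarrow$\textbf{b}, \textbf{c}, \textbf{d} in \textbf{i}. Since properties \textbf{b}, \textbf{c}, \textbf{d} fail in the other two normal forms (there $g_{|\HH}$ is either definite or indefinite, and $\HH\cap\HH^\perp=0$ by Remark~\ref{rem:lorentzorthogonal}), the converses follow by exclusion.

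For \textbf{iv}, a hyperplane is transverse to every timelike vector iff it contains no timelike vector. By \textbf{iii}, that holds iff $\HH$ is not noncharacteristic-nonspacelike, i.e. iff it is characteristic or spacelike. For \textbf{v}, the computation above exhibits a null line in the characteristic and in the noncharacteristic-nonspacelike cases (the latter needs $d\ge2$). Conversely, a positive definite $g_{|\HH}$ has no null line. Remark~\ref{rem: metric}\ref{rem:null} could replace the explicit null vector: combine a timelike vector and a spacelike vector of $\HH$.

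The only delicate point is the role of the dimension hypothesis in \textbf{iii.b} and \textbf{v}. When $d=1$, $\{x_1=0\}$ is the timelike line itself, so $g_{|\HH}$ is negative definite and contains no null line. I will state these items only for $\dim\VV\ge3$ and check the case analysis there. Otherwise the proof is mechanical once the normal forms are justified. Justifying them is the main step: it requires the transitivity of the Lorentz group on unit timelike vectors, on unit spacelike vectors, and on future null rays. I would derive this from Lemma~\ref{lem:darboux} together with an explicit boost in the $(t,x_1)$ plane.
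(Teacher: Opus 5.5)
The paper gives no proof of Proposition~\ref{prop:hyper}. It appears in Appendix~\ref{app:Lorentzian_geometry} as a recalled fact of linear Lorentzian algebra, so there is no written argument to compare against.

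Your normal-form proof is correct. The three cases for $\bfv$ (timelike, spacelike, null) are exhaustive and mutually exclusive. Your computations for $\{t=0\}$, $\{x_1=0\}$ and $\{t=x_1\}$ are right, including $d=1$, where $\{t=x_1\}$ coincides with $\HH^\perp$ and $\{x_1=0\}$ is a negative definite line. You also place the hypothesis $\dim\VV\ge 3$ exactly where it is needed. The approach fits the paper's comment that Lemma~\ref{lem:darboux} is there to move constructions to $\Minkowski^{1+d}$.

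Two small corrections.

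First, the normal forms need less than transitivity of the Lorentz group, and no boost.
\begin{itemize}
\item If $\bfv$ is timelike, the normal form is Lemma~\ref{lem:darboux} itself.
\item If $\bfv$ is spacelike, $\Span\{\bfv\}$ is positive definite. Remark~\ref{rem:lorentzorthogonal} then gives $\VV=\Span\{\bfv\}\oplus\bfv^\perp$, so $g$ restricted to $\bfv^\perp$ has signature $(-1,1,\dots,1)$. Apply Lemma~\ref{lem:darboux} to a unit timelike $\bft\in\bfv^\perp$; this sends $\bfv$ to some $(0,\bfv_x)$, and a rotation in $x$ finishes.
\item If $\bfv$ is null, a rotation together with a sign change or rescaling suffices.
\end{itemize}

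Second, Remark~\ref{rem:lorentzorthogonal} does not apply to $\HH=\{x_1=0\}$ when $d\ge 2$, because $g_{|\HH}$ is indefinite there. Apply it to $\WW=\HH^\perp$ instead, which is definite whenever $\bfv$ is not null. Simpler still: $\HH\cap\HH^\perp\neq 0 \iff \bfv\in\bfv^\perp \iff g(\bfv,\bfv)=0$. This gives \textbf{a}$\iff$\textbf{b} of \textbf{i} in one line.

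The tools placed just before the proposition also give a coordinate-free proof.
\begin{itemize}
\item If $\bfv$ is timelike, Lemma~\ref{lem:vspace} makes $g_{|\HH}$ positive definite.
\item If $\bfv$ is spacelike, the orthogonal splitting above gives $g_{|\HH}$ signature $(-1,1,\dots,1)$. Hence $\HH$ contains a timelike vector. When $\dim\VV\ge 3$, $g_{|\HH}$ also takes both signs, and Remark~\ref{rem: metric}(ii) supplies a null line.
\item If $\bfv$ is null, then $\bfv\in\HH$. A timelike $\bft\in\HH=\bfv^\perp$ would contradict Lemma~\ref{lem:vspace}, so $g_{|\HH}\ge 0$. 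A second null direction $\bfw\in\HH$ would make $g\equiv 0$ on $\Span\{\bfv,\bfw\}$, contradicting Remark~\ref{rem: metric}(i).
\end{itemize}
Matching cases then proves every item, exactly as in your argument.

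That route needs no normal forms and no coordinates; it rests on three small lemmas about orthogonality. Yours replaces them with a mechanical computation in $\Minkowski^{1+d}$, at the cost of first justifying the normal forms.
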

The three possible configurations for a hyperplane, that is,
spacelike, characteristic, and nonchararacteristic-nonspacelike, are
illustrated in Figure~\ref{fig: Examples of noncharacteristic
  hyperplanes} for  $\Minkowski^{1+2}$.

\begin{lemma}
  \label{lemma: time-like cone-bis}
  The set of timelike vectors $\Cone$ is an open cone made of two
  convex connected components $\Cone_1$ and $\Cone_2$ with the map
  $\bfv\mapsto -\bfv$ interchanging $\Cone_1$ and $\Cone_2$. If
  $\bfv, \bfw \in \Cone$, they lie in the same component if and only if $g(\bfv, \bfw )< 0$.
\end{lemma}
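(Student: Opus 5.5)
This is a pure linear-algebra fact about a Lorentzian vector space $(\VV,g)$, and I plan to reduce it to Minkowski space. Pick any timelike $\bft$ and normalize it so that $g(\bft,\bft)=-1$. By Lemma~\ref{lem:darboux} there is an isometry onto $\Minkowski^{1+d}$ sending $\bft$ to $(1,0,\dots,0)$. The statement is invariant under linear isometries, so it suffices to prove it for $g_\Minkowski(v,v)=-v_0^2+|\vec v|^2$.

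In these coordinates, $\Cone=\{|\vec v|<|v_0|\}$. This is open, and it is a cone because $g(\lambda v,\lambda v)=\lambda^2 g(v,v)$. It splits as $\Cone_1\cup\Cone_2$, where
\begin{align*}
  \Cone_1=\{v_0>|\vec v|\}, \qquad \Cone_2=\{v_0<-|\vec v|\}.
\end{align*}
The two pieces are disjoint, and each is nonempty. The map $v\mapsto -v$ swaps them.

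Convexity of $\Cone_1$ follows from the triangle inequality. If $v,w\in\Cone_1$ and $s\in[0,1]$, then
\begin{align*}
  |(1-s)\vec v+s\vec w|\le(1-s)|\vec v|+s|\vec w|<(1-s)v_0+sw_0 .
\end{align*}
Being convex and nonempty, $\Cone_1$ is connected, and so is $\Cone_2=-\Cone_1$. Since $\Cone$ is the disjoint union of the two open sets $\Cone_1$ and $\Cone_2$, these are exactly its connected components.

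For the inner-product criterion, take $v,w\in\Cone$. By Cauchy--Schwarz, $|\vec v\cdot\vec w|\le|\vec v||\vec w|<|v_0||w_0|$. Hence $g(v,w)=-v_0w_0+\vec v\cdot\vec w$ has the sign of $-v_0w_0$, and the two terms cannot cancel. So $g(v,w)<0$ exactly when $v_0$ and $w_0$ have the same sign, that is, exactly when $v$ and $w$ lie in the same component.

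No step here is a real obstacle. The only point needing care is the strict Cauchy--Schwarz estimate, which shows $g(v,w)$ never vanishes for two timelike vectors and hence gives a clean sign dichotomy. Everything else is immediate once Lemma~\ref{lem:darboux} has moved the problem to Minkowski space.
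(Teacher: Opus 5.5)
Your proof is correct and complete. The reduction to $\Minkowski^{1+d}$ through Lemma~\ref{lem:darboux} is legitimate, the triangle inequality gives convexity of $\{v_0>|\vec v|\}$, and the strict bound $|\vec v\cdot\vec w|<|v_0||w_0|$ gives the sign dichotomy for $g(v,w)$.

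The paper states this lemma without proof, twice, as Lemma~\ref{lemma: time-like cone} and Lemma~\ref{lemma: time-like cone-bis}. Its remark after Lemma~\ref{lem:darboux}, about moving constructions to Minkowski space, points to exactly your route. Lemma~\ref{lem:darboux} is also unproved there; it follows from Gram--Schmidt on $\bft^\perp$ together with Lemma~\ref{lem:vspace}, and neither uses the cone lemma, so your argument is not circular.
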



 

 \subsection{Lorentzian manifolds and space-time}
 \label{sec:Strict hyperbolicity and Lorentzian manifold}

If $\caL$ is a $(1+d)$-dimensional manifold equipped with a smooth
metric $g_y$ one says that $(\caL,g)$ is Lorentzian if for all $y \in
\caL$, $(T_y\caL, g_y)$ is a Lorentzian vector field.
A vector field $\bfv$ is called
\begin{enumerate}[align=left,labelwidth=*,labelsep=0cm,itemindent=*,leftmargin=0cm,label=\bf{\roman*}.]
\item timelike if $\bfv_y$ is timelike for all $y\in \caL$. 
\item null if $\bfv_y$ is null for all $y\in \caL$. 
\item spacelike if $\bfv_y$ is spacelike for all $y\in \caL$. 
\end{enumerate}
The Lorentz manifold $(\caL,g)$ is said to be time-oriented if $\caL$
admits a continuous timelike vector field. Denote by $\bft$ such a
vector field. Once oriented a Lorentz manifold is called a
space-time. The setting of Section~\ref{sec:Hyperbolicity and Lorentz
  structure} yields a space-time with $\bft= \omega^\sharp$, with
$\sharp$ the musical isomorphism inverse of $\flat$, that is, in local
coordinates $(\omega^\sharp)^i = g_y^{ij}\, \omega_j$. In
section~\ref{sec:First-arrival_future_past}, $\bft = \d_t$ gives the
time orientation.

$T_y \caL\setminus 0$ is the disjoint union of {\it five connected cones} as
stated in Definition~\ref{def:fivecones}.
\begin{definition}
  \label{def: timelike causal curves}
  Let $I\subset \R$ be an interval and $I \ni s \mapsto \gamma(s)
  \in\caL$ be a Lipschitz curve.
  \begin{enumerate}[align=left,labelwidth=*,labelsep=*,itemindent=*,
  leftmargin=0cm,label=\bf{\roman*.}]
  \item
    $\gamma$ is  a {\bf timelike curve} if 
    $\gamma '(s)$ is timelike at $\gamma(s)$ \pp $x \in I$. 
  \item
    $\gamma$ is an {\bf causal curve} if  $\gamma '(s)$
    is timelike or null at $\gamma(s)$ \pp $x \in I$. 
  \item
      $\gamma$ is a {\bf null} curve if 
        $\gamma '(s)$ is null at $\gamma(s)$  \pp $x \in I$.  
  \end{enumerate}
  A timelike, causal, or null curve  is said to be future (\resp past)
   if $\gamma '(s)$ is moreover future (\resp past) \pp $x \in I$.  
\end{definition}
A timelike curve is a causal curve, yet causal curves may not
be timelike. In particular, a null curve is a causal curve yet
nowhere timelike.

\bigskip
Consider a hypersurface  $\Sigma$ locally given by $f(y) =0$ with $df \neq 0$. If $y \in \Sigma$ then $T_y \Sigma$ is an
hyperplane of $T_y \caL$.
Following Definition~\ref{def: hyperplanes}
one says that $\Sigma$ is
\begin{enumerate}[align=left,labelwidth=0.8cm,labelsep=0cm,itemindent=0.8cm,
  leftmargin=0cm,label=\bf{\roman*.}]
\item  characteristic at $y$ if $T_y \Sigma$ is characteristic
  in $(T_y \caL,g_y)$;
\item spacelike at $y$ if $T_y \Sigma$ is spacelike in
  $(T_y \caL,g_y)$;
\item noncharacteristic-nonspacelike at $y$ if $T_y \Sigma$ is
  noncharacteristic-nonspacelike in $(T_y \caL,g_y)$.
\end{enumerate}
Set $\nablaL f (y) = \big(d
f(y)\big)^\sharp$. One calls $\nablaL f$ the Lorentzian gradient  vector
field
of $f$. In local coordinates 
\begin{align*}
  \nablaL f (y) = \sum_{0\leq i \leq d}  (\nablaL f)^i(y) \, \d_{y_i}
  \ \ \avec \ \ 
  (\nablaL f)^i(y)= \sum_{0\leq j \leq d} g^{i j}(y)\d_{y_j} f(y).
\end{align*}
One has $\nablaL f (y)\neq 0$ and 
$g_y ( \nablaL f (y) , v) = \dup{df(y)}{v} =0$  if $v \in T_y
\Sigma$. One deduces the following result. 
\begin{lemma}
  \label{lemma: nabla L f}
  One has $\big(T_y \Sigma)^\perp  = \Span\big\{
\nablaL f (y)\big\}$
 and 
\begin{enumerate}[align=left,labelwidth=0.8cm,labelsep=0cm,itemindent=0.8cm,
  leftmargin=0cm,label=\bf{\roman*.}]
\item $\Sigma$ characteristic at $y$  $\Equiv$  $\nablaL f (y)$ null 
  $\Equiv$  $g_y^* \big( d f
  (y) , d f (y) \big) = 0$; $d f (y)$
  characteristic.
\item $\Sigma$  spacelike at $y$ $\Equiv$  $\nablaL f (y)$ timelike.
\item $\Sigma$ noncharacteristic-nonspacelike at $y$ $\Equiv$ $\nablaL f (y) $ spacelike.
\end{enumerate}
\end{lemma}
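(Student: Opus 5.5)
The statement is Lemma~\ref{lemma: nabla L f}: for a hypersurface $\Sigma$ locally given by $f(y)=0$ with $df\neq 0$, one has $(T_y\Sigma)^\perp=\Span\{\nablaL f(y)\}$, together with the three characterizations. The plan is to reduce everything to linear algebra in the Lorentzian vector space $(T_y\caL,g_y)$ and then apply Definition~\ref{def: hyperplanes}.

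\textbf{Step 1: the orthogonal line.} First I check $\nablaL f(y)\neq 0$. The $\sharp$ isomorphism is injective and $df(y)\neq 0$, so this holds.

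Next, for every $v\in T_y\Sigma$ one has $\dup{df(y)}{v}=0$. Indeed, $f$ vanishes on $\Sigma$, so its differential kills the tangent vectors. By the definition of $\sharp$ this gives
$$g_y(\nablaL f(y),v)=\dup{df(y)}{v}=0,$$
so $\nablaL f(y)\in (T_y\Sigma)^\perp$.

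Finally, nondegeneracy of $g_y$ gives $\dim T_y\Sigma+\dim (T_y\Sigma)^\perp=1+d$. Since $T_y\Sigma$ is a hyperplane, the orthogonal is one dimensional. It is therefore spanned by the nonzero vector $\nablaL f(y)$.

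\textbf{Step 2: the three characterizations.} By Definition~\ref{def: hyperplanes}, and the definitions of characteristic, spacelike and noncharacteristic-nonspacelike at $y$ for $\Sigma$ via $T_y\Sigma$, the type is read off from the sign of $g_y(\bfv,\bfv)$ for any basis $\bfv$ of $(T_y\Sigma)^\perp$. Taking $\bfv=\nablaL f(y)$ gives the equivalences with $\nablaL f(y)$ being null, timelike, or spacelike respectively.

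For the covector formulation in {\bf i}, I use the relation $g_y(\eta^\sharp,\eta^\sharp)=g_y^*(\eta,\eta)$. This follows from \eqref{eq: dual metric}, since $\sharp$ is the inverse of $\flat$. With $\eta=df(y)$ this gives
$$g_y(\nablaL f,\nablaL f)=g_y^*(df,df)=p(y,df(y)),$$
so nullity of $\nablaL f(y)$ means $df(y)$ is characteristic.

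\textbf{Main obstacle.} No step is a real obstacle. The only point needing care is the identity $g(\eta^\sharp,\eta^\sharp)=g^*(\eta,\eta)$. In coordinates it is immediate: $g_{ij}g^{ik}\eta_k g^{jl}\eta_l=g^{kl}\eta_k\eta_l$, using $g^*=g^{-1}$.
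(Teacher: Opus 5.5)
Your proposal is correct and follows the paper's argument: $\nablaL f(y)\neq 0$ is $g_y$-orthogonal to $T_y\Sigma$ since $g_y(\nablaL f(y),v)=\dup{df(y)}{v}=0$, so by the dimension count it spans the one-dimensional orthogonal, and the three cases follow from Definition~\ref{def: hyperplanes}. Your explicit check that $g_y(\eta^\sharp,\eta^\sharp)=g^*_y(\eta,\eta)$ supplies the one identity the paper leaves implicit for the covector formulation in {\bf i}.
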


\subsection{Case $\bld{\caL = \R^{1+d}}$}
\label{sec:case L=R1+d}
Here, we give useful properties of the space-time $(\caL,g)$ for $\caL=
\R^{1+d}$ under Hypothesis~\ref{hyp: metric}.
Points in $\caL$ are denoted $y = (t, x)$ with $t \in \R$ and $x \in
\R^d$ and $\bft= \d_t$ provides the time orientation. One also writes $y = (y_0, y_1, \dots, y_d)$ with $y_0
= t$ and $y_i = x_i$, $i=1, \dots, d$. 
\begin{lemma}
  \label{lemma: property g R1+d}
  Assume Hypothesis~\ref{hyp: metric} holds. Suppose $y \in \caL=
  \R^{1+d}$ and $\bfv \in T_y \caL$. Write $\bfv=(v^0, \bfv_x) \in
  \R\times \R^d$, with $\bfv_x=(v^1, \dots, v^d)$.  The following
  properties hold.
  \begin{enumerate}[align=left,labelwidth=0.8cm,labelsep=0cm,itemindent=0.8cm,
      leftmargin=0cm,label=\bf{\roman*.}]
  \item If $v^0=0$, then $g_y(\bfv, \bfv) \geq C \Norm{\bfv_x}{}^2$, for $C$ uniform in $y$. 
  \item Suppose $\bfv$ is timelike or null. Then, $g_y(\bfv, \d_t)$
    has the opposite sign of $v^0$. In other words $\bfv$ is future if
    $v^0>0$ and $\bfv$ is past if $v^0<0$.
  \end{enumerate}
\end{lemma}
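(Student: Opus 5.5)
The plan is to reduce both assertions to the pointwise structure of $g_y$ given by Hypothesis~\ref{hyp: metric}, at a fixed $y$. I will write $g_y(\bfv,\bfv)$ in block form
\begin{align*}
g_y(\bfv,\bfv) = g_{00}(y)(v^0)^2 + 2 v^0 \sum_{i\ge1} g_{0i}(y) v^i + \sum_{i,j\ge 1} g_{ij}(y) v^iv^j .
\end{align*}

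For {\bf i}, when $v^0=0$ only the spatial block $G(y)=(g_{ij}(y))_{i,j\ge1}$ remains. This block is the restriction of $g_y$ to the hyperplane $\{v^0=0\}$. That hyperplane is the kernel of $dt$, and it is $g_y$-orthogonal to $(dt)^\sharp$. Since $g_y\big((dt)^\sharp,(dt)^\sharp\big)=g^{00}(y)\le -C_1<0$, the vector $(dt)^\sharp$ is timelike. Lemma~\ref{lem:vspace} then shows that $G(y)$ is positive definite.

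To make the lower bound uniform, I will use the Schur complement (block inverse) formula. It gives $\det g_y = \det G(y)\,/\,g^{00}(y)$, equivalently $g^{00}=\det G/\det g$. Hence
\begin{align*}
\det G(y) = g^{00}(y)\det g_y,\qquad |\det G(y)| \ge C_1 \, |\det g_y| \ge C_1 C_0 .
\end{align*}
On the other hand, the entries of $g_y$ are bounded. This follows because $g_y=(g^*_y)^{-1}$ is a cofactor matrix of the bounded $g^{ij}$ divided by a determinant whose modulus is at least the bound from Hypothesis~\ref{hyp: metric}-{\bf ii}; if that bound is really on $\det g^*$, the same argument applies via the reciprocal determinant. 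So the largest eigenvalue of $G(y)$ is bounded by some $M$. The smallest eigenvalue then satisfies
\begin{align*}
\lambda_{\min} \ge \det G(y)/M^{d-1} \ge C>0 ,
\end{align*}
uniformly in $y$, which is {\bf i}. I expect the main obstacle to be this bookkeeping: making sure that the determinant and entry bounds transfer correctly between $g$ and $g^*$.

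For {\bf ii}, I will use Lemma~\ref{lemma: time-like cone}. The vector $\d_t$ is timelike since $g_{00}\le -C_1$. For timelike $\bfv$, define the map $s\mapsto \bfv_s=(v^0,s\bfv_x)$ on $s\in[0,1]$. By part {\bf i} and the block form, $g_y(\bfv_s,\bfv_s)$ is nonincreasing in moving toward... Rather than rely on that, I will argue directly. If $v^0=0$ then $\bfv$ is spacelike by {\bf i} (or zero), which is excluded. So $v^0\neq0$, and the set of timelike vectors with $v^0>0$ is a connected open convex cone: it is the intersection of the convex component $\Cone_1$ containing $\d_t$ with a half space. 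More precisely, each component of $\Cone$ misses $\{v^0=0\}$ and is connected, so $v^0$ has constant sign on it, positive on the component containing $\d_t$. Lemma~\ref{lemma: time-like cone} then gives $g(\bfv,\d_t)<0$ exactly when $v^0>0$.

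Null vectors are handled by continuity. A null $\bfv\neq0$ is a limit of timelike vectors with the same sign of $v^0$, and $v^0\neq0$ by {\bf i}. This gives $g(\bfv,\d_t)\le0$ for $v^0>0$. Equality is excluded by Remark~\ref{rem: metric}-\ref{remark: orthogonal spacelike vector} and Lemma~\ref{lem:vspace}: a nonzero vector orthogonal to the timelike $\d_t$ is spacelike, not null.
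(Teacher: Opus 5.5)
\paragraph{The gap: uniformity in {\bf i}.} The problem sits exactly in the bookkeeping you flagged. To get $\lambda_{\max}(G(y))\le M$ you need the $g_{ij}$ to be bounded. Via cofactors of $g^*_y$, that requires a \emph{lower} bound on $|\det g^*_y|$, i.e.\ an \emph{upper} bound on $|\det g_y|$. Hypothesis~\ref{hyp: metric}-{\bf ii} gives the opposite inequality, $|\det g_y|\ge C_0$, which in fact already follows from {\bf i}. Passing to the reciprocal determinant cannot turn a lower bound into an upper bound, so your hedge runs the wrong way.

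Under the hypotheses as written, the $g_{ij}$ really can be unbounded. For $d\ge 2$, $g^*_y=\mathrm{diag}\big(-1,(1+|x|^2)^{-1},1,\dots,1\big)$ satisfies {\bf i}--{\bf iii} with $C_0=C_1=1$, yet $g_{11}=1+|x|^2$. So for $d\ge 2$ the estimate $\lambda_{\min}\ge \det G/M^{d-1}$ is not justified. For $d=1$ your argument is fine, since then $\lambda_{\min}=\det G\ge C_0C_1$. If {\bf ii} were meant as $|\det g^*_y|\ge C_0$, your route would work. In that case the bound $|\det g_y|\ge 1/\sup|\det g^*_y|$ needed for $\det G$ is supplied by {\bf i} instead.

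\paragraph{A fix.} This avoids determinants and works under either reading. Block inversion of $g_y=(g^*_y)^{-1}$, with $D=(g^{ij})_{i,j\ge 1}$ and $c=(g^{i0})_{i\ge 1}$, gives $G(y)^{-1}=D-c\,c^{T}/g^{00}$. Its entries are uniformly bounded by {\bf i} together with $|g^{00}|\ge C_1$. Your Lemma~\ref{lem:vspace} argument shows $G(y)$ is positive definite, so $\lambda_{\min}(G(y))=1/\lambda_{\max}(G(y)^{-1})$ is bounded below uniformly in $y$. This proves {\bf i} using only hypotheses {\bf i} and {\bf iii}.

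\paragraph{The rest.} The qualitative positivity in {\bf i} and your proof of {\bf ii} are correct. Timelike vectors with $v^0=0$ are excluded, so $v^0$ has constant sign on each convex component of the timelike cone, and Lemma~\ref{lemma: time-like cone} concludes. The null case follows by a limit plus Lemma~\ref{lem:vspace}. Two small clean-ups: delete the abandoned sentence about $\bfv_s$, and justify in one line (e.g.\ via Lemma~\ref{lem:darboux}) that a nonzero null vector is a limit of timelike vectors with the same sign of $v^0$.
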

%
By Lemma~\ref{lemma: property g R1+d}, if $\gamma(s)$ is a timelike
(\resp causal, \resp null) curve with $\gamma(s) = (t(s), x(s))$,
it is future if and only if $t'(s)>0$ \pp $x \in I$, and past if and
only if $t'(s)<0$ \pp $x \in I$.

Similarly to the second part of Lemma~\ref{lemma: property g R1+d} one has the following property.
\begin{lemma}
  \label{lemma: property g R1+d bis}
  Assume Hypothesis~\ref{hyp: metric} holds. Suppose $y \in \caL=
  \R^{1+d}$ and $\omega \in T_y^* \caL$. Write $\omega=(\omega^0,
  \omega_x) \in \R\times \R^d$.  Suppose $g^*_y (\omega, \omega) \leq
  0$. Then, $g^*_y(\omega, dt)$ has the opposite sign of 
  $\omega^0>0$.
\end{lemma}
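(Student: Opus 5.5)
The plan is to work entirely in the cotangent space $T^*_y\caL$ with the dual form $g^*_y$. By Proposition~\ref{prop:strict hyperbolic} (or simply by duality with $g_y$), $(T^*_y\caL,g^*_y)$ is a Lorentzian vector space, so Lemma~\ref{lemma: time-like cone} applies there. Hypothesis~\ref{hyp: metric} gives $g^*_y(dt,dt)=g^{00}(y)\le -C_1<0$, so $dt$ is a timelike covector. Write $dt=(1,0,\dots,0)$, so that $dt^0=1>0$. We may assume $\omega\neq 0$, since otherwise there is nothing to prove.

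\textbf{Step 1: causal covectors have $\omega^0\neq 0$.} Suppose $\omega\neq 0$ and $\omega^0=0$. Then $\dup{\omega}{\d_t}=\omega^0=0$, which means $g_y(\omega^\sharp,\d_t)=0$. Since $\d_t$ is timelike, Lemma~\ref{lem:vspace} shows that $\omega^\sharp$ is spacelike. Hence $g^*_y(\omega,\omega)=g_y(\omega^\sharp,\omega^\sharp)>0$. So every nonzero $\omega$ with $g^*_y(\omega,\omega)\le 0$ satisfies $\omega^0\neq 0$.

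\textbf{Step 2: timelike case.} Let $\Gamma_1$ be the component of the timelike covectors that contains $dt$. By Lemma~\ref{lemma: time-like cone}, $\Gamma_1$ is convex, hence connected. The function $\omega\mapsto\omega^0$ is continuous on $\Gamma_1$, never vanishes there by Step~1, and equals $1$ at $dt$. Therefore $\omega^0>0$ on $\Gamma_1$. The other component is $-\Gamma_1$, where $\omega^0<0$. Lemma~\ref{lemma: time-like cone} then gives $g^*_y(\omega,dt)<0$ if and only if $\omega\in\Gamma_1$, that is, if and only if $\omega^0>0$. So the sign of $g^*_y(\omega,dt)$ is opposite to that of $\omega^0$.

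\textbf{Step 3: null case, by perturbation.} This step needs the most care. Let $\omega\neq0$ be null. First, $g^*_y(\omega,dt)\neq 0$: otherwise $\omega$ would be $g^*$-orthogonal to the timelike covector $dt$, and Lemma~\ref{lem:vspace} applied in $T^*_y\caL$ would make $\omega$ spacelike. Set $s=-\epsilon\,\sgn g^*_y(\omega,dt)$ with $\epsilon>0$ small, and put $\omega_\epsilon=\omega+s\,dt$. Then
\[
g^*_y(\omega_\epsilon,\omega_\epsilon)=2s\,g^*_y(\omega,dt)+s^2g^{00}<0,
\]
so $\omega_\epsilon$ is timelike.

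For $\epsilon$ small, two further facts hold:
\begin{itemize}
\item $\omega_\epsilon^0=\omega^0+s$ has the sign of $\omega^0$, since $\omega^0\neq0$ by Step~1;
\item $g^*_y(\omega_\epsilon,dt)=g^*_y(\omega,dt)+s\,g^{00}$ has the sign of $g^*_y(\omega,dt)$.
\end{itemize}
Applying Step~2 to $\omega_\epsilon$ shows that $g^*_y(\omega_\epsilon,dt)$ has the sign opposite to $\omega_\epsilon^0$. Transferring both signs back to $\omega$ concludes the proof.
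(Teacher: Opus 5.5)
The paper gives no proof of this lemma. It is stated in the appendix with only the remark that it is analogous to part~\textbf{ii} of Lemma~\ref{lemma: property g R1+d}, whose proof is also omitted.

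Your argument is correct. Step~1 is the key input: it is the covector counterpart of part~\textbf{i} of that lemma, namely positivity of $g^*_y$ on $\{\omega^0=0\}$. Note that Step~1 uses the half $g_{00}\le -C_1$ ($\d_t$ timelike) of Hypothesis~\ref{hyp: metric}, while $g^{00}\le -C_1$ ($dt$ timelike) enters in Steps~2--3. Both halves are genuinely needed.

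One small wording point in Step~2. Lemma~\ref{lemma: time-like cone} by itself only gives $g^*_y(\omega,dt)\ge 0$ for $\omega\in-\Gamma_1$. Strict positivity comes from applying the lemma to $-\omega\in\Gamma_1$.

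There is also a shorter algebraic route that replaces both the connectedness argument and the null-case perturbation. Write $\omega=\omega^0\,dt+\omega_x$, with $\omega_x$ having no $dt$-component. Expanding gives the identity
\[
2\,\omega^0\, g^*_y(\omega,dt)=g^*_y(\omega,\omega)+(\omega^0)^2\, g^{00}(y)-g^*_y(\omega_x,\omega_x).
\]
By Step~1 and $g^{00}\le -C_1$, the right-hand side is strictly negative whenever $\omega\neq 0$ and $g^*_y(\omega,\omega)\le 0$. This treats timelike and null covectors at once and even yields a quantitative bound. Your version is longer, but it relies only on the abstract cone facts of Appendix~\ref{app:Lorentzian_geometry}, transplanted to the Lorentzian space $(T^*_y\caL,g^*_y)$, which is also perfectly valid.
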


\section{Hypersurface regularity and unique continuation}
\label{sec:hypersurface Robbiano-Tataru}

In the references
\cite{Robbiano:91,Tataru:95,Hoermander:97,RZ:98,Tataru:99},
noncharacteristic-nonspacelike hypersurfaces across which unique
continuation is proven are $\Con^2$ or $\Cinf$.  The next proposition
states that a $\Con^1$-hypersurface suffices for unique continuation to hold. 
\begin{proposition}
  \label{prop: UCP from Cinf to C1}
  Suppose that the unique continuation property of Definition~\ref{def:unique_continuation} holds for 
  $\Cinf$ noncharacteristic-nonspacelike hypersurfaces. Then, it holds for
  such a $\Con^1$  hypersurface.
\end{proposition}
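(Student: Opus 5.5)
The plan is to approximate the $\Con^1$ hypersurface from the side where $u$ already vanishes by a $\Cinf$ noncharacteristic-nonspacelike hypersurface touching it at the given point, and then apply the assumed $\Cinf$ uniqueness to that surrogate. Fix $y\in\Sigma$, where $\Sigma$ is a $\Con^1$ noncharacteristic-nonspacelike hypersurface, and let $u\in H^2(\Omega)$ with $Pu=0$ and $u=\nabla u=0$ on $\Sigma\cap\Omega$. Near $y$ choose local coordinates in which $\Sigma=\{\phi=0\}$ with $\phi\in\Con^1$ and $d\phi(y)\neq0$. Since $\Sigma$ separates a small ball $B$ into $B^\pm=\{\pm\phi>0\}$, extend $u$ by zero across $\Sigma$ on one side. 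Concretely, set $\tilde u=u$ on $B^+$ and $\tilde u=0$ on $B^-$. Because the Cauchy data of $u$ vanish on $\Sigma$, the function $\tilde u$ lies in $H^2(B)$ and satisfies $P\tilde u=0$ in $B$. To see this, note that $\tilde u\in H^1$ because the traces match. The normal derivatives also match, so $\nabla\tilde u\in H^1$ and no single or double layer terms appear. The $\Con^1$ regularity suffices for the trace and integration by parts arguments on Lipschitz domains. Now $\tilde u$ vanishes on the open set $B^-$, and it suffices to show that $\tilde u$ vanishes near $y$.

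Next comes the construction of the smooth surrogate. Choose a $\Cinf$ function $\psi$ with $d\psi(y)=d\phi(y)$ and $\psi(y)=0$, for instance the linear part of $\phi$ at $y$, and convexify it slightly:
\[
\psi_\delta(x)=\dup{d\phi(y)}{x-y}-\delta\,|x-y|^2 .
\]
Because $\phi(x)=\dup{d\phi(y)}{x-y}+o(|x-y|)$, the quadratic term alone does not beat the $o(|x-y|)$ error. I therefore plan a Holmgren-type sweeping. Consider the family $S_\lambda=\{\dup{d\phi(y)}{x-y}-\delta|x-y|^2=-\lambda\}$ for $\lambda$ ranging from a positive value down to $0$. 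On the fixed sphere $|x-y|=r$, the condition $\{S_\lambda\cap\partial B_r\}\subset B^-$ holds provided $\delta r^2>o(r)$, which is true once $r$ is small with $\delta$ fixed. The surfaces $S_\lambda$ are $\Cinf$. Their conormals are close to $d\phi(y)$ for $r$ small, so they are noncharacteristic-nonspacelike, since that is an open condition by Lemma~\ref{lemma: nabla L f}.

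The propagation step then uses the $\Cinf$ uniqueness hypothesis in the style of John's theorem (Theorem~\ref{theorem:John global}). For large $\lambda$ the cap $S_\lambda\cap B_r$ lies entirely in $B^-$, where $\tilde u=0$. Let $\lambda_*$ be the infimum of the $\lambda$ for which $\tilde u$ vanishes on the region swept so far. At $S_{\lambda_*}$ the function $\tilde u$ has zero Cauchy data; near the boundary of the cap this holds because that part lies in $B^-$. The $\Cinf$ unique continuation then gives vanishing beyond $S_{\lambda_*}$, so $\lambda_*$ decreases to $0$. Since $S_0$ passes through $y$, $\tilde u$ vanishes on a neighborhood of $y$ on the side $\psi_\delta>0$. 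Applying the argument once more at a slightly perturbed level gives a full neighborhood of $y$. On $B^+$ this neighborhood is where $u=\tilde u$, which proves local uniqueness for $u$ at $y$.

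The main obstacle is the geometric bookkeeping. One must ensure that the boundaries of the caps $S_\lambda\cap B_r$ stay inside $B^-$, using the smallness of the $o(|x-y|)$ term from the $\Con^1$ condition relative to $\delta r^2$. One must also ensure that the swept region actually contains a neighborhood of $y$ on the $B^+$ side, which requires a little room. If needed, I would translate the family slightly into $B^+$ by using levels $\lambda<0$ down to $-\lambda_0$, with $\lambda_0\ll\delta r^2$.
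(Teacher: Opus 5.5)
There is a genuine gap, and it sits in the rim estimate that you yourself flag as the main obstacle.

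\textbf{The rim estimate fails.} First, the sign is reversed. On $|x-y|=r$ the surface $S_\lambda$ satisfies $\dup{d\phi(y)}{x-y}=\delta r^2-\lambda$, which is positive once $\lambda<\delta r^2$. So as $\lambda\downarrow 0$ the rims of your caps move into $B^+$, not $B^-$. The quadratic term has to bend the rims toward the vanishing side: you should sweep the levels $\ell(x)+\delta|x-y|^2=c$, with $\ell(x)=\dup{d\phi(y)}{x-y}$ and $c$ increasing from a negative value to a small positive one.

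Second, and more seriously, the estimate fails even with the sign fixed. Set $\omega(r)=\sup_{|x-y|\le r}|d\phi(x)-d\phi(y)|$, so that $|\phi-\ell|\le r\,\omega(r)$ on $B_r$. The rim condition then reads $\delta r^2>r\,\omega(r)+c$. For fixed $\delta$ this requires $\omega(r)<\delta r$, which fails for all small $r$ whenever $\omega(r)/r\to\infty$. Your claim that $\delta r^2>o(r)$ holds once $r$ is small with $\delta$ fixed is backwards: a term of size $O(r^2)$ cannot dominate one that is only $o(r)$. You note yourself that the quadratic term does not beat the $o(|x-y|)$ error, and the sweeping does not change this, because the rim condition must hold at every level, including the last one past $y$.

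A concrete failure: in $\Minkowski^{1+2}$ with coordinates $(x_0,x_1,x_2)$, take $\phi=x_1+|x_2|^{3/2}$. This is a $\Con^1$ noncharacteristic-nonspacelike surface through $y=0$. The rim of $\{x_1+\delta|x|^2=0\}\cap B_r$ contains the points $(0,-\delta r^2,\pm\sqrt{r^2-\delta^2r^4})$, where $\phi\approx r^{3/2}-\delta r^2>0$ for $r$ small. With fixed $\delta$ your argument essentially needs $d\phi$ to be Lipschitz at $y$. That is the $\Con^{1,1}$/$\Con^2$ setting the proposition is meant to go beyond.

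\textbf{How to repair it.} One option is to let the convexity depend on the scale.
\begin{enumerate}
\item Choose $\kappa>0$ such that $g^*_x(\eta,\eta)>0$ whenever $x$ is near $y$ and $|\eta-d\phi(y)|\le\kappa$.
\item Put $\delta=\kappa/(2r)$, so that the conormals $d\ell+2\delta(x-y)$ stay within this margin.
\item Take $r$ small enough that $\omega(r)<\kappa/4$.
\end{enumerate}
Then sweep the levels $c\in[-\kappa r/4,\kappa r/8]$. The initial sublevel set lies in $B^-$, since $\phi<c+r\omega(r)<0$ there. The rims stay in $B^-$ throughout, since $\phi\le c-\kappa r/4<0$ there. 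The final sublevel set contains $y$.

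The paper takes a different route and avoids the pointwise Taylor expansion altogether. It writes $\Sigma=\{s=f(z)\}$ and deforms the graphs $s=\chi_\delta*f(z)+a+b|z|^2$ over a fixed disk $|z|\le R_1$, with fixed small $b$ (here $\delta$ is the mollification scale). Since $\chi_\delta*f\to f$ in $\Con^1$, these surfaces stay noncharacteristic-nonspacelike. Meanwhile the uniform error $\eps$ can be made smaller than $bR_1^2/2$, which keeps both the rims $|z|=R_1$ and the initial surface in the vanishing region. Parametrizing by graphs over a fixed disk also gives a deformation with a fixed manifold $\M$, so John's theorem applies as stated. Your caps $S_\lambda\cap B_r$ change topology (they are empty for very negative levels), so you would have to run the continuity argument by hand.

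Finally, your argument only shows $u=0$ near $y$ on the $B^+$ side. The $B^-$ side needs the same argument with the roles of $B^\pm$ exchanged.
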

\begin{proof}
Consider an embedded noncharacteristic-nonspacelike $\Con^1$
hypersurface $\Sigma$, $m \in \Sigma$, a bounded open \nhd $V$ of $y$,
and $u \in H^2(V)$ such that $P u=0$ and $u =0$ on one side of
$\Sigma$. Upon reducing $V$, suppose there are smooth coordinates
$(s,z) \in \R\times \R^d$, such that $m=(0,0)$ and $T_m \Sigma$
identifies with $\{s=0\}$. Locally, $\Sigma$ is given by $s = f(z)$
with $f$ a $\Con^1$ function. Choose $f$ such that $u$ vanishes in
$\{s> f(z)\}$. At $y=(f(z),z)$, a nonzero conormal vector is $N_y =d f
(z) -d s$.  One has $g^*_{y} (N_{y}, N_{y})\geq C_0>0$ if $y \in
\Sigma\cap V$.

Upon reducing $V$ and taking $V=]-S,S[\times B_{\R^d}(0,R_0)$ assume
    that $|f| < S$ on $B_{\R^d}(0,R_0)$.  Consider $\chi \in
    \Cinfc(\R^d)$, with $\int \chi =1$, and set $\chi_\delta(z)=
    \delta^{-d} \chi(z/\delta)$ for $\delta>0$. For $b>0$ and $R_1=
    R_0/2$, set
\begin{align*}
  h(a, z) = \chi_\delta * f(z)+ a+ b \Norm{z}{}^2
  \ \ \pour \ \
  a \in \R \ \et \ z \in  \ovl{B_{\R^d}(0,R_1)}, 
\end{align*}
well defined for $\delta$ small. Set 
$\caH_a = \{ (h(a,z),z);\ z\in  \ovl{B_{\R^d}(0,R_1)}\}$.
The hypersurface $\caH_a$ is smooth.
For $y \in \caH_a$, set $N^a_y = d_z h(a,z) - d s$ conormal to $\caH_a$ at $y$.
For $b$ and  $|a|$ small, say $0< b \leq b_0$,
$|a|\leq a_0$,  and $\delta$ small  then $\caH_a \subset V$ and 
\begin{align*}
  g^*_{y} (N^{a}_y, N^{a}_y) \geq C_0/2, \qquad  \pour \ y \in
  \caH_a, 
\end{align*}
since $\chi_\delta * f \to f$ in the $\Con^1$ topology.
Set $b=b_0$, $a_1= \min (a_0, b_0 R_1^2/2)$,
$0< \eps < a_1$, and $\delta$ \suff small such that 
$\sup_{B_{\R^d}(0,R_1)} |\chi_\delta * f - f| \leq \eps$.
Then, the three following properties hold:
\begin{enumerate}[align=left,labelwidth=0.8cm,labelsep=0cm,itemindent=0.8cm,
  leftmargin=0cm,label=\bf{\roman*}.]
  \item If $\Norm{z}{}\leq R_1$ then $h(a_1,z) > f(z)$.
  \item If $\Norm{z}{} = R_1$ and $|a|\leq a_1$ then
  $h (a,z) > f(z)$.
  \item $h(-a_1,0) <  f(0)$.
\end{enumerate}
 Set $\M =
\ovl{B_{\R^d}(0,R_1)}$ and $F:[-a_1, a_1]\times \M \to V$ given by
$F(a,z) = (h(a,z),z) \in \caH_a$, yielding a continuous
noncharacteristic-nonspacelike {\it smooth} hypersurface deformation
that sweeps a \nhd of $m$, with
\begin{align*}
  F([-a_1, a_1] \times \d\M) \cup F(\{a_1\} \times \M) \subset  \{ s> f(z)\} \ \ \where \ u \ \text{vanishes}.
  \end{align*}
Then, the assumed unique continuation property in the smooth
case implies $u=0$ in a \nhd of $m=(0,0)$.
\end{proof}

\section{Faster and slower metrics}
\label{app:speeding up and slowing down metrics}

Use the notation of Section~\ref{sec:elements_pseudo-Riemannian} and Appendix~\ref{app:Lorentzian_geometry}.

Consider $\delta>0$.  For a tangent vector $\bfu= (u_t, u_x)$ set
$S_\delta \bfu = ( u_t/\delta, u_x)$. For $\delta>1$, $S_\delta \bfu$
is faster than $\bfu$.  For  $\delta <1$, $S_\delta \bfu$
is slower than $\bfu$.  Set
\begin{align*}
  \gd_y \big( \bfu, \bfv\big) = g_y \big( S_{1/\delta}  \bfu ,
  S_{1/\delta}  \bfv\big), \qquad \bfu, \bfv \in T_y \caL.
\end{align*}
It is a Lorentzian metric for which $\d_t$ is a future timelike vector.
If $\delta>1$ and $\bfu$ is null for $g_y$, then $S_{1/\delta}  \bfu$ is
timelike for $g_y$. Thus $\bfu$ is timelike for $\gd_y$. Conversely,
if $\bfu$ is null for $\gd_y$, it is spacelike for $g_y$. Hence, a
null curve for $\gd$ is a spacelike curve for $g$. The opposite for $0 <
\delta < 1$. 
\begin{definition}
  \label{def: speedup slowdown metric}
  One says that the metric $g$ is made {\bf faster} if changed into $\gd$ for
  $\delta >1$. One says that $g$ is made {\bf slower} if changed into $\gd$ for
  $0< \delta < 1$. 
\end{definition}
For the metric $\gd$, denote $\Null_y^{\delta,\pm}$ the cones of
forward and backward null vectors at $y$.  Denote by
$\Timelike_y^{\delta,\pm}$ the open convex cone of forward
(resp. backward) timelike vectors at $y$ for the metric $\gd$. Observe
that $\Timelike_y^{\delta,\pm}$ increases as $\delta$ increases.
\begin{remark}
  \label{remark: speedup slowdown metric}
  In the particular case where $g = -dt^2 + \gR$ with $\gR$ a
  Riemannian metric on $\R^d$, then $\gd = -\delta^2 dt^2 + \gR$. Note
  that $(\gd)^* = \delta^{-2} ( - d\tau^2 + \delta^2
  \gR^*)$. Figure~\ref{fig: speed-up slow-down} shows the effect of
  making a metric faster in the Minkowski case.
\end{remark}
Denote by $\Timelike_y^{*,\delta,\pm}$ the open convex cone of forward
(resp. backward) timelike covectors at $y$ for the dual metric
$(\gd)^*$ on $T^* \caL$. Observe that $\Timelike_y^{*,\delta,\pm}$
decreases as $\delta$ increases.

\begin{figure}
  \begin{center}
    \resizebox{2.2cm}{!}{
\begin{picture}(0,0)%
\includegraphics{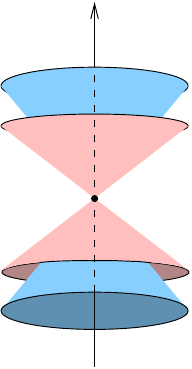}%
\end{picture}%
%
%
\setlength{\unitlength}{3947sp}%
\begin{picture}(1521,2949)(2695,-3223)
\put(4201,-1486){\makebox(0,0)[lb]{\smash{\fontsize{12}{14.4}\usefont{T1}{ptm}{m}{n}{\color[rgb]{0,0,0}$\Null_y^{\delta,+}$}%
}}}
\put(4201,-886){\makebox(0,0)[lb]{\smash{\fontsize{12}{14.4}\usefont{T1}{ptm}{m}{n}{\color[rgb]{0,0,0}$\Null_y^+$}%
}}}
\put(3301,-436){\makebox(0,0)[rb]{\smash{\fontsize{12}{14.4}\usefont{T1}{ptm}{m}{n}{\color[rgb]{0,0,0}$\d_t$}%
}}}
\end{picture}%
    }
  \caption{Faster metric $\gd$, $\delta >1$, for $g = - dt^2 +
    |dx|^2$.  $\Null_y^{+}$ denotes the cone of forward null vectors of $g$ and
    $\Null_y^{\delta,+}$ that of $\gd$.}
  \label{fig: speed-up slow-down}
\end{center}
\end{figure}

For $y \in \caL$ and $x \in \R^d$, denote by $\atimeFd{\delta}_y(x)$
the first-arrival time function for the metric $\gd$.
\begin{lemma}
  \label{lemma: continuity arrival time speed parameter}  
  The function $\delta
  \mapsto \atimeFd{\delta}_y(x)$ is 
  continuous and is  decreasing.
\end{lemma}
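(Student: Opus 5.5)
We must prove Lemma~\ref{lemma: continuity arrival time speed parameter}: $\delta\mapsto \atimeFd{\delta}_y(x)$ is decreasing and continuous. The plan is to prove monotonicity from the nesting of the timelike cones, and continuity from a reparametrization argument. That argument compares the cones of $g^{\delta}$ and $g^{\delta'}$ through the scaling $S_\delta$, and uses the uniform Lipschitz bounds of Proposition~\ref{prop: JMR} to control the error.

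\emph{Monotonicity.} If $\delta<\delta'$, then $\ovl{\Timelike_q^{\delta,+}}\setminus 0\subset \Timelike_q^{\delta',+}$ at every point $q$. Indeed, a $g^\delta$-causal vector $\bfu$ has $S_{1/\delta}\bfu$ $g$-causal. Then $S_{1/\delta'}\bfu=S_{\delta/\delta'}S_{1/\delta}\bfu$ is that causal vector with its time component shrunk, so it is strictly $g$-timelike. Hence every $g^\delta$-causal curve is a $g^{\delta'}$-causal (even timelike) curve, and the infimum in \eqref{eq: first arrival time} can only decrease, which gives $\atimeFd{\delta'}_y(x)\le \atimeFd{\delta}_y(x)$. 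For strict decrease, take a $g^\delta$ minimizer $\gamma(t)=(t,x(t))$ from Proposition~\ref{prop: JMR}. It is $g^{\delta'}$-timelike, so $(\atimeFd{\delta}_y(x),x)\in\Future^{\delta'}(y)$, and Lemma~\ref{lemma: alternative future} applied to $g^{\delta'}$ gives a strict inequality.

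\emph{Continuity.} The key observation is that $\gamma(t)=(t,x(t))$ is $g^\delta$-causal if and only if $t\mapsto (t,x(t))$ satisfies a speed constraint: $S_{1/\delta}(1,x'(t))=(\delta^{-1}\cdot\text{stuff})$. More usefully, reparametrize time. If $\gamma(t)=(t,x(t))$ is $g^{\delta}$-causal on $[t_y,t_1]$, set $\tilde\gamma(t)=(t_y+\lambda(t-t_y),\,x(t))$ with $\lambda=\delta/\delta'$. Its tangent $(\lambda,x')$ equals $\lambda\,(1,x'/\lambda)$, and $S_{1/\delta'}(\lambda, x')=(\lambda\delta',x')\cdot$ matches $S_{1/\delta}(1,x')$ up to the base point shift $t\mapsto t_y+\lambda(t-t_y)$. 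So $\tilde\gamma$ would be $g^{\delta'}$-causal if $g$ were time independent. In general the base point moves by $O(|\lambda-1|\,(t_1-t_y))$. This is the main obstacle. I would handle it by slack. For the bound $\atimeFd{\delta}\le \liminf$ as $\delta'\downarrow\delta$, I would not rescale directly. Instead I would use the strict cone inclusion with a uniform margin on compact sets, which Lemma~\ref{lemma: boundedness tip future and past} provides via finite speed. For $\delta_1<\delta$ close, a $g^{\delta_1}$ minimizer is $g^\delta$-timelike with a uniform margin. Its time-reparametrization by a factor close to $1$ therefore stays $g^{\delta}$-causal, since the margin absorbs the base-point perturbation by uniform continuity of $g$. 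This yields $\atimeFd{\delta}_y(x)\le t_y+\lambda(\atimeFd{\delta_1}_y(x)-t_y)$ with $\lambda\to1$ as $\delta_1\to\delta$, i.e.\ left continuity from below combined with monotonicity. The symmetric argument, starting from a $g^{\delta}$ minimizer and comparing with $g^{\delta_2}$ for $\delta_2>\delta$, gives the other one-sided inequality.

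Combining the two one-sided bounds with monotonicity yields continuity. An alternative for the continuity step is the characterization in Proposition~\ref{prop: JMR}(iii): $\atimeF$ is the largest subsolution of $\taum\le1$. Since $\taum^{\delta}=\delta\,\taum$ for the $g^\delta$ symbol, a rescaled subsolution $\zeta$ for one metric is nearly a subsolution for the other. I would use this as a fallback if the curve argument becomes messy. The only delicate point there is again the $t$-dependence of the coefficients through $\zeta(x)$.
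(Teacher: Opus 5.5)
Your monotonicity argument is correct and is essentially the paper's. Nested cones give $\atimeFd{\delta'}_y(x)\le\atimeFd{\delta}_y(x)$ for $\delta<\delta'$. Strictness holds because a minimizer is a null geodesic (Proposition~\ref{prop: minimizing curves}), so it cannot minimize for two different metrics. Two small fixes there:
\begin{itemize}
\item For $\delta<\delta'$, $S_{\delta/\delta'}$ \emph{enlarges} the time component rather than shrinking it. The conclusion survives, because $\bfw+c\,\d_t$ is future timelike when $\bfw$ is future causal and $c>0$.
\item Use only the implication (ii)$\Rightarrow$(iii) of Lemma~\ref{lemma: alternative future}. Passing through membership in $\Future^{\delta'}(y)$ uses (iii)$\Rightarrow$(i), whose proof relies on Corollary~\ref{cor: adjusting speed}, i.e.\ on the lemma you are proving.
\end{itemize}

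The continuity part has a genuine gap: it only produces inequalities in the same direction as monotonicity. Write $f(\delta)=\atimeFd{\delta}_y(x)$, which is decreasing. Right continuity at $\delta_0$ needs $f(\delta_0)\le f(\delta)+o(1)$ as $\delta\downarrow\delta_0$. Left continuity needs $f(\delta)\le f(\delta_0)+o(1)$ as $\delta\uparrow\delta_0$. In both cases you must bound the arrival time of the \emph{slower} metric from above by that of the \emph{faster} one.

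Both of your arguments go the other way. You start from a minimizer of the slower metric ($g^{\delta_1}$, resp.\ $\gd$), use that it is timelike with a margin for the faster one ($\gd$, resp.\ $g^{\delta_2}$), and compress time. With $\delta_-<\delta_+$ the slower and faster indices, this yields $\atimeFd{\delta_+}_y(x)\le t_y+\lambda\big(\atimeFd{\delta_-}_y(x)-t_y\big)$. That is an upper bound on the faster arrival time, already implied by monotonicity (take $\lambda=1$), and it is compatible with a jump of $f$. Your ``symmetric argument'' is the same inequality with relabeled indices, not its converse.

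The margin idea also does not reverse. A minimizer of the faster metric is null for it and spacelike for the slower one, so there is no slack. Stretching time by $\delta/\delta'$ gives tangents that would be $g^{\delta'}$-null at the original base points, not at the time-shifted points where they now sit.

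The missing step is a construction that turns a minimizer of the faster metric into a causal curve of the slower metric arriving over $x$ only $o(1)$ later. The paper does this by stability of the geodesic flow:
\begin{itemize}
\item For $\delta_n\searrow\delta$, take the $g^{\delta_n}$ null geodesic from $y$ to $y_n=(\atimeFd{\delta_n}_y(x),x)$, with initial vector $\bfv_n$.
\item The $\gd$ null geodesic from $y$ with initial vector $S_{\delta/\delta_n}\bfv_n\in\Null_y^{\delta,+}$ ends at some $z_n=y_n+o(1)$.
\item Append a $\gd$ null geodesic from $z_n$ to $(\atimeFd{\delta}_{z_n}(x),x)$. The continuity of $z\mapsto\atimeFd{\delta}_z(x)$ from Proposition~\ref{prop: JMR} then gives $\atimeFd{\delta}_y(x)\le\atimeFd{\delta_n}_y(x)+o(1)$.
\item The case $\delta_n\nearrow\delta$ follows by exchanging the roles of the two metrics.
\end{itemize}

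Within your own framework, a nonuniform reparametrization would also work. Along a $\gd$ minimizer $(t,x(t))$, take $(\tau(t),x(t))$ with $\tau'=(\delta/\delta')\big(1+C(\tau-t)\big)$ and $\tau(t_y)=t_y$, where $C$ depends on bounds for $\d_t g$ on the relevant compact set. This curve is $g^{\delta'}$-causal, and Gronwall gives $\tau-t=O(\delta/\delta'-1)$ on bounded intervals.

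Your fallback via the largest-subsolution property in Proposition~\ref{prop: JMR} does point in the right direction, since it yields lower bounds on the faster arrival time. However, the uniform rescaling $t_y+\lambda(\zeta-t_y)$ absorbs the $t$-dependence of $\taum$ only over short time spans. Producing an exact subsolution again requires a Gronwall-type nonuniform rescaling.
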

\begin{proof}
  By \eqref{eq: first arrival time},  $\delta \mapsto
  \atimeFd{\delta}_y(x)$ is nonincreasing. Having
  $\atimeFd{\delta}_y(x) = \atimeFd{\delta'}_y(x)$ for $\delta\neq \delta'$ yields a
  contradiction by 
  Proposition~\ref{prop: minimizing curves}: a minimizing curve cannot be
  null for both metrics.

  Suppose $\delta_n \searrow \delta>0$. Set $t_n =
  \atimeFd{\delta_n}_y(x)$ and $y_n = (t_n, x)$. By
  Proposition~\ref{prop: minimizing curves}
  there is a future null geodesic connecting $y$ and $y_n$ for the
  metric $g^{\delta_n}$, with initial tangent vector $\bfv_n \in
  \Null_y^{\delta_n,+}$. Then $\bfu_n = S_{\delta/\delta_n} \bfv_n
  \in \Null_y^{\delta,+}$.  Consider the future null geodesic for
  the metric $g^{\delta}$ initiated at $y$ with initial
  trangent vector $\bfu_n$. It reaches a point $z_n$ such that $z_n
  =y_n+ o(1)$.  Then, there exists a future null geodesic for
  $g^{\delta}$ connecting $z_n$ and
  $z_n'=(t_n',x)$ with $t_n'=\atimeFd{\delta}_{z_n}(x)$. Hence, there is a future
  Lipshitz null curve connecting $y$ and $z_n'$ for the metric
  $g^\delta$ and $z_n'=y_n+ o(1)$. Thus, $t_n' = t_n + o(1)$. One concludes that
  \begin{align*}
   \atimeFd{\delta_n}_y(x) \leq \atimeFd{\delta}_y(x) \leq
   \atimeFd{\delta_n}_y(x) + o(1),
  \end{align*}
  where the first inequality is the monotonicity proven
 above. This gives continuity  at $\delta^+$. 

  Suppose $\delta_n \nearrow \delta$. Interchanging the roles played
  by the two metrics one finds a Lipschitz null curve for the metric $g^{\delta_n}$ that
  starts at $y$ and ends at $(t_n,x)$ with $t_n \leq
  \atimeFd{\delta}_y(x) + o(1)$. One obtains 
$\atimeFd{\delta}_y(x) \leq \atimeFd{\delta_n}_y(x)
   \leq \atimeFd{\delta}_y(x) + o(1)$,
hence continuity at $\delta^-$. 
\end{proof}
\begin{corollary}
  \label{cor: adjusting speed}
  Suppose $y = (\ut,\ux) \in \caL$ and  $\R^d \ni x \neq \ux$.
  If  $t > \ut$, then there exists $\delta>0$ such
  that $t = \atimeFd{\delta}_y(x)$.
\end{corollary}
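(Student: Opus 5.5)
The natural route is an intermediate value argument in $\delta$, combining the continuity and strict monotonicity of $\delta\mapsto \atimeFd{\delta}_y(x)$ from Lemma~\ref{lemma: continuity arrival time speed parameter} with limiting behavior as $\delta\to 0^+$ and $\delta\to+\infty$. Concretely, we aim to show
\begin{align*}
  \lim_{\delta\to 0^+}\atimeFd{\delta}_y(x)=+\infty,
  \qquad
  \lim_{\delta\to +\infty}\atimeFd{\delta}_y(x)=\ut .
\end{align*}
Since $\ut<t<+\infty$, continuity on $]0,+\infty[$ then yields $\delta$ with $t=\atimeFd{\delta}_y(x)$; strict monotonicity even makes it unique.

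For the limit $\delta\to 0^+$, we use Hypothesis~\ref{hyp: metric} to bound speeds uniformly. A future causal vector $\bfu=(u_t,u_x)$ for $\gd$ means $S_{1/\delta}\bfu=(\delta u_t,u_x)$ is causal for $g$. The uniform bounds on $g$ (bounded coefficients, $g_{00}\le -C_1$, and Lemma~\ref{lemma: property g R1+d}{\bf i} for the spatial part) give a constant $c$ with $\Norm{w_x}{}\le c\,w_t$ for every $g$-causal future vector $w$. Hence $\Norm{u_x}{}\le c\,\delta\, u_t$ along any $\gd$-causal curve, and integrating gives
\begin{align*}
  \atimeFd{\delta}_y(x)-\ut\ \ge\ \Norm{x-\ux}{}/(c\delta).
\end{align*}
This tends to $+\infty$ because $x\neq\ux$.

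For the limit $\delta\to+\infty$, take the straight segment from $(\ut,\ux)$ to $(\ut+h,x)$ with $h>0$. Its tangent is $\bfu=(h,x-\ux)$, and $S_{1/\delta}\bfu=(h/\delta,x-\ux)$ we need to be $g$-timelike. Equivalently, we want $(h,\delta(x-\ux))$ to be $g$-timelike after rescaling, which is the wrong direction, so we must recheck which way the rescaling goes. By the appendix convention, $\gd$ with $\delta>1$ is faster, so with the fixed tangent $\bfu$ one should have $\bfu$ $\gd$-timelike once $\delta$ is large relative to $\Norm{x-\ux}{}/h$. The uniform lower bound on the speed of $g$ (from $g^{00}\le -C_1$ and bounded coefficients, giving a uniform open cone around $\d_t$ contained in $\Timelike^+$) then shows the segment is future timelike for $\gd$ once $\delta\ge C\Norm{x-\ux}{}/h$. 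Consequently $\atimeFd{\delta}_y(x)\le\ut+h$ for $\delta$ large, and since the arrival time always exceeds $\ut$ and $h$ is arbitrary, the limit is $\ut$.

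The main obstacle is this bookkeeping of the scaling direction of $S_\delta$ against the ``faster/slower'' convention. One must verify that the causal cones of $\gd$ shrink to the $\d_t$ direction as $\delta\to0$ and open up to everything with $u_t>0$ as $\delta\to\infty$, uniformly in $y$. This follows from the explicit form $\gd_y(\bfu,\bfu)=g_y(S_{1/\delta}\bfu,S_{1/\delta}\bfu)$ together with Hypothesis~\ref{hyp: metric}. Once these two uniform cone inclusions are established, the intermediate value argument is routine.
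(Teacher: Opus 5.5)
Your proposal is correct and is the paper's proof: the intermediate value theorem applied to the continuous, decreasing map $\delta\mapsto\atimeFd{\delta}_y(x)$ of Lemma~\ref{lemma: continuity arrival time speed parameter}, with the limits $+\infty$ as $\delta\to0^+$ (this is where $x\neq\ux$ is needed) and $\ut$ as $\delta\to+\infty$. The paper only asserts these two limits, while you justify them with uniform cone bounds from Hypothesis~\ref{hyp: metric}. One slip to fix: in the $\delta\to+\infty$ paragraph, $S_{1/\delta}\bfu=(\delta h,x-\ux)$, as in your first paragraph, not $(h/\delta,x-\ux)$; by homogeneity this is $g$-timelike iff $(h,(x-\ux)/\delta)$ is, which holds uniformly along the segment once $\delta\ge C\Norm{x-\ux}{}/h$, so your conclusion stands and no rechecking of the scaling direction is needed.
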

\begin{proof}
  If $\delta \to 0_+$,  then $\atimeFd{\delta}_y(x) \to
  +\infty$. If $\delta \to +\infty$, then  $\atimeFd{\delta}_y(x) \to
  \ut$. Conclude with Lemma~\ref{lemma: continuity arrival time speed
    parameter} and the intermediate value theorem.
\end{proof}


\end{document}